\documentclass[11pt,reqno]{amsart}
\usepackage[cm]{fullpage}
\usepackage{mathrsfs,amssymb,graphicx,verbatim,amsmath,amsfonts}
\usepackage{paralist}
\usepackage{esint}
\usepackage[autostyle]{csquotes}
\usepackage[breaklinks,pdfstartview=FitH]{hyperref}
\usepackage{upgreek}
\usepackage[mathscr]{euscript}
\usepackage{dutchcal}
\usepackage[T1]{fontenc}

\addtolength{\footskip}{17pt}


\usepackage{graphicx}
\usepackage{sidecap}
\usepackage[all]{xy}
\usepackage[font=small,labelfont=bf,labelsep=endash]{caption}
\usepackage{enumitem}
\newcommand{\sss}{\mathsf{s}}
\newcommand{\tp}{\widehat{\otimes}}

\newcommand{\ray}{\mathscr{R}}
\renewcommand{\eta}{\upeta}

\renewcommand{\epsilon}{\upepsilon}
\newcommand{\MM}{\mathcal{M}}

\renewcommand{\chi}{\upchi}
\renewcommand{\theta}{\uptheta}

\renewcommand{\le}{\leqslant}
\renewcommand{\ge}{\geqslant}

\renewcommand{\setminus}{\smallsetminus}
\renewcommand{\gamma}{\upgamma}
\renewcommand{\lambda}{\uplambda}
\renewcommand{\alpha}{\upalpha}
\renewcommand{\delta}{\updelta}
\renewcommand{\beta}{\upbeta}
\renewcommand{\omega}{\upomega}
\renewcommand{\nu}{\upnu}
\renewcommand{\mu}{\upmu}
\renewcommand{\psi}{\uppsi}
\renewcommand{\phi}{\upphi}
\renewcommand{\rho}{\uprho}
\renewcommand{\kappa}{\upkappa}

\newcommand{\OO}{\mathsf{O}}

\renewcommand{\AA}{\mathsf{A}}

\renewcommand{\tau}{\uptau}
\newcommand{\Id}{\mathsf{Id}}
\renewcommand{\tt}{\mathsf{t}}
\newcommand{\ud}[0]{\,\mathrm{d}}

\newcommand{\spn}{\mathrm{\bf span}}
\makeatletter
\def\moverlay{\mathpalette\mov@rlay}
\def\mov@rlay#1#2{\leavevmode\vtop{%
   \baselineskip\z@skip \lineskiplimit-\maxdimen
   \ialign{\hfil$\m@th#1##$\hfil\cr#2\crcr}}}
\newcommand{\charfusion}[3][\mathord]{
    #1{\ifx#1\mathop\vphantom{#2}\fi
        \mathpalette\mov@rlay{#2\cr#3}
      }
    \ifx#1\mathop\expandafter\displaylimits\fi}
\makeatother

\newcommand{\M}{\mathsf{M}}

\newcommand{\m}{\{1,\ldots,m\}}

\newcommand{\vol}{\mathrm{\bf vol}}
\newcommand{\proj}{\mathsf{Proj}}

\newcommand{\NN}{\mathcal{N}}

\newcommand{\kk}{\mathsf{k}}

\newcommand{\dd}{\mathsf{d}}
\renewcommand{\Pr}{\mathbb{P}}

\newcommand{\sign}{\mathrm{\bf sign}}
\newcommand{\rank}{\mathrm{\bf rank}}

\newcommand{\n}{\{1,\ldots,n\}}

\renewcommand{\k}{\{1,\ldots,k\}}

\newcommand{\sub}{\mathscr{C}}

\renewcommand{\d}{\delta}

\newcommand{\e}{\varepsilon}
\newcommand{\R}{\mathbb R}
\renewcommand{\1}{\mathbf 1}

\newcommand{\cc}{\mathsf{c}}
\newcommand{\G}{\mathsf{G}}
\newtheorem{theorem}{Theorem}
\newtheorem{lemma}[theorem]{Lemma}
\newtheorem{proposition}[theorem]{Proposition}
\newtheorem{claim}[theorem]{Claim}

\newtheorem{corollary}[theorem]{Corollary}

\newtheorem{definition}[theorem]{Definition}

\theoremstyle{remark}
\newtheorem{remark}[theorem]{Remark}

\newtheorem{question}[theorem]{Question}
\newtheorem{problem}[theorem]{Problem}

\renewcommand{\pi}{\uppi}
\renewcommand{\zeta}{\upzeta}
\newcommand{\mA}{\mathsf{A}}
\newcommand{\mB}{\mathsf{B}}

\newcommand{\YY}{\mathcal{Y}}

\renewcommand{\S}{\mathsf{S}}
\renewcommand{\subset}{\subseteq}

\newcommand{\C}{\mathbb C}

\newcommand{\E}{\mathbb{E}}

\newcommand{\W}{\mathsf{W}}

\renewcommand{\sigma}{\upsigma}

\newcommand{\N}{\mathbb N}
\newcommand{\Z}{\mathbb Z}

\newcommand{\eqdef}{\stackrel{\mathrm{def}}{=}}

\DeclareMathOperator{\diam}{\mathbf{diam}}

\newcommand{\EE}{\mathsf{E}}
\newcommand{\JJ}{\mathsf{J}}
\renewcommand{\emptyset}{\varnothing}

\begin{document}


\title{Metric dimension reduction: A snapshot of the Ribe~program}


\author{Assaf Naor}
\address{Mathematics Department, Princeton University, Princeton, New Jersey 08544-1000, USA.}
\email{naor@math.princeton.edu}

\thanks{Supported in part by NSF grant CCF-1412958, the Packard Foundation and the Simons Foundation. This work was  was carried out under the auspices of the Simons Algorithms and Geometry (A\&G) Think Tank, and was completed while the author was a member of the Institute for Advanced Study}




\maketitle






\section{Introduction}

The purpose of this article is to survey some of the context, achievements, challenges and mysteries of the field of {\em metric dimension reduction}, including new perspectives on major older results as well as recent advances.


From the point of view of theoretical computer science,  mathematicians "stumbled upon" metric dimension reduction in the early 1980s, as exemplified   by the following quote~\cite{Vem04}.

\blockquote{{ \small \em Two decades ago, analysts stumbled upon a surprising fact [...],
the Johnson--Lindenstrauss Lemma, as a crucial tool in their project of extending
functions in continuous ways. This result [...] says that, if you project $n$ points in some high dimensional space down to a random $O(\log n)$-dimensional plane, the chances are
overwhelming that all distances will be preserved  within a small relative error.
So, if distance is all you care about, there is no reason to stay in high dimensions!

\begin{flushright}{\small \em C.~Papadimitriou, 2004 (forward to {\em The random projection method} by S.~Vempala).}\end{flushright}}}
The above  use of the term  "stumbled upon" is  justified, because it would be fair to say that at the inception of this research direction mathematicians did not anticipate  the remarkable swath of its later impact on algorithms. However, rather than being discovered accidentally, the investigations that will be surveyed here can be motivated by classical issues in metric geometry. From the internal perspective of pure mathematics, it would be more befitting to state that the aforementioned early work stumbled upon the unexpected depth,  difficulty and richness of basic questions on the relation between "rough quantitative geometry" and dimension. Despite major efforts by many mathematicians over the past four decades, such questions  remain by and large stubbornly open.

We will explain below key ideas of major developments in metric dimension reduction, and also describe the larger mathematical landscape that partially motivates these investigations, most notably the {\em bi-Lipschitz embedding problem into $\R^n$} and the {\em Ribe program}. By choosing to focus on aspects of this area within  pure mathematics, we will put aside  the large (and growing) literature that investigates algorithmic ramifications of metric dimension reduction. Such applications warrant a separate treatment that is far beyond the scope of the present exposition; some aspects of that material are covered in the monographs~\cite{Mat02,Vem04,Har11} and the surveys~\cite{Ind01,Lin02}, as well as the articles of Andoni--Indyk--Razenshteyn and Arora in the present volume.


\begin{remark} The broader term {\em dimension reduction} is used ubiquitously in statistics and machine learning, with striking applications whose full rigorous understanding sometimes awaits the scrutiny of mathematicians (see e.g.~\cite{Bur10}). A common (purposefully vague) description of this term is the desire to decrease the degrees of freedom of a high-dimensional data set while approximately preserving some of its  pertinent features; formulated in such great generality, the area includes topics such as neural networks (see e.g.~\cite{HS06}).  The commonly used term {\em curse of dimensionality} refers to the perceived impossibility of this goal in many settings, and that the performance (running time, storage space) of certain algorithmic tasks must deteriorate exponentially as the underlying dimension grows. But, sometimes it does seem that certain high-dimensional data sets can be realized faithfully using  a small number of latent variables as auxiliary "coordinates." Here we restrict ourselves exclusively to  {\em metric} dimension reduction, i.e., to notions of faithfulness of low-dimensional representations that require the (perhaps quite rough) preservation of pairwise distances, including ways to prove the impossibility thereof.
\end{remark}

\subsection*{Roadmap} The rest of the Introduction is an extensive and detailed account of the area of metric dimension reduction, including statements of most of the main known results,  background and context, and many important open questions. The Introduction is thus an expository account  of the field, so those readers who do not wish to delve into some proofs, could read it separately from the rest of the text. The remaining sections contain further details  and complete justifications of  those statements that have not appeared in the literature.

\subsection{Bi-Lipschitz embeddings}\label{sec:bilip} Fix $\alpha\ge 1$. A metric space $(\MM,d_\MM)$ is said to embed with distortion $\alpha$ into a metric space $(\NN,d_\NN)$ if there is a mapping (an embedding) $f:\MM\to \NN$ and (a scaling factor) $\tau>0$ such that
\begin{equation}\label{eq:def embedding}
\forall\, x,y\in \MM,\qquad \tau d_{\MM}(x,y)\le d_\NN\big(f(x),f(y)\big)\le \alpha \tau d_\MM(x,y).
\end{equation}
The infimum over $\alpha\in [1,\infty]$ for which $(\MM,d_\MM)$ embeds with distortion $\alpha$ into $(\NN,d_\NN)$ is denoted $\cc_{(\NN,d_\NN)}(\MM,d_\MM)$, or $\cc_\NN(\MM)$ if the underlying metrics are clear from the context. If $\cc_\NN(\MM)<\infty$, then $(\MM,d_\MM)$ is said to admit a bi-Lipschitz embedding into $(\NN,d_\NN)$. Given $p\in [1,\infty)$, if $\NN$ is an $L_p(\mu)$ space into which $\MM$ admits a bi-Lipschitz embedding, then we use the notation $\cc_{L_p(\mu)}(\MM)=\cc_p(\MM)$. The numerical invariant $\cc_2(\MM)$, which measures the extent to which $\MM$ is close to being a (subset of a) Euclidean geometry, is called the {\em Euclidean distortion} of $\MM$.

A century of intensive research into bi-Lipschitz embeddings led to a rich theory with many deep achievements, but the following problem, which is one of the first questions that arise naturally in this direction, remains a major longstanding mystery; see e.g.~\cite{Sem99,LP01,Hei03,NN12}. Analogous issues in the context of topological dimension,  differentiable manifolds and Riemannian manifolds  were famously settled by Menger~\cite{Men28} and N\"obeling~\cite{Nob31}, Whitney~\cite{Whi36} and Nash~\cite{Nas54}, respectively.

\begin{problem}[the bi-Lipschitz embedding problem into $\R^k$]\label{Q:bilip Rk} Obtain an intrinsic characterization of those metric spaces $(\MM,d_\MM)$ that admit a bi-Lipschitz embedding into $\R^k$ for some $k\in \N$.
\end{problem}
Problem~\ref{Q:bilip Rk} is one of the qualitative underpinnings of the issues that will be surveyed here.  We say that it is "qualitative" because it ignores the magnitude of the distortion altogether, and therefore one does not need to specify which norm on $\R^k$ is considered in Problem~\ref{Q:bilip Rk}, since all the norms on $\R^k$ are (bi-Lipschitz) equivalent. Problem~\ref{Q:bilip Rk}  is also (purposefully) somewhat vague, because the notion of "intrinsic characterization" is not  well-defined. We will return to this matter in Section~\ref{sec:doubling} below, where candidates for such a characterization are discussed. At this juncture, it suffices to illustrate what Problem~\ref{Q:bilip Rk} aims to achieve through the following useful example. If one does not impose any restriction on the target dimension and allows for a bi-Lipschitz embedding into an infinite dimensional Hilbert space, then the following intrinsic characterization is available. A metric space $(\MM,d_\MM)$ admits a bi-Lipschitz embedding into a Hilbert space if and only if there exists $C=C_\MM\in [0,1)$ such that for every $n\in \N$ and every positive semidefinite symmetric matrix $\mA=(a_{ij})\in \M_n(\R)$ all of whose rows sum to zero (i.e., $\sum_{j=1}^na_{ij}=0$ for every $i\in \n$), the following quadratic distance inequality holds true.
\begin{equation}\label{eq:K LLR}
\forall\, x_1,\ldots,x_n\in \MM,\qquad \sum_{i=1}^n\sum_{j=1}^na_{ij}d_\MM(x_i,x_j)^2\le C\sum_{i=1}^n\sum_{j=1}^n|a_{ij}|d_\MM(x_i,x_j)^2.
\end{equation}
In fact, one can refine this statement quantitatively as follows. A metric space embeds with distortion $\alpha\in [1,\infty)$ into a Hilbert space if and only if  in the setting of~\eqref{eq:K LLR} we have
\begin{equation}\label{eq:alpha LLR}
\forall\, x_1,\ldots,x_n\in \MM,\qquad \sum_{i=1}^n\sum_{j=1}^na_{ij}d_\MM(x_i,x_j)^2\le \frac{\alpha^2-1}{\alpha^2+1}\sum_{i=1}^n\sum_{j=1}^n|a_{ij}|d_\MM(x_i,x_j)^2.
\end{equation}
The case $\alpha=1$ of~\eqref{eq:alpha LLR}, i.e., the case of isometric embeddings, is  a famous classical theorem of Schoenberg~\cite{Sch38}, and the general case of~\eqref{eq:alpha LLR} is due to Linial, London and Rabinovich~\cite[Corollary~3.5]{LLR}. The above characterization is clearly intrinsic, as it is a family of finitary distance inequalities among points of $\MM$ that do not make reference to any other auxiliary/external object. With such a characterization at hand, one could  examine the internal structure of a given metric space so as to determine if it can be represented faithfully as a subset of a Hilbert space. Indeed, \cite{LLR} uses~\eqref{eq:alpha LLR} to obtain an algorithm that takes as input an $n$-point metric space $(\MM,d_\MM)$ and outputs in polynomial time an arbitrarily good approximation to its Euclidean distortion $\cc_2(\MM)$.

A meaningful answer to Problem~\ref{Q:bilip Rk}  could in principle lead to a method for determining if a member of a  family $\mathcal{F}$ of metric spaces admits an embedding with specified distortion into a member of a family $\mathcal{G}$ of low dimensional normed spaces.
Formulated in such great generality, this type of question encompasses all of the investigations into metric dimension reduction that will be discussed in what follows, except that we will also examine analogous issues for embeddings with guarantees that are substantially  weaker (though natural and useful) than the "vanilla"  bi-Lipschitz requirement~\eqref{eq:def embedding}.

\begin{remark}Analogues of the above questions are very natural also when the target low-dimensional geometries are not necessarily normed spaces. Formulating meaningful goals in  such a setting is not as straightforward as it is for normed spaces, e.g.~requiring that the target space is a manifold of low topological dimension is not very useful, so one must impose geometric restrictions on the target manifold. As another example (to which we will briefly return later), one could ask about embeddings into spaces of probability measures that are equipped with a Wasserstein (transportation cost) metric, with control on the size of the underlying metric space. At present, issues of this type are largely an uncharted terrain whose exploration is likely to be interesting and useful.
\end{remark}

\subsection{Local theory and the Ribe program} Besides being motivated by the bi-Lipschitz embedding problem into $\R^k$, much of the inspiration for the studies that will be presented below comes from a major endeavour in metric geometry called the {\em Ribe program}. This is a large and active research area that has been (partially) surveyed in~\cite{Kal08,Nao12,Bal13,Ost13,God17}. It would be highly unrealistic to attempt to cover it comprehensively here, but we will next present a self-contained general introduction to the Ribe program  that is intended for non-experts, including   aspects that are relevant to the ensuing discussion on metric dimension reduction.

Martin Ribe was a mathematician who in the 1970s obtained a few beautiful results in functional analysis, prior to leaving mathematics. Among his achievements is a very influential rigidity theorem~\cite{Ribe76} which shows that the local linear theory of Banach spaces could in principle be described using only distances between points, and hence it could potentially apply to general metric spaces.

Before formulating the above statement precisely, it is instructive to consider a key example. The {\em infimal cotype}~\cite{Mau03} $q_X$ of a Banach space $(X,\|\cdot\|)$ is the infimum over those $q\in [2,\infty]$ for which\footnote{In addition to the standard $``O"$ notation, we will use throughout this article the following standard and convenient asymptotic notation. Given two quantities $Q,Q'>0$, the notations
$Q\lesssim Q'$ and $Q'\gtrsim Q$ mean that $Q\le CQ'$ for some
universal constant $C>0$. The notation $Q\asymp Q'$
stands for $(Q\lesssim Q') \wedge  (Q'\lesssim Q)$. If  we need to allow for dependence on parameters, we indicate this by subscripts. For example, in the presence of  auxiliary objects (e.g.~numbers or spaces) $\phi,\mathfrak{Z}$, the notation $Q\lesssim_{\phi,\mathfrak{Z}} Q'$ means that $Q\le C(\phi,\mathfrak{Z})Q' $, where $C(\phi,\mathfrak{Z}) >0$ is allowed to depend only on $\phi,\mathfrak{Z}$; similarly for the notations $Q\gtrsim_{\phi,\mathfrak{Z}} Q'$ and $Q\asymp_{\phi,\mathfrak{Z}} Q'$.}
\begin{equation}\label{def:cotype}
\forall\, n\in \N,\ \forall\, x_1,\ldots,x_n\in X,\qquad \sum_{i=1}^n \|x_i\|^2\lesssim_{X,q}
\frac{n^{1-\frac2{q}}}{2^n}\sum_{\e\in \{-1,1\}^n}\bigg\|\sum_{i=1}^n \e_ix_i\bigg\|^2.
\end{equation}
In the special case $x_1=\ldots=x_n=x\in X\setminus\{0\}$, the left hand side of~\eqref{def:cotype} is equal to $n\|x\|^2$ and by expanding the squares one computes that the right hand side of~\eqref{def:cotype} is equal to $n^{2(1-1/q)}\|x\|^2$. Hence~\eqref{def:cotype} necessitates that $q\ge 2$, which explains why we imposed this restriction on $q$ at the outset.  Note also that~\eqref{def:cotype} holds true in any Banach space when $q=\infty$. This is a quick consequence of the convexity of the mapping $x\mapsto \|x\|^2$, since for every $\e\in \{-1,1\}^n$ and $i \in \n$ we have
\begin{multline}\label{convex priove cotype}
\|x_i\|^2=\bigg\|\frac{(\e_1x_1+\ldots+\e_nx_n)+(-\e_1x_1-\ldots-\e_{i-1}x_{i-1}+\e_i x_i-\e_{i+1}x_{i+1}\ldots-\e_n x_n)}{2}\bigg\|^2\\
\le \frac{\|\e_1x_1+\ldots+\e_nx_n\|^2+\|-\e_1x_1-\ldots-\e_{i-1}x_{i-1}+\e_i x_i-\e_{i+1}x_{i+1}\ldots-\e_n x_n\|^2}{2}.
\end{multline}
By averaging~\eqref{convex priove cotype} over $\e\in \{-1,1\}^n$ and $i \in \n$ we see that~\eqref{def:cotype} holds if $q=\infty$.  So, one could view~\eqref{def:cotype} for $q<\infty$ as a requirement that the norm $\|\cdot\|:X\to [0,\infty)$ has a property that is asymptotically stronger than mere convexity. When $X=\ell_\infty$, this requirement  does not hold for any $q<\infty$, since if  $\{x_i\}_{i=1}^n$ are the first $n$ elements of the coordinate basis, then the left hand side of~\eqref{def:cotype} equals $n$ while its right hand side   equals $n^{1-1/q}$.

 Maurey and Pisier proved~\cite{MP73} that the above obstruction to having $q_X<\infty$ is actually the {\em only possible} such obstruction. Thus, by ruling out the presence of copies of $\{\ell_\infty^n\}_{n=1}^\infty$ in $X$ one immediately deduces the  "upgraded" (asymptotically stronger as $n\to \infty$) randomized convexity inequality~\eqref{def:cotype} for some $q<\infty$.

\begin{theorem}\label{thm:MP cotype} The following conditions are equivalent for every Banach space $(X,\|\cdot\|)$.
\begin{itemize}
\item There is no $\alpha\in [1,\infty)$ such that  $\ell_\infty^n$ is $\alpha$-isomorphic to a subspace of $X$ for every $n\in \N$.
\item $q_X<\infty$.
\end{itemize}
\end{theorem}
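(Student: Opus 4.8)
The plan is to observe that the two conditions in the statement are negations of one another: the negation of the first says that some finite $\alpha$ has $\ell_\infty^n$ $\alpha$-isomorphic to a subspace of $X$ for all $n$, i.e.\ that $\sup_n\cc_X(\ell_\infty^n)<\infty$. So it suffices to prove the two implications: (A) if $\sup_n\cc_X(\ell_\infty^n)<\infty$ then $q_X=\infty$; and (B) if $q_X=\infty$ then $\sup_n\cc_X(\ell_\infty^n)<\infty$.

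Implication (A) is the easy half and extends the computation already given in the excerpt for the coordinate basis of $\ell_\infty$. Suppose $\alpha\in[1,\infty)$ is such that for every $n$ there is a linear map $T_n\colon\ell_\infty^n\to X$ with $\|y\|_\infty\le\|T_ny\|\le\alpha\|y\|_\infty$ for all $y$, and put $x_i\eqdef T_n\ee_i$. Then $\|x_i\|\ge1$, while $\|\sum_{i=1}^n\e_ix_i\|=\|T_n(\sum_{i=1}^n\e_i\ee_i)\|\le\alpha$ for every $\e\in\{-1,1\}^n$; hence if $q_X<\infty$ then for any finite $q>q_X$ the inequality~\eqref{def:cotype} (which then holds) would give $n\le C(X,q)\,n^{1-2/q}\alpha^2$ for all $n$, which is absurd. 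Thus $q_X=\infty$.

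Implication (B) is the substance of the theorem; the plan is to localize the failure of cotype, use it to manufacture a controlled infinite basic sequence inside an ultrapower of $X$, and then apply Krivine's $\ell_p$-dichotomy while ruling out all finite exponents. Since~\eqref{def:cotype} is a condition on finite families, $q_X=\infty$ means that for every $q\in[2,\infty)$ and $K\ge1$ there are $n$ and unit vectors $x_1,\dots,x_n\in X$ with $\big(2^{-n}\sum_{\e\in\{-1,1\}^n}\|\sum_i\e_ix_i\|^2\big)^{1/2}<K^{-1}n^{1/q}$; invoking the contraction principle to pass to sub-collections, one deduces that the least Rademacher average attainable by $m$ unit vectors of $X$ grows more slowly than any fixed power of $m$. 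Passing to an ultrapower $X^{\mathcal U}$ over a non-principal ultrafilter on $\N$ — in which finite representability in $X$ becomes isometric containment, and which is itself finitely representable in $X$ — and then diagonalizing and passing to a spreading model, one extracts a seminormalized basic sequence $(\ee_j)_{j\ge1}$ in $X^{\mathcal U}$ that is $1$-spreading, (essentially) $1$-unconditional, and whose Rademacher averages still grow slowly. Krivine's theorem then supplies $p\in[1,\infty]$ with $\ell_p$ block-finitely representable in $(\ee_j)$; if $p<\infty$, then $(\ee_j)$ contains, for each $k$, normalized blocks $(1+o(1))$-equivalent to the unit-vector basis of $\ell_p^k$, whose Rademacher average is $\asymp k^{1/p}$, and comparing this — through $1$-unconditionality and $1$-spreading — with the slow growth of the Rademacher averages of initial segments of $(\ee_j)$ produces a contradiction. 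Hence $p=\infty$, so $\ell_\infty^k$ embeds $(1+o(1))$-isomorphically into $X^{\mathcal U}$, and therefore, say, $2$-isomorphically into $X$ for every $k$, giving $\sup_n\cc_X(\ell_\infty^n)\le2$.

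The main obstacle is the middle step of (B): turning the purely qualitative hypothesis ``$X$ has no finite cotype'' into a \emph{single} quantitatively controlled basic sequence — seminormalized, sufficiently unconditional, and with Rademacher averages small enough to defeat every finite Krivine exponent. Making those averages genuinely small (rather than merely sub-polynomial along a sparse set of lengths) and arranging enough symmetry to compare arbitrary blocks with initial segments is the delicate part, and is the technical core of the Maurey--Pisier argument~\cite{MP73}; alternatively, it can be carried out without invoking Krivine's theorem, by iterating the construction so as to build the copies of $\ell_\infty^n$ directly.
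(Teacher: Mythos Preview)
The paper does not prove this theorem; it is stated as background and attributed to Maurey and Pisier~\cite{MP73}, with the easy direction essentially contained in the computation with the coordinate basis of $\ell_\infty^n$ that precedes the statement.

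Your direction (A) is correct and matches that computation. Your outline of (B) via ultrapowers, spreading models, and Krivine's theorem is a legitimate route to the result, but it is not the original Maurey--Pisier argument: their 1973 proof builds the copies of $\ell_\infty^n$ directly by an iterated extraction, without invoking Krivine's theorem (which in any case appeared slightly later). The Krivine-based approach you sketch is closer to later textbook treatments. You correctly flag the genuine difficulty: from $q_X=\infty$ one gets, for each finite $q$, only that $\liminf_m \lambda_m(X)/m^{1/q}=0$ (where $\lambda_m(X)$ is the infimal Rademacher average of $m$ unit vectors), and turning this into a single seminormalized unconditional spreading sequence whose Rademacher averages are small enough to exclude \emph{every} finite Krivine exponent $p$ is exactly the step that requires real work. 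As written, your proposal is an accurate plan with the hard step honestly labelled as such, rather than a proof.
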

The (standard) terminology that is used in Theorem~\ref{thm:MP cotype} is that given $\alpha\in [1,\infty)$, a Banach space $(Y,\|\cdot\|_Y)$ is said to be $\alpha$-isomorphic to a subspace of a Banach space $(Z,\|\cdot\|_Z)$ if there is a linear operator $T:Y\to Z$ satisfying $\|y\|_Y\le \|Ty\|_Z\le \alpha\|y\|_Y$ for every $y\in Y$; this is the same as saying that $Y$ embeds into $Z$ with distortion $\alpha$ via an embedding that is a linear operator.

Suppose that $X$ and $Y$ are Banach spaces that are uniformly homeomorphic, i.e., there is a bijection $f:X\to Y$ such that both $f$ and $f^{-1}$ are uniformly continuous. By the aforementioned rigidity theorem of Ribe (which will be formulated below in full generality),  this implies in particular that $q_X=q_Y$.  So, despite the fact that  the requirement~\eqref{def:cotype} involves linear operations (summation and sign changes) that do not make sense in general metric spaces, it is in fact preserved by purely metric (quantitatively continuous, though potentially very complicated) deformations. Therefore, in principle~\eqref{def:cotype} could be characterized while only making reference  to  distances between  points in $X$. More generally, Ribe's rigidity theorem makes an analogous assertion for {\em any} isomorphic local linear property of a Banach space; we will define formally those properties in a  moment, but,  informally, they are requirements in the spirit of~\eqref{def:cotype} that depend only on the finite dimensional subspaces of the given Banach space  and are stable under linear isomorphisms that could potentially incur a large error.

The purely metric reformulation of~\eqref{def:cotype}  about which we speculated above is only suggested but not guaranteed by Ribe's theorem. From Ribe's statement we will only infer an indication that there might be a "hidden dictionary" for translating certain linear properties into metric properties, but we will  not be certain that any specific "entry" of this dictionary (e.g.~the entry for, say, "$q_X=\pi$") does in fact exist, and even if it does exist, we will not have an indication what it says. A hallmark of the Ribe program is that at its core it is a search for a family of analogies and definitions, rather than being a collection of specific conjectures. Once such analogies are made and the corresponding questions are formulated, their value is of course determined by the usefulness/depth of the phenomena that they uncover and the theorems that could be proved about them. Thus far, not all of the steps of this endeavour turned out to have a positive answer, but the vast majority did. This had major impact on the study of metric spaces that a priori have nothing to do with Banach spaces, such as graphs, manifolds, groups, and metrics that arise in algorithms (e.g.~as continuous relaxations).

The first written formulation of the plan to uncover a hidden dictionary between normed spaces and metric spaces is the following quote of Bourgain~\cite{Bourgain-trees}, a decade after Ribe's theorem appeared.

\blockquote{{ \small \em It follows in particular from Ribe's result [...] that the
notions from local theory of normed spaces are determined by the metric
structure of the space and thus have a purely metrical formulation. The next step
consists in studying these metrical concepts in general metric spaces in an
attempt to develop an analogue of the linear theory. A detailed exposition of this
program will appear in J. Lindenstrauss's forthcoming survey [...] in our "dictionary" linear operators are translated in Lipschitz maps, the operator norm by the Lipschitz constant of the map [...] The translations of
"Banach-Mazur distance" and "finite-representability" in linear theory are
immediate. At the roots of the local theory of normed spaces are properties such
as type, cotype, superreflexivity [...]  The
analogue of type in the geometry of metric spaces is [...] A simple metrical invariant replacing the notion of cotype
was not yet discovered.
\begin{flushright}{\small \em J.~Bourgain, 1986.}\end{flushright}}}
Unfortunately, the survey of Lindenstrauss that is  mentioned  above  never appeared. Nonetheless, Lindenstrauss had massive impact on this area as a leader who helped  set the course of the Ribe program, as well as due to the important theorems that he proved in this direction. In fact, the article~\cite{JL84} of Johnson and Lindenstrauss, where the aforementioned metric dimension reduction lemma was proved,  appeared a few years before~\cite{Bourgain-trees} and contained inspirational (even prophetic) ideas that had major subsequent impact on the Ribe program (including on Bourgain's works in this area). In the above quote, we removed the text describing "the analogue of type in the geometry of metric spaces" so as to not digress; it refers to the influential work of Bourgain, Milman and Wolfson~\cite{BMW} (see also the earlier work of Enflo~\cite{Enf78} and the subsequent work of Pisier~\cite{Pisier-type}). "Superreflexivity" was the main focus of~\cite{Bourgain-trees}, where the corresponding step of the Ribe program was completed (we will later discuss and use a refinement of this solution). An answer to the above mentioned question on cotype, which we will soon describe, was subsequently found by Mendel and the author~\cite{MN08-cotype}.
We will next  explain the terminology "finite-representability" in the above quote, so as to facilitate the ensuing discussion.

\subsubsection{Finite representability} The first decades of work on the geometry of Banach spaces focused almost entirely on  an inherently infinite dimensional theory.  This was governed by Banach's partial ordering of Banach spaces~\cite[Chapter~7]{Ban32}, which declares that a Banach space $X$ has "linear dimension" at most that of a Banach space $Y$ if there exists $\alpha\ge 1$ such that $X$ is $\alpha$-isomorphic to a subspace of $Y$. In a remarkable feat  of foresight, the following quote of Grothendieck~\cite{Gro53-dvo-conj} heralded the {\em local theory of Banach spaces}, by shifting attention to the geometry of the finite dimensional subspaces of a Banach space as a way to understand its global structure.

\blockquote{{ \small\em assouplissons la notion de "dimension lin\'eaire" de Banach, en disant que
l'espace norm\'e E a un {\em type lin\'eaire} inf\'erieur \`a celui d'un espace norm\'e F, si
on peut trouver un $M>0$ fixe tel que tout sous-espace de dimension finie $E_1$
de E soit isomorphe "\`a M pr\`es" \`a un sous-espace $F_1$ de $F$ (i.e. il existe une
application lin\'eaire biunivoque de $E_1$ sur $F_1$ , de norme $\le 1$, dont l\'application inverse a une norme $\le 1+M$).

\begin{flushright}{\small \em A.~Grothendieck, 1953.}\end{flushright}}}
Grothendieck's work in the 1950s exhibited astounding (technical and conceptual) ingenuity and insight that go well-beyond merely defining a key concept, as he did above. In particular, in~\cite{Gro53-dvo-conj} he conjectured an important phenomenon\footnote{This phenomenon was situated within the Ribe program by Bourgain, Figiel and Milman~\cite{BFM86}, and as such it eventually had ramifications to a well-studied (algorithmic) form of metric dimension reduction through its use to "compress" a finite metric space into a data structure called an {\em approximate distance oracle}~\cite{TZ05}. To date, the only known way to construct such a data structure with constant query time (and even, by now, conjecturally sharp approximation factor~\cite{Che14}) is via the nonlinear Dvoretzky theorem of~\cite{MN07-ramsey}, and thus through the Ribe program. For lack of space, we will not discuss this direction here; see the survey~\cite{Nao12}.} that was later proved by Dvoretzky~\cite{Dvo60} (see the discussion in~\cite{Sch17}), and his contributions in~\cite{Gro53} were transformative (e.g.~\cite{LP68,DFS08,KN12,Pis12}). The above definition set the stage for decades of (still ongoing) work on the local theory of Banach spaces which had  major impact on a wide range of mathematical  areas.

The above "softening" of Banach's "linear dimension" is called today {\em finite representability}, following the terminology  of James~\cite{Jam72} (and his important contributions on this topic). Given $\alpha\in [1,\infty)$, a Banach space $X$ is said to be $\alpha$-finitely representable in a Banach space $Y$ if for any $\beta>\alpha$, any finite dimensional subspace of $X$ is $\beta$-isomorphic to a subspace of $Y$ (in the notation of the above quote, $\beta=1+M$); $X$ is  (crudely) finitely representable in $Y$  if there is some $\alpha\in [1,\infty)$ such that $X$ is $\alpha$-finitely representable in $Y$. This means that the finite dimensional subspaces of $X$ are not very different from subspaces of $Y$; if each of $X$ and $Y$ is finitely representable in the other, then this should be viewed as saying that $X$ and $Y$ have the same finite dimensional subspaces (up to a global allowable error that does not depend on the finite dimensional subspace in question). As an important example of the "taming power" of this definition, the {\em principle of local reflexivity} of Lindenstrauss and Rosenthal~\cite{LR69} asserts that even though sometimes $X^{**}\neq X$, it is always true that $X^{**}$ is $1$-finitely representable in $X$. Thus, while in infinite dimensions $X^{**}$ can be much larger than $X$, passing to the bidual cannot produce substantially new finite dimensional structures. The aforementioned Dvoretzky theorem~\cite{Dvo60} asserts that $\ell_2$ is $1$-finitely representable in any infinite dimensional Banach space.  As another example of a landmark theorem on finite representability, Maurey and Pisier strengthened~\cite{MP-type-cotype} Theorem~\ref{thm:MP cotype} by showing that $\ell_{q_X}$ is $1$-finitely representable in any infinite dimensional Banach space $X$.

Isomorphic local linear properties of Banach spaces are defined to be those properties that are preserved under finite representability. As an example, one should keep in mind  finitary inequalities such as the cotype condition~\eqref{def:cotype}. The formal statement of Ribe's rigidity theorem~\cite{Ribe76} is
\begin{theorem}\label{thm:ribe} Uniformly homeomorphic Banach spaces $X$ and $Y$ are finitely representable in each other.
\end{theorem}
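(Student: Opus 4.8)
The plan is to turn the nonlinear hypothesis into a linear statement about the finite-dimensional subspaces of $X$: first extract the ``infinitesimal at infinity'' part of the homeomorphism, which lives in an ultrapower of $Y$ and is a genuine bi-Lipschitz embedding of the whole space $X$, and then linearise it. Since an ultrapower $Y_{\mathcal U}$ is itself finitely representable in $Y$, it is enough to produce a constant $C$, depending only on a modulus of uniform continuity of $f$ and of $f^{-1}$, such that every finite-dimensional subspace $E\subset X$ is $C$-isomorphic to a subspace of some ultrapower $Y_{\mathcal U}$ (the reverse assertion then follows by applying everything to $f^{-1}$). The first classical ingredient is that $f$ and $f^{-1}$ are \emph{Lipschitz for large distances}: if $f$ has modulus of continuity $\omega$, then breaking the segment from $a$ to $b$ into $\lceil\|a-b\|\rceil$ pieces of length at most $1$ shows $\|f(a)-f(b)\|\le(\omega(1)+1)\|a-b\|$ whenever $\|a-b\|\ge1$. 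Fix $L<\infty$ that serves as such a large-distance Lipschitz constant for both $f$ and $f^{-1}$.

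\emph{Blow-up at infinity.} Fix a non-principal ultrafilter $\mathcal U$ on $\N$ and, for $k\in\N$, set $f_k(x)=\tfrac1k f(kx)$ and $g_k(y)=\tfrac1k f^{-1}(ky)$, so that $g_k\circ f_k=\mathrm{id}_X$. For any fixed $x\ne x'$ and all large $k$ one has $k\|x-x'\|\ge1$, hence $\|f_k(x)-f_k(x')\|\le L\|x-x'\|$; moreover $\{f_k-f_k(0)\}_k$ is uniformly bounded on bounded sets. Consequently the ultralimit $F(x)\eqdef\lim_{k\to\mathcal U}\big(f_k(x)-f_k(0)\big)$ defines a map $F\colon X\to Y_{\mathcal U}$ which, crucially, is \emph{honestly} $L$-Lipschitz on \emph{all} of $X$: the scales on which $f$ failed to be Lipschitz have been rescaled down towards $0$ and are invisible to the limit. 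The analogous construction applied to $\{g_k\}_k$ yields an $L$-Lipschitz map $G\colon Y_{\mathcal U}\to X_{\mathcal U}$ with $G\circ F=\mathrm{id}_X$ (viewing $X$ inside $X_{\mathcal U}$). Thus $F$ is a bi-Lipschitz embedding, with distortion at most $L^2$, of the \emph{entire} Banach space $X$, and it admits a globally defined Lipschitz left inverse.

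\emph{Linearisation.} It remains to pass from the bi-Lipschitz map $F$ to a \emph{linear} embedding of $E$ into some ultrapower of $Y$ with distortion bounded in terms of $L$ alone. The natural approach is differentiation. Restrict attention to a separable subspace of $X$ containing $E$, and view the target as sitting inside a dual Banach space (e.g.\ its own bidual). By the differentiation theory for Lipschitz maps into dual spaces — which, unlike Rademacher's theorem, does not require the Radon--Nikod\'ym property — one can find a point $x_0$ at which $F$ is weak${}^{*}$-G\^ateaux differentiable with a \emph{linear} derivative; let $T=D^{w^*}F(x_0)|_E\colon E\to Y_{\mathcal U}$, so that $\|T\|\le L$ by weak${}^{*}$ lower semicontinuity of the norm. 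To see that $T$ cannot collapse a direction, exploit that $G\circ F$ is \emph{exactly} the identity: its difference quotients along $E$ are literally constant, so differentiating $G$ at $F(x_0)$ and using the chain rule gives $DG(F(x_0))\circ T=\mathrm{id}_E$, whence $\|v\|\le L\,\|Tv\|$ for all $v\in E$. This exhibits $E$ as $L^2$-isomorphic to a subspace of $Y_{\mathcal U}$; as $E$ was arbitrary, $X$ is finitely representable in $Y$, and symmetry completes the argument.

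\emph{Where the difficulty lies.} The linearisation step is the only genuinely hard one. The obstruction is that $Y_{\mathcal U}$ typically fails the Radon--Nikod\'ym property — a bi-Lipschitz curve into $L_1$ can be nowhere differentiable — so Rademacher's theorem is unavailable and one is left with a \emph{weak${}^{*}$} derivative, which a priori is only norm-nonincreasing and hence could in principle annihilate a direction of $E$. Ruling this out is precisely the substance of the theorem: one must use in an essential way both that $F$ is the blow-up of a homeomorphism of the \emph{whole} space (which is what makes the left inverse $G$ globally defined) and that $G\circ F$ equals the identity \emph{on the nose}, and one must treat carefully the differentiability of $G$ at an individual point of the non-separable space $Y_{\mathcal U}$. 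Ribe's original proof~\cite{Ribe76} reaches the same conclusion by an entirely different, combinatorial route, improving the relevant estimates by iteration over a geometric progression of scales. Finally, optimising the choice of base point and scale in the linearisation sharpens the distortion from $L^2$ to $1+\e$ for every $\e>0$, so that uniformly homeomorphic Banach spaces are in fact $1$-finitely representable in each other; the cruder version above is all that is needed for the stated theorem.
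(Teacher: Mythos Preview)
The paper does not prove this theorem; it is stated as Ribe's classical result with a citation to~\cite{Ribe76} and serves purely as background for the survey. So there is no proof in the paper to compare your argument against.

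That said, your outline follows the well-known Heinrich--Mankiewicz route (ultrapower plus weak${}^{*}$-differentiation), which the paper itself alludes to a few lines after the theorem when discussing the equivalence of linear and metric finite representability. The large-distance Lipschitz estimate and the rescaling-into-an-ultrapower step are correct and standard. The gap is exactly where you yourself locate it: the invocation of a chain rule. You have arranged for $F$ to be weak${}^{*}$-G\^ateaux differentiable at a point $x_0$ of your choosing, but nothing guarantees that $G$ is differentiable at the \emph{particular} point $F(x_0)\in Y_{\mathcal U}$, nor that a chain rule holds for weak${}^{*}$-G\^ateaux derivatives of Lipschitz maps into non-RNP targets; and one cannot simply read off $\|Tv\|\ge L^{-1}\|v\|$ from the bi-Lipschitz lower bound on $F$, since weak${}^{*}$ limits may strictly decrease norms. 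The genuine Heinrich--Mankiewicz argument fills this gap by a more delicate simultaneous-differentiability device rather than a literal chain rule. Your sketch has correctly isolated the ingredients and the crux, but the linearisation paragraph as written is a roadmap rather than a proof.
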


The "immediate translation" of finite representability in the  above quoted text from~\cite{Bourgain-trees} is to define that for $\alpha\in [1,\infty)$  a metric space $\MM$ is $\alpha$-finitely representable in a metric space $\NN$ if  $\cc_\NN(\sub)\le \alpha$ for {\em every} finite subset $\sub\subset \MM$. By doing so one does not induce any terminological conflict, because one can show that a Banach space $X$ is (linearly) $\alpha$-finitely representable in a Banach space $Y$ if and only if $X$ is $\alpha$-finitely representable in $Y$ when $X$ and $Y$ are viewed as metric spaces. This statement follows from "soft" reasoning that is explained in~\cite{GNS12} (relying on a $w^*$-differentiation argument of Heinrich and Mankiewicz~\cite{HM82} as well as properties of ultrapowers of Banach spaces and the aforementioned principle of local reflexivity), though it also follows from Ribe's original proof of Theorem~\ref{thm:ribe} in~\cite{Ribe76}, and a  different (quantitative) approach to this statement was obtained in~\cite{Bou87}.

\subsubsection{Universality and dichotomies} Say that a metric space $\MM$ is (finitarily)  universal if there is $\alpha\ge 1$ such that $\cc_\MM(\mathscr{F})\le \alpha$ for {\em every} finite metric space $\mathscr{F}$. By~\cite{Mat92-ramsey,MN08-cotype}, this requirement holds for some $\alpha\ge 1$ if and only if it holds for $\alpha=1$ (almost-isometric embeddings), so the notion of universality turns out   to be insensitive to the underlying distortion bound. Since for every $n\in \N$, any $n$-point metric space $(\mathscr{F}=\{x_1,\ldots,x_n\},d_\mathscr{F})$ is isometric to a subset of $\ell_\infty$ via the embedding  $(x\in \mathscr{F})\mapsto (d_\mathscr{F}(x,x_i))_{i=1}^n$ (Fr\'echet embedding~\cite{Fre10}),  a different way to state the notion of universality is to say that $\MM$ is universal if $\ell_\infty$ is finitely representable in $\MM$.

Determining whether a given metric space is universal is a subtle matter. By Theorem~\ref{thm:MP cotype}, for a Banach space $X$ this is the same as asking to determine whether $q_X=\infty$. Such questions include major difficult issues in functional analysis that have been studied for a long time; as notable examples, see the works~\cite{Pel77,Bou84} on the (non)universality of the dual of the disc algebra, and the characterization~\cite{BM85} of Sidon subsets of the dual of a compact Abelian group $G$ in terms of the universality of their span is the space of continuous functions on $G$. Here are three famous concrete situations in which it is  unknown if a certain specific space is universal.

\begin{question}[Pisier's dichotomy problem] For each $n\in \N$ let $X_n$ be an arbitrary linear subspace of $\ell_\infty^n$ satisfying
\begin{equation}\label{eq:Pisier dichotomy}
\limsup_{n\to \infty}\frac{\dim(X_n)}{\log n}=\infty.
\end{equation}
Pisier conjectured~\cite{Pis81} that~\eqref{eq:Pisier dichotomy} forces the $\ell_2$ (Pythagorean)  direct sum $(X_1\oplus X_2\oplus\ldots)_2$ to be universal. By duality, a positive answer to this question is  equivalent to the following appealing statement on the geometry of polytopes. For $n\in \N$, suppose that $K\subset \R^n$ is an origin-symmetric polytope with $e^{o(n)}$ faces. Then, for each $\d>0$ there is $k=k(n,\d)\in \n$ with $\lim_{n\to \infty} k(n,\d)=\infty$, a subspace $F=F(n,\d)$ of $\R^n$ with $\dim(F)=k$ and a parallelepiped $Q\subset F$ (thus, $Q$ is an image of $[-1,1]^{k}$ under an invertible  linear transformation) such that $Q\subset K\cap F\subset (1+\d)Q$. Hence, a positive answer to Pisier's dichotomy conjecture  implies that every centrally symmetric polytope with $e^{o(n)}$ faces has a central section of dimension $k$ going to $\infty$ (as a function of the specific $o(n)$ dependence in the underlying assumption), which is $(1+\d)$-close to a polytope (a parallelepiped) with only $O(k)$ faces. The use of "dichotomy" in the name of this conjecture is due to the fact that this conclusion does not hold with $o(n)$ replaced by $O(n)$, as seen by considering polytopes that approximate the Euclidean ball. More generally, by the "isomorphic version" of the Dvoretzky theorem due to Milman and Schechtman~\cite{MS95}, for every  sequence of normed spaces $\{Y_n\}_{n=1}^\infty$ with $\dim(Y_n)=n$ (not only $Y_n=\ell_\infty^n$, which is the case of interest above), and every $k(n)\in\n$ with $k(n)=O(\log n)$, there is a subspace $X_n\subset Y_n$ with $\dim(X_n)=k(n)$ such that the space $(X_1\oplus X_2\oplus\ldots)_2$ is isomorphic to a Hilbert space, and hence in particular it is not universal. The best-known bound in Pisier's dichotomy conjecture appears in the forthcoming work of Schechtman and Tomczak-Jaegermann~\cite{ST18}, where it is shown that the desired conclusion does indeed hold true  if~\eqref{eq:Pisier dichotomy} is replaced by the stronger assumption $\limsup_{n\to\infty} \dim(X_n)/((\log n)^2(\log\log n)^2)=\infty$; this is achieved~\cite{ST18} by building on ideas of Bourgain~\cite{Bou84-pisier}, who obtained the same conclusion if $\limsup_{n\to\infty} \dim(X_n)/(\log n)^4>0$. Thus, due to~\cite{ST18} the above statement  about almost-parallelepiped  central sections of centrally symmetric polytopes does hold true if the initial polytope is assumed to have $\exp(o(\sqrt{n}/\log n))$ faces.
\end{question}

Prior to stating the next question on universality (which, apart from its intrinsic interest, plays a role in the ensuing discussion on metric dimension reduction), we need to very briefly recall some basic notation and terminology from optimal transport (see e.g.~\cite{AGS08,Vil09}). Suppose that $(\MM,d_\MM)$ is a separable complete metric space and fix $p\in [1,\infty)$. Denote by $\mathsf{P}_1(\MM)$ the set of all Borel probability measures $\mu$ on $\MM$ of finite $p$'th moment, i.e., those Borel probability measure $\mu$ on $\MM$ for which $\int_\MM d_\MM(x,y)^p\ud \mu(y)<\infty$ for all $x\in \MM$. A probability measure $\pi\in \mathsf{P}_p(\MM\times \MM)$ is a called a coupling of $\mu,\nu\in \mathsf{P}_p(\MM)$ if $\mu(A)=\pi(A\times\MM)$ and $\nu(A)=\pi(\MM\times A)$ for every Borel measurable subset $A\subset \MM$. The Wasserstein-$p$ distance between $\mu,\nu\in \mathsf{P}_p(\MM)$, denoted $\W_p(\mu,\nu)$, is defined to be the infimum of $(\iint_{\MM\times \MM} d_\MM(x,y)^p\ud \pi(x,y))^{1/p}$ over all couplings $\pi$ of $\mu,\nu$. Below,  $\mathsf{P}_p(\MM)$ is always assumed to be endowed with the metric $\W_p$. The following question is from~\cite{Bourgain-trees}.

\begin{question}[Bourgain's universality problem]\label{Q:bourgain universality} Is $\mathsf{P}_1(\R^2)$ universal? This formulation may  seem different from the way it is asked in~\cite{Bourgain-trees}, but, as explained~\cite[Section~1.5]{ANN15}, it is equivalent to it. More generally, is $\mathsf{P}_1(\R^k)$ universal for some integer $k\ge 2$ (it is simple to see that $\mathsf{P}_1(\R)$ is not universal)? In~\cite{Bourgain-trees} it was proved that $\mathsf{P}_1(\ell_1)$ is universal (see also the exposition in~\cite{Ost13}). So, it is important here that the underlying space is finite dimensional, though to the best of our knowledge it is also unknown whether  $\mathsf{P}_1(\ell_2)$ is universal, or, for that matter, if $\mathsf{P}_1(\ell_p)$ is universal for any  $p\in (1,\infty)$. See~\cite{ANN15} for a (sharp) universality property of $\mathsf{P}_p(\R^3)$ if $p\in (1,2]$.
\end{question}

For the following open question about universality (which will also play a role in the subsequent discussion on metric dimension reduction), recall the notion~\cite{Gro52,Gro55} of {\em projective tensor product} of Banach spaces. Given two Banach spaces $(X,\|\cdot\|_X)$ and $(Y,\|\cdot\|_Y)$, their projective tensor product $X\tp Y$ is the completion of their algebraic tensor product  $X\otimes Y$ under the norm whose unit ball is the convex hull of the simple tensors of vectors of norm at most $1$, i.e., the convex hull of the set $\{x\otimes y\in X\otimes Y:\ \|x\|_X,\|y\|_Y\le 1\}$. For example, $\ell_1\tp X$ can be naturally identified with $\ell_1(X)$, and $\ell_2\tp \ell_2$ can be naturally identified with Schatten--von Neumann trace class $\S_1$ (recall that for $p\in [1,\infty]$, the Schatten--von Neumann trace class $\S_p$  is the Banach space~\cite{vN37}  of those compact linear operators $T:\ell_2\to \ell_2$ for which $\|T\|_{\S_p}=(\sum_{j=1}^\infty\sss_j(T)^p)^{1/p}<\infty$, where $\{\sss_j(T)\}_{j=1}$  are the singular values of $T$); see the monograph~\cite{Rya02} for much more on tensor products of Banach spaces.

It is a longstanding endeavour in Banach space theory to understand which properties of Banach spaces are preserved under projective tensor products; see~\cite{DFS03,BNR12} and the references therein for more on this research direction. Deep work of Pisier~\cite{Pis83} shows that there exist two Banach spaces $X,Y$ that are not universal (even of cotype $2$) such that $X\tp Y$ is universal. The following question was posed by Pisier in~\cite{Pis-92-clapem}.

\begin{question}[universality of projective tensor products] Suppose that $p\in (1,2)$. Is $\ell_p\tp \ell_2$ universal? We restricted the range of $p$ here because it is simple to check that $\ell_1\tp \ell_2\cong\ell_1(\ell_2)$ is not universal, Tomczak-Jaegermann~\cite{Tom74} proved that $\ell_2\otimes \ell_2\cong \S_1$ is not universal, and Pisier proved~\cite{Pis92} that $\ell_p\tp \ell_q$ is not universal when $p,q\in [2,\infty)$. It was also asked in~\cite{Pis-92-clapem} if $\ell_2\tp\ell_2\tp\ell_2$ is universal. The best currently available result in this direction (which will be used below) is that, using the local theory of Banach spaces and recent work on locally decodable codes, it was shown in~\cite{BNR12} that if $a,b,c\in (1,\infty)$ satisfy $\frac{1}{a}+\frac{1}{b}+\frac{1}{c}\le 1$, then $\ell_a\tp\ell_b\tp\ell_c$ is universal.
\end{question}

The following theorem is a union of several results of~\cite{MN08-cotype}.

\begin{theorem}\label{thm:union MN} The following conditions are equivalent for a metric space $(\MM,d)$.
\begin{itemize}
\item $\MM$ is not universal.
\item There exists $q=q(\MM)\in (0,\infty)$  with the following property. For every $n\in \N$ there is $m=m(n,\MM,q)\in \N$ such that any collection of points $\{x_w\}_{w\in \Z_{2m}^n}$ in $\MM$ satisfies the inequality
\begin{equation}\label{eq:metric cotype}
 \sum_{i=1}^n \sum_{w\in \Z_{2m}^n} \frac{d(x_{w+me_i},x_w)^2}{m^2}\lesssim_{X,q} \frac{n^{1-\frac{2}{q}}}{3^n}\sum_{\e\in \{-1,0,1\}^n}\sum_{w\in \Z_{2m}^n}d(x_{w+\e},x)^2.
\end{equation}
Here $e_1,\ldots,e_n$ are the standard basis of $\R^n$ and addition (in the indices) is modulo $2m$.
\item There is $\theta(\MM)\in (0,\infty)$ with the following property.  For arbitrarily large $n\in \N$ there exists an $n$-point metric space $(\mathcal{B}_n,d_n)$ such that
    \begin{equation}
    \cc_\MM(\mathcal{B}_n)\gtrsim_\MM (\log n)^{\theta(\MM)}.
    \end{equation}
\end{itemize}
Moreover, if we assume that $\MM$ is a Banach space rather than an arbitrary metric space, then for any $q\in (0,\infty]$ the validity of~\eqref{eq:metric cotype} (as stated, i.e., for each $n\in \N$ there is $m\in \N$ for which~\eqref{eq:metric cotype} holds for every configuration $\{x_w\}_{w\in \Z_{2m}^n}$ of points in $\MM$) is equivalent to~\eqref{def:cotype}, and hence, in particular,  the infimum over those $q\in (0,\infty]$ for which~\eqref{eq:metric cotype} holds true is equal to the infimal cotype $q_\MM$ of $\MM$.
\end{theorem}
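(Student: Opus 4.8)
The plan is to prove the equivalence by establishing the cycle of implications $(1)\Rightarrow(2)\Rightarrow(3)\Rightarrow(1)$ among the three listed conditions (numbered in order), with the ``moreover'' assertion extracted from the Banach-space part of $(1)\Rightarrow(2)$. Of these, $(3)\Rightarrow(1)$ is immediate: were $\MM$ universal, a single $\alpha\ge1$ would satisfy $\cc_\MM(\mathscr{F})\le\alpha$ for \emph{every} finite metric space $\mathscr{F}$, contradicting $\cc_\MM(\mathcal{B}_n)\gtrsim_\MM(\log n)^{\theta(\MM)}\to\infty$.

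For $(2)\Rightarrow(3)$ --- and, as a byproduct, for the easy implication $(2)\Rightarrow(1)$ --- the right test objects are the ``$\ell_\infty$ grids'' $G_k:=(\Z_{2m}^k,\|\cdot\|_\infty)$, where $\Z_{2m}^k$ carries the quotient metric of $(\R^k,\|\cdot\|_\infty)$ and $m=m(k,\MM,q)$ is the modulus furnished by \eqref{eq:metric cotype}; note $|G_k|=(2m)^k$. If $f\colon G_k\to\MM$ has distortion $D$ with scaling factor $\tau$, apply \eqref{eq:metric cotype} to $\{f(w)\}_{w\in\Z_{2m}^k}$: since the distance from $0$ to $m$ in $\Z_{2m}$ is exactly $m$, every summand on the left-hand side is $\ge\tau^2$, so the left-hand side is $\ge\tau^2k(2m)^k$, whereas $\|\e\|_\infty\in\{0,1\}$ for $\e\in\{-1,0,1\}^k$ forces the right-hand side to be $\le D^2\tau^2k^{1-2/q}(2m)^k$. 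Thus $D\gtrsim_{\MM,q}k^{1/q}$, and since $f$ was arbitrary, $\cc_\MM(G_k)\gtrsim_{\MM,q}k^{1/q}$; in particular $\sup_k\cc_\MM(G_k)=\infty$, which is $(2)\Rightarrow(1)$. For $(3)$ one uses in addition that the modulus may be taken polynomial in the dimension, $m(k)\le k^{C}$ with $C=C(\MM,q)$ --- a quantitative by-product of the proof of $(1)\Rightarrow(2)$ below (invoked here via $(2)\Rightarrow(1)$) --- so that $\log|G_k|=k\log(2m(k))\lesssim_{\MM,q}k\log k$, whence $k\gtrsim_{\MM,q}\sqrt{\log|G_k|}$ for large $k$, and therefore $\cc_\MM(G_k)\gtrsim_\MM(\log|G_k|)^{\theta(\MM)}$ with $\theta(\MM):=1/(2q)$; as $k$ ranges over $\N$, the spaces $\mathcal{B}_n:=G_k$ of cardinality $n=(2m(k))^k$ then witness $(3)$.

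The substance is $(1)\Rightarrow(2)$, and I would first settle the case that $\MM=X$ is a Banach space, which also yields the ``moreover'' claim. If $X$ is not universal then $q_X<\infty$ by Theorem~\ref{thm:MP cotype}, so \eqref{def:cotype} holds for every $q>q_X$; it therefore suffices to prove that, for Banach spaces, \eqref{def:cotype} at exponent $q$ is \emph{equivalent} to \eqref{eq:metric cotype} at the same exponent $q$ (with modulus $m\asymp k^{1/q}$), which simultaneously identifies the infimal metric cotype with $q_X$. The direction $\eqref{eq:metric cotype}\Rightarrow\eqref{def:cotype}$ is the softer half: given $x_1,\ldots,x_k\in X$, feed into \eqref{eq:metric cotype} the ``affine'' configuration $x_w:=\sum_{i=1}^k\phi(w_i)x_i$, where $\phi\colon\Z_{2m}\to\R$ sends $t$ to its distance from $0$ in $\Z_{2m}$ (a $1$-Lipschitz triangle wave); then $\|x_{w+me_i}-x_w\|$ has average (over $w$) of order $m\|x_i\|$, while $x_{w+\e}-x_w=\sum_i\e_i\sigma_i(w)x_i$ with signs $\sigma_i(w)\in\{-1,1\}$ for all $w$ outside a set of relative size $O(k/m)$, so after averaging over $w$ and invoking the contraction principle (averages over $\{-1,0,1\}^k$ are dominated by averages over $\{-1,1\}^k$) the inequality \eqref{eq:metric cotype} reduces, up to lower-order terms, to \eqref{def:cotype} (after a routine argument permitting $m$ to be taken large relative to $k$). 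The reverse implication $\eqref{def:cotype}\Rightarrow\eqref{eq:metric cotype}$ is the genuine technical core of~\cite{MN08-cotype}: on the Fourier side of $\Z_{2m}^k$ the shift $w\mapsto w+me_i$ multiplies the character indexed by $j\in\Z_{2m}^k$ by $(-1)^{j_i}$ and averaging over $\e\in\{-1,0,1\}^k$ acts by the multiplier $\prod_{i=1}^k\frac{1+2\cos(\pi j_i/m)}{3}$ (near $1$ at low frequencies, small at high ones); one smooths $w\mapsto x_w$ by iterating this averaging operator, telescopes $x_{w+me_i}-x_w$ into single steps so as to apply \eqref{def:cotype} to the resulting Rademacher sums, and verifies that $m\asymp k^{1/q}$ is precisely the scale at which the gain from cotype overcomes the error introduced by smoothing. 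This Fourier-analytic estimate is the main obstacle, and it is also what supplies the polynomial modulus used in the preceding paragraph.

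Finally, for a general metric space $\MM$ that is not universal, the task in $(1)\Rightarrow(2)$ is to reduce to the Banach-space case just treated. The useful structural point is that \eqref{eq:metric cotype}, being a finitary inequality among pairwise distances, is inherited by metric subspaces; hence it would suffice to embed $\MM$ isometrically into \emph{some} non-universal Banach space. The natural candidate is the Lipschitz-free (Arens--Eells) space $\mathcal{F}(\MM)$, into which $\MM$ embeds isometrically, but one must check that universality is not created in passing from $\MM$ to $\mathcal{F}(\MM)$, which requires an argument. Otherwise one proceeds by contraposition: if \eqref{eq:metric cotype} were to fail for $\MM$ at \emph{every} exponent $q\in(0,\infty)$ and every modulus, the resulting violating configurations $\{x_w\}_{w\in\Z_{2m}^k}\subset\MM$, suitably rescaled, could be assembled --- reversing the computation of the second paragraph --- into bi-Lipschitz embeddings of the grids $(\{1,\ldots,m\}^k,\|\cdot\|_\infty)$ into $\MM$ with distortion bounded independently of $k$; since every finite metric space embeds almost isometrically into such a grid via the Fr\'echet embedding~\cite{Fre10}, this would make $\MM$ universal, contradicting $(1)$. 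Either route --- showing that $\mathcal{F}(\MM)$ stays non-universal, or extracting uniform grid embeddings from the simultaneous failure of metric cotype at all exponents --- together with the Fourier-analytic core above, is where essentially all the difficulty of the theorem resides.
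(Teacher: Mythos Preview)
The paper does not prove Theorem~\ref{thm:union MN}; it presents it as ``a union of several results of~\cite{MN08-cotype}'' with no further argument. So the comparison is between your sketch and the actual proof in~\cite{MN08-cotype}.

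Your roadmap is sound for the Banach-space portion and for the easy implications. The cycle structure, the use of $\ell_\infty$-grids as test spaces, the direction $\eqref{eq:metric cotype}\Rightarrow\eqref{def:cotype}$ via the triangle-wave configuration, and the identification of the Fourier-analytic smoothing argument as the technical core of $\eqref{def:cotype}\Rightarrow\eqref{eq:metric cotype}$ all match the approach of~\cite{MN08-cotype}. The ``moreover'' clause indeed drops out of that analysis.

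There is, however, a genuine gap in the general metric-space case. Your route~(a) through the Lipschitz-free space $\mathcal{F}(\MM)$ is not how~\cite{MN08-cotype} proceeds, and it is not known to work: whether non-universality of $\MM$ forces non-universality of $\mathcal{F}(\MM)$ is at least as hard as Bourgain's open Question~\ref{Q:bourgain universality} (since $\mathsf{P}_1(\R^2)$ sits isometrically inside $\mathcal{F}(\R^2)$, universality of the former would make $\mathcal{F}(\R^2)$ universal while $\R^2$ is not). Your route~(b) is the correct one, but your sketch does not deliver the quantitative input you need. You invoke a polynomial bound $m(k)\le k^{C}$ as ``a quantitative by-product of the proof of $(1)\Rightarrow(2)$,'' and then use it to pass from $\cc_\MM(G_k)\gtrsim k^{1/q}$ to $(\log|G_k|)^{\theta}$. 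But the contrapositive you outline---failure of~\eqref{eq:metric cotype} at all $q$ and all moduli yields uniformly embedded grids---does not by itself control $m$; a priori the modulus obtained could be super-polynomial, in which case $\log|G_k|$ swamps $k$ and condition~(3) does not follow. What~\cite{MN08-cotype} actually proves is a self-improvement lemma: non-embeddability of $(\Z_{2m}^{n_0},\|\cdot\|_\infty)$ into $\MM$ with distortion $\le\alpha_0$, for \emph{some} fixed $n_0,\alpha_0$ and \emph{all} $m$, already forces metric cotype at an explicit exponent $q=q(n_0,\alpha_0)$ with an explicit polynomial modulus $m(k)$. That quantitative bootstrapping---from a single non-embedded grid to metric cotype with controlled $m$---is the step you are missing, and without it your cycle $(1)\Rightarrow(2)\Rightarrow(3)$ does not close for general metric spaces.
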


The final sentence of Theorem~\ref{thm:union MN} is an example of a successful step in the Ribe program, because it reformulates the local linear invariant~\eqref{def:cotype} in purely metric terms, namely as the quadratic geometric inequality~\eqref{eq:metric cotype} that imposes a  restriction on the behavior of pairwise distances within any configuration of $(2m)^n$ points $\{x_w\}_{w\in \Z_{2m}^n}$ (indexed by the discrete torus $\Z_{2m}^n$) in the given Banach space.

With this at hand, one can consider~\eqref{eq:metric cotype} to be a property of a  metric space, while initially~\eqref{def:cotype}  made sense only for a normed space. As in the case of~\eqref{def:cotype}, if $q=\infty$, then~\eqref{eq:metric cotype} holds in any metric space $(\MM,d_\MM)$ (for any $m\in \Z$,  with the implicit constant in~\eqref{eq:metric cotype}  being universal);  by its general nature, such a statement must of course be nothing more than a formal consequence of the triangle inequality, as carried  out in~\cite{MN08-cotype}. So, the validity of~\eqref{eq:metric cotype} for $q<\infty$ could be viewed as an asymptotic (randomized) enhancement of the triangle inequality in $(\MM,d_\MM)$; by considering the the canonical realization of  $\Z_{2m}^n$ in $\C^n$, namely the points $\{(\exp(\pi iw_1/m),\ldots, \exp(\pi iw_n/m))\}_{w\in \Z_{{2m}}^n}$, equipped with the metric inherited from $\ell_\infty^n(\C)$, one checks that not every metric space satisfies this requirement. The equivalence of the first two bullet points in Theorem~\ref{thm:union MN} shows that once one knows that a metric space  is not universal, one deduces the validity of such an enhancement of the triangle inequality. This is an analogue of Theorem~\ref{thm:MP cotype} of Maurey and Pisier for general metric spaces.

The equivalence of the first and third bullet points in Theorem~\ref{thm:union MN} yields the following dichotomy. If one finds a finite metric space $\mathscr{F}$ such that $\cc_{\mathscr{F}}(\MM)>1$, then there are arbitrary large finite metric spaces whose minimal distortion in $\MM$ is at least a fixed positive power (depending on $\MM$) of the logarithm their cardinality. Hence, for example, if every $n$-point metric space embeds into $\MM$ with distortion $O(\log\log n)$, then actually for any $\d>0$, every finite metric space embeds into $\MM$ with distortion $1+\d$. See~\cite{Men09,MN11-arxiv,MN13,ANN15} for more on metric dichotomies of this nature, as well as a quite delicate counterexample~\cite{MN13} for a natural variant for trees (originally asked by C. Fefferman). It remains a  mystery~\cite{MN08-cotype} if the power of the logarithm $\theta(\MM)$ in Theorem~\ref{thm:union MN} could be bounded from below by a universal positive constant, as formulated in the following open question.

\begin{question}[metric cotype dichotomy problem] Is there a universal constant $\theta>0$ such that in Theorem~\ref{thm:union MN}  one could take $\theta(\MM)>\theta$. All examples that have been computed thus far leave the possibility that even $\theta(\MM)\ge 1$, which would be sharp (for $\MM=\ell_2$) by Bourgain's embedding theorem~\cite{Bou85}.  Note, however, that in~\cite{ANN15} it is asked whether for the Wasserstein space $\mathsf{P}_p(\R^3)$ we have $\liminf_{p\to 1} \theta(\mathsf{P}_p(\R^3))=0$. If this were true, then it would resolve  the metric cotype dichotomy problem negatively. It would be interesting to understand the bi-Lipschitz structure of these spaces of measures on $\R^3$ regardless of this context, due to their independent importance.
\end{question}

Theorem~\ref{thm:union MN} is a good illustration of a "vanilla" accomplishment of the Ribe program, since it obtains a metric reformulation of a key isomorphic linear property of metric spaces, and also proves statements about general metric spaces which are inspired by the analogies with the linear theory that the Ribe program is aiming for. However, even in this particular setting of metric cotype, Theorem~\ref{thm:union MN} is only a part of the full picture, as it has  additional purely metric ramifications. Most of these  rely on a delicate issue that has been suppressed in the above statement of Theorem~\ref{thm:union MN}, namely that of understanding the asymptotic behavior of $m=m(n,\MM,q)$  in~\eqref{eq:metric cotype}. This matter is not yet fully resolved even when $\MM$ is a Banach space~\cite{MN08-cotype,GNS11}, and generally such questions seem to be quite challenging (see~\cite{MN07-scaled,GN10,Nao16-riesz} for related issues). Thus far, whenever this question was answered for specific (classes of) metric spaces, it led to interesting geometric applications; e.g.~its resolution for certain Banach spaces in~\cite{MN08-cotype} was used in~\cite{Nao12-quasi} to answer a longstanding question~\cite{Vai99} about quasisymmetric embeddings, and its resolution for  Alexandrov spaces of (global) nonpositive curvature~\cite{Ale51} (see e.g.~\cite{BGP92,Stu03} for the relevant background) in the forthcoming work~\cite{EMN17}  is used there to answer a longstanding question about the coarse geometry of such Alexandrov spaces.

\subsection{Metric dimension reduction}\label{sec:metric dim reduction intro} By its nature, many aspects of the local theory of Banach spaces involve describing phenomena that rely on dimension-dependent estimates. In the context of the Ribe program, the goal is to formulate/conjecture analogous phenomena for metric spaces, which is traditionally governed by asking Banach space-inspired questions about a finite metric space $(\MM,d_\MM)$ in which $\log |\MM|$ serves as a replacement for the  dimension. This analogy arises naturally also in the context of the bi-Lipschitz embedding problem into $\R^k$ (Problem~\ref{Q:bilip Rk}); see Remark~\ref{rem:doub log n} below. Early successful instances of this analogy can be found in the work of Marcus and Pisier~\cite{MP84}, as well as the aforementioned work of Johnson and Lindentrauss~\cite{JL84}. However, it should be stated at the outset that over the years it became clear that while making this analogy is the right way to get "on track" toward the discovery of fundamental metric phenomena, from the perspective of the Ribe program the reality is much more nuanced and, at times, even unexpected and surprising.

Johnson and Lindenstrauss asked~\cite[Problem~3]{JL84}  whether  every finite metric space $\MM$ embeds with distortion $O(1)$ into some normed space $X_\MM$ (which is allowed to depend on $\MM$) of dimension $\dim(X_\MM)\lesssim \log |\MM|$. In addition to arising from the above background, this question is  motivated by~\cite[Problem~4]{JL84}, which asks if  the Euclidean distortion of every finite metric space $\MM$  satisfies $\cc_2(\MM)\lesssim \sqrt{\log |\MM|}$. If so, this would have served as a very satisfactory metric  analogue of John's theorem~\cite{Joh48}, which asserts that any finite dimensional normed space $X$ is $\sqrt{\dim(X)}$-isomorphic to a subspace of $\ell_2$. Of course, John's theorem shows that a positive answer to the former question~\cite[Problem~3]{JL84} formally implies a positive answer to the latter question~\cite[Problem~4]{JL84}.

The aforementioned Johnson--Lindenstrauss lemma~\cite{JL84} (JL lemma, in short) shows that, at least for finite subsets of  a Hilbert space, the answer to the above stated~\cite[Problem~3]{JL84} is positive.
\begin{theorem}[JL lemma]\label{thm:JL alpha} For each $n\in \N$ and $\alpha\in (1,\infty)$, there is $k\in \n$ with  $k\lesssim_\alpha \log n$ such that any $n$-point subset of $\ell_2$ embeds into $\ell_2^k$ with distortion $\alpha$.
\end{theorem}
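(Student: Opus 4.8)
The plan is to establish the stronger distributional fact that a single random linear map simultaneously preserves all pairwise distances --- this is the original argument of Johnson and Lindenstrauss. First I would pass to finite dimensions: an $n$-point subset $\{x_1,\dots,x_n\}\subset\ell_2$ lies in the linear span of $x_1,\dots,x_n$, which has dimension $N\le n$, so we may assume $x_1,\dots,x_n\in\R^N$ equipped with the Euclidean norm. Fix $\e=\e(\alpha)\in(0,1)$ small enough that $\sqrt{(1+\e)/(1-\e)}\le\alpha$ (possible since $\alpha>1$), and let $k$ be a parameter of order $\e^{-2}\log n$ to be pinned down below. Consider the random linear map $G=\tfrac{1}{\sqrt k}\Gamma:\R^N\to\R^k$, where $\Gamma$ is a $k\times N$ matrix with independent standard Gaussian entries (alternatively one may use a suitably normalized orthogonal projection onto a uniformly random $k$-dimensional subspace of $\R^N$, which behaves identically for what follows).

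The heart of the matter is the one-vector concentration estimate: there is a universal constant $c>0$ such that for every $x\in\R^N$,
\[
\Pr\Big[\,\big|\,\|Gx\|_2^2-\|x\|_2^2\,\big|>\e\,\|x\|_2^2\,\Big]\le 2e^{-c\e^2 k}.
\]
By homogeneity it suffices to prove this for a unit vector $x$. Rotation invariance of the standard Gaussian shows that if $\gamma_1,\dots,\gamma_k$ denote the rows of $\Gamma$, then $\langle\gamma_1,x\rangle,\dots,\langle\gamma_k,x\rangle$ are independent standard Gaussians, so $k\|Gx\|_2^2=\sum_{i=1}^k\langle\gamma_i,x\rangle^2$ is a $\chi^2$ random variable with $k$ degrees of freedom, and in particular $\E\|Gx\|_2^2=1$. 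The upper and lower tails are then controlled by the Cram\'er--Chernoff method applied to this sum of independent squared Gaussians, using $\E e^{t\langle\gamma_i,x\rangle^2}=(1-2t)^{-1/2}$ for $t<\tfrac12$ and optimizing the resulting exponent over $t$; carrying this out so that $c$ is uniform over $\e\in(0,1)$ is the single genuine computation of the proof.

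Finally I would union-bound over the $\binom n2$ difference vectors $x_i-x_j$, $1\le i<j\le n$: the probability that $\|G(x_i-x_j)\|_2^2\notin\big[(1-\e)\|x_i-x_j\|_2^2,(1+\e)\|x_i-x_j\|_2^2\big]$ for some pair is at most $2\binom n2 e^{-c\e^2 k}<n^2 e^{-c\e^2 k}$, which is strictly less than $1$ as soon as $k>(2/(c\e^2))\log n$. For such $k$ there is a realization of $G$ for which $\sqrt{1-\e}\,\|x_i-x_j\|_2\le\|Gx_i-Gx_j\|_2\le\sqrt{1+\e}\,\|x_i-x_j\|_2$ holds for all $i,j$ simultaneously, so $x_i\mapsto Gx_i$ embeds $\{x_1,\dots,x_n\}$ into $\ell_2^k$ with distortion at most $\sqrt{(1+\e)/(1-\e)}\le\alpha$. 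Taking $k=\lfloor (2/(c\e^2))\log n\rfloor+1$ gives $k\lesssim_\alpha\log n$ and yields the theorem whenever this value is $\le n$; in the complementary small-$n$ range one instead uses the isometric embedding of the $n$ points into the at-most-$(n-1)$-dimensional affine subspace they span, padded to dimension $k\le n$ (still $\lesssim_\alpha\log n$ there). Apart from the $\chi^2$ tail estimate with its $\e$-uniform exponent, and the minor bookkeeping reconciling $k\lesssim_\alpha\log n$ with the constraint $k\le n$, the argument is soft.
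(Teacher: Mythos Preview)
Your proof is correct and follows a standard route. The paper's own argument in Section~\ref{sec:JL opt} is slightly different in flavor: rather than Gaussian matrices, it works directly with the original Johnson--Lindenstrauss construction $y_i=\sigma\,\proj_{\R^k}\OO x_i$ for a Haar-random $\OO\in\OO_{n-1}$, computes the exact distribution of $\|\proj_{\R^k}\OO z\|$ for a unit vector $z$ via the density~\eqref{eq:projected density}, and optimizes over the scaling factor $\sigma$ (the optimal value is the explicit $\sigma_{\max}(n,k,\alpha)$ of~\eqref{eq:def sigma max}). The payoff of this extra work is Proposition~\ref{prop:sigma nk}, which shows that among \emph{all} random-matrix schemes the rescaled random projection maximizes the quantity $\mathfrak{p}_\mu^\alpha$ governing the union-bound argument; your Gaussian variant is discussed separately in Remark~\ref{rem:constants}, where the paper likewise optimizes the scaling (obtaining~\eqref{eq:rescaled gaussian}) and notes that it is provably no better than the projection scheme. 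For the bare statement of the theorem both approaches give the same $k\lesssim(\log n)/\log(1+(\alpha-1)^2)$, so your argument loses nothing there; the paper's route additionally yields the sharp condition~\eqref{eq:beta JL} and the optimality claim.
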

We postpone discussion of this fundamental geometric fact to Section~\ref{sec:JL} below, where it is examined in detail and its proof is presented. Beyond Hilbert spaces, there is only one other example (and variants thereof) of a Banach space for which it is currently known that~\cite[Problem~3]{JL84} has a positive answer for any of its finite subsets, as shown in the following theorem from~\cite{JN10}.
\begin{theorem}\label{thm:tsirelson} There is a Banach space $\mathscr{T}^{(2)}$ which is not isomorphic to a Hilbert space yet it has the following property. For every finite subset $\sub\subset \mathscr{T}^{(2)}$ there is $k\in \n$ with $k\lesssim\log |\sub|$ and a $k$-dimensional linear subspace $F$ of $\mathscr{T}^{(2)}$ such that $\sub$ embeds into $F$ with $O(1)$ distortion.
\end{theorem}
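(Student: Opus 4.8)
The plan is to take $\mathscr{T}^{(2)}$ to be the $2$-convexification of Tsirelson's space, equipped with its canonical $1$-unconditional basis $(e_i)_{i=1}^\infty$. This space is reflexive, it is a \emph{weak Hilbert space} in the sense of Pisier (it has weak type $2$ and weak cotype $2$), and it is not isomorphic to $\ell_2$, because $\cc_2\big(\spn(e_1,\dots,e_m)\big)\to\infty$ as $m\to\infty$. One then wants a Johnson--Lindenstrauss-type statement \emph{internal} to $\mathscr{T}^{(2)}$. It is important to notice at the outset why this cannot be reduced to Theorem~\ref{thm:JL alpha}: if one embedded $\sub$ into a Hilbert space, compressed to $O(\log n)$ Euclidean dimensions, and mapped back linearly into $\mathscr{T}^{(2)}$, the composition would incur a multiplicative error that is a power of $\cc_2(\sub)$, and, by linearizing a bi-Lipschitz embedding of a fine net of a finite-dimensional subspace together with Kwapie\'n's theorem, $\sup\{\cc_2(\sub):\ \sub\subset\mathscr{T}^{(2)},\ |\sub|<\infty\}=\infty$. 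So the compression of $\sub$ must be carried out \emph{directly inside} $\mathscr{T}^{(2)}$, producing a target $F$ whose own (non-Euclidean) geometry is used, rather than factoring through a Hilbert space at all.

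I would begin with the routine reduction to finite dimensions: because $(e_i)$ is a Schauder basis, up to an arbitrarily small loss in distortion one may assume $\sub$ (with $|\sub|=n$) lies in some finite initial segment, and, translating so that $0\in\sub$, that it spans a subspace $E$ with $\dim E\le n$. The goal is then a linear map $T$ on a large initial segment of $\mathscr{T}^{(2)}$ with $\dim T(\mathscr{T}^{(2)})=O(\log n)$ such that $\|Tx\|_{\mathscr{T}^{(2)}}\asymp \lambda\,\|x\|_{\mathscr{T}^{(2)}}$, for a fixed scale $\lambda>0$, simultaneously for the $\binom n2$ pairwise differences $x$ of points of $\sub$; taking $f=\lambda^{-1}T|_\sub$ and $\tau=1$ then gives \eqref{eq:def embedding} with $\alpha=O(1)$. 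The natural way to build $T$ is randomly: fix a huge finite $N$ and let $T$ be obtained from a \emph{random} rank-$O(\log n)$ operator on $\spn(e_1,\dots,e_N)$ -- for instance the composition of a random isometry (for a fixed Euclidean structure on $\spn(e_1,\dots,e_N)$) with the projection onto a fixed block of $O(\log n)$ basis vectors, or a random normalized block-basis construction -- and hope that the desired two-sided estimate holds with positive probability once $\dim T(\mathscr{T}^{(2)})\gtrsim\log n$.

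The hard part -- and the step I expect to be the main obstacle -- is the concentration estimate that makes the union bound over the $\binom n2$ differences go through: for a \emph{fixed} $x$ one must show that $\|Tx\|_{\mathscr{T}^{(2)}}$ is concentrated, at the scale $1/\sqrt{\log n}$ about its mean, around $\lambda\|x\|_{\mathscr{T}^{(2)}}$. This is genuinely not a formal consequence of the JL random-projection mechanism, because in a generic Banach space a random low-rank projection reports (a multiple of) the \emph{Euclidean} norm of $x$, which by the first paragraph is incomparable to $\|x\|_{\mathscr{T}^{(2)}}$. Making it work must use the specific combinatorics of the convexified Tsirelson norm -- built from iterated $\ell_2$-aggregates of the restrictions of a vector to families $E_1<\dots<E_k$ of successive integer intervals subject to the admissibility constraint $k\le\min E_1$ -- together with the weak Hilbert structure of $\mathscr{T}^{(2)}$. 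The phenomenon one is after is that spreading a fixed finitely supported vector across an enormous initial segment by a random rotation "dilutes" the admissibility constraint enough that the defining supremum is attained, up to universal constants, in a robust way, so that $\|Tx\|_{\mathscr{T}^{(2)}}$ is a well-concentrated multiple of $\|x\|_{\mathscr{T}^{(2)}}$. Quantifying this -- and, in particular, pinning down how the rate of growth of $\cc_2\big(\spn(e_1,\dots,e_m)\big)$ interacts with the $\log n$ target dimension so that the final distortion is a universal constant rather than something mildly growing -- is where the substance of the proof lies. Once the estimate is in hand, the theorem follows, and the only remaining chores are a diagonal argument to absorb the arbitrarily small truncation errors and the freedom in $N$, together with a check that the needed quantitative weak Hilbert and Dvoretzky-type facts are available with the required constants.
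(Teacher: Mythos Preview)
Your identification of $\mathscr{T}^{(2)}$ and of the obstruction to factoring through $\ell_2$ is correct, but the random-projection mechanism you propose runs straight back into that obstruction rather than around it. Your operator $T$ is a Euclidean rotation of $\R^N$ followed by restriction to a fixed set of $k$ coordinates; for any fixed $x$, the law of $Tx$ therefore depends on $x$ only through $\|x\|_{\ell_2^N}$. Whatever norm you put on the $k$-dimensional range --- and on the span of $k$ consecutive basis vectors far to the right the $\mathscr{T}^{(2)}$-norm is in fact uniformly equivalent to the $\ell_2^k$-norm --- the quantity $\|Tx\|_{\mathscr{T}^{(2)}}$ will concentrate around a multiple of $\|x\|_{\ell_2}$, not of $\|x\|_{\mathscr{T}^{(2)}}$. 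Since you have already argued that these two norms are not uniformly comparable on finite subsets of $\mathscr{T}^{(2)}$, the two-sided estimate $\|Tx\|_{\mathscr{T}^{(2)}}\asymp\lambda\|x\|_{\mathscr{T}^{(2)}}$ cannot hold with universal constants. The ``diluting the admissibility constraint'' heuristic is a statement about the \emph{range} of $T$; it does nothing to make the \emph{input} information recorded by a Euclidean-random operator track the Tsirelson norm. The block-basis variant has the same defect, and more generally $\mathscr{T}^{(2)}$ has essentially no isometries beyond sign changes, so there is no symmetry group rich enough to run a JL-style averaging argument directly in its own norm.

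The paper does not prove this theorem; it refers to~\cite{JN10} and summarizes that proof as ``a concatenation of several (substantial) structural results in the literature.'' That route is not probabilistic. It proceeds through the finite-dimensional structure theory of weak Hilbert spaces and of $\mathscr{T}^{(2)}$ in particular --- uniform complementation of finite-dimensional subspaces, proportional Euclidean subspaces, and the asymptotic-$\ell_2$ block behaviour --- assembled so that the genuine Hilbertian JL lemma can be invoked on a carefully isolated piece without the factor $\cc_2(E)$ contaminating the final distortion. If you want to reconstruct the argument, those are the ingredients to look up; a direct concentration argument internal to $\mathscr{T}^{(2)}$ along the lines you sketch is not known and, for the reason above, is unlikely to exist in that form.
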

The space $\mathscr{T}^{(2)}$ of Theorem~\ref{thm:tsirelson}  is not very quick to describe, so we refer to~\cite{JN10} for the details. It suffices to say here  that this space is the $2$-convexification of the classical Tsirelson space~\cite{Cir74,FJ74}, and that the proof that it satisfies the stated dimension reduction result is obtained in~\cite{JN10} via a concatenation of several (substantial) structural results in the literature; see Section~4 in~\cite{JN10} for a discussion of variants of this construction, as well as related open questions.  The space $\mathscr{T}^{(2)}$ of Theorem~\ref{thm:tsirelson} is not isomorphic to a Hilbert space, but barely so: it is explained in~\cite{JN10} that for every $n\in \N$ there exists an $n$-dimensional  subspace $F_n$ of $\mathscr{T}^{(2)}$ with $\cc_2(F_n)\ge e^{c\mathsf{Ack}^{-1}(n)}$, where $c>0$ is a universal constant and $\mathsf{Ack}^{-1}(\cdot)$ is the inverse of the Ackermann function from  computability theory (see e.g.~\cite[Appendix~B]{AKNSS08}). So, indeed $\lim_{n\to \infty} \cc_2(F_n)=\infty$, but at a  tremendously slow rate.

Remarkably, despite major scrutiny for over $3$ decades, it remains unknown if~\cite[Problem~3]{JL84}  has a positive answer for subsets of {\em any} non-universal classical Banach space. In particular, the following  question is open.
\begin{question}\label{Q:lp into some space}
Suppose that $p\in [1,\infty)\setminus\{2\}$. Are there $\alpha=\alpha(p),\beta=\beta(p)\in [1,\infty)$ such that for any $n\in \N$, every $n$-point subset of $\ell_p$ embeds with distortion $\alpha$ into some $k$-dimensional normed space with $k\le \beta\log n$?
\end{question}
It is even open if in Question~\ref{Q:lp into some space} one could obtain a  bound of $k=o(n)$ for any fixed $p\in [1,\infty)\setminus\{2\}$. Using John's theorem as above, a positive answer to Question~\ref{Q:lp into some space} would imply that $\cc_2(\sub)\lesssim_p \sqrt{\log |\sub|}$ for any finite subset $\sub$ of $\ell_p$. At present, such an embedding statement is not known for {\em any} $p\in [1,\infty)\setminus\{2\}$, though for $p\in [1,2]$ it is known~\cite{ALN08} that any $n$-point subset of $\ell_p$ embeds into $\ell_2$ with distortion $(\log n)^{1/2+o(1)}$; it would be interesting to obtain any $o(\log n)$ bound here for any fixed $p\in (2,\infty)$, which would be a  "nontrivial" asymptotic behavior in light of the following general theorem~\cite{Bou85}.
\begin{theorem}[Bourgain's embedding theorem]\label{thm:bourgain embedding} $\cc_2(\MM)\lesssim \log |\MM|$ for every finite metric space $\MM$.
\end{theorem}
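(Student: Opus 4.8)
The plan is to use Bourgain's random embedding into $\ell_2$ built from distances to random subsets, at logarithmically many scales and with logarithmically many independent repetitions per scale so that a single realization works simultaneously for all pairs. Write $n=|\MM|$, $m=\lceil\log_2 n\rceil$, and fix a repetition count $N\asymp\log n$ to be pinned down at the end. For each scale $j\in\{1,\dots,m\}$ and repetition $t\in\{1,\dots,N\}$ let $A_{j,t}\subset\MM$ be a random subset in which each point of $\MM$ is included independently with probability $2^{-j}$, all of these choices independent, and define $f:\MM\to\R^{mN}$ by
\[
f(x)=\frac1{\sqrt{mN}}\Big(d_\MM(x,A_{j,t})\Big)_{j\in\{1,\dots,m\},\,t\in\{1,\dots,N\}},\qquad d_\MM(x,A)=\min_{a\in A}d_\MM(x,a)
\]
(with the convention $d_\MM(x,\emptyset)=0$; empty sets are never used below). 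The upper estimate is immediate: $|d_\MM(x,A)-d_\MM(y,A)|\le d_\MM(x,y)$ for every $A\subset\MM$, so $\|f(x)-f(y)\|_2\le d_\MM(x,y)$ and $f$ is $1$-Lipschitz. Everything therefore reduces to the lower bound $\|f(x)-f(y)\|_2\gtrsim d_\MM(x,y)/\log n$.

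I would establish this one pair at a time and then derandomize. Fix $x\neq y$ in $\MM$, put $\rho_0=0$, and for $j\ge1$ let $\rho_j$ be the least $r>0$ for which both closed balls $B(x,r)$ and $B(y,r)$ of $(\MM,d_\MM)$ contain at least $2^j$ points. Since $2^m\ge n$ forces $\rho_m\ge d_\MM(x,y)$, there is a smallest index $J\le m$ with $\rho_J\ge d_\MM(x,y)/4$; redefine $\rho_J:=d_\MM(x,y)/4$, so that $0=\rho_0\le\rho_1\le\cdots\le\rho_J=d_\MM(x,y)/4$ and the increments telescope to $d_\MM(x,y)/4$. The crux is the following per-scale separation estimate: there is a universal $c_0>0$ such that for every $j\in\{1,\dots,J\}$ and every $t$,
\[
\Pr\Big[\big|d_\MM(x,A_{j,t})-d_\MM(y,A_{j,t})\big|\ge\rho_j-\rho_{j-1}\Big]\ge c_0.
\]
To prove it, note (relabelling $x\leftrightarrow y$ within this scale if needed, and using monotonicity of balls at $j=J$, where $\rho_{J-1}<d_\MM(x,y)/4=\rho_J$) that by minimality of $\rho_j$ the open ball $B^{\circ}(x,\rho_j)$ has fewer than $2^j$ points, while $B(y,\rho_{j-1})$ has at least $2^{j-1}$ points; moreover $B(y,\rho_{j-1})\cap B^{\circ}(x,\rho_j)=\emptyset$, since a common point $z$ would give $d_\MM(x,y)\le d_\MM(x,z)+d_\MM(z,y)<\rho_j+\rho_{j-1}\le2\rho_J=d_\MM(x,y)/2$. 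On the event that $A_{j,t}$ meets $B(y,\rho_{j-1})$ and misses $B^{\circ}(x,\rho_j)$ — two independent events, the underlying point sets being disjoint — we have $d_\MM(y,A_{j,t})\le\rho_{j-1}$ and $d_\MM(x,A_{j,t})\ge\rho_j$, hence $|d_\MM(x,A_{j,t})-d_\MM(y,A_{j,t})|\ge\rho_j-\rho_{j-1}$; and that event has probability at least $\big(1-(1-2^{-j})^{2^{j-1}}\big)(1-2^{-j})^{2^j}\ge(1-e^{-1/2})\tfrac14=:c_0$.

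To finish, fix a pair and $j\le J$, and let $G_j\subset\{1,\dots,N\}$ be the set of $t$ for which $|d_\MM(x,A_{j,t})-d_\MM(y,A_{j,t})|\ge\rho_j-\rho_{j-1}$; the indicators of these events are independent across $t$, so by a Chernoff bound $|G_j|\ge c_0N/2$ except on an event of probability $e^{-c_1N}$. Taking a union bound over the at most $m\lesssim\log n$ relevant scales and the at most $n^2$ pairs, and choosing $N=C\log n$ for a large enough universal constant $C$, this failure probability is $<1$, so some realization of $f$ is simultaneously good for every pair. For that realization and any $x\neq y$,
\[
\|f(x)-f(y)\|_2^2\ge\frac1{mN}\sum_{j=1}^J|G_j|\,(\rho_j-\rho_{j-1})^2\ge\frac{c_0}{2m}\sum_{j=1}^J(\rho_j-\rho_{j-1})^2\ge\frac{c_0}{2mJ}\Big(\frac{d_\MM(x,y)}4\Big)^2,
\]
the last inequality by Cauchy--Schwarz together with $\sum_{j=1}^J(\rho_j-\rho_{j-1})=d_\MM(x,y)/4$. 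Since $m,J\lesssim\log n$, this yields $\|f(x)-f(y)\|_2\gtrsim d_\MM(x,y)/\log n$, which with the $1$-Lipschitz bound gives $\cc_2(\MM)\lesssim\log|\MM|$. I expect the single-scale separation estimate to be the main obstacle: one must set up the truncated radii $\rho_j$ so that consecutive scales "see" disjoint balls of controlled cardinality and so that the "hit one ball, miss the other" event has constant probability uniformly in $j$. Once that is in place, averaging over scales (Cauchy--Schwarz) and passing from a random embedding to a single deterministic one valid for all pairs (Chernoff plus a union bound) are routine.
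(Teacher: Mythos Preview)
The paper does not actually prove Theorem~\ref{thm:bourgain embedding}; it is stated as background and attributed to~\cite{Bou85}, with only the remark that it was obtained ``via the introduction of an influential randomized embedding method.'' Your proposal is a correct and complete rendition of precisely that method: random subsets at dyadic density scales, coordinates given by distances to these sets, the per-scale ``hit one ball, miss the other'' lemma yielding increments $\rho_j-\rho_{j-1}$ with uniform probability, Cauchy--Schwarz to sum the increments, and Chernoff plus a union bound to pass to a single good realization. All the details you supply (the truncation $\rho_J:=d_\MM(x,y)/4$, the disjointness $B(y,\rho_{j-1})\cap B^\circ(x,\rho_j)=\emptyset$, the scale-dependent relabelling of $x$ and $y$, and the constant lower bound on the hit--miss probability) are handled correctly; the only cosmetic looseness is that when $2^m>n$ one should read $\rho_m=+\infty$, which does not affect the existence of $J\le m$.
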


The above questions from~\cite{JL84}  were the motivation for the influential work~\cite{Bou85}, where Theorem~\ref{thm:bourgain embedding} was proved. Using a probabilistic construction and the JL lemma, it was shown in~\cite{Bou85} that Theorem~\ref{thm:bourgain embedding} is almost sharp in the sense that there are arbitrarily large $n$-point metric spaces $\MM_n$ for which $\cc_2(\MM_n)\gtrsim (\log n)/\log\log n$. By John's theorem, for every $\alpha\ge 1$, if $X$ is a finite dimensional normed space and $\cc_X(\MM_n)\le \alpha$, then $\cc_2(\MM_n)\le \alpha\sqrt{\dim(X)}$. Therefore the above lower bound on $\cc_2(\MM_n)$ implies that  $\dim(X)\gtrsim (\log n)^2/(\alpha^2(\log\log n)^2)$.

The  achievement of~\cite{Bou85} is thus twofold. Firstly, it discovered Theorem~\ref{thm:bourgain embedding} (via the introduction of an influential randomized embedding method), which is the "correct"  metric version of John's theorem in the Ribe program. The reality turned out to be more nuanced in the sense that the answer  is not quite as good as the $O(\sqrt{\log n})$ that was predicted in~\cite{JL84}, but the $O(\log n)$ of Theorem~\ref{thm:bourgain embedding} is still a  strong and useful phenomenon that was discovered through the analogy that the Ribe program provided. Secondly, we saw above that~\cite[Problem~3]{JL84} was {\em disproved} in~\cite{Bou85}, though  the "bad news" that follows from~\cite{Bou85} is only mildly worse than  the $O(\log n)$ dimension bound that~\cite[Problem~3]{JL84} predicted, namely a dimension lower bound that grows quite slowly, not faster than $(\log n)^{O(1)}$. Curiously, the very availability of strong dimension reduction in $\ell_2$ through the JL lemma is what was harnessed in~\cite{Bou85} to deduce that any "host normed space" that contains $\MM_n$ with $O(1)$ distortion must have dimension at least of order $(\log n/\log\log n)^2\gg \log n$. Naturally, in light of these developments, the question of understanding what is the correct asymptotic behavior of the smallest $k(n)\in \N$ such that any $n$-point metric space embeds with distortion $O(1)$ into a $k(n)$-dimensional normed space was raised  in~\cite{Bou85}.

In order to proceed, it would be convenient to introduce some notation and terminology.
\begin{definition}[metric dimension reduction modulus]\label{def:modulus bilip} Fix $n\in \N$ and $\alpha\in [1,\infty)$. Suppose that $(X,\|\cdot\|_X)$ is a normed space. Denote by $\kk_n^\alpha(X)$ the minimum $k\in \N$ such that for {\em any} $\sub\subset X$ with $|\sub|=n$ there exists a $k$-dimensional linear subspace $F_\sub$ of $X$ into which $\sub$ embeds with distortion $\alpha$.
\end{definition}

The quantity $\kk_n^\alpha(\ell_\infty)$ was introduced by Bourgain~\cite{Bou85} under the notation $\psi_\alpha(n)= \kk_n^\alpha(\ell_\infty)$; see also~\cite{AR92,Mat96} where this different notation persists, though for the sake of uniformity of the ensuing discussion we prefer not to use it here because we will treat $X\neq\ell_\infty$ extensively. \cite{Bou85} focused for concreteness  on the  arbitrary value $\alpha=2$, and asked for the asymptotic behavior $\kk_n^2(\ell_\infty)$ as $n\to \infty$.

 An $n$-point subset of $X=\ell_\infty$ is nothing more than a general $n$-point metric space, via the aforementioned isometric Fr\'echet embedding. In the same vein, a $k$-dimensional linear subspace of $\ell_\infty$ is nothing more than a general $k$-dimensional normed space $(F,\|\cdot\|_F)$ via the linear isometric embedding $(x\in F)\mapsto (x_i^*(x))_{i=1}^\infty$, where $\{x_i^*\}_{i=1}^\infty$ is an arbitrary sequence of linear functionals on $F$ that are dense in the unit sphere of the dual space $F^*$. Thus, the quantity $\kk_n^\alpha(\ell_\infty)$ is the smallest $k\in \N$ such that every $n$-point metric space $(\MM,d_\MM)$ can be realized with distortion at most $\alpha$ as a subset of $(\R^k,\|\cdot\|_\MM)$ for some norm $\|\cdot\|_\MM:\R^k\to \R^k$ on $\R^k$ (which, importantly, is allowed to be adapted to the initial metric space $\MM$), i.e., $\cc_{(\R^k,\|\cdot\|_\MM)}(\MM,d_\MM)\le \alpha$. This is precisely the quantity that~\cite[Problem~3]{JL84} asks about, and above we have seen that~\cite{Bou85} gives the lower bound
 \begin{equation}\label{eq:bourgain displayed log log}
\kk_n^\alpha(\ell_\infty)\gtrsim  \left(\frac{\log n}{\alpha\log\log n}\right)^2.
\end{equation}

 Theorem~\ref{thm:summary} below is a summary of the main nontrivial\footnote{Trivially $\kk_n^\alpha(X)\le n-1$, by  considering in Definition~\ref{def:modulus bilip} the subspace $F_\sub=\spn(\sub-x_0)$ for any fixed $x_0\in \sub$.} bounds on the modulus $\kk_n^\alpha(X)$ that are currently known for specific Banach spaces $X$. Since  such a "combined statement" contains a large amount information and covers a lot of the literature on this topic, we suggest reading it in tandem with the subsequent discussion, which includes further clarifications and explanation of the history of the respective results. A "take home" message from the statements below is that despite major efforts by many researchers, apart from information on metric dimension reduction for $\ell_2$, the space  $\mathscr{T}^{(2)}$ of Theorem~\ref{thm:tsirelson}, $\ell_\infty$, $\ell_1$ and $\S_1$, nothing is known for other spaces (even for $\ell_1$ and $\S_1$ more remains to be done, notably with respect to bounding $\kk_n^\alpha(\ell_1),\kk_n^\alpha(\S_1)$ from above).

 \begin{theorem}[summary of the currently known upper and lower bounds on metric dimension reduction]\label{thm:summary} There exist universal constants $c,C>0$ such that the following assertions hold true for every integer $n\ge 20$.
 \begin{enumerate}
\item In the Hilbertian setting, we have the sharp bounds
\begin{equation}\label{eq:JL in theporem}
\forall\, \alpha\ge 1+\frac{1}{\sqrt[3]{n}},\qquad  \kk_n^\alpha(\ell_2)\asymp \frac{\log n}{\log(1+(\alpha-1)^2)}\asymp \max\left\{\frac{\log n}{(\alpha-1)^2},\frac{\log n}{\log \alpha}\right\}.
\end{equation}
\item For the space $\mathscr{T}^{(2)}$ of Theorem~\ref{thm:tsirelson}, there exists $\alpha_0\in [1,\infty)$ for which $\kk_n^{\alpha_0}(\mathscr{T}^{(2)})\asymp \log n$.
\item For $\ell_\infty$, namely in the setting of~\cite[Problem~3]{JL84}, we have
\begin{equation}\label{eq:alpha<2}
\forall\, \alpha\in [1,2),\qquad \kk_n^\alpha(\ell_\infty)\asymp n,
\end{equation}
and
\begin{equation}\label{eq:linfty bounds in theorem}
\forall\, \alpha\ge 2,\qquad n^{\frac{c}{\alpha}}+\frac{\log n}{\log\left(\frac{\log n}{\alpha}+\frac{\alpha\log\log n}{\log n}\right)}\lesssim \kk_n^\alpha(\ell_\infty)\lesssim \frac{n^{\frac{C}{\alpha}}\log n}{\log\left(1+\frac{\alpha}{\log n}\right)}.
\end{equation}
\item For $\ell_1$, we have
\begin{equation}\label{eq:ell1 in theorem}
\forall\, \alpha\ge 1,\qquad n^{\frac{c}{\alpha^2}}+\frac{\log n}{\log(\alpha+1)}\lesssim \kk_{n}^\alpha(\ell_1)\lesssim \frac{n}{\alpha}.
\end{equation}
Moreover, if  $\alpha\ge 2C\sqrt{\log n}\log\log n$, then we have the better upper bound
\begin{equation}\label{eq:use ALN}
 \kk_n^\alpha(\ell_1)\lesssim \frac{\log n}{\log\left(\frac{\alpha}{C\sqrt{\log n}\log\log n}\right)}.
\end{equation}
\item For the Schatten--von Neumann trace class $\S_1$, we have
\begin{equation}\label{eq:S1 in theorem}
\forall\, \alpha\ge 1,\qquad \kk_n^\alpha(\S_1)\gtrsim n^{\frac{c}{\alpha^2}}+\frac{\log n}{\log(\alpha+1)}.
\end{equation}
 \end{enumerate}
 \end{theorem}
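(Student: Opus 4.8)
Since Theorem~\ref{thm:summary} compiles bounds of rather different provenance, the plan is to establish its five items separately, in each case pairing an explicit low‑dimensional embedding (the upper bound) with a geometric obstruction (the lower bound) and invoking the results already recorded in the Introduction; the one place where substantial new work is required is the lower bound~\eqref{eq:S1 in theorem} for $\S_1$. It is convenient first to isolate a device that recurs throughout: if $F$ is any $k$-dimensional normed space containing $n$ points whose pairwise distances all lie in $[1,\alpha]$, then comparing the volume of the $F$-ball of radius $\alpha+\tfrac12$ with those of the $n$ disjoint $F$-balls of radius $\tfrac12$ centred at these points gives $n\le(2\alpha+1)^k$, i.e.\ $k\gtrsim\log n/\log(\alpha+1)$. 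Applied inside the subspace $F_\sub$ of Definition~\ref{def:modulus bilip} to the image of the ``simplex'' $\{e_1,\ldots,e_n\}$ — the first $n$ vectors of the natural basis of $\ell_2$, $\ell_1$, $\S_1$ or $\mathscr{T}^{(2)}$, whose pairwise distances are within universal constants of one another — this at once supplies the summand $\log n/\log(\alpha+1)$ in~\eqref{eq:JL in theporem}, \eqref{eq:ell1 in theorem} and~\eqref{eq:S1 in theorem}, as well as the lower bound $\kk_n^{\alpha_0}(\mathscr{T}^{(2)})\gtrsim\log n$ of item~(2).

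For item~(1): the upper bound in~\eqref{eq:JL in theporem} is the quantitatively sharp Johnson--Lindenstrauss lemma, Theorem~\ref{thm:JL alpha}, whose proof I would give in Section~\ref{sec:JL} — a Gaussian random projection with the sharp concentration estimate of order $(\alpha-1)^{-2}\log n$ when $\alpha$ is near $1$, together with a net argument giving $k\lesssim\log n/\log\alpha$ for large $\alpha$ — while the matching lower bound combines the optimality theorem for the Johnson--Lindenstrauss lemma (a carefully chosen near‑equilateral configuration forcing $k\gtrsim(\alpha-1)^{-2}\log n$) with the volumetric bound above (forcing $k\gtrsim\log n/\log\alpha$), the two together being $\asymp\log n/\log(1+(\alpha-1)^2)$. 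For item~(2) the upper bound is exactly Theorem~\ref{thm:tsirelson} of~\cite{JN10}, and its lower bound is the volumetric bound just stated.

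For $\ell_\infty$ (item~(3)): the upper bound in~\eqref{eq:alpha<2} is the trivial $\kk_n^\alpha(\ell_\infty)\le n-1$, and the matching $\gtrsim n$ comes from an almost‑equilateral graph metric — an otherwise equilateral $n$-point space of diameter~$2$ with a bounded number of disjoint ``unit'' pairs — which an elementary coordinate‑counting argument shows cannot be realised in $\ell_\infty^k$ with distortion $<2$ unless $k\gtrsim n$. For $\alpha\ge 2$, the upper bound in~\eqref{eq:linfty bounds in theorem} is a Matou\v{s}ek‑type embedding $x\mapsto(\dist(x,A))_A$ in which the random anchor sets $A$ appear only at a geometric progression of $O(\alpha)$ size‑scales and only $n^{O(1/\alpha)}$ of them per scale (refined to extract the stated form), the lower bound $n^{c/\alpha}$ is Matou\v{s}ek's high‑girth/expander obstruction, and the remaining logarithmic summand is obtained by feeding the volumetric bound into Bourgain's estimate~\eqref{eq:bourgain displayed log log}. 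For the $\ell_1$ upper bounds (item~(4)), the bound $n/\alpha$ is only a minor sharpening of the trivial $n-1$; the improvement~\eqref{eq:use ALN} in the range $\alpha\ge 2C\sqrt{\log n}\,\log\log n$ comes from composing the embedding of an $n$-point $\sub\subset\ell_1$ into $\ell_2$ with distortion $(\log n)^{1/2+o(1)}$ from~\cite{ALN08}, the Johnson--Lindenstrauss lemma inside $\ell_2$, and the standard $O(1)$-distortion embedding of $\ell_2^k$ into $\ell_1^{O(k)}$; the composition places $\sub$ in a subspace of $\ell_1$ of dimension $\asymp\log n/\log\!\big(\alpha/(C\sqrt{\log n}\log\log n)\big)$ with distortion at most $\alpha$.

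What is left — and the step I expect to be the main obstacle — is the lower bound $n^{c/\alpha^2}$ in~\eqref{eq:ell1 in theorem} and~\eqref{eq:S1 in theorem}. For $\ell_1$ the hard configuration is the recursively defined family of diamond (Laakso) graphs $\{G_\ell\}$: $G_\ell$ has $\asymp 4^\ell$ vertices and embeds into $\ell_1$ with $O(1)$ distortion, whereas the Brinkman--Charikar (and Lee--Naor) analysis — a recursion that exploits the self‑similar structure of $G_\ell$ — shows that the distortion of embedding $G_\ell$ into $\ell_1^d$ is $\gtrsim\sqrt{\ell/\log d}$; since every $d$-dimensional subspace of $\ell_1$ itself $2$-embeds into $\ell_1^{O(d\log d)}$, a distortion-$\alpha$ copy of $G_\ell$ inside a $k$-dimensional subspace of $\ell_1$ forces $\log k\gtrsim \ell/\alpha^2$, i.e.\ $k\gtrsim n^{c/\alpha^2}$. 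For $\S_1$ one transfers the same $n$-point configuration verbatim through the isometric copy of $\ell_1$ in $\S_1$ (diagonal matrices, with the $e_i$ replaced by rank‑one projections), so the sole remaining point is the analogue of the $\ell_1$ subspace step, and this is the crux: a $k$-dimensional subspace $F$ of $\S_1$ is \emph{not} a subspace of $\ell_1$, and the naive detour — pass from $F$ to $\ell_2$ by John's theorem and then to $\ell_1$ — costs a factor of order $\sqrt{k}$, far too lossy to survive the recursion. One must instead re‑run the Brinkman--Charikar‑type argument with $\ell_1^d$ replaced by an arbitrary $d$-dimensional subspace of $\S_1$, establishing the required per‑level metric inequalities directly, with only an $O(1)$ loss relative to $\ell_1^d$, from the structure theory of low‑dimensional subspaces of the non‑commutative $L_1$ space $\S_1$ — a delicate point precisely because $\S_1$ does not bi‑Lipschitzly embed into $L_1$ — in the spirit of the work of Naor, Pisier and Schechtman; this yields the bound $\gtrsim\sqrt{\ell/\log k}$ on the distortion of embedding $G_\ell$ into such an $F$, hence $\log n\asymp\ell\lesssim\alpha^2\log k$ and therefore $k\gtrsim n^{c/\alpha^2}$, which completes~\eqref{eq:S1 in theorem}.
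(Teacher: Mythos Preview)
Your overall architecture is right, and most of the attributions match the paper's, but there are three places where the proposed argument diverges from what actually works.

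\textbf{The logarithmic summand in the $\ell_\infty$ lower bound~\eqref{eq:linfty bounds in theorem}.} You say this term ``is obtained by feeding the volumetric bound into Bourgain's estimate~\eqref{eq:bourgain displayed log log}.'' This does not give the stated term. Bourgain's bound~\eqref{eq:bourgain displayed log log} yields at best $((\log n)/(\alpha\log\log n))^2$, which for $\alpha\asymp\log n$ collapses to $O(1/(\log\log n)^2)$, whereas the term in~\eqref{eq:linfty bounds in theorem} is of order $(\log n)/\log\log\log n$ in that regime. The paper obtains this summand by a genuinely new argument in Section~\ref{sec:average}: one takes a bounded-degree expander $\G$, combines a ball-packing lemma (Lemma~\ref{lem:ABN referee}) with the nonlinear spectral gap estimate~\eqref{eq:M-d version} and John's theorem, arriving at the inequality $\alpha\log(\kk_n^\alpha(\ell_\infty)+1)\gtrsim n^{1/(2\kk_n^\alpha(\ell_\infty))}\log n$, from which the stated term follows. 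This is not a repackaging of the volumetric bound.

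\textbf{The $\S_1$ lower bound.} Your plan is to ``re-run the Brinkman--Charikar-type argument'' inside an arbitrary $k$-dimensional subspace $F$ of $\S_1$. The paper's route (following~\cite{NPS18}) is different: it uses the bi-Lipschitz invariant \emph{Markov $2$-convexity} $\Pi_2(\cdot)$. One shows $\Pi_2(\sub_{\mathsf{BC}})\gtrsim\sqrt{\log n}$ for the Brinkman--Charikar/Laakso set, and separately that every $k$-dimensional subspace $F\subset\S_1$ satisfies $\Pi_2(F)\lesssim\sqrt{\log k}$; contrasting these under a distortion-$\alpha$ embedding gives $k\ge n^{c/\alpha^2}$. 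This bypasses the recursion entirely and is precisely how one avoids the issue you correctly flag, namely that there is no Talagrand-type reduction from $F\subset\S_1$ to $\ell_1^{O(k)}$.

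\textbf{Two smaller points.} The bound $\kk_n^\alpha(\ell_1)\lesssim n/\alpha$ is not ``a minor sharpening of the trivial $n-1$''; the paper attributes it to~\cite{AN17} and notes that it relies on the Batson--Spielman--Srivastava sparsification method. And the $\alpha<2$ lower bound $\kk_n^\alpha(\ell_\infty)\gtrsim n$ is due to a Fourier-analytic argument of Arias-de-Reyna and Rodr\'iguez-Piazza (refined by Matou\v{s}ek), not an elementary coordinate count.
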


The bound $\kk_n^\alpha(\ell_2)\lesssim (\log n)/\log(1+(\alpha-1)^2)$ in~\eqref{eq:JL in theporem} restates Theorem~\ref{thm:JL alpha} (the JL lemma) with the implicit dependence on $\alpha$ now stated explicitly; it actually holds for every $\alpha> 1$, as follows from the original proof in~\cite{JL84} and explained in Section~\ref{sec:JL} below. The restriction $\alpha\ge 1+1/\sqrt[3]{n}$ in~\eqref{eq:JL in theporem}  pertains only to the corresponding lower bound on $\kk_n^\alpha(\ell_2)$, which exhibits different behaviors in the low-distortion and high-distortion regimes.

Despite scrutiny of many researchers over the past 3 decades, only very recently the dependence on $\alpha$ in the JL lemma when $\alpha$ is arbitrarily close to $1$ but independent of $n$ was proved to be sharp by Larsen and Nelson~\cite{GN17} (see~\cite{Alo03,Alo09,GN16} for earlier results in this direction, as well as the subsequent work~\cite{AK17}). This is so even when $\alpha$ is allowed to tend to $1$ with $n$, and even in a somewhat larger range than the requirement $\alpha\ge 1+1/\sqrt[3]{n}$  in~\eqref{eq:JL in theporem} (see~\cite{GN17} for the details), though there remains a small range of values of $\alpha$ ($n$-dependent, very close to $1$) for which it isn't currently known what is the behavior of $\kk_n^\alpha(\ell_2)$. The  present article is focused on embeddings that permit large errors, and in particular in ways to prove impossibility results even if  large errors are allowed. For this reason, we will not describe here the ideas of the proof in~\cite{GN17} that pertains to the almost-isometric regime.

For, say, $\alpha\ge 2$, it is much simpler to see that the $\kk_n^\alpha(\ell_2)\gtrsim (\log n)/\log\alpha$, in even greater generality that also explains the appearance of the term $(\log n)/\log(\alpha+1)$ in~\eqref{eq:ell1 in theorem} and~\eqref{eq:S1 in theorem}. One could naturally generalize Definition~\ref{def:modulus bilip} so as to introduce the following notation for {\em relative metric dimension reduction moduli}. Let $\mathcal{F}$ be a family of metric spaces and $\YY$ be a family of normed spaces. For $n\in \N$ and $\alpha\in [1,\infty)$, denote by $\kk_n^\alpha(\mathcal{F},\YY)$ the minimum $k\in \N$ such that for every $\MM\in \mathcal{F}$ with $|\MM|=n$ there exists $Y\in \YY$ with $\dim(Y)=k$ such that $\cc_Y(\MM)\le \alpha$. When $\mathcal{F}$ is the collection of all the finite subsets of a fixed Banach space $X$, and $\YY$ is the collection of all the finite-dimensional linear subspaces of a fixed Banach space $Y$, we  use the simpler notation $\kk_n^\alpha(\mathcal{F},\YY)=\kk_n^\alpha(X,Y)$. Thus, the modulus $\kk_n^\alpha(X)$ of Definition~\ref{def:modulus bilip} coincides with $\kk_n^\alpha(X,X)$. Also, under this notation Question~\eqref{Q:lp into some space} asks if for $p\in [1,\infty)\setminus \{2\}$ we have $\kk_n^\alpha(\ell_p,\ell_\infty)\lesssim_p \log n$ for some $1\le \alpha\lesssim_p 1$. The study of the modulus $\kk_n^\alpha(\mathcal{F},\YY)$  is essentially a completely unexplored area, partially because even our understanding of the "vanilla" dimension reduction modulus $\kk_n^\alpha(X)$  is currently very limited. By a short volumetric argument that is presented  in Section~\ref{sec:JL} below, every infinite dimensional Banach space $X$ satisfies
\begin{equation}\label{lower brunnminkowksi intro}
\forall (n,\alpha)\in \N\times [1,\infty),\qquad \kk_n^\alpha(X,\ell_\infty)\ge \frac{\log n}{\log(\alpha+1)}.
\end{equation}
Hence also $\kk_n^\alpha(X)\ge (\log n)/\log(\alpha+1)$, since~\eqref{lower brunnminkowksi intro} rules out embeddings into any normed space of dimension less than $(\log n)/\log(\alpha+1)$, rather than only into such spaces that are also subspaces of $X$.

Using an elegant Fourier-analytic argument, Arias-de-Reyna and Rodr\'iguez-Piazza proved in~\cite{AR92} that for every $\alpha\in [1,2)$ we have $\kk_n^\alpha(\ell_\infty)\gtrsim (2-\alpha)n$. This was slightly improved by Matou\v{s}ek~\cite{Mat96} to $\kk_n^\alpha(\ell_\infty)\gtrsim n$, i.e., he showed that the constant multiple of $n$ actually remains bounded below by a positive constant as $\alpha\to 2^-$ (curiously, the asymptotic behavior of $\kk_n^2(\ell_\infty)$ remains unknown). These results establish~\eqref{eq:alpha<2}. So, for sufficiently small distortions one cannot hope to embed every $n$-point metric space into some normed space of dimension $o(n)$. For larger distortions (our main interest), it was conjectured in~\cite{AR92} that $\kk_n^\alpha(\ell_\infty)\lesssim (\log n)^{O(1)}$ if $\alpha>2$.

The bounds in~\eqref{eq:linfty bounds in theorem} refute this conjecture of~\cite{AR92}, since they include the lower bound $\kk_n^\alpha(\ell_\infty)\gtrsim n^{c/\alpha}$, which is a landmark achievement of Matou\v{s}ek~\cite{Mat96} (obtained a decade after Bourgain asked about the asymptotics here and over a decade after Johnson and Lindenstrauss posed the question whether $\kk_n^\alpha(\ell_\infty)\lesssim_\alpha\log n$). It is, of course, an exponential improvement over Bourgain's bound~\eqref{eq:bourgain displayed log log}. Actually, in the intervening period Linial, London and Rabinovich~\cite{LLR} removed the iterated logarithm in the lower bound of~\cite{Bou85} by showing that Theorem~\ref{thm:bourgain embedding} (Bourgain's embedding) is sharp up to the value of the implicit universal constant. By the same reasoning as above (using John's theorem), this also removed the iterated logarithm from the denominator in~\eqref{eq:bourgain displayed log log}, i.e., \cite{LLR} established that $\kk_n^\alpha(\ell_\infty)\gtrsim (\log n)^2/\alpha^2$. This was the best-known bound prior to~\cite{Mat96}.

Beyond proving a fundamental geometric theorem, which, as seen in~\eqref{eq:linfty bounds in theorem}, is  optimal up to the constant in the exponent, this work of Matou\v{s}ek is important because it injected a refreshing approach from real algebraic geometry into this area, which was previously governed by considerations from analysis, geometry, probability and combinatorics. Section~\ref{sec:matousek} covers this outstanding contribution in detail, and obtains the following stronger  statement that wasn't previously noticed in the literature but follows from an adaptation of Matou\v{s}ek's ideas.

\begin{theorem}[impossibility of coarse dimension reduction]\label{thm:coarse matousek} There is a universal constant $c\in (0,\infty)$ with the following property. Suppose that $\omega,\Omega:[0,\infty)\to [0,\infty)$ are increasing functions that satisfy $\omega(s)\le \Omega(s)$ for all $s\in [0,\infty)$, as well as $\lim_{s\to\infty}\omega(s)=\infty$. Define
\begin{equation}\label{eq:def beta}
\beta(\omega,\Omega)\eqdef \sup_{s\in (0,\infty)} \frac{s}{\omega^{-1}\big(2\Omega(s)\big)}\in (0,1).
\end{equation}
For arbitrarily large $n\in \N$ there is a metric space $(\MM,d_{\MM})=\left(\MM(n,\omega,\Omega),d_{\MM(n,\omega,\Omega)}\right)$
with $|\MM|=3n$ such that for any normed space $(X,\|\cdot\|_X)$, if there exists $f:\MM\to X$ which satisfies
\begin{equation}\label{eq:coarse condition}
\forall\,x,y\in \MM,\qquad \omega\big(d_\MM(x,y)\big)\le \|f(x)-f(y)\|_X\le \Omega\big(d_\MM(x,y)\big),
\end{equation}
then necessarily 
\begin{equation}\label{eq:dim lower coarse}
\dim(X)\gtrsim n^{c\beta(\omega,\Omega)}.
\end{equation}
\end{theorem}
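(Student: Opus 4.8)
The plan is to reduce the coarse statement to Matoušek's original bi-Lipschitz impossibility result $\kk_n^\alpha(\ell_\infty)\gtrsim n^{c/\alpha}$ by exhibiting a single family of metric spaces that is ``hard'' simultaneously for all distortions below a certain threshold. The key observation is that Matoušek's construction does not merely produce, for each fixed $\alpha$, a metric space that resists $\alpha$-embeddings into low dimensions; rather, the underlying combinatorial object (a metric built from a high-girth graph or an expander-type incidence structure) comes with a quantitative gap: if such a space on $3n$ points embeds into a $D$-dimensional normed space with any bi-Lipschitz distortion $\alpha$, then $D\gtrsim n^{c/\alpha}$, and this holds uniformly in $\alpha$. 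So first I would revisit the construction in Section~\ref{sec:matousek} and isolate the statement in this uniform form: there is a metric space $(\MM,d_\MM)$ with $|\MM|=3n$ such that, for every normed space $X$ and every $\alpha\ge 1$, the existence of a map $f:\MM\to X$ with $d_\MM(x,y)\le\|f(x)-f(y)\|_X\le\alpha\,d_\MM(x,y)$ forces $\dim(X)\gtrsim n^{c/\alpha}$.

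Next I would perform the coarse-to-bi-Lipschitz reduction. Suppose $f:\MM\to X$ satisfies the coarse condition~\eqref{eq:coarse condition} with moduli $\omega\le\Omega$. The point is that on a \emph{bounded} range of distances a coarse embedding is automatically a bi-Lipschitz embedding with distortion controlled by the ratio $\Omega(s)/\omega(s)$ over that range --- but that ratio can be unbounded, so instead one uses the quantity $\beta(\omega,\Omega)$ from~\eqref{eq:def beta} to \emph{rescale} the metric on $\MM$ before applying it. Concretely, I would choose the edge-length scale $s_0$ in Matoušek's construction so that all pairwise distances in $\MM$ lie in an interval $[s_0,\,\Delta s_0]$ where $\Delta=\diam/\mathrm{(min\ distance)}$ is the aspect ratio of the (unrescaled) construction, which is a fixed constant. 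Then for $x\neq y$ we have $\omega(s_0)\le\|f(x)-f(y)\|_X\le\Omega(\Delta s_0)$, so $f$ is, after dividing by $\omega(s_0)$, a bi-Lipschitz embedding with distortion at most $\alpha:=\Omega(\Delta s_0)/\omega(s_0)$. The definition~\eqref{eq:def beta} of $\beta(\omega,\Omega)$ as a supremum over $s$ of $s/\omega^{-1}(2\Omega(s))$ is exactly what controls this: choosing $s$ so that $s/\omega^{-1}(2\Omega(s))$ is within a factor $2$ of $\beta(\omega,\Omega)$ and setting $s_0$ proportional to $s$, one gets $\omega(s_0)\gtrsim \Omega(\Delta s_0)\cdot\beta(\omega,\Omega)$ up to the fixed constant $\Delta$ (after possibly shrinking $s_0$ by the constant factor $\Delta$, using monotonicity of $\Omega$ and $\omega$), hence $\alpha\lesssim 1/\beta(\omega,\Omega)$. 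Plugging $\alpha\asymp 1/\beta(\omega,\Omega)$ into the uniform Matoušek bound $\dim(X)\gtrsim n^{c/\alpha}$ yields $\dim(X)\gtrsim n^{c'\beta(\omega,\Omega)}$, which is~\eqref{eq:dim lower coarse}.

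The main obstacle, and the step that requires genuine care rather than bookkeeping, is verifying that Matoušek's lower bound really is uniform in $\alpha$ in the sense above --- i.e. that the same $3n$-point metric space works for all distortions at once, with the dimension bound degrading only through the exponent $c/\alpha$. In Matoušek's argument the space and the distortion parameter are intertwined (the graph's girth and the ambient dimension bound are tuned to a target $\alpha$), so I would need to check that the real-algebraic-geometry input --- a bound (via the Milnor--Thom / Warren-type estimate on the number of sign patterns of polynomials) on how many ``combinatorial types'' of $D$-dimensional point configurations can realize a given distance pattern --- can be invoked with $\alpha$ left as a free variable, and that the counting argument still separates the $n^{\Theta(1/\alpha)}$ required configurations from the $D^{O(D)}$ available ones. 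I expect this to go through because Warren's bound is stated for arbitrary polynomial systems and the only place $\alpha$ enters the counting is in how coarsely one must discretize distances; the rest is the contraction/selection argument that is insensitive to $\alpha$. A secondary, minor point is that~\eqref{eq:def beta} asserts $\beta(\omega,\Omega)\in(0,1)$; the upper bound $\beta<1$ comes from $\omega\le\Omega\le$ something slightly larger forcing $\omega^{-1}(2\Omega(s))>s$ for all $s$ (since $2\Omega(s)>\Omega(s)\ge\omega(s)$ and $\omega$ is increasing), and positivity from $\lim_{s\to\infty}\omega(s)=\infty$ so $\omega^{-1}$ is finite everywhere; I would include this as a one-line sanity check before the main estimate.
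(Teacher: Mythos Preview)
Your plan has a genuine gap, and it lies precisely in the step you flagged as ``the main obstacle.'' The uniform-in-$\alpha$ version of Matou\v{s}ek's theorem that you want --- a single $3n$-point space $\MM$ with $\dim_\alpha(\MM)\gtrsim n^{c/\alpha}$ for \emph{every} $\alpha\ge 1$ --- is not what Matou\v{s}ek's argument delivers, and it is not clear it is true. In the construction of Section~\ref{sec:matousek}, the space is built from a bipartite template graph of girth $g$; the aspect ratio of the resulting (truncated shortest-path) metric is exactly $g$, not a fixed constant as you asserted. The Milnor--Thom rank bound (Lemma~\ref{lem:rank}) then gives $\dim(X)\gtrsim |\EE|/n$, and the extremal bound on girth-$g$ graphs gives $|\EE|\le n^{1+O(1/g)}$. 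So for a \emph{fixed} Matou\v{s}ek space of girth $g$, the dimension lower bound you can extract is $n^{c/g}$, valid for all $\alpha<g/2$; it does \emph{not} improve to $n^{c/\alpha}$ when $\alpha\ll g$. The girth, the aspect ratio, and the exponent in the dimension bound are all tied together, so you cannot have bounded aspect ratio and still get an $\alpha$-dependent exponent. This also breaks your coarse-to-bi-Lipschitz reduction: on a space whose aspect ratio is $\Delta\asymp g$, a coarse $(\omega,\Omega)$-embedding does not in general yield bi-Lipschitz distortion $\asymp 1/\beta(\omega,\Omega)$; the distortion you would extract from $\omega(s_0)\le\|f(x)-f(y)\|\le\Omega(\Delta s_0)$ depends on $\Delta$ and the shapes of $\omega,\Omega$ over the whole range $[s_0,\Delta s_0]$.

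The paper does not attempt any such black-box reduction. Instead it reruns Matou\v{s}ek's sign-pattern argument directly with the coarse moduli. The only place the embedding enters is in establishing $\sign(a_{ij}^\sigma)=\sigma(i,j)$ for edges $(i,j)\in\EE$, and that computation uses exactly two distance scales: the edge scale $s$ (where one needs $\|f(\rho_j)-f(\lambda_i^{\sigma(i,j)})\|\le\Omega(s)$) and the ``separated'' scale $gs$ (where one needs $\|f(\lambda_i^+)-f(\lambda_i^-)\|\ge\omega(gs)$). The sign condition boils down to $\tfrac12\omega(gs)>\Omega(s)$, i.e.\ $g>\omega^{-1}(2\Omega(s))/s$. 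One now chooses $s$ near a maximizer in the definition~\eqref{eq:def beta} of $\beta(\omega,\Omega)$, so that $\omega^{-1}(2\Omega(s))/s\lesssim 1/\beta$, and then takes the girth $g\asymp 1/\beta$. The Milnor--Thom step is unchanged and gives $\dim(X)\gtrsim |\EE|/n\gtrsim n^{\kappa/g}\asymp n^{c\beta(\omega,\Omega)}$. So the right fix to your proposal is not to verify a uniform bi-Lipschitz statement, but to notice that the proof only needs the two-scale inequality $\omega(gs)>2\Omega(s)$ and to tune $g$ and $s$ to the moduli via~\eqref{eq:def beta}.
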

A mapping that satisfies~\eqref{eq:coarse condition} is called a {\em coarse embedding} (with moduli $\omega,\Omega$), as introduced in Gromov's seminal work~\cite{Gro93} and studied extensively ever since, with a variety of interesting applications (see the monographs~\cite{Roe03,NY12,Ost13} and the references therein). The bi-Lipschitz requirement~\eqref{eq:def embedding} corresponds to $\omega(s)=\tau s$ and $\Omega(s)=\alpha\tau s$ in~\eqref{eq:coarse condition}, in which case~\eqref{eq:dim lower coarse} becomes Matou\v{s}ek's aforementioned lower bound on $\kk_n^\alpha(\ell_\infty)$. Theorem~\ref{thm:coarse matousek} asserts that there exist arbitrarily large finite metric spaces that cannot be embedded even with a very weak (coarse) guarantee into any low-dimensional normed space, with the dimension of the host space being forced to be at least a power of their cardinality, which is exponentially larger than the logarithmic behavior that one would predict from the natural ball-covering requirement that is induced by low-dimensionality (see the discussion of the doubling condition in Section~\ref{sec:doubling}, as well as the proof of~\eqref{lower brunnminkowksi intro} in Section~\ref{sec:JL}).

\begin{remark}\label{rem:snowflake} Consider the following special case of  Theorem~\ref{thm:coarse matousek}. Fix $\theta\in (0,1]$ and let $(\MM,d_\MM)$ be a metric space. It is straightforward to check that $d_\MM^\theta:\MM\times \MM\to [0,\infty)$ is also a metric on $\MM$. The metric space $(\MM,d_\MM^\theta)$ is commonly called the {\em $\theta$-snowflake} of $\MM$ (in reference to the von Koch snowflake curve; see e.g.~\cite{DS97}) and it is denoted $\MM^\theta$. Given $\alpha\ge 1$, the statement that $\MM^\theta$ embeds with distortion $\alpha$ into a normed space $(X,\|\cdot\|_X)$ is the same as the requirement~\eqref{eq:coarse condition} with $\omega(s)=s^\theta$ and $\Omega(s)=\alpha s^\theta$. Hence, by Theorem~\ref{thm:coarse matousek} there exist arbitrarily large $n$-point metric spaces $\MM_n=\MM_n(\alpha,\theta)$ such that if $\MM_n^\theta$ embeds with distortion $\alpha$ into some $k$-dimensional normed space, then $k\ge n^{c/(2\alpha)^{1/\theta}}$. Conversely, Remark~\ref{rem:snowflake of ell infty} below shows that for every $n\in \N$ and $\alpha>1$, the $\theta$-snowflake of any $n$-point metric space embeds with distortion $\alpha$ into a normed space $X$ with $\dim(X)\lesssim_{\alpha,\theta} n^{C/\alpha^{1/\theta}}$. So, the bound~\eqref{eq:dim lower coarse} of Theorem~\ref{thm:coarse matousek} is quite sharp even for embeddings that are not bi-Lipschitz, though we did not investigate the extent of its sharpness for more general moduli $\omega,\Omega:[0,\infty)\to [0,\infty)$.
\end{remark}

At this juncture, it is natural to complement the (coarse) strengthening in Theorem~\ref{thm:coarse matousek} of Matou\v{s}ek's bound $\kk_n^\alpha(\ell_\infty)\ge n^{c/\alpha}$ by stating the following different type of strengthening, which we recently obtained in~\cite{Nao17}.

\begin{theorem}[impossibility of average dimension reduction]\label{thm:average distortion} There is a universal constant $c\in (0,\infty)$ with the following property. For arbitrarily large $n\in \N$ there is an $n$-point metric space $(\MM,d_{\MM})$
such that for  any normed space $(X,\|\cdot\|_X)$ and any $\alpha\in [1,\infty)$, if there exists $f:\MM\to X$ which satisfies $\|f(x)-f(y)\|_X\le\alpha d_\MM(x,y)$ for all $x,y\in X$, yet $\frac{1}{n^2}\sum_{x,y\in X} \|f(x)-f(y)\|_X\ge \frac{1}{n^2}\sum_{x,y\in X} d_\MM(x,y)$, then necessarily $\dim(X)\ge n^{c/\alpha}$.
\end{theorem}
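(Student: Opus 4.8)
The plan is to upgrade Matou\v{s}ek's real-algebraic-geometry lower bound for $\kk_n^\alpha(\ell_\infty)\gtrsim n^{c/\alpha}$ (discussed in Section~\ref{sec:matousek}, and strengthened to the coarse setting in Theorem~\ref{thm:coarse matousek}) so that it survives replacing the two-sided bi-Lipschitz hypothesis by the much weaker requirement that $f$ is $\alpha$-Lipschitz and merely expands on average. The proof splits into three parts: the choice of the hard space $\MM$; an averaging step turning ``expands on average'' into ``genuinely expands on a dense family of pairs''; and a sign-pattern counting argument that converts the latter into a dimension lower bound valid for an arbitrary normed target $X$.

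The metric space cannot be a single Matou\v{s}ek graph, whose parameters (girth, degree, number of edges) are tuned to one value of the distortion, because here a single $(\MM,d_\MM)$ must work for \emph{every} $\alpha$ at once. One therefore needs an $\alpha$-uniform object: roughly, a graph that across a geometric sequence of scales $t_j\asymp 2^j$ (with $t_j$ running up to $\asymp\log n$) locally presents, at scale $t_j$, a Matou\v{s}ek-type obstruction for distortion $\asymp t_j$ — i.e.\ a $\Delta_j$-regular ``layer'' on $\asymp n_j$ vertices with girth $\asymp t_j$ and $\Delta_j\asymp n_j^{c_0/t_j}$, hence $\asymp n_j^{1+c_0/t_j}$ edges — with the layers amalgamated (not as a far-apart disjoint union, which the adversary could defeat by separating the scales and collapsing within each) so that for every $\alpha$ the layer at scale $\asymp\alpha$ both has $\gtrsim n^{1+c_1/\alpha}$ edges and, despite the averaging hypothesis being global, is forced to carry a definite share of the expansion. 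Pinning down this construction so that all of these requirements hold simultaneously — in particular so that the global average condition cannot be satisfied without expanding the relevant layer — is itself one of the things that has to be done; I expect the right object to be a single self-similar graph rather than a crude superposition.

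Now fix $\alpha$, a normed space $X$, and $f:\MM\to X$ with $\|f(x)-f(y)\|_X\le\alpha d_\MM(x,y)$ for all $x,y\in\MM$ and $\sum_{x,y}\|f(x)-f(y)\|_X\ge\sum_{x,y}d_\MM(x,y)$; write $k=\dim(X)$. Since $\|f(x)-f(y)\|_X$ never exceeds $\alpha d_\MM(x,y)$ while its average dominates that of $d_\MM$, a reverse-Markov argument produces a set $S$ of pairs carrying a $\gtrsim 1/\alpha$ fraction of $\sum_{x,y}d_\MM(x,y)$ on which $\|f(x)-f(y)\|_X\ge\tfrac12 d_\MM(x,y)$; since $f$ also contracts these pairs by at most $\alpha$, on $S$ the map looks exactly like an honest bi-Lipschitz embedding of distortion $O(\alpha)$. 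The difficulty, relative to Matou\v{s}ek's argument, is that $S$ is chosen adversarially, so one cannot designate in advance a fixed family of ``far pairs'' that are guaranteed to be expanded; the construction of $\MM$ in the previous paragraph must be rigid and homogeneous enough that \emph{any} such dense $S$ hits the relevant Matou\v{s}ek layer in a controlled way.

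For the counting step, approximate $B_X$ within a factor of $2$ by a polytope with $F=e^{O(k)}$ facets, so that $\|\cdot\|_X\asymp\max_{\ell\le F}\langle a_\ell,\cdot\rangle$ and, for each pair $\{x,y\}$ and each dyadic threshold, ``$\|f(x)-f(y)\|_X$ falls in a prescribed dyadic band'' becomes a Boolean combination of $F$ linear inequalities in the $nk$ coordinates of $f(\MM)\subseteq\R^k$. By Warren's theorem (the Milnor--Thom bound) the number of realizable sign patterns of these $\lesssim n^2F$ linear forms — hence the number of realizable ``band profiles'' $\{x,y\}\mapsto\lfloor\log_2\|f(x)-f(y)\|_X\rfloor$ — is at most $e^{O(nk\log(nF))}=e^{O(nk^2+nk\log n)}$. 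One then wants: on the Matou\v{s}ek layer at scale $\asymp\alpha$, the conjunction of [$f$ is $\alpha$-Lipschitz], [$f$ genuinely expands a dense set $S$ of its far pairs] and [girth $\gtrsim\alpha$] forces the band profile to carry $\gtrsim n^{1+c_1/\alpha}$ bits of information about which girth-$g$ graph that layer is, whence $e^{\Omega(n^{1+c_1/\alpha})}\le e^{O(nk^2+nk\log n)}$ and so $k\gtrsim n^{c_1/(3\alpha)}$. The main obstacle is exactly this last, information-theoretic step: in Matou\v{s}ek's two-sided setting the band profile recovers \emph{every} pairwise $G$-distance up to a factor $\alpha$ and therefore recovers $G$ outright, whereas here it recovers distances only on the adversarial $S$, so one must exploit the homogeneity of the construction and the density of $S$ among the far pairs of the relevant layer to argue that this still pins down enough of the graph. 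This is, I expect, exactly where the average John theorem of~\cite{Nao17} enters: replacing $X$ by a Euclidean (or otherwise structured) space on which the relevant averages are controlled leaves the polytopal approximation and the sign-pattern count with room to spare, and the adversary's latitude in choosing $S$ then costs only the factor $O(\alpha)$ already present in the Lipschitz and expansion bounds — which is why the final exponent retains Matou\v{s}ek's shape $c/\alpha$.
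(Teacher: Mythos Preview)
Your proposal is a research outline rather than a proof: the two hardest steps --- the construction of the $\alpha$-uniform multi-scale space $\MM$, and the ``information-theoretic step'' showing that an adversarially chosen expanded set $S$ still pins down $\gtrsim n^{1+c_1/\alpha}$ bits of the graph --- are left open, as you yourself say. These are genuine obstacles, not routine details; in particular, the reverse-Markov step discards all control over \emph{which} pairs get expanded, and there is no known way to run a Warren/Milnor--Thom sign-pattern count against an adversary who chooses $S$ after seeing the template graph. The paper remarks explicitly (just after Theorem~\ref{thm:average distortion}) that its proof is ``conceptually different from Matou\v{s}ek's approach.''

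The actual proof (Section~\ref{sec:average}) is purely analytic, via nonlinear spectral gaps, and it dissolves both obstacles at once. The hard space is simply the shortest-path metric on \emph{any} $O(1)$-regular expander $\G$ with $\lambda_2(\G)\le 1-\Omega(1)$; a single graph works uniformly for all $\alpha$, so your multi-scale construction is unnecessary. The $\alpha$-Lipschitz hypothesis bounds the edge-average of $\|f(i)-f(j)\|_X^2$ by $\alpha^2$, and the average-expansion hypothesis together with Jensen bounds $\big(\frac{1}{n^2}\sum_{i,j}\|f(i)-f(j)\|_X^2\big)^{1/2}$ below by $\frac{1}{n^2}\sum_{i,j}d_\G(i,j)\gtrsim\log n$. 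These two quantities are exactly the numerator and denominator of a nonlinear Rayleigh quotient, and the key estimate (Lemma~\ref{lem:power Rayleigh}, proved by iterating the lazy walk $\frac12\Id_n+\frac12\AA_\G$ on a John-ellipsoid Hilbertian renorming of $X$ until it mixes) is $\gamma(\G,\|\cdot\|_X^2)\lesssim\big(\log(\cc_2(X)+1)/(1-\lambda_2(\G))\big)^2$. Chaining gives $\log n\lesssim\alpha\log(\cc_2(X)+1)$, and John's theorem $\cc_2(X)\le\sqrt{\dim(X)}$ yields $\dim(X)\ge n^{c/\alpha}$. The conceptual point is that a nonlinear spectral gap is \emph{by definition} a ratio of two averages, which is precisely what the hypotheses of Theorem~\ref{thm:average distortion} supply --- your closing guess that the ``average John theorem of~\cite{Nao17}'' is the missing ingredient is essentially right, but it \emph{is} the argument, not a patch on a Matou\v{s}ek-style one.
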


An $n$-point metric space $\MM$ as in Theorem~\ref{thm:average distortion} is intrinsically high dimensional even on average, in the sense that if one wishes to assign in an $\alpha$-Lipschitz manner to each point in $\MM$ a vector in some normed space $X$ such that the average distance in the image is the same as the average distance in $\MM$, then this forces the abient dimension to satisfy $\dim(X)\ge n^{c/\alpha}$. Prior to Theorem~\ref{thm:average distortion}, the best-known bound here was $\dim(X)\gtrsim (\log n)^2/\alpha^2$, namely the aforementioned lower bound $\kk_n^\alpha(\ell_\infty)\gtrsim (\log n)^2/\alpha^2$ of Linial, London and Rabinovich~\cite{LLR} actually treated the above "average distortion" requirement rather than only the (pairwise) bi-Lipschitz requirement.

\begin{remark} The significance of Theorem~\ref{thm:average distortion} will be discussed further in Section~\ref{sec:average} below; see also~\cite{ANNRW17,ANNRW18}. In Section~\ref{sec:average} we will present a new proof of Theorem~\ref{thm:average distortion}  that is different from (though inspired by) its proof in~\cite{Nao17}. It suffices to say here that the proof of Theorem~\ref{thm:average distortion} is conceptually different from Matou\v{s}ek's approach~\cite{Mat96}. Namely, in contrast to the algebraic/topological argument of~\cite{Mat96}, the proof of Theorem~\ref{thm:average distortion} relies on the theory of nonlinear spectral gaps, which is also an outgrowth of the Ribe program; doing justice to this theory and its ramifications is beyond the scope of the present article (see~\cite{MN14} and the references therein), but the basics are recalled in Section~\ref{sec:average}. Importantly, the proof of Theorem~\ref{thm:average distortion}  obtains a criterion for determining if a given metric space $\MM$ satisfies its conclusion, namely $\MM$ can be taken to be the shortest-path metric of any bounded degree graph with a spectral gap.  This information is harnessed in the forthcoming work~\cite{ANNRW18} to imply that finite-dimensional normed spaces have a structural proprty (a new type of hierarchical partitioning scheme) which has implications to the design of efficient data structures for approximate nearest neighbor search, demonstrating that the omnipresent "curse of dimensionality" is to some extent absent from this fundamental algorithmic task.
\end{remark}

In the intervening period between Bourgain's work~\cite{Bou85} and Matousek's solution~\cite{Mat96}, the question of determining the asymptotic behavior of $\kk_n^\alpha(\ell_\infty)$ was pursued by Johnson, Lindenstrauss and Schechtman, who proved in~\cite{JLS87} that $\kk_n^\alpha(\ell_\infty)\lesssim_\alpha n^{C/\alpha}$ for some universal constant $C>0$.
They demonstrated this by constructing for every $n$-point metric space $\MM$ a normed space $X_\MM$, which they (probabilistically) tailored to the given metric space $\MM$, with $\dim(X_\MM)\lesssim_\alpha n^{C/\alpha}$ and such that $\MM$ embeds into $X_\MM$ with distortion $\alpha$. Subsequently, Matou\v{s}ek showed~\cite{Mat92} via a different argument that one could actually work here with $X_\MM=\ell_\infty^k$ for $k\in \N$ satisfying $k\lesssim_\alpha n^{C/\alpha}$, i.e., in order to obtain this type of upper bound on the asymptotic behavior of $\kk_n^\alpha(\ell_\infty)$ one does not need to adapt the target normed space to the metric space $\MM$ that is being embedded. The implicit dependence on $\alpha$ here, as well as the constant $C$ in the exponent, were further improved in~\cite{Mat96}. For $\alpha=O((\log n)/\log\log n)$, the upper bound on $\kk_n^\alpha(\ell_\infty)$ that appears in~\eqref{eq:linfty bounds in theorem} is that of~\cite{Mat96}, and for the remaining values of  $\alpha$ it is due to a more recent improvement over~\cite{Mat96} by Abraham, Bartal and Neiman~\cite{ABN11} (specifically, the upper bound in~\eqref{eq:linfty bounds in theorem} is a combination of Theorem~5 and Theorem~6 of~\cite{ABN11}).

\begin{remark}\label{rem:snowflake of ell infty} An advantage of the fact~\cite{Mat92} that one could take $X_\MM=\ell_\infty^k$ rather than the more general  normed space of~\cite{JLS87} is that it quickly implies the optimality of the lower bound from Remark~\ref{rem:snowflake} on dimension reduction of snowflakes. Fix $n\in \N$, $\alpha>1$ and $\theta\in (0,1]$. Denote $\d=\min\{\sqrt{\alpha}-1,1\}$, so that $\alpha\asymp \alpha/(1+\d)>1$. By~\cite{Mat92}, given an $n$-point metric space $(\MM,d_\MM)$ there is an integer $k\lesssim n^{c/\alpha^{1/\theta}}$ and $f=(f_1,\ldots,f_k):\MM\to \R^k$ such that
$$
\forall\, x,y\in \MM,\qquad d_\MM(x,y)\le \|f(x)-f(y)\|_{\ell_\infty^k}\le \left(\frac{\alpha}{1+\d}\right)^{\frac{1}{\theta}}d_\MM(x,y).
$$
Hence (here it becomes useful that we are dealing with the $\ell_\infty^k$ norm, as it commutes with powering),
$$
\forall\, x,y\in \MM,\qquad d_\MM(x,y)^\theta\le \max_{i\in \k} |f_i(x)-f_i(y)|^\theta \le \frac{\alpha}{1+\d}d_\MM(x,y)^\theta.
$$
By works of Kahane~\cite{Kah81} and Talagrand~\cite{Tal92-helix}, there is $m=m(\d,\theta)$ and a mapping (a quasi-helix) $h:\R\to \R^m$ such that $|s-t|^\theta\le\|h(s)-h(t)\|_{\ell_\infty^m}\le (1+\d)|s-t|^\theta$ for all $s,t\in \R$. The mapping $$(x\in \MM)\mapsto \bigoplus_{i=1}^k h\circ f_i(x)\in \bigoplus_{i=1}^k \ell_\infty^m$$ is a distortion-$\alpha$ embedding of the $\theta$-snowflake $(\MM,d_\MM^\theta)$ into a normed space of dimension $mk\lesssim_{\alpha,\theta}  n^{c/\alpha^{1/\theta}}$. The implicit dependence on $\alpha,\theta$ that~\cite{Kah81,Tal92-helix} imply here is quite good, but likely not sharp  as $\alpha\to 1^+$ when $\theta\neq\frac12$.
\end{remark}

Since the expressions in~\eqref{eq:linfty bounds in theorem} are somewhat involved, it is beneficial to restate them on a case-by-case basis as follows. For sufficiently large $\alpha$, we have a bound\footnote{One can alternatively justify the upper bound in~\eqref{eq:sharp for large alpha} (for sufficiently large $n$) by first using Theorem~\ref{thm:bourgain embedding} (Bourgain's embedding theorem) to embed an $n$-point metric space $\MM$ into $\ell_2$ with distortion $A\log n$ for some universal constant $A\ge 1$, and then using Theorem~\ref{thm:JL alpha} (the JL lemma) with the dependence on the distortion as stated in~\eqref{eq:JL in theporem} to reduce the dimension of the image of $\MM$ under Bourgain's embedding to $O((\log n)/\log(\alpha/(A\log n)))$ while incurring a further distortion of $\alpha/(A\log n)$, thus making the overall distortion be at most $\alpha$. The right hand side  of~\eqref{eq:sharp for large alpha} is therefore  in fact an upper bound on $\kk_n^\alpha(\ell_\infty,\ell_2)$; see also Corollary~\ref{cor:nontrivial type}.} that is sharp up to universal constant factors.
\begin{equation}\label{eq:sharp for large alpha}
\alpha\ge (\log n)\log\log n\implies \kk_n^\alpha(\ell_\infty)\asymp \frac{\log n}{\log\left(\frac{\alpha}{\log n}\right)}.
\end{equation}
For a range of smaller values of $\alpha$, including those $\alpha$ that do not tend to $\infty$ with $n$, we have
\begin{equation}\label{eq:sharp for small alpha}
1\le \alpha\le \frac{\log n}{\log\log n}\implies n^{\frac{c}{\alpha}}\lesssim \kk_n^\alpha(\ell_\infty)\lesssim n^{\frac{C}{\alpha}}.
\end{equation}
\eqref{eq:sharp for small alpha} satisfactorily shows that the asymptotic behavior of $\kk_n^\alpha(\ell_\infty)$ is of  power-type, but it is not as sharp as~\eqref{eq:sharp for large alpha}. We suspect that determining the  correct exponent of $n$ in the power-type dependence of $\kk_n^\alpha(\ell_\infty)$ would be challenging (there is indication~\cite{Mat96,Mat02}, partially assuming a positive answer to a difficult conjecture of Erd\H{o}s~\cite{Erd64,Bol78}, that this exponent has infinitely many jump discontinuities as a function of $\alpha$). In an intermediate range $(\log n)/\log\log n\lesssim \alpha\lesssim (\log n)\log\log n$ the bounds~\eqref{eq:linfty bounds in theorem} are less satisfactory. The case $\alpha\asymp \log n$, corresponding to the distortion in Bourgain's embedding theorem, is especially intriguing, with~\eqref{eq:linfty bounds in theorem} becoming
\begin{equation}\label{eq:log log log}
\frac{\log n}{\log\log \log n}\lesssim \kk_n^{\Theta(\log n)}(\ell_\infty)\lesssim \log n.
\end{equation}
The first inequality in~\eqref{eq:log log log} has not been stated in the literature, and we justify it in Section~\ref{sec:average} below. A more natural lower bound here would be a constant multiple of $(\log n)/\log\log n$, as this corresponds to the volumetric restriction~\eqref{lower brunnminkowksi intro}, and moreover by the upper bound~\eqref{eq:JL in theporem} in the JL lemma we know that any $n$-point subset of a Hilbert space does in fact embed with distortion $\log n$ into $\ell_2^k$ with $k\lesssim (\log n)/\log\log n$. The triple logarithm in~\eqref{eq:log log log} is therefore quite intriguing/surprising, thus leading to the following open question.

\begin{question}\label{Q:log n} Given an integer $n\ge 2$, what is the asymptotic behavior of the smallest $k=k_n\in \N$ for which any $n$-point metric space $\MM$ embeds with distortion $O(\log n)$ into some $k$-dimensional normed space $X_\MM$.
\end{question}

There is a dearth of available upper bounds on $\kk_n^\alpha(\cdot)$, i.e., positive results establishing that metric dimension reduction is  possible. This is especially striking in the case of $\kk_n^\alpha(\ell_1)$, due to the importance of $\ell_1$ from the perspective of pure mathematics and algorithms. The upper bound on $\kk_n^\alpha(\ell_1)$ in the large distortion regime~\eqref{eq:use ALN} follows from combining the Euclidean embedding of~\cite{ALN08} with the JL lemma.  The only general dimension reduction result in $\ell_1$ that lowers the dimension below the trivial bound $\kk_n^\alpha(\ell_1)\le n-1$ is the
forthcoming work~\cite{AN17}, where the estimate $\kk_n^\alpha(\ell_1)\lesssim n/\alpha$ in~\eqref{eq:ell1 in theorem} is obtained; even this modest statement requires effort (among other things, it relies on the sparsification method of Batson, Spielman and Srivastava~\cite{BSS12}).

The bound $\kk_n^\alpha(\ell_1)\ge n^{c/\alpha^2}$  in~\eqref{eq:ell1 in theorem} is a remarkable theorem of Brinkman and Charikar~\cite{BC05} which answered a question that  was at the time open for many years. To avoid any possible confusion, it is important to note that~\cite{BC05} actually exhibits an $n$-point subset $\sub_{\mathsf{BC}}$ of $\ell_1$ for which it is shown in~\cite{BC05} that if $\sub_{\mathsf{BC}}$ embeds with distortion $\alpha$ into $\ell_1^k$, then necessarily $k\ge n^{c/\alpha^2}$. On the face of it, this seems weaker than~\eqref{eq:ell1 in theorem}, because the lower bound on $\kk_n^\alpha(\ell_1)$ in~\eqref{eq:ell1 in theorem} requires showing  that if $\sub_{\mathsf{BC}}$ embeds into an {\em arbitrary} finite-dimensional linear subspace $F$ of $\ell_1$, then necessarily $\dim(F)\ge n^{c/\alpha^2}$. However, Talagrand proved~\cite{Tal90} that in this setting for every $\beta> 1$ the subspace $F$ embeds with distortion $\beta$ into $\ell_1^k$, where $k\lesssim_\beta \dim(F)\log \dim(F)$. From this, an application of the above stated result of~\cite{BC05}  gives that $\dim(F)\log \dim(F)\gtrsim n^{c/\alpha^2}$, and so the lower bound in~\eqref{eq:ell1 in theorem} follows from the formulation in~\cite{BC05}. Satisfactory analogues of the above theorem of Talagrand are known~\cite{Sch87,BLM89,Tal95} (see also the survey~\cite{JS01} for more on this subtle issue) when $\ell_1$ is replaced by $\ell_p$ for some $p\in (1,\infty)$, but such reductions to "canonical" linear subspaces are not available elsewhere,    so the above reasoning is a rare "luxury" and in general one must treat arbitrary low-dimensional linear subspaces of the Banach space in question.

The above difficulty was overcome for  $\mathsf{S}_1$ in~\cite{NPS18}, where~\eqref{eq:S1 in theorem} was proven. The similarity of the lower bounds in~\eqref{eq:ell1 in theorem} and~\eqref{eq:S1 in theorem} is not coincidental.  One can view the Brinkman--Charikar example $\sub_{\mathsf{BC}}\subset \ell_1$ also as a collection of diagonal matrices in $\S_1$, and~\cite{NPS18}  treats this very same subset by strengthening the assertion of~\cite{BC05}  that $\sub_{\mathsf{BC}}$  does not well-embed into low-dimensional subspaces of $\mathsf{S_1}$ which consist entirely of diagonal matrices, to the same assertion for low-dimensional subspaces of $\mathsf{S}_1$ which are now allowed to consist of any matrices whatsoever. Using our notation for relative dimension reduction moduli, this gives the stronger assertion $\kk_n^\alpha(\ell_1,\mathsf{S}_1)\ge n^{c/\alpha^2}$.

A geometric challenge of the above discussion is that, even after one isolates a candidate $n$-point subset $\sub$ of  $\ell_1$ that is suspected not to be realizable with $O(1)$ distortion in low-dimensions (finding such a suspected intrinsically high-dimensional set  is of course a major challenge in itself), one needs to devise a way to somehow argue that if one could find a configuration of $n$ points in a low-dimensional subspace $F$ of $\ell_1$ (or $\mathsf{S}_1$) whose pairwise distances are within a fixed, but potentially very large, factor $\alpha\ge 1$ of the corresponding pairwise distances within $\sub$ itself, then this would force the ambient dimension $\dim(F)$ to be very large. In~\cite{BC05} this was achieved via a clever proof that relies on linear programming; see also~\cite{ACNNH11} for a variant of this linear programming approach in the almost isometric regime $\alpha\to 1^+$. In~\cite{Reg13} a different proof of the Brinkman--Charikar theorem was found, based on information-theoretic reasoning. Another entirely different geometric method to prove that theorem  was devised in~\cite{LN04}; see also~\cite{LMN05,JS09} for more applications of  the approach of~\cite{LN04}.

Very recently, a further geometric approach was obtained in~\cite{NPS18}, where it was used to derive a stronger statement that, as shown in~\cite{NPS18}, cannot follow from the method of~\cite{LN04} (the statement is that the $n$-point subset $\sub_{\mathsf{BC}}\subset \ell_1$ is not even an $\alpha$-Lipschitz quotient of any subset of a low dimensional subspace of $\mathsf{S}_1$; see~\cite{NPS18} for the relevant definition an a complete discussion). The approach of~\cite{NPS18} relies on an invariant that arose in the Ribe program and is called {\em Markov convexity}. Fix $q>0$. Let $\{\chi_t\}_{t\in \Z}$ be a Markov chain on a state space $\Omega$. Given an integer $k\ge 0$, denote by $\{\widetilde{\chi}_t(k)\}_{t\in \Z}$ the process that equals $\chi_t$ for time $t\le k$, and evolves independently of $\chi_t$ (with respect to the same transition probabilities) for time $t>k$. Following~\cite{LNP09}, the Markov $q$-convexity constant of a metric space $(\MM,d_\MM)$, denoted $\Pi_q(\MM)$, is the infimum over those $\Pi\in [0,\infty]$ such that for every Markov chain $\{\chi_t\}_{t\in \Z}$ on a state space $\Omega$ and every $f:\Omega\to \MM$ we have
\begin{equation*}\label{eq:def markov convexity}
\bigg(\sum_{k=1}^\infty\sum_{t\in \Z}\frac{1}{2^{qk}}\E\left[d_\MM \big(f\big(\widetilde{\chi}_t(t-2^k)\big),f(\chi_t)\big)^q\right]\bigg)^{\frac1{q}}\le \Pi\bigg(\sum_{t\in \Z} \E\Big[d_\MM\big(f(\chi_t),f(\chi_{t-1})\big)^q\Big]\bigg)^{\frac1{q}}.
\end{equation*}
By~\cite{LNP09,MN13},  a Banach space $X$ satisfies $\Pi_q(X)<\infty$ if and only if it has an equivalent norm $|||\cdot|||:X\to [0,\infty)$ whose modulus of uniform convexity has power type $q$, i.e., $|||x+y|||\le 2-\Omega_X(|||x-y|||^q)$ for every $x,y\in X$ with $|||x|||=|||y|||=1$. This completes the step in the Ribe program which corresponds to the local linear property  "$X$ admits an equivalent norm whose modulus of uniform convexity has power type $q$," and it is a refinement of the aforementioned characterization of superreflexivity in~\cite{Bourgain-trees} (which by deep results of~\cite{Enf72,Pisier-martingales} corresponds to the cruder local linear property "there is a finite $q\ge 2$ for which $X$ admits an equivalent norm whose modulus of uniform convexity has power type $q$"). By~\cite{MN13,EMN17-bifurcation}, the Brinkman--Charikar subset $\sub_{\mathsf{BC}}\subset \ell_1$ (as well as a variant of it due to Laakso~\cite{Laakso} which has~\cite{LMN05} the same non-embeddability property into low-dimensional subspaces of $\ell_1$) satisfies  $\Pi_q(\sub_{\mathsf{BC}})\gtrsim (\log n)^{1/q}$ for every $q\ge 2$ (recall that in our notation $|\sub_{\mathsf{BC}}|=n$). At the same time, it is proved in~\cite{NPS18} that $\Pi_2(F)\lesssim \sqrt{\log \dim(F)}$ for every finite dimensional subset of $\mathsf{S}_1$. It remains to contrast these asymptotic behaviors (for $q=2$) to deduce that if $\cc_F(\sub_{\mathsf{BC}})\le \alpha$, then necessarily $\dim(F)\ge n^{c/\alpha^2}$.

Prior to the forthcoming work~\cite{NY17-3dim},  the set $\sub_{\mathsf{BC}}$ (and variants thereof of a similar nature) was the only known example that demonstrates that there is no $\ell_1$ analogue of the JL-lemma. The following theorem is from~\cite{NY17-3dim}.

\begin{theorem}\label{thm:heisenberg} There is a universal constant $c\in (0,\infty)$ with the following property. For arbitrarily large $n\in \N$ there exists an $n$-point $O(1)$-doubling subset $\mathscr{H}_n$ of $\ell_1$ satisfying  $\cc_4(\mathscr{H}_n)\lesssim 1$, such that for every $\alpha\in [1,\infty)$ and every finite-dimensional linear subspace $F$ of $\S_1$, if $\mathscr{H}_n$ embeds into $F$ with distortion $\alpha$, then necessarily
\begin{equation}\label{eq:heisenberg lower}
\dim(F)\ge \exp\Big(\frac{c}{\alpha^2}\sqrt{\log n}\Big).
\end{equation}
\end{theorem}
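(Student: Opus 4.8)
The strategy is to isolate the two genuinely new ingredients and then feed them into the Markov-convexity machinery for $\S_1$ that was described above in connection with $\sub_{\mathsf{BC}}$. It suffices to construct, for arbitrarily large $n\in\N$, an $n$-point metric space $\mathscr{H}_n$ which (i) sits inside $\ell_1$, (ii) is $O(1)$-doubling, (iii) satisfies $\cc_4(\mathscr{H}_n)\lesssim 1$, and (iv) has Markov $2$-convexity constant $\Pi_2(\mathscr{H}_n)\gtrsim(\log n)^{1/4}$. Given this, the bound~\eqref{eq:heisenberg lower} drops out exactly as for $\sub_{\mathsf{BC}}$: if $\mathscr{H}_n$ embeds with distortion $\alpha$ into a finite-dimensional linear subspace $F$ of $\S_1$, then since Markov convexity constants grow at most linearly under bi-Lipschitz embeddings one has $\Pi_2(\mathscr{H}_n)\le\alpha\,\Pi_2(F)$; combining this with the estimate $\Pi_2(F)\lesssim\sqrt{\log\dim(F)}$ of~\cite{NPS18} recalled above gives $(\log n)^{1/4}\lesssim\alpha\sqrt{\log\dim(F)}$, i.e.\ $\sqrt{\log n}\lesssim\alpha^2\log\dim(F)$, which rearranges to $\dim(F)\ge\exp(c\sqrt{\log n}/\alpha^2)$. (The target is $\S_1$ rather than $\ell_1$ only because the estimate of~\cite{NPS18} holds at that generality; as $\ell_1$ embeds isometrically into $\S_1$, this is a formally stronger statement.)

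For the construction, the natural choice is to take $\mathscr{H}_n$ to be a unit-mesh net in a ball of radius $\asymp n^{1/4}$ in the first Heisenberg group $\mathbb{H}$ — so that $|\mathscr{H}_n|\asymp n$ by Ahlfors $4$-regularity — but metrized \emph{not} by the Carnot--Carath\'eodory metric $d$ (which embeds into no $L_p$, $p<\infty$), rather by the metric $\rho(x,y)=\|\Phi(x)-\Phi(y)\|_{L_1}$ pulled back from a carefully chosen $1$-Lipschitz map $\Phi:(\mathbb{H},d)\to L_1$ assembled from vertical half-space (``monotone set'') cut measures at all scales, along the lines of the positive side of the Cheeger--Kleiner cut-measure theory. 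By construction $(\mathscr{H}_n,\rho)$ is realized inside $\ell_1$, giving (i); it is $O(1)$-doubling because $\rho\lesssim d$, $(\mathbb{H},d)$ is doubling, and $\Phi$ is arranged not to collapse metric balls by more than a bounded factor, giving (ii); and (iii) is obtained by checking that the precise multi-scale behaviour of $\rho$, together with the homogeneous dimension $4$ of $\mathbb{H}$, makes $(\mathscr{H}_n,\rho)$ embed into $L_4$ with bounded distortion. The reason $\rho$ cannot simply be a snowflake $d^\theta$ of $d$ is that any snowflake of a doubling space embeds into $\ell_2$ with bounded distortion (Assouad), whence it would embed with $O(1)$ distortion into a subspace of $\S_1$ of dimension at most $(\log n)^{O(1)}$ (apply the Johnson--Lindenstrauss lemma, Theorem~\ref{thm:JL alpha}, together with the isometric inclusion $\ell_1\hookrightarrow\S_1$ and $\ell_2^k\hookrightarrow\ell_1^{O(k)}$), contradicting the theorem; thus $\rho$ must be genuinely multi-scale, comparable to $L_1$-geometry yet bounded away from $L_2$, and verifying (i)--(iii) for an explicit $\Phi$ is a delicate but essentially self-contained task.

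The heart of the theorem, and the step I expect to be the main obstacle, is the lower bound (iv) with the \emph{correct} exponent $\tfrac14$: running the trivial accounting over the $\asymp\log n$ scales present in $\mathscr{H}_n$ would only yield a worse power. The essential input is the sharp vertical-versus-horizontal perimeter inequality for $\mathbb{H}$: there is a universal constant such that every measurable $\Omega\subseteq\mathbb{H}$ of finite horizontal perimeter satisfies
\[
\Big(\sum_{k\in\Z}\Big(\frac{\mathsf{v}_\Omega(2^k)}{2^k}\Big)^{2}\Big)^{1/2}\lesssim\mathrm{Per}_{\mathbb{H}}(\Omega),
\]
where $\mathsf{v}_\Omega(t)$ measures the amount of the essential boundary of $\Omega$ detected by the vertical flow at scale $t$; the exponent $2$ in the sum over scales is sharp — no Poincar\'e-type $\ell_\infty$ substitute holds — and it is precisely this exponent, reflecting the quadratic scaling of the center of $\mathbb{H}$, that produces the power $\tfrac14$ and that has no analogue over $\R^n$. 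From this inequality, via the $L_1$ cut-measure representation, an $\alpha$-distortion embedding of $\mathscr{H}_n$ into $\ell_1$ forces a cut whose vertical behaviour violates the inequality unless $\alpha\gtrsim(\log n)^{1/4}$; promoting this from $\ell_1$ to a statement about arbitrary Markov chains, hence to a lower bound on $\Pi_2(\mathscr{H}_n)$, is carried out by running the same vertical/horizontal bookkeeping along a random walk on $\mathscr{H}_n$ which at each dyadic time scale $2^k$ forks by a vertical translation of length $\asymp 2^{2k}$. Establishing the sharp perimeter inequality itself — by a Lipschitz-image/corona-type decomposition of $\mathbb{H}$ into regions on which the cut measures arising from an $L_1$-embedding are quantitatively controlled — is by far the most technical component; once it and its Markov-chain consequence are in hand, the reduction to a lower bound on $\dim(F)$ in $\S_1$ and the verification of (i)--(iii) are comparatively routine.
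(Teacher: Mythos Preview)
The paper does not give a self-contained proof; it refers to~\cite{NY17-3dim} and lists as ingredients a new property of the $3$-dimensional Heisenberg group together with~\cite{ANT13,LN14-poincare,LN14,HN16}. But it is explicit that the argument is ``markedly different from (and more involved than) previous proofs of impossibility of dimension reduction in $\ell_1$,'' the previous proofs being exactly the Markov-convexity route of~\cite{NPS18} that you are proposing to follow. The bound $\Pi_2(F)\lesssim\sqrt{\log\dim F}$ from~\cite{NPS18} is not among the cited inputs, and the paper singles out the fact that $\cc_4(\mathscr{H}_n)\lesssim1$ forces $\sup_n\Pi_4(\mathscr{H}_n)<\infty$ as the feature making $\mathscr{H}_n$ ``qualitatively different'' from $\sub_{\mathsf{BC}}$ precisely on the Markov-convexity axis.

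The concrete gap is your step (iv). The sharp vertical-versus-horizontal perimeter inequality you invoke is an $L_1$-specific statement about cut measures of maps $\mathbb{H}\to L_1$; its direct consequence is a distortion lower bound for the Carnot--Carath\'eodory metric into $L_1$. But your $(\mathscr{H}_n,\rho)$ sits isometrically in $\ell_1$ by design, so that consequence is vacuous for $\rho$. You then propose to ``promote'' the inequality to $\Pi_2(\mathscr{H}_n)\gtrsim(\log n)^{1/4}$ by ``running the same vertical/horizontal bookkeeping along a random walk'' that forks vertically at dyadic scales --- but Markov $2$-convexity is a bi-Lipschitz invariant that constrains embeddings into \emph{every} metric space, whereas the cut-measure analysis is inseparable from the $L_1$ target structure. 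You supply no mechanism bridging these, and none is indicated in the literature; if one existed, the paper would have no reason to flag $\mathscr{H}_n$ as requiring a genuinely new argument.

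The route the paper points to instead goes through vertical-versus-horizontal \emph{Poincar\'e} inequalities for maps from the Heisenberg group into uniformly convex targets~\cite{LN14-poincare,ANT13}, sharpened in~\cite{NY17-3dim} in a way specific to the $3$-dimensional group, combined with quantitative uniform-convexity information for finite-dimensional subspaces of $\S_1$. The construction of $\mathscr{H}_n$ follows~\cite{LN14} rather than the hand-built pullback $\Phi$ you sketch. Your instinct that the quadratic vertical scaling and the $3$-versus-$5$-dimensional distinction drive the exponent $\tfrac14$ is right; the vehicle, however, is a Poincar\'e-type obstruction applied on the target side rather than a Markov-convexity blow-up of the source.
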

 See Section~\ref{sec:doubling} for the (standard) terminology "doubling" that is used in Theorem~\ref{thm:heisenberg}. While~\eqref{eq:heisenberg lower} is  weaker than the  lower bound of Brinkman--Charikar in terms of the dependence on $n$, it nevertheless rules out metric dimension reduction in $\ell_1$ (or $\mathsf{S}_1$) in which the target dimension is, say, a power of $\log n$. The example $\mathscr{H}_n$ of Theorem~\ref{thm:heisenberg} embeds with distortion $O(1)$ into $\ell_4$, and hence in particular $\sup_{n\in \N}\Pi_4(\mathscr{H}_n)\lesssim \Pi_4(\ell_4)<\infty$, by~\cite{LNP09}. This makes $\mathscr{H}_n$ be qualitatively different from all the previously known examples which exhibit the impossibility of metric dimension reduction in $\ell_1$, and as such its existence has further ramifications that answer longstanding questions; see~\cite{NY17-3dim} for a detailed discussion. The proof of Theorem~\ref{thm:heisenberg} is markedly different from (and more involved than)  previous proofs of impossibility of dimension reduction in $\ell_1$, as it relies on new geometric input (a subtle property of the $3$-dimensional Heisenberg group which fails for the $5$-dimensional Heisenberg group) that is obtained in~\cite{NY17-3dim}, in combination with results from~\cite{ANT13,LN14-poincare,LN14,HN16}; full details appear in~\cite{NY17-3dim}.

\subsection{Spaces admitting bi-Lipschitz and average metric dimension reduction}\label{sec:spaces} Say that an infinite dimensional Banach space $(X,\|\cdot\|_X)$ {\em admits  metric dimension reduction} if there is $\alpha=\alpha_X\in [1,\infty)$ such that
\begin{equation*}\label{eq:def admits}
\lim_{n\to \infty} \frac{\log \kk_n^\alpha(X)}{\log n}=0.
\end{equation*}
In other words, the requirement here is that for some $\alpha=\alpha_X\in [1,\infty)$ and every $n\in \N$, any $n$-point subset $\sub\subset X$ embeds with (bi-Lipschitz) distortion $\alpha$ into some linear subspace $F$  of $X$ with $\dim(F)=n^{o_X(1)}$.

Analogously, we say that $(X,\|\cdot\|_X)$  {\em admits  average metric dimension reduction} if there is $\alpha=\alpha_X\in [1,\infty)$ such that for any $n\in \N$ there is $k_n=n^{o_X(1)}$, i.e., $\lim_{n\to\infty} (\log k_n)/\log n=0$, such that for any $n$-point subset $\sub$ of $X$ there is a linear subspace $F$  of $X$ with $\dim(F)=k_n$ and a mapping $f:\sub\to F$ which is $\alpha$-Lipschitz, i.e., $\|f(x)-f(y)\|_X\le \alpha \|x-y\|_X$ for all all $x,y\in \sub$, yet
\begin{equation}\label{eq:av dist requirement}
\frac{1}{n^2}\sum_{x\in \sub}\sum_{y\in \sub}\|f(x)-f(y)\|_X\ge \frac{1}{n^2}\sum_{x\in \sub}\sum_{y\in \sub}\|x-y\|_X.
\end{equation}

Our choice here of the  behavior $n^{o_X(1)}$ for the target dimension is partially motivated by the available results, based on which this type of asymptotic behavior appears to be a benchmark.
We stress, however, that since the repertoire of spaces that are known to admit metric dimension reduction is currently very limited, finding any new setting in which one could prove that reducing dimension to $o_X(n)$ is possible would be a highly sought after achievement. In the same vein, finding new spaces for which one could prove a metric dimension reduction lower bound that   tends to $\infty$ faster than $\log n$  (impossibility of a JL-style guarantee) would be very interesting.

\begin{remark} In the above definition of spaces admitting average metric dimension reduction we imposed the requirement~\eqref{eq:av dist requirement} following the terminology that was introduced by Rabinovich in~\cite{Rab08}, and due to the algorithmic usefulness of this notion of embedding. However, one could also consider natural variants such as  $(\frac{1}{n^2}\sum_{x\in \sub}\sum_{y\in \sub}\|f(x)-f(y)\|_X^p)^{1/p}\ge (\frac{1}{n^2}\sum_{x\in \sub}\sum_{y\in \sub}\|x-y\|_X^q)^{1/q}$ for any $p,q\in (0,\infty]$, and much of the ensuing discussion holds mutatis mutandis in this setting as well.
\end{remark}

The only Banach spaces that are currently known to admit metric dimension reduction are Hilbert spaces and the space $\mathscr{T}^{(2)}$ of Theorem~\ref{thm:tsirelson} (and variants thereof). These examples allow for the possibility that if $(X,\|\cdot\|_X)$ admits metric dimension reduction, i.e., $\kk_n^{O_X(1)}(X)=n^{o_X(1)}$, then actually $\kk_n^{O_X(1)}(X)=O_X(\log n)$. Such a dichotomy would of course be remarkable, but there is insufficient evidence to conjecture that this is so.

The available repertoire of spaces that admit average metric dimension reduction is larger, since if $p\in [2,\infty)$, then  $\ell_p$ and even $\mathsf{S}_p$ satisfy the assumption of the following theorem, by~\cite{Maz29} and~\cite{Ric15}, respectively.

\begin{theorem}\label{thm:average in lp} Let $(X,\|\cdot\|_X)$ be an infinite dimensional Banach space with unit ball $B_X=\{x\in X:\ \|x\|_X\le 1\}$. Suppose that there is a Hilbert space $(H,\|\cdot\|_H)$ and a one-to-one mapping $f:B_X\to H$ such that $f$ is Lipschitz and $f^{-1}:f(B_X)\to X$ is uniformly continuous. Then $X$ admits average metric dimension reduction. In fact, this holds for embeddings into a subspace of logarithmic dimension, i.e., there is $\alpha=\alpha_X\in [1,\infty)$ such that for any $n\in \N$ and any $n$-point subset $\sub$ of $X$ there is a linear subspace $F$ of $X$ with $\dim(F)\lesssim \log n$ and a mapping $f:\sub\to F$ which satisfies both~\eqref{eq:av dist requirement} and $\|f(x)-f(y)\|_X\le \alpha\|x-y\|_X$ for all $x,y\in \sub$.
\end{theorem}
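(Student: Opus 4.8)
The plan is to combine three ingredients: (i) John's theorem, which controls the Euclidean distortion of any $n$-point subset of a finite-dimensional normed space; (ii) a Johnson–Lindenstrauss–type reduction inside the Hilbert space $H$; and (iii) the hypothesis that $B_X$ Lipschitz-embeds into $H$ with uniformly continuous inverse, which lets us transfer a low-dimensional near-isometric picture from $H$ back into $X$ while keeping the average distance controlled. The key point is that we do not need a bi-Lipschitz embedding back into $X$; we only need to produce a map into a low-dimensional subspace $F$ of $X$ that is $\alpha$-Lipschitz and whose average pairwise distance is at least as large as the original one, and the latter is a very weak requirement that a single well-chosen direction already witnesses.

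First I would normalize: given an $n$-point subset $\sub\subset X$, rescale so that $\sub\subset B_X$ (this costs nothing since all the conditions~\eqref{eq:av dist requirement} and Lipschitzness are scale-invariant). Apply the hypothesized map $f:B_X\to H$; then $f(\sub)$ is an $n$-point subset of a Hilbert space, so by the JL lemma (Theorem~\ref{thm:JL alpha}) it embeds with distortion, say, $2$ into $\ell_2^k$ with $k\lesssim \log n$. Composing, we get $g:\sub\to \ell_2^k$ with $\|g(x)-g(y)\|_2\lesssim \|x-y\|_X$ (from Lipschitzness of $f$ and the distortion bound) and $\|g(x)-g(y)\|_2\gtrsim \omega(\|x-y\|_X)$, where $\omega$ is (a multiple of) the inverse modulus of uniform continuity of $f^{-1}$; this is exactly a coarse embedding of $\sub$ into $\ell_2^k$. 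Now embed $\ell_2^k$ linearly and isometrically into $X$? That is not possible in general, so instead I would argue as follows: $\ell_2^k$ admits a bi-Lipschitz embedding into $X$ with distortion $O(\sqrt{k})$ is \emph{still} not automatic; what \emph{is} automatic is that we can route through any $k$-dimensional subspace of $X$ and use John. Concretely, pick any $k$-dimensional subspace $F\subset X$; its Banach–Mazur distance to $\ell_2^k$ is at most $\sqrt{k}$ (John), so there is a linear isomorphism $T:\ell_2^k\to F$ with $\|v\|_2\le \|Tv\|_X\le \sqrt{k}\,\|v\|_2$. Then $h\eqdef T\circ g:\sub\to F$ satisfies $\|h(x)-h(y)\|_X\le \sqrt{k}\,\|g(x)-g(y)\|_2\lesssim \sqrt{k}\,\|x-y\|_X$, and $\|h(x)-h(y)\|_X\ge \|g(x)-g(y)\|_2\gtrsim \omega(\|x-y\|_X)$.

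This already gives $\dim(F)\lesssim \log n$, but the Lipschitz constant is $O(\sqrt{k})=O(\sqrt{\log n})$, not $O(1)$, and the lower bound is only coarse, not a constant-factor lower bound on distances. To fix the Lipschitz constant, rescale $h$ by $1/\sqrt{k}$: now it is $O(1)$-Lipschitz. It remains to arrange the average-distance inequality~\eqref{eq:av dist requirement}. Here is where the coarse lower bound suffices: since $\sub$ has $n$ points in $B_X$, its pairwise distances are bounded by $2$, and a positive fraction of pairs are at distance bounded below by some absolute constant $\delta_0$ depending only on how "spread out" a worst-case configuration can be—actually one must be a little careful, since a configuration could be concentrated near a single point. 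The clean fix is to first reduce to the case where $\frac{1}{n^2}\sum_{x,y}\|x-y\|_X$ is comparable to $\mathrm{diam}(\sub)$ by a standard splitting-into-two-well-separated-halves argument, or, more simply, to add one extra "spreading" coordinate: append to $h$ the rank-one map $x\mapsto \lambda\,\ell(x)\,u$ for a fixed unit vector $u\in X$ and a norm-one functional $\ell$ chosen (via Hahn–Banach and an averaging/pigeonhole over pairs) so that $\frac{1}{n^2}\sum_{x,y}|\ell(x)-\ell(y)|\gtrsim \frac{1}{n^2}\sum_{x,y}\|x-y\|_X$; tuning $\lambda$ to a suitable constant makes the final average at least $\frac{1}{n^2}\sum_{x,y}\|x-y\|_X$ while keeping the map $O(1)$-Lipschitz, at the cost of raising the dimension of $F$ by $1$.

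The main obstacle I expect is precisely this last normalization step: ensuring the average-distance lower bound~\eqref{eq:av dist requirement} holds \emph{with constant $1$ on the nose} (not just up to a constant factor) while simultaneously keeping the Lipschitz constant an absolute constant independent of $n$ and of $\sub$. The coarse embedding coming from $f$ and the JL lemma only preserves distances up to a nonlinear modulus $\omega$, so on very small scales the image may be badly contracted; the rank-one "spreading direction" trick sidesteps this by contributing directly to the \emph{average} rather than to individual distances, and since we are free to scale that single coordinate independently, matching the constant exactly is then just choosing $\lambda$. A secondary point to verify carefully is that $\dim(F)\lesssim \log n$ survives all of this: the JL target has dimension $O(\log n)$, John is applied in that same dimension, and we add only $O(1)$ extra coordinates, so the bound is preserved. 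The existence of the functional $\ell$ with the stated averaging property is an elementary consequence of the fact that $\frac{1}{n^2}\sum_{x,y}\|x-y\|_X = \frac{1}{n^2}\sum_{x,y}\sup_{\|\ell\|\le 1}|\ell(x)-\ell(y)|$ together with a compactness/pigeonhole argument on the finite set $\sub$.
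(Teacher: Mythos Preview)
The rank-one ``spreading direction'' step contains a genuine error. You claim that for any finite $\sub\subset X$ there is a norm-one functional $\ell\in X^*$ with $\tfrac{1}{n^2}\sum_{x,y\in\sub}|\ell(x)-\ell(y)|\gtrsim \tfrac{1}{n^2}\sum_{x,y\in\sub}\|x-y\|_X$, arguing from $\|x-y\|_X=\sup_{\|\ell\|\le 1}|\ell(x)-\ell(y)|$ ``together with a compactness/pigeonhole argument.'' But the supremum is attained at a \emph{different} functional for each pair; no single $\ell$ need work. Concretely, take $X=\ell_2$ (which trivially satisfies the hypothesis) and $\sub=\{e_1,\dots,e_n\}$: the right-hand average is $\asymp 1$, while for any unit $\ell=(\ell_i)$ one has $\tfrac{1}{n^2}\sum_{i,j}|\ell_i-\ell_j|\le \tfrac{1}{n}\bigl(\sum_{i,j}(\ell_i-\ell_j)^2\bigr)^{1/2}\le\sqrt{2/n}$. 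So the rank-one correction contributes at most $O(\lambda/\sqrt{n})$ to the average and cannot repair~\eqref{eq:av dist requirement} without sending the Lipschitz constant $\lambda$ to infinity with $n$. (Note that if such an $\ell$ did exist, the map $x\mapsto\ell(x)u$ alone would already give a $1$-dimensional solution and the rest of your argument would be superfluous.)

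This is not a technicality: the heart of the theorem is precisely that the hypothesis on $X$ yields an $O_X(1)$-Lipschitz map $\sub\to\ell_2$ which \emph{already satisfies}~\eqref{eq:av dist requirement}. Your route only extracts a coarse lower bound $\|g(x)-g(y)\|_2\gtrsim\omega(\|x-y\|_X)$, and a coarse embedding into Hilbert space does not in general give bounded average distortion (if $\omega(t)\asymp t^2$, say, and all pairwise distances in $\sub$ are $\e$, the averaged lower bound degrades by a factor $\e$). The paper outsources this nontrivial conversion to \cite[Proposition~7.5 and Lemma~7.6]{Nao14}; only after those results produce an average-distortion embedding into $\ell_2$ does one apply the JL lemma to land in $\ell_2^k$ with $k\lesssim\log n$, and then Dvoretzky's theorem---not John's on an arbitrary subspace, so there is no $\sqrt{k}$ loss at all, since the infinite-dimensional $X$ contains a $2$-isomorphic copy of $\ell_2^k$---to place $\ell_2^k$ back inside $X$.
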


\begin{proof}This statement is implicit in~\cite{Nao14}. By combining~\cite[Proposition~7.5]{Nao14} and~\cite[Lemma~7.6]{Nao14} there is a $O_X(1)$-Lipschitz mapping $f:\sub\to \ell_2$ which satisfies $\frac{1}{n^2}\sum_{x\in \sub}\sum_{y\in \sub} \|f(x)-f(y)\|_{2}\ge \frac{1}{n^2}\sum_{x\in \sub}\sum_{y\in \sub} \|x-y\|_X$. By the JL lemma we may assume that $f$ actually takes values in $\ell_2^k$ for some $k\lesssim \log n$. Since $X$ is infinite dimensional, Dvoretzky's theorem~\cite{Dvo60} ensures that $\ell_2^k$ is $2$-isomorphic to a $k$-dimensional subspace $F$ of $X$.
\end{proof}

\begin{remark} By~\cite{Maz29}, for $p\in [2,\infty)$ the assumption of Theorem~\ref{thm:average in lp} holds for $X=\ell_p$. An inspection of the proofs in~\cite{Nao14} reveals that the dependence of the Lipschitz constant $\alpha=\alpha_p$ on $p$ that Theorem~\ref{thm:average in lp} provides for $X=\ell_p$ grows to $\infty$ exponentially with $p$. As argued in~\cite[Section~5.1]{Nao14} (using metric cotype), this exponential behavior is unavoidable using the above proof. However, in this special case a more sophisticated argument of~\cite{Nao14} yields $\alpha_p\lesssim p^{5/2}$; see equation (7.40) in~\cite{Nao14}. Motivated by~\cite[Corollary~1.6]{Nao14}, we conjecture that this could be improved to $\alpha_p\lesssim p$, and there is some indication (see~\cite[Lemma~1.11]{Nao14}) that this would be sharp.
\end{remark}

Prior to~\cite{Nao17}, it was not known if there exists a Banach space which fails to admit average metric dimension reduction. Now we know (Theorem~\ref{thm:average distortion}) that $\ell_\infty$ fails to admit average metric dimension reduction, and therefore also any universal Banach space fails to admit average metric dimension reduction. A fortiori, the same is true also for (non-average) metric dimension reduction, but this statement follows from the older  work~\cite{Mat96}. Failure of average metric dimension reduction is not known for {\em any} non-universal (finite cotype) Banach space, and it would be very interesting to provide such an example. By~\cite{BC05,NPS18} we know that $\ell_1$ and $\mathsf{S}_1$ fail to admit  metric dimension reduction, but this is not known for average distortion, thus leading to the following question.

\begin{question}\label{Q:l1 average} Does $\ell_1$ admit average metric dimension reduction? Does $\ell_p$ have this property for any $p\in [1,2)$?
\end{question}
All of the available examples of $n$-point subsets of $\ell_1$ for which  the $\ell_1$ analogue of the JL lemma fails (namely if $k=O(\log n)$, then they do not embed with $O(1)$ distortion into $\ell_1^{k}$) actually embed into the real line $\R$ with $O(1)$ average distortion; this follows from~\cite{Rab08}. Specifically, the examples in~\cite{BC05,LMN05} are the shortest-path metric on planar graphs, and the example in Theorem~\ref{thm:heisenberg} is $O(1)$-doubling, and both of these classes of metric spaces are covered by~\cite{Rab08}; see also~\cite[Section~7]{Nao14} for generalizations. Thus, the various known proofs which demonstrate that the available examples cannot be embedded into a low dimensional subspace of $\ell_1$ argue that any such low-dimensional embedding must highly distort  some distance, but this is not so for a typical distance. A negative answer to Question~\ref{Q:l1 average} would therefore require a substantially new type of construction which exhibits a much more "diffuse" intrinsic high-dimensionality despite it being a subset of $\ell_1$.  In the reverse direction, a positive answer to Question~\ref{Q:l1 average}, beyond its intrinsic geometric/structural interest, could have algorithmic applications.

\subsubsection{Lack of stability under projective tensor products} Prior to the recent work~\cite{NPS18}, it was unknown whether the property of admitting metric dimension reduction is preserved under projective tensor products.
\begin{corollary}\label{coro:S1} There exist Banach spaces $X,Y$ that admit metric dimension reduction yet $X\tp Y$ does not.
\end{corollary}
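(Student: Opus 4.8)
The plan is to take $X=Y=\ell_2$ and to read the conclusion off from facts already recorded above; no genuinely new argument is needed, only their assembly. First I would invoke the JL lemma (Theorem~\ref{thm:JL alpha}) in the quantitative form~\eqref{eq:JL in theporem}: for every fixed $\alpha>1$ we have $\kk_n^\alpha(\ell_2)\lesssim_\alpha\log n$, so $\log\kk_n^\alpha(\ell_2)/\log n\to 0$ as $n\to\infty$. Hence $\ell_2$ admits metric dimension reduction (with any fixed $\alpha_{\ell_2}>1$); the same is of course true of the second factor $Y=\ell_2$.

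Next I would use the canonical identification $\ell_2\tp\ell_2\cong\S_1$ of the projective tensor square of a Hilbert space with the Schatten--von Neumann trace class $\S_1$, recorded in the discussion of projective tensor products above. Thus for $X=Y=\ell_2$ the space $X\tp Y$ is, isometrically, $\S_1$. Finally I would appeal to the lower bound~\eqref{eq:S1 in theorem} of~\cite{NPS18}: there is a universal constant $c>0$ such that $\kk_n^\alpha(\S_1)\gtrsim n^{c/\alpha^2}$ for every $\alpha\ge1$ and every integer $n\ge20$. Consequently, for each fixed $\alpha\ge1$,
\[
\liminf_{n\to\infty}\frac{\log\kk_n^\alpha(\S_1)}{\log n}\ \ge\ \frac{c}{\alpha^2}\ >\ 0,
\]
so no value of $\alpha$ makes the defining limit vanish; that is, $\S_1$, and therefore $\ell_2\tp\ell_2$, fails to admit metric dimension reduction. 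Combining these observations proves the corollary, and in particular answers negatively the question of whether admitting metric dimension reduction passes to projective tensor products.

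As for the main obstacle: in this formulation there is essentially none, since the entire difficulty has been externalized to~\eqref{eq:S1 in theorem}, whose proof is the genuine input --- it contrasts the Markov $2$-convexity estimate $\Pi_2(F)\lesssim\sqrt{\log\dim(F)}$ valid for every finite-dimensional subspace $F\subset\S_1$ against the lower bound $\Pi_2(\sub_{\mathsf{BC}})\gtrsim\sqrt{\log n}$ for the Brinkman--Charikar configuration viewed as a set of diagonal matrices in $\S_1$. Granting that theorem, the only conceptual content of the corollary is the observation that Hilbert space trivially has the property (by the JL lemma) while its projective square $\S_1$ is known to lack it, which makes $X=Y=\ell_2$ the natural choice of witnesses.
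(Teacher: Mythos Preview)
Your proposal is correct and is exactly the argument the paper gives: take $X=Y=\ell_2$, invoke the JL lemma to see that $\ell_2$ admits metric dimension reduction, identify $\ell_2\tp\ell_2$ with $\S_1$, and use the lower bound~\eqref{eq:S1 in theorem} from~\cite{NPS18} to conclude that $\S_1$ does not.
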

Since $\S_1$ is isometric to $\ell_2\tp\ell_2$ and~\cite{NPS18} establishes that $\S_1$ fails to admit metric dimension reduction, together with the JL lemma this implies  Corollary~\ref{coro:S1} (we can thus even have $X=Y$ and $\kk_n^\alpha(n)\lesssim_\alpha \log n$ for all $\alpha>1$).

Since we do not know whether $\S_1$ admits average metric dimension reduction (the above comments pertaining to Question~\ref{Q:l1 average} are valid also for $\S_1$), the analogue of Corollary~\ref{coro:S1}  for average metric dimension reduction was previously unknown. Here we note  the following statement, whose proof is a somewhat curious argument.

\begin{theorem}\label{thm:tensor average} There exist Banach spaces $X,Y$ that admit average metric dimension reduction yet $X\tp Y$ does not. Moreover, for every $p\in (2,\infty)$ we can take here $X=\ell_p$.
\end{theorem}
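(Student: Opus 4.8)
The plan is to force $X\tp Y$ to be \emph{universal}. As recorded after Theorem~\ref{thm:average distortion}, every universal Banach space fails to admit average metric dimension reduction (because $\ell_\infty$ does), so it suffices to produce Banach spaces $X$ and $Y$ that each admit average metric dimension reduction while $X\tp Y$ is universal. For the stronger assertion with $X=\ell_p$, fix $p\in(2,\infty)$, put $q=\max\{3,\tfrac{2p}{p-1}\}$ (so $q\ge 3$), and set $X=\ell_p$ and $Y=\ell_q\tp\ell_q$. A direct check gives $\tfrac1p+\tfrac1q+\tfrac1q\le 1$, so by~\cite{BNR12} (together with the associativity of $\tp$) the space $X\tp Y=\ell_p\tp\ell_q\tp\ell_q$ is universal, and hence it does not admit average metric dimension reduction. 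The theorem will follow once we verify that $\ell_p$ and $\ell_q\tp\ell_q$ each admit average metric dimension reduction.

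For $\ell_p$ with $p\ge 2$ this is immediate: by~\cite{Maz29} the Mazur map restricts to a Lipschitz bijection of $B_{\ell_p}$ onto a subset of $B_{\ell_2}$ whose inverse is H\"older continuous, which is exactly the hypothesis of Theorem~\ref{thm:average in lp}; hence $\ell_p$ admits average metric dimension reduction, even into subspaces of logarithmic dimension.

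The substance of the proof is the claim that $Y=\ell_q\tp\ell_q$ admits average metric dimension reduction, and here I would once more verify the hypothesis of Theorem~\ref{thm:average in lp}: construct a Hilbert space $H$ together with a one-to-one Lipschitz map $f\colon B_{\ell_q\tp\ell_q}\to H$ whose inverse is uniformly continuous. This is the ``curious'' ingredient of the argument. Since $\ell_q\tp\ell_q$ has no unconditional basis --- as is classical already for $\S_1=\ell_2\tp\ell_2$ --- there is no coordinatewise Mazur map to transplant, so $f$ has to be built from the finer structure of projective tensor products of $\ell_s$-spaces with $s>2$: for instance by first embedding $B_{\ell_q\tp\ell_q}$, via a Lipschitz map with uniformly continuous inverse, into some $L_s$-space and then composing with a coarse embedding of $L_s$ into Hilbert space, or else by exploiting an appropriate convexity/smoothness or finite-cotype property that $\ell_q\tp\ell_q$ enjoys. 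It is essential here that $q>2$: the construction must degenerate as $q\to 2$, for at $q=2$ the space becomes $\S_1$, and whether $\S_1$ admits average metric dimension reduction is a well-known open problem.

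The main obstacle is exactly this last step --- producing the Lipschitz embedding of $B_{\ell_q\tp\ell_q}$ into Hilbert space. Once it is in hand, $X=\ell_p$ and $Y=\ell_q\tp\ell_q$ both admit average metric dimension reduction whereas $X\tp Y$ does not, which proves the theorem for every $p\in(2,\infty)$; the fact that the Lipschitz constant and the target dimension for $Y$ depend on $q$, hence on $p$, does not affect this qualitative conclusion.
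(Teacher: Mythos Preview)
Your proposal has a genuine gap at exactly the step you flag as ``the main obstacle'': you need $\ell_q\tp\ell_q$ to admit average metric dimension reduction, and you supply no argument. Worse, the route you suggest---verifying the hypothesis of Theorem~\ref{thm:average in lp}---cannot succeed. The paper records (in the paragraph following Question~\ref{Q:l3}) that $\S_1$, and hence $\ell_1$, is finitely representable in $\ell_3\tp\ell_3$, and that no Banach space in which $\ell_1$ is finitely representable can satisfy the hypothesis of Theorem~\ref{thm:average in lp}; the same obstruction is present in $\ell_q\tp\ell_q$ for every $q\ge 2$. So the Lipschitz map $f\colon B_{\ell_q\tp\ell_q}\to H$ with uniformly continuous inverse that you seek does not exist, for any $q>2$. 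Whether $\ell_q\tp\ell_q$ admits average metric dimension reduction by some other method is open: for $q=3$ (the value your parametrization produces for every $p\ge 3$) this is precisely Question~\ref{Q:l3}.

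The paper's proof bypasses this open problem with a case analysis, which is the ``curious'' feature. Choose $q\ge 2$ with $\tfrac{2}{p}+\tfrac{1}{q}\le 1$, so that $\ell_p\tp\ell_p\tp\ell_q$ is universal by~\cite{BNR12} and hence fails average metric dimension reduction. Now ask whether $\ell_p\tp\ell_q$ admits average metric dimension reduction. If not, take $Y=\ell_q$: both $\ell_p$ and $\ell_q$ admit it by Theorem~\ref{thm:average in lp} (since $p,q\ge 2$), while $X\tp Y=\ell_p\tp\ell_q$ does not. If yes, take $Y=\ell_p\tp\ell_q$: then $X\tp Y=\ell_p\tp\ell_p\tp\ell_q$ is universal. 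Either way $X=\ell_p$ works, and the argument establishes the existence of $Y$ without deciding which of the two candidates it is.
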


\begin{proof} By~\cite{BNR12} (which relies on major input from the theory of locally decodable codes~\cite{Efr09}  and an important inequality of Pisier~\cite{Pis80}), the $3$-fold product $\ell_3\tp\ell_3\tp\ell_3$ is universal. So, by the recent work~\cite{Nao17} (Theorem~\ref{thm:average distortion}), $\ell_3\tp\ell_3\tp\ell_3$ does not admit average metric dimension reduction. At the same time, by Theorem~\ref{thm:average in lp} we know that $\ell_3$ admits average metric dimension reduction. So, if $\ell_3\tp\ell_3$ fails to admit average metric dimension reduction, then we can take $X=Y=\ell_3$ in Theorem~\ref{thm:tensor average}. Otherwise,  $\ell_3\tp\ell_3$ does admit average metric dimension reduction, in which case we can take $X=\ell_3$ and $Y=\ell_3\tp\ell_3$. Thus, in either of the above two cases, the first assertion of Theorem~\ref{thm:tensor average} holds true. The second assertion of Theorem~\ref{thm:tensor average} follows by repeating this argument using the fact~\cite{BNR12} that $\ell_p\tp \ell_p\tp  \ell_{q}$ is universal if $2/p+1/q\le 1$, or equivalently $q\ge p/(p-2)$. If we choose, say, $q=\max\{2,p/(p-2)\}$, then by Theorem~\ref{thm:average in lp} we know that both $\ell_p$ and $\ell_q$ admit average metric dimension reduction, while $\ell_p\tp \ell_p\tp  \ell_{q}$ does not. So, the second assertion of Theorem~\ref{thm:tensor average} holds for either $Y=\ell_p$ or $Y=\ell_p\tp\ell_q$.
\end{proof}

The proof of Theorem~\ref{thm:tensor average} establishes  that at least one of the  pairs $(X=\ell_3,Y=\ell_3)$ or $(X=\ell_3,Y=\ell_3\tp\ell_3)$ satisfies its conclusion, but it gives no indication which of these two options occurs. This naturally leads to

\begin{question}\label{Q:l3} Does $\ell_3\tp\ell_3$ admit average metric dimension reduction?
\end{question}
A positive answer to Question~\ref{Q:l3} would yield a new space that admits average metric dimension reduction. In order to claim that $\ell_3\tp\ell_3$ is indeed new in this context, one must show that it does not satisfy the assumption of Theorem~\ref{thm:average in lp}. This is so because $\S_1$ (hence also $\ell_1$) is finitely representable in $\ell_3\tp \ell_3$; see e.g.~\cite[page~61]{DFS03}. The fact that no Banach space in which $\ell_1$ is finitely representably satisfies the assumption of Theorem~\ref{thm:average in lp} follows by combining~\cite[Lemma~1.12]{Nao14}, \cite[Proposition~7.5]{Nao14}, and~\cite[Lemma~7.6]{Nao14}. This also shows that a positive answer to Question~\ref{Q:l3} would imply that any $n$-point subset of $\ell_1$ (or $\S_1$) embeds with $O(1)$ average distortion into some normed space (a  subspace of $\ell_3\tp\ell_3$) of dimension $n^{o(1)}$, which is a statement in the spirit of Question~\ref{Q:l1 average}. If the answer to Question~\ref{Q:l3}  were negative, then $\ell_3\tp\ell_3$  would be the first example of a  non-universal space which fails to admit average metric dimension reduction, because Pisier proved~\cite{Pis92,Pis-92-clapem} that $\ell_3\tp\ell_3$ is not universal.

Another question that arises naturally from Theorem~\ref{thm:tensor average}  is whether its conclusion holds true also for $p=2$.

\begin{question} Is there a Banach space $Y$ that admits average metric dimension reduction yet $\ell_2\tp Y$ does not?
\end{question}

\subsubsection{Wasserstein spaces} Let $(\MM,d_\MM)$ be a metric space  and $p\in [1,\infty)$. The Wasserstein space $\mathsf{P}_p(\MM)$ is not a Banach space, but there is a natural version of the metric dimension reduction question in this context.

\begin{question}\label{Q:wasserstein}
 Fix $\alpha>1$, $n\in \N$ and $\mu_1,\ldots,\mu_n\in \mathsf{P}_p(\MM)$. What is the asymptotic behavior of the smallest $k\in \N$ for which there is $\mathcal{S}\subset \MM$ with $|\mathcal{S}|\le k$ such that $(\{\mu_1,\ldots,\mu_n\},\W_p)$ embeds with distortion $\alpha$ into $\mathsf{P}_p(\mathcal{S})$?
\end{question}

Spaces of measures with the Wasserstein metric $\W_p$ are of major importance in pure and applied
 mathematics,  as well as in computer science (mainly for $p=1$, where they are used in graphics and vision, but also for other values of $p$; see e.g.~the discussion in~\cite{ANN16-ICALP}). However, their bi-Lipschitz structure is poorly understood, especially so in the above context of metric dimension reduction. If $k$ were small in Question~\ref{Q:wasserstein}, then this would give a way to "compress" collections of measures using measures with small support while approximately preserving  Wasserstein distances. In the context of, say, image retrieval (mainly $\MM=\n^2\subset \R^2$ and $p=1$), this could be viewed as obtaining representations of images using  a small number of "pixels."

Charikar~\cite{Cha02} and Indyk--Thaper~\cite{IT03} proved that if $\MM$ is a finite metric space, then $\mathsf{P}_1(\MM)$ embeds into $\ell_1$ with distortion $O(\log |\MM|)$. Hence, if the answer to Question~\ref{Q:wasserstein} were $k=n^{o(1)}$ for some $\alpha=O(1)$, then it would follow that any $n$-point subset of $\mathsf{P}_1(\MM)$ embeds into $\ell_1$ with distortion $o(\log n)$, i.e., better distortion than the general bound that is provided by Bourgain's embedding theorem (actually the $\ell_1$-variant of that theorem, which is also known to be sharp in general~\cite{LLR}). This shows that one cannot hope to answer  Question~\ref{Q:wasserstein} with $k=n^{o(1)}$ and $\alpha=O(1)$ without imposing geometric  restrictions on the underlying metric space $\MM$, since if $(\MM=\{x_1,\ldots,x_n\},d_\MM)$ is a metric space for which $\cc_1(\MM)\asymp \log n$, then we can take $\mu_1,\ldots,\mu_n$ to be the point masses $\d_{x_1},\ldots,\d_{x_n}$, so that $(\{\mu_1,\ldots,\mu_n\},\W_1)$ is isometric to $(\MM,d_\MM)$. The pertinent issue is therefore to study Question~\ref{Q:wasserstein} when the $\MM$ is "nice." For example, sufficiently good bounds here for $\MM=\R^2$ would be relevant to Question~\ref{Q:bourgain universality}, but at this juncture such a potential approach  to Question~\ref{Q:bourgain universality} is quite speculative.

The above "problematic" example relied inherently on the fact that the underlying metric space $\MM$ is itself far from being embeddable in $\ell_1$, but the difficulty persists even when $\MM=\ell_1$. Indeed, we recalled in Question~\ref{Q:bourgain universality} that Bourgain proved~~\cite{Bourgain-trees} that $\mathsf{P}_1(\ell_1)$ is universal, and hence the spaces of either Theorem~\ref{thm:coarse matousek}  or Theorem~\ref{thm:average distortion}  embed into $\mathsf{P}_1(\ell_1)$  with $O(1)$ distortion. So, for arbitrarily large $n\in \N$ we can find probability measures $\mu_1,\ldots,\mu_n$ on $\ell_1$ (actually on a sufficiently high dimensional Hamming cube $\{0,1\}^N$) such that $(\{\mu_1,\ldots,\mu_n\},\W_1)$ does not admit a good embedding into any normed space of dimension $n^{o(1)}$. This rules out an answer of $k=n^{o(1)}$  to Question~\ref{Q:wasserstein} (even for average distortion) for $(\mu_1,\ldots,\mu_n,\W_1)$, because in the setting of  Question~\ref{Q:wasserstein}, while  $\mathsf{P}_1(\mathcal{S})$ is not a normed space, it embeds isometrically into a normed space of dimension $|\mathcal{S}|-1$ (the dual of the mean-zero Lipschitz functions on $(\mathcal{S},d_\MM)$; see e.g.~\cite{NS07,Vil09} for an explanation of this standard fact). In the case of average distortion, one could see this using a different approach of Khot and the author~\cite{KN06} which constructs a collection of $n=e^{O(d)}$ probability measures on the Hamming cube $\{0,1\}^d$ that satisfy the conclusion of Theorem~\ref{thm:average distortion}, as explained in~\cite[Remark~5]{Nao17}. This shows that even though these $n$ probability measures reside on a Hamming cube of dimension $O(\log n)$, one cannot realize their Wasserstein-1 geometry  with $O(1)$ distortion (even on average) in any normed space of dimension $n^{o(1)}$, let alone in $\mathsf{P}_1(\mathcal{S})$ with $|\mathcal{S}|=n^{o(1)}$.

It is therefore natural to investigate Question~\ref{Q:wasserstein} when $\MM$ is low-dimensional. When $p=1$, this remains  an (important) uncharted terrain. When $p>1$ and $\MM=\R^3$, partial information on Question~\ref{Q:wasserstein} follows from~\cite{ANN15}. To see this, focus for concreteness on the case $p=2$. Fix $\alpha\ge 1$ and $n\in \N$. Suppose that $(\NN,d_\NN)$ is an $n$-point metric space for which the conclusion of Theorem~\ref{thm:coarse matousek}  holds true with $\omega(t)=\sqrt{t}$ and $\Omega(t)=2\alpha\sqrt{t}$. By~\cite{ANN15}, the metric space $(\NN,\sqrt{d_\NN})$ embeds with distortion $2$ into $\mathsf{P}_2(\R^3)$, where $\R^3$ is equipped with the standard Euclidean metric. Hence, if the image under this embedding    of $\NN$ in $\mathsf{P}_2(\R^3)$ embedded into some $k$-dimensional normed space with distortion $\alpha$, then by Theorem~\ref{thm:coarse matousek}  necessarily $k\ge n^{c/\alpha^2}$ for some universal constant $c$. This does not address Question~\ref{Q:wasserstein} as stated, because to the best of our knowledge it is not known whether $\mathsf{P}_2(\mathcal{S})$ embeds with $O(1)$ distortion into some "low-dimensional" normed space for every "small" $\mathcal{S}\subset \R^3$ (the relation between "small" and "low-dimensional"  remains to be studied). In the case of average distortion, repeat this argument with $\NN$ now being the metric space  of Theorem~\ref{thm:average distortion}.  By Remark~\ref{rem:for average snowflake} below, if the image in $\mathsf{P}_2(\R^3)$ of $(\NN,\sqrt{d_\NN})$ embedded with average distortion $\alpha$ into some $k$-dimensional normed space, then necessarily $k\ge\exp(\frac{c}{\alpha}\sqrt{\log n})$.

\section{Finite subsets of Hilbert space}\label{sec:JL}

The article~\cite{JL84} of Johnson and Lindenstrauss is devoted to proving a  theorem on the extension of Lipschitz functions from finite subsets of metric spaces.\footnote{Stating this theorem here would be an unnecessary digression, but we highly recommend examining the accessible geometric result of~\cite{JL84}; see~\cite{NR17} for  a review of the current state of the art on Lipschitz extension from finite subsets.} Over the ensuing decades, the classic~\cite{JL84} attained widespread prominence outside the rich literature on the {\em Lipschitz extension problem}, due to  two components of~\cite{JL84} that had major conceptual significance and influence, but are technically simpler than the  proof of its main theorem.

The first of these components is the JL lemma, which we already stated in the Introduction. Despite its wide acclaim and applicability, this result is commonly called a "lemma" rather than a "theorem" because within the context of~\cite{JL84} it was just that, i.e., a relatively simple step toward the proof of the main theorem of~\cite{JL84}.

The second of these components is a section of~\cite{JL84} that is devoted to formulating open problems in the context of the Ribe program; we already described a couple of the questions that were raised there, but it contains more questions that proved to be remarkably insightful and had major impact on subsequent research (see e.g.~\cite{Bal92,NPSS06}). Despite its importance,  the impact of~\cite{JL84} on the Ribe program will not be pursued further in the present article, but we will next proceed to study the JL lemma in detail (including some new observations).

Recalling Theorem~\ref{thm:JL alpha}, the JL lemma~\cite{JL84} asserts that for every integer $n\ge 2$ and (distortion/error tolerance) $\alpha\in (1,\infty)$, if  $x_1,\ldots,x_n$ are distinct vectors in a Hilbert space $(H,\|\cdot\|_H)$, then there exists (a target dimension) $k\in \n$  and a new $n$-tuple of $k$-dimensional vectors $y_1,\ldots,y_n\in \R^k$ such that
\begin{equation}\label{eq:JL conclusion1}
k\lesssim_\alpha \log n,
\end{equation}
and the assignment $x_i\mapsto y_i$, viewed as a mapping into $\ell_2^k$, has distortion at most $\alpha$, i.e.,
\begin{equation}\label{eq:JL conclusion2}
\forall\, i,j\in\n,\qquad\|x_i-x_j\|_H\le \|y_i-y_j\|_{\ell_2^k}\le \alpha\|x_i-x_j\|_H.
\end{equation}

It is instructive to  take note of the "compression" that this statement entails. By tracking the numerical value of the target dimension $k$ that the proof in Section~\ref{sec:JL opt} below yields (see Remark~\ref{rem:constants}), one concludes that given an arbitrary collection of, say, a billion vectors of length a billion (i.e., $1\,000\,000\,000$ elements of $\R^{1\,000\,000\,000}$), one can find a billion vectors of length $329$ (i.e., elements of $\R^{329}$), all of whose pairwise distances are within a factor  $2$ of the corresponding pairwise distances among the initial configuration of billion-dimensional
vectors. Furthermore, if one wishes to maintain the pairwise distances of those billion vectors within a somewhat larger constant factor, say, a factor of $10$ or $450$, then one could do so in dimension $37$ or $9$, respectively.

The logarithmic dependence on $n$ in~\eqref{eq:JL conclusion1} is optimal, up to the value of the implicit ($\alpha$-dependent) constant factor. This is so even when one considers the special case when $x_1,\ldots,x_n\in H$ are the vertices of the standard $(n-1)$-simplex, i.e., $\|x_i-x_j\|_H=1$ for all distinct $i,j\in \n$, and even when one allows the Euclidean norm in~\eqref{eq:JL conclusion2} to be  replaced by any norm $\|\cdot\|:\R^k\to [0,\infty)$, namely if instead of~\eqref{eq:JL conclusion2} we have $1\le \|y_i-y_j\|\le \alpha$ for all distinct $i,j\in \n$. Indeed, denote the unit ball of $\|\cdot\|$  by $B=\{z\in \R^k:\ \|z\|\le 1\}$ and let $\vol_k(\cdot)$ be the Lebesgue measure on $\R^k$. If $i,j\in \n$ are distinct, then by the triangle inequality the assumed lower bound $\|y_i-y_j\|\ge 1$ implies that the interiors of $y_i+\frac12 B$ and $y_j+\frac12 B$ are disjoint. Hence, if we denote $A=\bigcup_{i=1}^n(y_i+\frac12 B)$, then $\vol_k(A)=\sum_{i=1}^n \vol_k(y_i+\frac12 B)=\frac{n}{2^k}\vol_k(B)$. At the same time, for every $u,v\in A$ there are $i,j\in \n$ for which $u\in y_i+\frac12 B$ and $v\in y_j+\frac12 B$, so by another application of the triangle inequality we have $\|u-v\|\le \|y_i-y_j\|+1\le \alpha+1$. This implies that $A-A\subset (\alpha+1)B$. Hence,
\begin{equation*}\label{eq:use BM}
(\alpha+1)\sqrt[k]{\vol_k(B)}=\sqrt[k]{\vol_k((\alpha+1)B)}\ge \sqrt[k]{\vol_k(A-A)}\ge 2\sqrt[k]{\vol_k(A)} =\sqrt[k]{n\vol_k(B)},
\end{equation*}
where the penultimate step uses the Brunn--Minkowski inequality~\cite{Sch14}.  This simplifies to give
\begin{equation}\label{eq:k lower log general norm}
k\ge \frac{\log n}{\log(\alpha+1)}.
\end{equation}
By~\cite{Bra99,Dek00},  the vertices of $(n-1)$-simplex embed isometrically into any infinite dimensional Banach space, so we have thus justified the bound~\eqref{lower brunnminkowksi intro}, and hence in particular the first lower bound on $\kk_n^\alpha(\ell_2)$ in~\eqref{eq:JL in theporem}. As we already explained, the second lower bound (for the almost-isometric regime) on $\kk_n^\alpha(\ell_2)$ in~\eqref{eq:JL in theporem} is due to the very recent work~\cite{GN17}. The upper bound on $\kk_n^\alpha(\ell_2)$ in~\eqref{eq:JL in theporem}, namely that in~\eqref{eq:JL conclusion1} we can take
\begin{equation}\label{eq:JL const best known}
k\lesssim \frac{\log n}{\log\big(1+(\alpha-1)^2\big)}\asymp \max\left\{\frac{\log n}{(\alpha-1)^2},\frac{\log n}{\log \alpha}\right\},
\end{equation}
follows from the original proof of the JL lemma in~\cite{JL84}. A justification of~\eqref{eq:JL const best known} appears in Section~\ref{sec:JL opt} below.

\begin{question}[dimension reduction for the vertices of the simplex]Fix $\d\in (0,\frac12)$. What is the order of magnitude (up to universal constant factors) of the smallest $\mathfrak{S}(\d)\in (0,\infty)$ such that for every  $n\in \N$ there is $k\in \N$ with $k\le \mathfrak{S}(\d)\log n$ and $y_1,\ldots,y_{n}\in \R^k$ that satisfy $1\le \|y_i-y_j\|_{2}\le 1+\d$ for all distinct $i,j\in \n$? By~\eqref{eq:JL const best known} we have $\mathfrak{S}(\d)\lesssim 1/\d^{2}$. The best-known lower bound here is $\mathfrak{S}(\d)\gtrsim 1/(\d^2\log(1/\d))$, due to Alon~\cite{Alo03}.
\end{question}

\begin{remark} The upper bound~\eqref{eq:JL const best known} treats the target dimension in the JL lemma  for an {\em arbitrary} subset of a Hilbert space. The lower bound~\eqref{eq:k lower log general norm} was derived in the special case of the vertices of the regular simplex, but it is also more general as it is valid for embeddings of these vertices into an {\em arbitrary} $k$-dimensional norm. In this (both special, and more general) setting, the bound~\eqref{eq:k lower log general norm} is quite sharp for large $\alpha$. Indeed, by~\cite{ABV98} (see also~\cite[Corollary~2.4]{OR16}), for each $n\in \N$ and  $\alpha>\sqrt{2}$, if we write
$
k=\lceil(\log(4n))/\log(\alpha^2/(2\sqrt{\alpha^2-1}))\rceil,
$
then for {\em every} norm $\|\cdot\|$ on $\R^k$   there exist $y_1,\ldots,y_n\in \R^k$ satisfying $1\le \|y_i-y_j\|\le \alpha$ for distinct $i,j\in \n$. See~\cite[Theorem~4.3]{FL94} for an earlier result in this direction. See also~\cite{ABV98}  and the references therein (as well as~\cite[Problem~2.5]{OR16})  for partial results towards understanding the analogous issue (which is a longstanding open question) in the small distortion regime $\alpha\in (1,\sqrt{2}]$.
\end{remark}

\subsection{Optimality of re-scaled random projections}\label{sec:JL opt} To set the stage for the proof of the JL lemma, note that by translation-invariance we may assume without loss of generality that one of the vectors $\{x_i\}_{i=1}^n$ vanishes, and then by replacing the Hilbert space $H$ with the span of $\{x_i\}_{i=1}^n$, we may further assume that $H=\R^{n-1}$.

Let $\proj_{\R^k}\in \M_{k\times (n-1)}(\R)$ be the $k$ by $n-1$ matrix of the orthogonal projection from $\R^{n-1}$ onto $\R^k$, i.e., $\proj_{\R^k} z=(z_1,\ldots,z_k)\in\R^k$ is the first $k$ coordinates of $z=(z_1,\ldots,z_{n-1})\in \R^{n-1}$. One could attempt to simply truncate the  vectors vectors $x_1,\ldots,x_n$ so as to obtain $k$-dimensional vectors, namely to consider the vectors  $\{y_i=\proj_{\R^k} x_i\}_{i=1}^n$ in~\eqref{eq:JL conclusion2}.  This naive (and heavy-handed)  way of forcing low-dimensionality can obviously fail miserably, e.g.~we could have $\proj_kx_i=0$ for all $i\in \n$. Such a simplistic idea performs poorly because it makes two arbitrary and unnatural choices, namely it does not take advantage of rotation-invariance and scale-invariance. To remedy this, let $\OO_{n-1}\subset \M_{n-1}(\R)$ denote the group of $n-1$ by $n-1$ orthogonal matrices, and fix (a scaling factor) $\sigma\in (0,\infty)$. Let $\OO\in \OO_{n-1}$ be a random orthogonal matrix distributed according to the Haar probability measure on $\OO_{n-1}$. In~\cite{JL84} it was shown that if $k$ is sufficiently large (yet satisfying~\eqref{eq:JL conclusion1}),  then for an appropriate $\sigma>0$ with positive probability~\eqref{eq:JL conclusion2} holds  for the following  random vectors.
\begin{equation}\label{eq:choice of yi}
\{y_i =\sigma \proj_{\R^k}\OO x_i\}_{i=1}^n\subset \R^k.
\end{equation}

We will do more than merely explain why the randomly projected vectors in~\eqref{eq:choice of yi} satisfy the desired conclusion~\eqref{eq:JL conclusion2} of the JL lemma with positive probability. We shall  next demonstrate that  such a procedure is the {\em best possible} (in  a certain sense that will be made precise) among all the possible choices of random assignments of $x_1,\ldots,x_n$ to $y_1,\ldots,y_n$ via multiplication by a random matrix in $M_{k\times(n-1)}(\R)$, provided that we optimize so as to use the best scaling factor $\sigma\in (0,\infty)$ in~\eqref{eq:choice of yi}.

Let $\mu$ be any Borel probability measure on $\M_{k\times (n-1)}(\R)$, i.e., $\mu$ represents an arbitrary (reasonably measurable) distribution over $k\times (n-1)$ random matrices $\mA\in \M_{k\times (n-1)}(\R)$. For $\alpha\in (1,\infty)$ define
\begin{equation}\label{eqLdef frak p mu}
 \mathfrak{p}_\mu^\alpha\eqdef \inf_{z\in \mathbf{S}^{n-2}} \mu\Big[\big\{\mA\in \M_{k\times (n-1)}(\R):\  1\le \|\mA z\|_{\ell_2^k}\le \alpha\big\} \Big],
\end{equation}
where $\mathbf{S}^{n-2}=\{z\in \R^{n-1}:\ \|z\|_{\ell_2^{n-1}}=1\}$ denotes the unit Euclidean sphere in $\R^{n-1}$. Then
\begin{align}\label{eq:uninion JL}
\nonumber \mu \bigg[&\bigcap_{i,j\in \n}\Big\{\mA\in \M_{k\times (n-1)}(\R):\|x_i-x_j\|_{\ell_2^{n-1}}\le \|\mA x_i-\mA x_j\|_{\ell_2^k}\le \alpha\|x_i-x_j\|_{\ell_2^{n-1}}\Big\}\bigg]\\ \nonumber
&= 1- \mu\bigg[\bigcup_{i=1}^n\bigcup_{j=i+1}^n \bigg(\M_{k\times (n-1)}(\R)\setminus \Big\{\mA\in \M_{k\times (n-1)}(\R):1 \le \Big\|\mA \frac{x_i- x_j}{\|x_i-x_j\|_{\ell_2^{n-1}}}\Big\|_{\ell_2^k}\le \alpha\Big\}\bigg)\bigg]\\ \nonumber
&\ge 1- \sum_{i=1}^n\sum_{j=i+1}^n \bigg(1-\mu \bigg[\Big\{\mA\in \M_{k\times (n-1)}(\R):1 \le \Big\|\mA \frac{x_i- x_j}{\|x_i-x_j\|_{\ell_2^{n-1}}}\Big\|_{\ell_2^k}\le \alpha\Big\}\bigg]\bigg)\\
&\ge 1- \binom{n}{2}\big(1-\mathfrak{p}_\mu^\alpha\big).
\end{align}
Hence, the random vectors $\{y_i=\mA x_i\}_{i=1}^n$ will satisfy~\eqref{eq:JL conclusion2} with positive probability if $\mathfrak{p}_\mu^\alpha>1-\frac{2}{n(n-1)}$.

In order to  succeed to embed the largest possible number of vectors into $\R^k$ via the above randomized procedure while using the estimate~\eqref{eq:uninion JL}, it is in our best interest to work with a probability measure $\mu$ on $\M_{k\times (n-1)}(\R)$ for which $\mathfrak{p}_\mu^\alpha$ is as large as possible. To this end, define
\begin{equation}\label{eq:def pnk}
\mathfrak{p}^\alpha_{n,k}\eqdef \sup \Big\{\mathfrak{p}_\mu^\alpha:\ \mu\ \mathrm{is\ a \ Borel\ probability\ measure\ on\ } \M_{k\times (n-1)}(\R)\Big\}.
\end{equation}
Then, the conclusion~\eqref{eq:JL conclusion2} of the JL lemma will be valid provided $k\in \n$ satisfies
\begin{equation}\label{eq:JL condition random}
\mathfrak{p}^\alpha_{n,k}>1-\frac{2}{n(n-1)}.
\end{equation}
The following proposition asserts that the supremum in the definition~\eqref{eq:def pnk} of $\mathfrak{p}_{n,k}^\alpha$ is attained at a distribution over random matrices that has the aforementioned structure~\eqref{eq:choice of yi}.

\begin{proposition}[multiples of random orthogonal projections are JL-optimal]\label{prop:sigma nk} Fix $\alpha\in (1,\infty)$, an integer $n\ge 4$ and $k\in \{1,\ldots,n-3\}$. Let $\mu=\mu_{n,k}^\alpha$ be   the probability distribution on $\M_{k\times (n-1)}(\R)$ of the random matrix
\begin{equation}\label{eq:optimal rescaled projection}
\sqrt{\frac{\alpha^{\frac{2n-6}{n-k-3}}-1}{\alpha^{\frac{2k}{n-k-3}}-1}}\cdot \proj_{\R^k}\OO,
\end{equation}
that is obtained by choosing $\OO\in \OO_{n-1}$ according to the normalized Haar measure on $\OO_{n-1}$. Then $\mathfrak{p}_\mu^\alpha=\mathfrak{p}_{n,k}^\alpha$.
\end{proposition}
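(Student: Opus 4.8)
The plan is to reduce the optimisation defining $\mathfrak{p}_{n,k}^\alpha$ in~\eqref{eq:def pnk} to an elementary one-parameter problem for the $\mathrm{Beta}$ distribution, and then to read off the scaling factor in~\eqref{eq:optimal rescaled projection} from a first-order condition. \textbf{Symmetrisation.} First I would show that it suffices to maximise over laws that are invariant under right multiplication by $\OO_{n-1}$. Given any Borel probability measure $\mu$ on $\M_{k\times (n-1)}(\R)$, let $\nu$ be the law of $\mA\OO'$ where $\mA$ has law $\mu$ and $\OO'\in\OO_{n-1}$ is an independent Haar-random orthogonal matrix; this $\nu$ is the pushforward of $\mu\otimes\mathrm{Haar}$ under the continuous map $(\mA,\OO')\mapsto\mA\OO'$, hence a genuine Borel probability measure. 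Since $\|\mA\OO' z\|_{\ell_2^k}$ has the same distribution as $\|\mA w\|_{\ell_2^k}$ for $w$ uniform on $\mathbf{S}^{n-2}$, and this distribution is the same for every $z\in\mathbf{S}^{n-2}$, for each such $z$ one gets $\nu\big[\{\mA:\ 1\le\|\mA z\|_{\ell_2^k}\le\alpha\}\big]=\int_{\mathbf{S}^{n-2}}\mu\big[\{\mA:\ 1\le\|\mA w\|_{\ell_2^k}\le\alpha\}\big]\ud\us(w)\ge\mathfrak{p}_\mu^\alpha$, so that $\mathfrak{p}_\nu^\alpha\ge\mathfrak{p}_\mu^\alpha$, and moreover for a right-invariant $\nu$ one simply has $\mathfrak{p}_\nu^\alpha=\nu\big[\{\mA:\ 1\le\|\mA e_1\|_{\ell_2^k}\le\alpha\}\big]$.

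\textbf{Reduction to a scalar.} Next, for a right-invariant law $\nu$ I would diagonalise the Gram matrix $\mA^{\!\top}\mA$: it is positive semidefinite of rank at most $k$ and its law is conjugation-invariant, so (with the usual care about eigenvalue multiplicities) its eigenvector frame is Haar-distributed and independent of its eigenvalues, namely the squared singular values $s_1^2,\dots,s_k^2$ of $\mA$ padded by zeros. Hence $\|\mA e_1\|_{\ell_2^k}^2=\sum_{i=1}^k s_i^2 g_i^2$ with $g$ uniform on $\mathbf{S}^{n-2}$ and independent of $(s_i)$. Writing $B=\sum_{i=1}^k g_i^2$, one has $B\sim\mathrm{Beta}(k/2,(n-1-k)/2)$, the vector $(g_1,\dots,g_k)/\sqrt B$ is an independent uniform point of $\mathbf{S}^{k-1}$, and therefore $\|\mA e_1\|_{\ell_2^k}^2=B\,T$ where $T=\sum_{i=1}^k s_i^2 g_i^2/B\ge 0$ is independent of $B$ and has some law depending on $\nu$. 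In particular, for $\nu=\mu_{n,k}^\alpha$ the matrix $\proj_{\R^k}\OO$ has all singular values equal to $1$, so all $s_i$ equal $\sigma$ and $T\equiv\sigma^2$ is the Dirac mass at $\sigma^2$.

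\textbf{The one-parameter problem.} Let $G$ be the distribution function of $\mathrm{Beta}(k/2,(n-1-k)/2)$ and put $\psi(\tau)=\Pr[1\le\tau B\le\alpha^2]=G(\alpha^2/\tau)-G(1/\tau)$ for $\tau>0$. By the previous step, $\mathfrak{p}_\nu^\alpha=\Pr[1\le BT\le\alpha^2]=\E[\psi(T)]\le\sup_{\tau>0}\psi(\tau)$, with equality when $T$ is the Dirac mass at a maximiser of $\psi$. Since $\psi$ is continuous with $\psi(0^+)=\psi(\infty)=0$, the maximum is attained at an interior point $\tau_0>\alpha^2$, where $\psi'(\tau_0)=0$ reads $f(1/\tau_0)=\alpha^2 f(\alpha^2/\tau_0)$ for the Beta density $f(x)\propto x^{k/2-1}(1-x)^{(n-3-k)/2}$; cancelling common factors this becomes $\big((\tau_0-1)/(\tau_0-\alpha^2)\big)^{(n-3-k)/2}=\alpha^k$, whose unique solution is $\tau_0=\sigma^2$ with $\sigma$ exactly as in~\eqref{eq:optimal rescaled projection} (a short check of the sign of $\psi'$ on either side of $\tau_0$ confirms it is the unique maximiser; here $n-k-3\ge1$ is what makes the formula meaningful and $f$ vanish at $1$). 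Combining the three steps, every Borel probability measure $\mu$ obeys $\mathfrak{p}_\mu^\alpha\le\mathfrak{p}_\nu^\alpha\le\psi(\sigma^2)$, while $\mu_{n,k}^\alpha$ is right-invariant with $T\equiv\sigma^2$, hence $\mathfrak{p}_{\mu_{n,k}^\alpha}^\alpha=\psi(\sigma^2)$; therefore $\mathfrak{p}_{n,k}^\alpha=\psi(\sigma^2)=\mathfrak{p}_{\mu_{n,k}^\alpha}^\alpha$, which is the assertion.

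I expect the genuine work to be in the first two steps — formalising the symmetrisation as an operation on Borel measures and, above all, justifying that the eigenvectors of a conjugation-invariant random positive semidefinite matrix are Haar-distributed and independent of the eigenvalues — so that the problem really does collapse to choosing the law of the single nonnegative random variable $T$. After that collapse the third step is only the one-variable calculus of the $\mathrm{Beta}$ density, and the value $\sigma$ of~\eqref{eq:optimal rescaled projection} simply falls out of the first-order condition.
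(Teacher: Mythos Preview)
Your proposal is correct and lands at the same one-variable calculus as the paper, but the route you choose for the \textbf{Reduction to a scalar} step is more involved than necessary. The paper does not invoke any statement about eigenvector frames of conjugation-invariant random matrices. Instead it keeps $\mA$ \emph{deterministic}, writes its singular value decomposition $\mA=U\,\mathrm{diag}(s_1,\dots,s_k)\,\proj_{\R^k}V$, and uses only that for Haar-random $\OO$ the vector $V\OO z$ is uniform on $\mathbf{S}^{n-2}$. This immediately yields that $\|\mA\OO z\|_{\ell_2^k}^2$ has the law of $\sum_{i=1}^k s_i^2 g_i^2$ with $g$ uniform on the sphere, without any discussion of eigenvalue multiplicities. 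Your worry that ``the genuine work'' lies in justifying Haar-distribution of eigenvectors in the degenerate (rank $\le k$) case is therefore a self-imposed difficulty; you can sidestep it entirely by keeping the symmetrisation explicit, i.e.\ writing $\mA=\mA_0\OO'$ with $\OO'$ Haar and independent of $\mA_0$, and conditioning on $\mA_0$.

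The remaining differences are purely organisational: you symmetrise first and then compute a single probability, whereas the paper computes the Haar-averaged probability for each fixed $\mA$ and only at the end averages over an arbitrary $\nu$ via Fubini. Your $\psi(\tau)$ is exactly the paper's $\psi_{n,k}^\alpha(\sqrt{\tau})$, your $B$ is $\|\proj_{\R^k}\OO z\|_{\ell_2^k}^2$, and your first-order condition is the same computation that yields the paper's $\sigma_{\max}(n,k,\alpha)$. So the two proofs are the same in substance.
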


Obviously~\eqref{eq:optimal rescaled projection} is {\em not} a multiple of a uniformly random rank $k$ orthogonal projection $\proj:\R^{n-1}\to \R^{n-1}$ (chosen according to the normalized Haar measure on the appropriate Grassmannian). To obtain such a distribution, one should multiply the matrix in~\eqref{eq:optimal rescaled projection} on the left by $\OO^*$. That additional rotation does not influence the Euclidean length of the image, and hence it does not affect the quantity~\eqref{eqLdef frak p mu}. For this reason and for simplicity of notation, we prefer to work with~\eqref{eq:optimal rescaled projection} rather than random projections as was done in~\cite{JL84}.

\begin{proof}[Proof of Proposition~\ref{prop:sigma nk}] Given $\mA\in \M_{k\times (n-1)}(\R)$, denote its singular values by $\sss_1(\mA)\ge\ldots\ge \sss_k(\mA)$, i.e., they are  the eigenvalues (with multiplicity) of the symmetric matrix $\sqrt{\mA\mA^*}\in \M_k(\R)$. Then,
\begin{equation}\label{eq:haar identity}
\mathfrak{H}^{\OO_{n-1}} \Big[\big\{\OO\in \OO_{n-1}:\ 1\le \|\mA\OO z\|_{\ell_2^k}\le \alpha\big\}\Big]= \int_{\mathbf{S}^{k-1}}\psi_{n,k}^\alpha\bigg(\Big(\sum_{i=1}^k\sss_i(\mA)^2 \omega_i^2\Big)^{\frac12}\bigg)\ud \mathfrak{H}^{\mathbf{S}^{k-1}}(\omega),
\end{equation}
where $\mathfrak{H}^{\OO_{n-1}}$ and $\mathfrak{H}^{\mathbf{S}^{k-1}}$ are the Haar probability measures on the orthogonal group $\OO_{n-1}$ and the unit Euclidean sphere $\mathbf{S}^{k-1}$, respectively, and the function $\psi_{n,k}^\alpha:[0,\infty)\to \R$ is defined by
\begin{equation}\label{eq:def psi nk modified}
\forall\, \sigma\in [0,\infty),\qquad \psi_{n,k}^\alpha(\sigma)\eqdef
\frac{2\pi^{\frac{k}{2}}}{\Gamma\big(\frac{k}{2}\big)}\int^{\max\left\{1,\sigma\right\}}_{\max\left\{1,\frac{\sigma}{\alpha}\right\}}\frac{(s^2-1)^{\frac{n-k-3}{2}}}{s^{n-2}}\ud s.
\end{equation}
To verify the identity~\eqref{eq:haar identity}, consider  the  singular value decomposition
\begin{equation}\label{eq:write SVD}\mA=
\mathsf{U}\begin{pmatrix} \sss_1(\mA) & 0 & \dots& \dots&0 \\
  0 & \sss_2(\mA)& \ddots& \ddots & \vdots\\
  \vdots & \ddots & \ddots  & \ddots & \vdots\\
            \vdots & \ddots & \ddots& \ddots &0\\
              0 & \dots & \dots &0&\sss_k(\mA)
                       \end{pmatrix}\proj_{\R^k}\mathsf{V},\end{equation}
where $\mathsf{U}\in \OO_k$ and $\mathsf{V}\in \OO_{n-1}$. If $\OO\in \OO_{n-1}$ is distributed according to $\mathfrak{H}^{\OO_{n-1}}$, then by the left-invariance of $\mathfrak{H}^{\OO_{n-1}}$ we know that $\mathsf{V}\OO$ is distributed according to $\mathfrak{H}^{\OO_{n-1}}$. By rotation-invariance and uniqueness of Haar measure on $\mathbf{S}^{n-2}$ (e.g.~\cite{MS}), it follows that for every $z\in \mathbf{S}^{n-1}$ the random vector $\mathsf{V}\OO z$ is distributed according to the normalized Haar measure on $\mathbf{S}^{n-2}$. So, $\proj_{\R^k} \mathsf{V}\OO z$ is distributed on the Euclidean unit ball of $\R^k$, with density
\begin{equation}\label{eq:projected density}
\big(u\in \R^k\big)\mapsto \frac{\Gamma\big(\frac{n-1}{2}\big)}{\pi^{\frac{k}{2}}\Gamma\big(\frac{n-1-k}{2}\big)} \left(1-\|u\|_{\ell_2^k}^2\right)^{\frac{n-k-3}{2}}\1_{\big\{\|u\|_{\ell_2^k}\le 1\big\}}.
\end{equation}
See~\cite{Sta82} for a proof  of this distributional identity (or~\cite[Corollary~4]{BGMN05}  for a more general derivation); in codimension $2$, namely $k=n-3$, this is a higher-dimensional analogue of Archimedes' theorem  that the projection to $\R$ of the uniform surface area measure on the unit Euclidean sphere in $\R^{3}$ is the Lebesgue measure on $[-1,1]$. Recalling~\eqref{eq:write SVD}, it follows from this discussion that the Euclidean norm of $\mA\OO z$ has the same distribution as $(\sum_{i=1}^k \sss_i(\mA)^2u_i^2)^{1/2}$, where $u=(u_1,\ldots,u_k)\in \R^k$ is distributed according to the density~\eqref{eq:projected density}. The identity~\eqref{eq:haar identity} now follows by integration in polar coordinates $(\omega,r)\in \mathbf{S}^{k-1}\times [0,\infty)$, followed by the change of variable $s=1/r$.

Next,  $\psi_{n,k}^\alpha$ vanishes on $[0,1]$, increases on $[1,\alpha]$, and is smooth on $[\alpha,\infty)$. The integrand in~\eqref{eq:def psi nk modified} is at most $s^{-k-1}$, so $\lim_{\sigma\to \infty}\psi_{n,k}^\alpha(\sigma)=0$. By directly differentiating~\eqref{eq:def psi nk modified} and simplifying the resulting expression, one sees that if $\sigma\in [\alpha,\infty)$, then $(\psi_{n,k}^\alpha)'(\sigma)=0$ if and only if $\sigma=\sigma_{\max}(n,k,\alpha)$, where
\begin{equation}\label{eq:def sigma max}
\sigma_{\max}(n,k,\alpha)\eqdef \sqrt{\frac{\alpha^{\frac{2n-6}{n-k-3}}-1}{\alpha^{\frac{2k}{n-k-3}}-1}}.
\end{equation}
Therefore, the global maximum of $\psi_{n,k}^\alpha$ is attained at $\sigma_{\max}(n,k,\alpha)$, and by~\eqref{eq:haar identity} we have
\begin{equation}\label{eq:use identity agaoin for maximizer}
\forall\, \mA\in \M_{k\times (n-1)}(\R),\qquad \mathfrak{H}^{\OO_{n-1}} \Big[\big\{\OO\in \OO_{n-1}:\ 1\le \|\mA\OO z\|_{\ell_2^k}\le \alpha\big\}\Big]\le \psi_{n,k}^\alpha\big(\sigma_{\max}(n,k,\alpha)\big)=\mathfrak{p}_\mu^\alpha.
\end{equation}
The final step of~\eqref{eq:use identity agaoin for maximizer} is another application~\eqref{eq:haar identity}, this time in the special case $\mA=\sigma_{\max}(n,k,\alpha)\proj_{\R^k}$, while recalling~\eqref{eqLdef frak p mu} and~\eqref{eq:def sigma max}, and that $\mu$ is the distribution of the random matrix appearing in~\eqref{eq:optimal rescaled projection}.

To conclude the proof of Proposition~\eqref{prop:sigma nk}, take any Borel probability measure $\nu$ on $\M_{k\times (n-1)}(\R)$ and integrate~\eqref{eq:use identity agaoin for maximizer} while using Fubini's theorem to obtain the estimate
\begin{multline*}
\mathfrak{p}_\mu^\alpha\ge\int_{\M_{k\times (n-1)}(\R)} \mathfrak{H}^{\OO_{n-1}}\Big[\big\{\OO\in \OO_{n-1}:\ 1\le \|\mA\OO z_0\|_{\ell_2^k}\le \alpha\big\}\Big]\ud\nu(\mA)\\ =
\int_{\OO_{n-1}} \nu\Big[\big\{\mA\in \M_{k\times (n-1)}(\R):\ 1\le \|\mA\OO z_0\|_{\ell_2^k}\le \alpha\big\}\Big] \ud\mathfrak{H}^{\OO_{n-1}}(\OO)\stackrel{\eqref{eqLdef frak p mu}}{\ge} \int_{\OO_{n-1}} \mathfrak{p}_\nu^\alpha \ud\mathfrak{H}^{\OO_{n-1}}(\OO)=\mathfrak{p}_\nu^\alpha.
\end{multline*}
So, the maximum of $\mathfrak{p}_\nu^\alpha$ over the Borel probability  measures $\nu$ on $\M_{k\times (n-1)}(\R)$ is attained at $\mu$.\end{proof}

\begin{remark} Recalling~\eqref{eq:JL condition random}, due to~\eqref{eq:use identity agaoin for maximizer} the conclusion~\eqref{eq:JL conclusion2} of the JL lemma holds if $k$ satisfies
\begin{equation}\label{eq:beta JL}
\frac{2\pi^{\frac{k}{2}}}{\Gamma\big(\frac{k}{2}\big)}\int^{\sigma_{\max}(n,k,\alpha)}_{\frac{1}{\alpha}
\sigma_{\max}(n,k,\alpha)}\frac{(s^2-1)^{\frac{n-k-3}{2}}}{s^{n-2}}\ud s>1-\frac{2}{n(n-1)},
\end{equation}
where $\sigma_{\max}(n,k,\alpha)$ is given in~\eqref{eq:def sigma max}. This is the best-known bound on $k$ in the JL lemma, which, due to Proposition~\ref{prop:sigma nk}, is the best-possible bound that is obtainable through the reasoning~\eqref{eq:uninion JL}. In particular, the asymptotic estimate~\eqref{eq:JL const best known} follows from~\eqref{eq:beta JL} via straightforward elementary calculus.
\end{remark}

\begin{remark} The JL lemma was reproved many times; see~\cite{FM88,Gor88,IM99,AV99,DG03,Ach03,KM05,IN07,Mat08,AC09,DG09,KW11,AL13,DG14,KN14,BDN15,Dir16}, though we make no claim that this is a comprehensive list of references. There were several motivations for these further investigations, ranging from the desire to obtain an overall  better understanding of the JL phenomenon, to obtain better bounds, and to obtain distributions on random matrices $\mA$ as in~\eqref{eq:uninion JL} with certain additional properties that are favorable from the computational perspective, such as ease of simulation, use of fewer random bits, sparsity, and the ability to evaluate the mapping $(z\in \R^{n-1})\mapsto \mA z$ quickly (akin to the fast Fourier transform). This  body of work represents ongoing efforts by computer scientists and applied mathematicians to further develop improved "JL transforms," driven by their usefulness as a tool for data-compression. We will not survey these ideas here, partially because we established that using random projections yields the best-possible bound on the target dimension $k$ (moreover,  this procedure is natural and simple). We speculate that working with the Haar measure on the orthogonal group $\OO_{n-1}$ as in~\eqref{eq:choice of yi} could have benefits that address the above computational issues, but leave this as an interesting open-ended direction for further research. A specific conjecture towards this goal appears in~\cite[page~320]{AC09}, and we suspect that the more recent work~\cite{BG12} on the spectral gap of Hecke operators of orthogonal Cayley graphs should be relevant in this context as well (e.g. for derandomization and fast implementation of~\eqref{eq:choice of yi}; see~\cite{BHH16,KM15} for steps in this direction).
\end{remark}

\begin{remark}\label{rem:constants}  In the  literature there is often a preference to use random matrices with independent entries in the context of the JL lemma, partially because they are simple to generate, though see the works~\cite{Ste80,Gen00,Mez07} on generating elements of the orthogonal group $\OO_{n-1}$ that are distributed according to its Haar measure. In particular, the best bound on $k$ in~\eqref{eq:JL conclusion1} that was previously available in the literature~\cite{DG03} arose from applying~\eqref{eq:uninion JL} when $\mA$ is replaced by the random matrix $\sigma\mathsf{G}$, where $\sigma=1/\sqrt{k}$ and the entries of $\mathsf{G}=(\mathsf{g}_{ij})\in M_{k\times (n-1)}(\R)$ are independent standard Gaussian random variables. We can, however, optimize over the scaling factor $\sigma$ in this setting as well, in analogy to the above optimization over the scaling factor in~\eqref{eq:choice of yi}, despite the fact that we know that working with the Gaussian matrix $\mathsf{G}$ is inferior to using a random rotation. A short calculation reveals that the optimal scaling factor is now $\sqrt{(\alpha^2-1)/(2k\log \alpha)}$, i.e., the best possible re-scaled Gaussian matrix for the purpose of reasoning as in~\eqref{eq:uninion JL} is not $\frac{1}{\sqrt{k}}\mathsf{G}$ but rather the random matrix
\begin{equation}\label{eq:rescaled gaussian}
\mathsf{G}_k^\alpha\eqdef \sqrt{\frac{\alpha^2-1}{2k\log \alpha}}\cdot\mathsf{G}.
\end{equation}
For this optimal multiple of a Gaussian matrix, one computes that for every $z\in \mathbf{S}^{n-2}$ we have
\begin{multline}\label{eq:gaussian optimizer identity and estimate}
1-\Pr \Big[1\le \|\mathsf{G}_k^\alpha z\|_{\ell_2^k}\le \alpha\Big]=\frac{2k^{\frac{k}{2}}}{\Gamma\big(\frac{k}{2}\big)}\int_{\log \alpha}^\infty \left(\frac{\beta}{e^{2\beta}-1}\right)^{\frac{k}{2}}\exp\bigg(-\frac{k\beta}{e^{2\beta}-1}\bigg)\ud \beta\\ <
\frac{4k^{\frac{k}{2}-1}}{\Gamma\big(\frac{k}{2}\big)}\left(\frac{\alpha^2-1}{\log \alpha}\alpha^{\frac{2}{\alpha^2-1}}\right)^{-\frac{k}{2}}\frac{(\alpha^2-1)^2\log\alpha }{2\alpha^4\log\alpha+2\alpha^2-\alpha^4-4\alpha^2(\log\alpha)^2-2\log\alpha-1}.
\end{multline}
The first step in~\eqref{eq:gaussian optimizer identity and estimate} follows from a straightforward computation using the fact that the squared Euclidean length of $\mathsf{G}_k^\alpha z$ is distributed according to a multiple of the $\chi^2$ distribution with $k$ degrees of freedom (see e.g.~\cite{Dur10}), i.e., one can write the leftmost term of~\eqref{eq:gaussian optimizer identity and estimate} explicitly as a definite integral, and then check that it indeed equals the middle term of~\eqref{eq:gaussian optimizer identity and estimate}, e.g., by verifying the the derivatives with respect to $\alpha$ of both expressions coincide. The final estimate in~\eqref{eq:gaussian optimizer identity and estimate} can be justified via a modicum of straightforward calculus. We deduce from this that the conclusion~\eqref{eq:JL conclusion2} of the JL lemma is holds with positive probability if for each $i\in \n$ we take $y_i$ to be the image of $x_i$ under the re-scaled Gaussian matrix in~\eqref{eq:rescaled gaussian}, provided that $k$ is sufficiently large so as to ensure that
\begin{equation}\label{eq:k condition improved dg}
\frac{\Gamma\big(\frac{k}{2}\big)}{k^{\frac{k}{2}-1}}\left(\frac{\alpha^2-1}{\log \alpha}\alpha^{\frac{2}{\alpha^2-1}}\right)^{\frac{k}{2}}\ge \frac{2n^2(\alpha^2-1)^2\log\alpha }{2\alpha^4\log\alpha+2\alpha^2-\alpha^4-4\alpha^2(\log\alpha)^2-2\log\alpha-1}.
\end{equation}
The values that we stated for the target dimension $k$ in the JL lemma with a billion vectors were obtained by using~\eqref{eq:k condition improved dg}, though even better bounds arise from an evaluation of the integral in~\eqref{eq:gaussian optimizer identity and estimate} numerically, which is what we recommend to do for particular settings of the parameters. As $\alpha\to 1$, the above bounds improve over those of~\cite{DG03} only in the second-order terms. For larger $\alpha$ these bounds yield  substantial improvements that might matter in practice, e.g.~for embedding a billion vectors with distortion $2$, the target dimension that is required using the best-available estimate in the literature~\cite{DG03} is $k=768$, while~\eqref{eq:k condition improved dg} shows that $k=329$ suffices.
\end{remark}

\section{Infinite subsets of Hilbert space}\label{sec:doubling}

The JL lemma provides a  quite complete understanding of the metric dimension reduction problem for finite subsets of Hilbert space. For infinite subsets of Hilbert space, the research splits into two strands. The first is to understand those subsets $\sub\subset \R^n$ for which certain random matrices in $\M_{k\times n}(\R)$ (e.g.~random projections, or matrices whose entries are i.i.d. independent sub-Gaussian random variables) yield with positive probability an embedding of $\sub$ into $\R^k$ of a certain pre-specified distortion; this was pursued in~\cite{Gor88,KM05,IN07,MPT07,MT08,BDN15,Dir16,PDG17}, yielding a satisfactory answer which  relies on  multi-scale  chaining criteria~\cite{Tal14,Nel16} .

The second (and older) research strand focuses on the mere {\em existence} of a low-dimensional embedding rather than on the success of the specific embedding approach of (all the known proofs of) the JL lemma. Specifically, given a subset $\sub$ of a Hilbert space and $\alpha\in [1,\infty)$, could one understand when does $\sub$ admit an embedding with distortion $\alpha$ into $\ell_2^k$ for some $k\in \N$? If one ignores the dependence on the distortion $\alpha$, then this qualitative question coincides with Problem~\ref{Q:bilip Rk} (the bi-Lipschitz embedding problem into $\R^k$), since if a metric space $(\MM,d_\MM)$ satisfies $\inf_{k\in \N}\cc_{\R^k}(\MM)<\infty$, then in particular it admits a bi-Lipschitz embedding into a Hilbert space.

We shall next describe an obvious necessary condition for bi-Lipschitz embeddability into $\R^k$ for some $k\in\N$. In what follows, all balls in a metric space $(\MM,d_\MM)$ will be closed balls, i.e., for $x\in \MM$ and $r\in [0,\infty)$ we write $B_\MM(x,r)=\{y\in \MM:\ d_\MM(x,y)\le r\}$. Given $K\in [2,\infty)$, a metric space   $(\MM,d_\MM)$ is said to be $K$-{\em doubling} (e.g.~\cite{Bou28,CW71}) if every ball in $\MM$ (centered anywhere in $\MM$ and of any radius) can be covered by at most $K$ balls of half its radius, i.e.,  for every $x\in \MM$ and $r\in [0,\infty)$ there is $m\in \N$ with $m\le K$ and $y_1,\ldots,y_m\in \MM$ such that $B_\MM(x,r)\subset B_\MM(y_1,\frac12 r)\cup\ldots\cup B_\MM(y_m,\frac12 r)$. A metric space is doubling if it is $K$-doubling for some $K\in [2,\infty)$.

Fix $k\in \N$ and $\alpha\ge 1$. If a metric space  $(\MM,d_\MM)$ embeds with distortion $\alpha$ into a normed space $(\R^k,\|\cdot\|)$, then $\MM$ is $(4\alpha+1)^k$-doubling.  Indeed, fix $x\in \MM$ and $r>0$. Let $\{z_1,\ldots,z_n\}\subset B_\MM(x,r)$ be a maximal subset (with respect to inclusion)  of  $B_\MM(x,r)$ satisfying $d_\MM(z_i,z_j)>\frac12 r$ for  distinct $i,j\in \n$. The maximality of $\{z_1,\ldots,z_n\}$  ensures that for any $w\in B_\MM(x,r)\setminus \{z_1,\ldots,z_n\}$ we have $\min_{i\in \n} d_\MM(w,z_i)\le \frac12 r$, i.e.,  $B_\MM(x,r)\subset B_\MM(z_1,\frac12 r)\cup\ldots\cup B_\MM(z_n,\frac12 r)$.  We are assuming that there is an embedding $f:\MM\to \R^k$ that satisfies $d_\MM(u,v)\le \|f(u)-f(v)\|\le \alpha d_\MM(u,v)$ for all $u,v\in \MM$.  So, for distinct $i,j\in \n$ we have $\frac{r}{2}<d_\MM(z_i,z_j)\le \|f(z_i)-f(z_j)\|\le \alpha d_\MM(z_i,z_j)\le \alpha (d_\MM(z_i,x)+ d_\MM(x,z_j))\le2\alpha r$.
The reasoning that led to~\eqref{eq:k lower log general norm} with $y_1=\frac{2}{r}f(z_1),\ldots,y_n=\frac{2}{r}f(z_n)$ and $\alpha$ replaced by $4\alpha$ gives $k\ge (\log n)/\log(4\alpha+1)$, i.e., $n\le (4\alpha+1)^k$.

\begin{remark}\label{rem:doub log n} In Section~\ref{sec:metric dim reduction intro} we recalled that in the context of the Ribe program $\log |\MM|$ was the initial (in hindsight somewhat naive, though still very useful) replacement for the "dimension" of a finite metric space $\MM$. This arises naturally also from the above discussion. Indeed, $\MM$ is trivially $|\MM|$-doubling (simply cover each ball in $\MM$ by singletons), and this is the best bound that one could give on the doubling constant of $\MM$ in terms of $|\MM|$. So, from the perspective of the doubling property, the natural restriction on $k\in \N$ for which there exists an embedding of $\MM$ into some $k$-dimensional normed space with $O(1)$ distortion  is that $k\gtrsim \log |\MM|$.
\end{remark}

Using terminology that was recalled in Remark~\ref{rem:snowflake}, the definition of the doubling property directly implies that for every $\theta\in (0,1)$ a metric space $\MM$ is doubling  if and only if its $\theta$-snowflake $\MM^\theta$ is doubling. With this in mind, Theorem~\ref{thm:assouad} below is a very important classical achievement of Assouad~\cite{Ass83}.

\begin{theorem}\label{thm:assouad}
The following assertions are equivalent for every metric space $(\MM,d_\MM)$.
\begin{itemize}
\item $\MM$ is doubling.
\item For every $\theta\in (0,1)$ there is $k\in \N$ such that $\MM^\theta$ admits a bi-Lipschitz embedding into $\R^k$.
\item Some snowflake of $\MM$ admits a bi-Lipschitz embedding into $\R^k$ for some $k\in \N$.
\end{itemize}
\end{theorem}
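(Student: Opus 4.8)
The plan is to prove the cycle of implications $(2)\Rightarrow(3)\Rightarrow(1)\Rightarrow(2)$, of which the first two are immediate from material already in hand and the third is the heart of Assouad's argument. For $(2)\Rightarrow(3)$ one simply specializes to a single value of $\theta$. For $(3)\Rightarrow(1)$, suppose $\MM^\theta$ embeds with some distortion into $\R^k$ equipped with any norm; by the doubling estimate established immediately before the theorem statement (bi-Lipschitz images of balls in a $k$-dimensional normed space can be covered by boundedly many balls of half the radius), $\MM^\theta$ is doubling, and by the observation preceding the theorem a metric space is doubling if and only if its $\theta$-snowflake is doubling, so $\MM$ is doubling.

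For the substantial direction $(1)\Rightarrow(2)$, assume $\MM$ is $K$-doubling, fix $\theta\in(0,1)$, and write $\rho=d_\MM^\theta$. First I would set up a multi-scale net/coloring scheme: for each $j\in\Z$ choose a maximal $2^{-j}$-separated set $N_j\subset\MM$, so the balls $B_\MM(x,2^{-j})$ for $x\in N_j$ cover $\MM$ while the balls $B_\MM(x,2^{-j-1})$ are pairwise disjoint. Iterating the doubling inequality a fixed number of times shows that any ball of radius $\asymp 2^{-j}$ contains at most $D=D(K)$ points of $N_j$, so the graph on $N_j$ joining pairs at distance $\le 100\cdot 2^{-j}$ has maximum degree $<D$; greedy coloring then produces $c_j\colon N_j\to\{1,\dots,L\}$ with $L=L(K)$, such that distinct points of the same color are at distance $\ge 100\cdot 2^{-j}$. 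For $x\in N_j$ put $\lambda_{j,x}(y)=\max\{0,\,1-2^{j-1}d_\MM(x,y)\}$ (a tent of height $1$, supported in $B_\MM(x,2\cdot 2^{-j})$, Lipschitz constant $2^{j-1}$), and for each color $\ell$ set $f_{j,\ell}=2^{-j\theta}\sum_{x\in N_j,\,c_j(x)=\ell}\lambda_{j,x}$. Because same-color net points are more than $4\cdot 2^{-j}$ apart the supports are disjoint, so $f_{j,\ell}$ is well defined, takes values in $[0,2^{-j\theta}]$, and is $\lesssim 2^{j(1-\theta)}$-Lipschitz on $(\MM,d_\MM)$; in particular $|f_{j,\ell}(y)-f_{j,\ell}(y')|\le\min\{2^{-j\theta},\,2^{j(1-\theta)}d_\MM(y,y')\}$.

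Next I would assemble the finite-dimensional embedding. Fix a basepoint $o\in\MM$, choose a period $P=P(\theta)\in\N$ (to be specified), and for $r\in\{0,\dots,P-1\}$ and $\ell\in\{1,\dots,L\}$ define the coordinate $\Phi_{r,\ell}(y)=\sum_{j\equiv r\ (\mathrm{mod}\ P)}\big(f_{j,\ell}(y)-f_{j,\ell}(o)\big)$; the two bounds on $f_{j,\ell}$ force this series to converge absolutely (its terms decay geometrically as $j\to\pm\infty$), so $\Phi=(\Phi_{r,\ell})$ maps $\MM$ into $\R^{PL}$ with target dimension $k=PL$ depending only on $K$ and $\theta$. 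For the \emph{upper} Lipschitz bound, fix $y\ne y'$, put $t=d_\MM(y,y')$ and $s=\log_2(1/t)$; summing $|f_{j,\ell}(y)-f_{j,\ell}(y')|\le 2^{-j\theta}\min\{1,2^j t\}$ over $j$ in a fixed residue class gives two geometric series (ratio $2^{-P\theta}$ for $j\ge s$, ratio $2^{-P(1-\theta)}$ for $j<s$), each summing to $\lesssim_\theta t^\theta$, so $\|\Phi(y)-\Phi(y')\|_2\lesssim_{K,\theta}\rho(y,y')$. For the \emph{lower} bound, pick the dyadic scale $j_0$ with $2^{-j_0}\in(t/20,t/10]$ and $x\in N_{j_0}$ with $d_\MM(x,y)\le 2^{-j_0}$, so $\lambda_{j_0,x}(y)\ge\tfrac12$ and hence $f_{j_0,\ell_0}(y)\ge\tfrac12 2^{-j_0\theta}\asymp_\theta\rho(y,y')$ with $\ell_0=c_{j_0}(x)$; the large same-color separation guarantees that every color-$\ell_0$ point at scale $j_0$ is at distance $>2\cdot 2^{-j_0}$ from $y'$, so $f_{j_0,\ell_0}(y')=0$. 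In the coordinate $\Phi_{j_0\bmod P,\ \ell_0}$ this $j=j_0$ term is the dominant one; every other contributing scale $j\equiv j_0\ (\mathrm{mod}\ P)$ differs from $j_0$ by at least $P$, and the total of those terms is at most $\big(2^{-P\theta}+2^{-P(1-\theta)}\big)$ times a constant multiple of $2^{-j_0\theta}$. Choosing $P=P(\theta)$ large enough makes this tail smaller than, say, $\tfrac14 2^{-j_0\theta}$, whence $\|\Phi(y)-\Phi(y')\|_\infty\gtrsim_\theta\rho(y,y')$. Combining the two estimates, $\Phi$ is a bi-Lipschitz embedding of $\MM^\theta$ into $\R^{PL}$, which proves $(1)\Rightarrow(2)$ and closes the cycle.

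The routine parts are the net/coloring construction and the pair of geometric-series estimates; the genuinely delicate point is the reduction to \emph{finite} dimension via the periodization $j\mapsto j\bmod P$, where one must simultaneously keep the infinite sum defining each coordinate convergent and prevent the off-critical scales landing in a given coordinate from cancelling the critical bump. This is exactly where the hypothesis $\theta<1$ is essential: both decay rates $2^{-P\theta}$ and $2^{-P(1-\theta)}$ must be made small, so the required period $P$ — and hence the dimension $k=PL$ — blows up as $\theta\to 1^-$, consistently with the fact that the exponent $\theta=1$ is excluded. I would also want to verify carefully the uniformity in $j$ of the bounds $D(K)$ and $L(K)$ (it comes from iterating the doubling inequality a bounded number of times) and that $\Phi$ is, up to a fixed translation, independent of the choice of basepoint $o$.
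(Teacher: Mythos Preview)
The paper does not actually prove Theorem~\ref{thm:assouad}; it states the result and attributes it to Assouad~\cite{Ass83}, noting only that the proof in~\cite{Ass83} yields quantitative control on the dimension $k(K,\theta)$ and distortion $\alpha(K,\theta)$, both of which blow up as $\theta\to 1^-$. So there is no in-paper argument to compare against.

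That said, your proposal is a correct and faithful reconstruction of the classical Assouad construction: the multi-scale nets, bounded chromatic number via the doubling property, tent functions, and the key periodization $j\mapsto j\bmod P$ to collapse infinitely many scales into finitely many coordinates are exactly the ingredients of~\cite{Ass83}. Your handling of $(3)\Rightarrow(1)$ correctly invokes the two facts the paper establishes just before the theorem (finite-dimensional targets force doubling, and doubling is invariant under snowflaking). The quantitative behavior you flag---$P\to\infty$ as $\theta\to 1^-$, hence $k=PL\to\infty$---matches precisely the remark the paper makes immediately after the theorem statement. One minor point: your claim that $\Phi$ is ``up to a fixed translation, independent of the choice of basepoint $o$'' is not quite right (different basepoints change each coordinate by a different constant, so the resulting maps differ by a translation in $\R^{PL}$, not by the same scalar in each coordinate), but this is harmless since translations do not affect the bi-Lipschitz property.
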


Theorem~\ref{thm:assouad}  is a qualitative statement, but its proof in~\cite{Ass83} shows that for every $K\in [2,\infty)$ and $\theta\in (0,1)$, there are $\alpha(K,\theta)\in [1,\infty)$ and $k(K,\theta)\in \N$ such that if $\MM$ is $K$-doubling, then $\MM^\theta$ embeds into $\R^{k(K,\theta)}$ with distortion $\alpha(K,\theta)$; the argument of~\cite{Ass83} inherently gives that as $\theta\to 1$, i.e., as the $\theta$-snowflake $\MM^\theta$ approaches the initial metric space $\MM$, we have  $\alpha(K,\theta)\to \infty$ and $k(K,\theta)\to \infty$. A meaningful study of the best-possible asymptotic behavior  of the distortion $\alpha(K,\theta)$ here would require specifying which norm on $\R^k$ is being considered. Characterizing the quantitative dependence in terms of geometric properties of the target norm on $\R^k$  has not been carried out yet (it isn't even clear what should the pertinent geometric properties be), though see~\cite{HM06} for an almost isometric version when one considers the $\ell_\infty$ norm on $\R^k$ (with the dimension $k$ tending to $\infty$ as the distortion approaches $1$); see also~\cite{GK15} for a further partial step in this direction. In~\cite{NN12} it was shown that for $\theta\in [\frac12,1)$ one could take $k(K,\theta)\le k(K)$ to be bounded by a constant that depends only on $K$; the proof of this fact in~\cite{NN12} relies on a probabilistic construction, but in~\cite{DS13} a clever and instructive deterministic proof of this phenomenon was found (though, yielding asymptotically worse estimates on $\alpha(K,\theta),k(K)$ than those of~\cite{NN12}).

Assouad's theorem  is a satisfactory characterization of the doubling property in terms of embeddability into finite-dimensional Euclidean space.  However, it is a "near miss" as an answer to Problem~\ref{Q:bilip Rk}: the same statement with $\theta=1$ would have been a wonderful resolution of the bi-Lipschitz embedding problem into $\R^k$, showing that a simple intrinsic ball covering property is equivalent to bi-Lipschitz embeddability into some $\R^k$. It is important to note that while the snowflaking procedure does in some sense "tend to" the initial metric space as $\theta\to 1$, for $\theta<1$ it deforms  the initial metric space substantially (e.g.~such a $\theta$-snowflake  does not contain any non-constant rectifiable curve). So, while Assouad's theorem with the stated snowflaking is useful (examples of nice applications appear in~\cite{BS00,HM06}), its failure to address the bi-Lipschitz category is a major drawback.

Alas, more than a decade after the publication of Assouad's theorem, it was shown in~\cite{Sem96} (relying a on a rigidity theorem of~\cite{Pan89}) that Assouad's theorem does not hold with $\theta=1$, namely there exists a doubling metric space that does not admit a bi-Lipschitz embedding into $\R^k$ for any $k\in \N$. From the qualitative perspective, we now know that the case $\theta=1$ of Assouad's theorem fails badly in the sense that there exists a doubling metric space (the continuous $3$-dimensional Heisenberg group, equipped with the Carnot--Carath\'eodory metric) that does not admit a bi-Lipschitz embedding into any Banach space with the Radon--Nikod\'ym property~\cite{LN06,CK06} (in particular, it does not admit a bi-Lipschitz embedding into any reflexive or separable dual Banach space, let alone a finite dimensional Banach space), into any $L_1(\mu)$ space~\cite{CK10}, or into any Alexandrov space of curvature bounded above or below~\cite{Pau01} (a further strengthening appears in the forthcoming work~\cite{AN17}). From the quantitative perspective, by now we know  that balls in the discrete $5$-dimensional Heisenberg group equipped with the word metric (which is doubling) have the asymptotically worst-possible bi-Lipschitz distortion (as a function of their cardinality) in  uniformly convex Banach spaces~\cite{LN14-poincare} (see also~\cite{ANT13}) and $L_1(\mu)$ spaces~\cite{NY17,NY17-versus}; interestingly, the latter assertion is not true for the $3$-dimensional Heisenberg group~\cite{NY17-3dim}, while the former assertion does hold true for the $3$-dimensional Heisenberg group~\cite{LN14-poincare}.

All of the known "bad examples" (including, in addition to the Heisenberg group, those that were subsequently found in~\cite{Laakso,Laa02,BP99,Che99}) which show that the doubling property is not the sought-after answer to Problem~\ref{Q:bilip Rk} do not even embed into an infinite-dimensional Hilbert space. This leads to the following natural and intriguing question that was stated by Lang and Plaut in~\cite{LP01}.

\begin{question}\label{Q:LP}
Does every doubling subset of a Hilbert admit a bi-Lipschitz embedding into $\R^k$ for some $k\in \N$?
\end{question}
As stated, Question~\ref{Q:LP} is qualitative, but by a compactness argument (see~\cite[Section~4]{NN12})  if its answer were positive, then for every $K\in [2,\infty)$ there would exist $d_K\in \N$ and $\alpha_K\in [1,\infty)$ such that any $K$-doubling subset of a Hilbert space would embed into $\ell_2^{d_K}$ with distortion $\alpha_K$. If Question~\ref{Q:LP} had a positive answer, then it would be very interesting to determine the asymptotic behavior of $d_K$ and $\alpha_K$ as $K\to \infty$. A positive answer to Question~\ref{Q:LP}  would be a solution of Problem~\ref{Q:bilip Rk}, though the intrinsic criterion that it would provide would be quite complicated, namely it would say that a metric space $(\MM,d_\MM)$   admits a bi-Lipschitz embedding into $\R^k$ for some $k\in \N$ if and only if it is doubling and satisfies the family of quadratic distance inequalities~\eqref{eq:K LLR}. More importantly, it seems that any positive answer to Question~\ref{Q:LP}  would devise a procedure that starts with a subset in a very high-dimensional Euclidean space and, if that subset is $O(1)$-doubling, produce a bi-Lipschitz embedding into $\R^{O(1)}$; such a procedure, if possible, would be a quintessential metric dimension reduction result that is bound to be of major importance. It should be noted that, as proved in~\cite[Remark~4.1]{IN07},  any such general procedure cannot be an embedding into low-dimensions via a linear mapping as in the JL lemma, i.e., Question~\ref{Q:LP} calls for a genuinely nonlinear dimension reduction technique.\footnote{On its own, the established necessity  of obtaining a genuinely   nonlinear embedding method into low dimensions should not discourage attempts to answer Question~\ref{Q:LP}, because  some rigorous nonlinear dimension reduction methods have been devised in the literature; see e.g.~\cite{Ass83,Sem99,CS02,GKL03,BLMN04,BM04,KLMN05,LS05,BKL07,LNP09,CGT10,ABN11,BRS11,GT11,NN12,DS13,LdMM13,NR13,GK15,BG16,Nei16,OR16,ANN17}. However, all of these approaches  seem far from addressing Question~\ref{Q:LP}.}

Despite the above reasons why a positive answer to Question~\ref{Q:LP} would be very worthwhile, we suspect that Question~\ref{Q:LP} has a negative answer. A specific doubling subset of a Hilbert space which is a potential counterexample to Question~\ref{Q:LP}  was constructed  in~\cite[Question~3]{NN12}, but to date it remains unknown whether or not this subset admits a bi-Lipschitz embedding into $\R^{O(1)}$. If the answer to Question~\ref{Q:LP} is indeed negative, then the next challenge would be to formulate a candidate conjectural characterization for resolving the bi-Lipschitz embedding problem into $\R^k$.

The analogue of Question~\ref{Q:LP} is known to fail in some non-Hilbertian settings. Specifically, it follows from~\cite{LN14,NY17,NY17-versus} that for every $p\in (2,\infty)$ there exists a doubling subset $\mathscr{D}_p$ of $L_p(\R)$ that does not admit a bi-Lipschitz embedding into any $L_q(\mu)$ space for any $q\in [1,p)$. So, in particular there is no bi-Lipschitz embedding of $\mathscr{D}_p$ into any finite-dimensional normed space, and a fortiori there is no such embedding into any finite-dimensional subspace of $L_p(\R)$. Note that in~\cite{LN14} this statement is made for embeddings of $\mathscr{D}_p$ into $L_q(\mu)$ in the reflexive range $q\in (1,p)$, and the case $q=1$ is treated in~\cite{LN14} only when $p\ge p_0$ for some universal constant $p_0>2$. The fact that $\mathscr{D}_p$ does not admit a bi-Lipschitz embedding into any $L_1(\mu)$ space follows by combining the argument of~\cite{LN14} with the more recent result\footnote{When~\cite{LN14} was written, only a weaker bound of~\cite{CKN11} was known.} of~\cite{NY17,NY17-versus} when the underlying group in the construction of~\cite{LN14} is the $5$-dimensional Heisenberg group; interestingly we now know~\cite{NY17-3dim} that if one carries out the construction of~\cite{LN14} for the $3$-dimensional Heisenberg group, then the reasoning of~\cite{LN14}  would yield the above conclusion only when $p>4$. A different example of a doubling subset of $L_p(\R)$ that fails to embed bi-Lipschitzly into $\ell_p^k$ for any $k\in \N$ was found in~\cite{BGN15}. In $L_1(\R)$, there is an even stronger counterexample~\cite[Remark~1.4]{LN14}: By~\cite{GNRS04}, the spaces considered in~\cite{Laakso,Laa02} yields a doubling subset of $L_1(\R)$ that by~\cite{CK09-RNP} (see also~\cite{Ost11}) does not admit a bi-Lipschitz embedding into any Banach space with the Radon--Nikod\'ym property~\cite{LN06,CK06}, hence it does not admit a bi-Lipschitz embedding into any reflexive or separable dual Banach space. The potential validity of the above statement for $p\in (1,2)$ remains an intriguing open problem, and the case $p=2$ is of course Question~\ref{Q:LP}.

\section{Matou\v{s}ek's random metrics, Milnor--Thom, and coarse dimension reduction}\label{sec:matousek}

Fix two moduli $\omega,\Omega:[0,\infty)\to [0,\infty)$ as in Theorem~\ref{thm:coarse matousek}, i.e., they are increasing functions and $\omega\le \Omega$ point-wise. For a metric space $(\MM,d_\MM)$ define $\dim_{(\omega,\Omega)}(\MM,d_\MM)$ to be the smallest dimension $k\in \N$ for which there exists a $k$-dimensional normed space $(X,\|\cdot\|_X)=(X(\MM),\|\cdot\|_{X(\MM)})$ and a mapping $f:\MM\to X$ that satisfies~\eqref{eq:coarse condition}. If no such $k\in \N$ exists, then write   $\dim_{(\omega,\Omega)}(\MM,d_\MM)=\infty$. For $\alpha\in [1,\infty)$, this naturally generalizes the notation $\dim_\alpha(\MM,d_\MM)$ of~\cite{LLR} in the bi-Lipschitz setting, which coincides with  $\dim_{(t,\alpha t)}(\MM,d_\MM)$. 	

Recalling~\eqref{eq:def beta}, the goal of this section is  to show that   $\dim_{(\omega,\Omega)}(\MM,d_\MM)\ge n^{c\beta(\omega,\Omega)}$ for arbitrarily large $n\in \N$,  some universal constant $c\in (0,\infty)$ and some metric space $(\MM,d_\MM)$ with $|\MM|=3n$, thus proving Theorem~\ref{thm:coarse matousek}. We will do so by following Matou\v{s}ek's beautiful  ideas in~\cite{Mat96}, yielding a probabilistic  argument for the existence of such an intrinsically (coarsely) high-dimensional metric space $(\MM,d_\MM)$.

The collections of subsets  of a set $S$ of size $\ell\in \N$ will be denoted below $\binom{S}{\ell}=\{\mathfrak{e}\subset S:\ |\mathfrak{e}|=\ell\}$. Fix $n\in \N$ and a bipartite graph $\mathsf{G}=(\mathsf{L},\mathsf{R},\EE)$ with $|\mathsf{L}|=|\mathsf{R}|=n$. Thus, $\mathsf{L}$ and $\mathsf{R}$ are disjoint $n$-point sets (the "left side" and "right side" of $\mathsf{G}$) and $\EE$ is a subset of $\binom{\mathsf{L}\cup\mathsf{R}}{2}$ consisting only of $\mathfrak{e}\subset \mathsf{L}\cup\mathsf{R}$ such that $|\mathsf{L}\cap \mathfrak{e}|=|\mathsf{R}\cap \mathfrak{e}|=1$. Following Matou\v{s}ek~\cite{Mat96}, any such graph $\mathsf{G}$ can used as follows  as a "template" for obtaining a family  $2^{|\EE|}$ graphs, each of which having $3n$ vertices. For each $\lambda \in \mathsf{L}$ introduce two new elements $\lambda^{\!+},\lambda^{\!-}$. Denote $\mathsf{L}^{\!+}=\{\lambda^{\!+}:\ \lambda\in \mathsf{L}\}$ and $\mathsf{L}^{\!-}=\{\lambda^{\!-}:\ \lambda\in \mathsf{L}\}$. Assume that the sets $\mathsf{L}^{\!+},\mathsf{L}^{\!-},\mathsf{R}$ are  disjoint. For every $\sigma:\EE\to \{-,+\}$ define
\begin{equation}\label{eq:def signed edges}
\EE_\sigma\eqdef \left\{\big\{\lambda^{\sigma(\{\lambda,\rho\})},\rho\big\}:\ (\lambda,\rho)\in \mathsf{L}\times\mathsf{R}\ \wedge\ \{\lambda,\rho\}\in  \EE\right\}\subset \binom{\mathsf{L}^{\!+}\cup\mathsf{L}^{\!-}\cup \mathsf{R}}{2}.
\end{equation}
We thus obtain a bipartite graph $\mathsf{G}_\sigma=(\mathsf{L}^{\!+}\cup\mathsf{L}^{\!-},\mathsf{R},\EE_\sigma)$. By choosing $\sigma:\EE\to \{-,+\}$ uniformly at random, we think of $\mathsf{G}_\sigma$ as a random graph; let $\Pr$ denote the uniform probability measure on the set of all such $\sigma$. In other words, consider $\sigma:\EE\to \{-,+\}$ to be independent tosses of a fair coin, one for each edge of $\mathsf{G}$. Given an outcome of the coin tosses $\sigma$,  each edge $\mathfrak{e}\in \EE$ of $\mathsf{G}$ induces an element of $\EE_\sigma$ as follows. If $\lambda$ is the endpoint of $\mathfrak{e}$ in $\mathsf{L}$ and $\rho$ is the endpoint of $\mathfrak{e}$ in $\mathsf{R}$, then $\EE_\sigma$ contains exactly one of the unordered pairs $\{\lambda^{\!+},\rho\}, \{\lambda^{\!-},\rho\}$ depending on whether $\sigma(\mathfrak{e})=+$ or $\sigma(\mathfrak{e})=-$, respectively; see Figure~\ref{fig:mat graphs} below for a schematic depiction of this construction.
\begin{figure}[h]
\centering
\begin{minipage}{7.3in}
\centering
\includegraphics[height=3.3in]{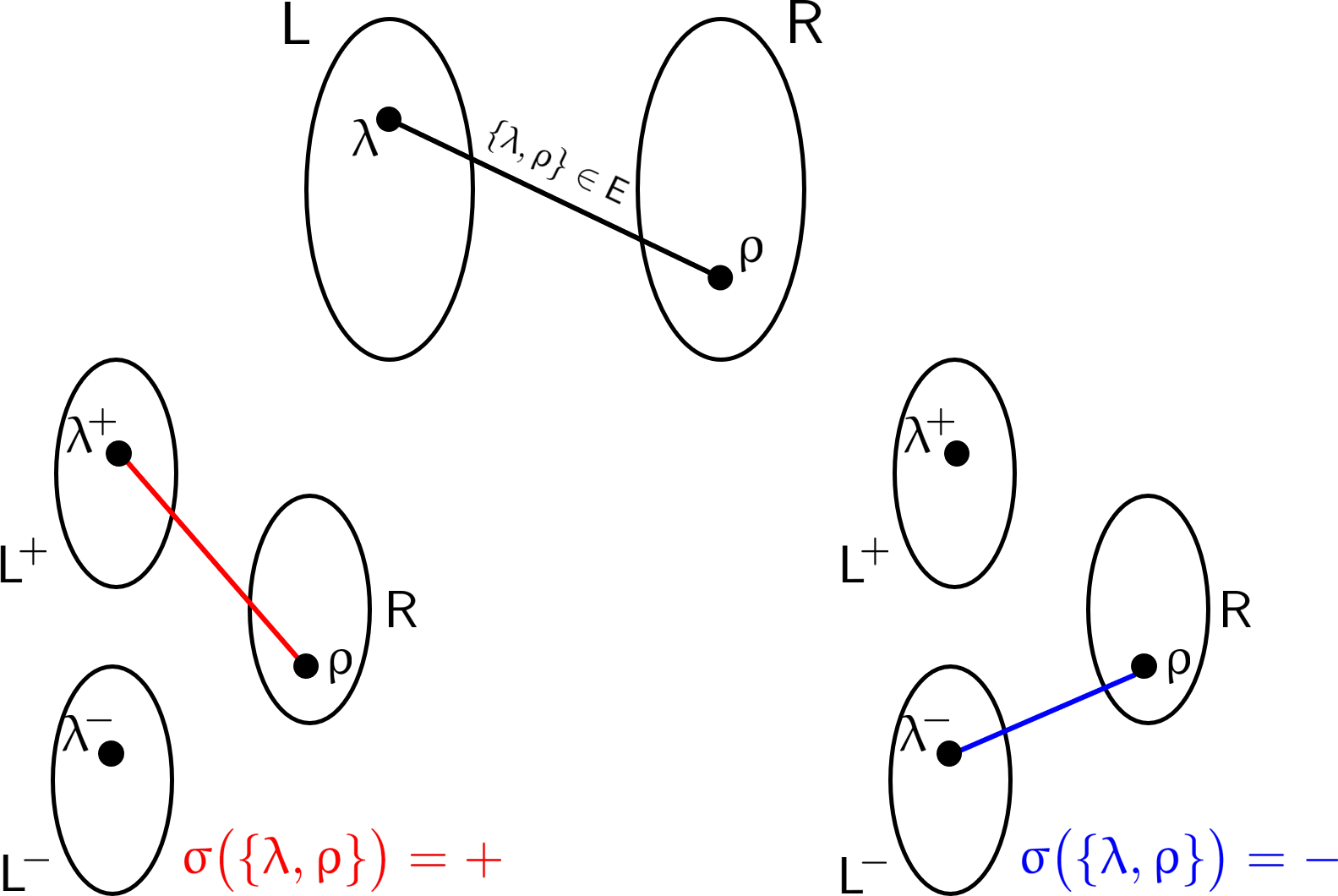}
\smallskip
\caption{The random bipartite graph $\mathsf{G}_\sigma=(\mathsf{L}^{\!+}\cup \mathsf{L}^{\!-},\mathsf{R},\EE_\sigma)$ that is associated to the bipartite  graph $\mathsf{G}=(\mathsf{L},\mathsf{R},\EE)$ and coin flips $\sigma:\mathsf{E}\to \{-,+\}$. Suppose that $(\lambda,\rho)\in \mathsf{L}\times \mathsf{R}$ and $\mathfrak{e}=\{\lambda,\rho\}\in \EE$. If the outcome of the coin that was flipped for the edge $\mathfrak{e}$ is $+$, then include in $\EE_\sigma$ the red edge between $\lambda^{\!+}$ and $\rho$ and do not include an edge between $\lambda^{\!-}$ and $\rho$. If the outcome of the coin that was flipped for the edge $\mathfrak{e}$ is $-$, then include in $\EE_\sigma$ the blue edge between $\lambda^{\!-}$ and $\rho$ and do not include an edge between $\lambda^{\!+}$ and $\rho$.
}
\label{fig:mat graphs}
\end{minipage}
\end{figure}

 Let $d_{\mathsf{G_\sigma}}:(\mathsf{L}^{\!+}\cup\mathsf{L}^{\!-}\cup \mathsf{R})\times (\mathsf{L}^{\!+}\cup\mathsf{L}^{\!-}\cup \mathsf{R})\to [0,\infty]$ be the shortest-path metric corresponding to $\mathsf{G}_\sigma$, with the convention that $d_{\mathsf{G}_\sigma}(x,y)=\infty$ if $x,y\in \mathsf{L}^{\!+}\cup\mathsf{L}^{\!-}\cup \mathsf{R}$ belong to different connected components of $\mathsf{G}_\sigma$. We record for convenience of later use the following very simple observation.

 \begin{claim}\label{claim:cycle compress} Fix $\lambda\in \mathsf{L}$ and $\sigma:\EE\to \{-,+\}$. Suppose that $k\eqdef d_{\mathsf{G}_\sigma}(\lambda^{\!+},\lambda^{\!-})<\infty$.  Then the original "template graph" $\mathsf{G}$ contains a cycle of length at most  $k$.
 \end{claim}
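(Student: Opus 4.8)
The plan is to unwind the definition of $\EE_\sigma$ and the shortest-path metric, and to exhibit an explicit cycle in $\mathsf{G}$ by ``projecting'' a shortest path from $\lambda^{\!+}$ to $\lambda^{\!-}$ in $\mathsf{G}_\sigma$ down to $\mathsf{G}$. Concretely, fix $\lambda\in\mathsf{L}$ and $\sigma$ with $k=d_{\mathsf{G}_\sigma}(\lambda^{\!+},\lambda^{\!-})<\infty$, and let $v_0=\lambda^{\!+},v_1,\ldots,v_k=\lambda^{\!-}$ be a geodesic in $\mathsf{G}_\sigma$. Define the projection $\pi:\mathsf{L}^{\!+}\cup\mathsf{L}^{\!-}\cup\mathsf{R}\to\mathsf{L}\cup\mathsf{R}$ by $\pi(\mu^{\!+})=\pi(\mu^{\!-})=\mu$ for $\mu\in\mathsf{L}$ and $\pi(\rho)=\rho$ for $\rho\in\mathsf{R}$. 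First I would observe that every edge of $\EE_\sigma$ is of the form $\{\mu^{\sigma(\{\mu,\rho\})},\rho\}$ with $\{\mu,\rho\}\in\EE$, so $\pi$ carries each edge $\{v_{i-1},v_i\}$ of the geodesic to an edge $\{\pi(v_{i-1}),\pi(v_i)\}\in\EE$; thus $\pi(v_0),\pi(v_1),\ldots,\pi(v_k)$ is a closed walk in $\mathsf{G}$ of length $k$, since $\pi(v_0)=\pi(v_k)=\lambda$.

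The next step is to pass from this closed walk to an honest cycle of length at most $k$. Because $\mathsf{G}_\sigma$ is bipartite with sides $\mathsf{L}^{\!+}\cup\mathsf{L}^{\!-}$ and $\mathsf{R}$, the geodesic alternates between the two sides, so $k$ is even, and the vertices $v_0,v_2,\ldots,v_k$ all lie in $\mathsf{L}^{\!+}\cup\mathsf{L}^{\!-}$ while $v_1,v_3,\ldots$ lie in $\mathsf{R}$. I would argue that the images $\pi(v_0),\pi(v_1),\ldots,\pi(v_{k-1})$ cannot all be distinct only in a controlled way: if the closed walk in $\mathsf{G}$ is not already a cycle, there is a repeated vertex, and one can excise the sub-walk between two occurrences of it to obtain a strictly shorter closed walk that is still nontrivial, provided one checks the resulting walk is not trivial (length $0$) or a backtrack of length $2$. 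Here is where a small amount of care is needed: the reason the projected walk does not collapse entirely to a point or to a single back-and-forth edge is that $v_0=\lambda^{\!+}$ and $v_k=\lambda^{\!-}$ are \emph{different} vertices of $\mathsf{G}_\sigma$, so the lift genuinely ``wraps around''; any trivial or length-$2$-backtrack closed walk in $\mathsf{G}$ lifts back to a walk in $\mathsf{G}_\sigma$ that returns to its starting vertex, contradicting $\lambda^{\!+}\neq\lambda^{\!-}$ together with minimality of $k$. Iterating the excision, I arrive at a closed walk in $\mathsf{G}$ of length at least $3$ and at most $k$ with no repeated vertex except the endpoints, i.e.\ a cycle of length $\le k$.

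I expect the only genuinely delicate point to be the bookkeeping in the previous paragraph: ensuring that the excision process terminates at a bona fide cycle and does not degenerate, and in particular ruling out that the minimal closed walk has length $1$ or is a $2$-edge backtrack. The cleanest way to handle this is probably to invoke minimality of $k$ directly: since $v_0,\ldots,v_k$ is a \emph{geodesic} in $\mathsf{G}_\sigma$, no vertex repeats along it, so in particular $\pi(v_i)\neq\pi(v_{i+1})$ (they lie on opposite sides) and $\pi(v_{i-1})\neq\pi(v_{i+1})$ unless $v_{i-1}=v_{i+1}$, which is impossible on a geodesic. Hence the closed walk $\pi(v_0),\ldots,\pi(v_k)$ already has no immediate backtracks; any repetition $\pi(v_i)=\pi(v_j)$ with $i<j$ and $(i,j)\neq(0,k)$ then yields, upon excision, either a shorter nontrivial closed walk (which we recurse on) or forces $v_i\neq v_j$ but $\pi(v_i)=\pi(v_j)$, meaning $\{v_i,v_j\}=\{\mu^{\!+},\mu^{\!-}\}$ for some $\mu\in\mathsf{L}$ — and then the sub-path $v_i,\ldots,v_j$ is a path in $\mathsf{G}_\sigma$ from $\mu^{\!+}$ to $\mu^{\!-}$ of length $j-i$, whose projection is a closed walk in $\mathsf{G}$ of length $j-i\le k$; continuing this descent (the total length strictly decreases each time) terminates with the desired cycle of length at most $k$ in $\mathsf{G}$. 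This completes the proof of the claim.
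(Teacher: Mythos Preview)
Your walk-projection/descent approach is sound in outline and ultimately correct, but there is one genuine gap and the whole argument is more labored than necessary. The gap is in your justification that the projected walk has no immediate backtracks: you assert that ``$\pi(v_{i-1})\neq\pi(v_{i+1})$ unless $v_{i-1}=v_{i+1}$,'' but this does not follow from the geodesic property alone. A priori one could have $v_{i-1}=\mu^{\!+}$, $v_i=\rho$, $v_{i+1}=\mu^{\!-}$ with $\pi(v_{i-1})=\pi(v_{i+1})=\mu$ and $v_{i-1}\neq v_{i+1}$. What actually rules this out is the key structural feature of $\EE_\sigma$ that you never invoke: for each $\{\mu,\rho\}\in\EE$, the set $\EE_\sigma$ contains \emph{exactly one} of $\{\mu^{\!+},\rho\}$ and $\{\mu^{\!-},\rho\}$, so $v_i$ cannot be adjacent to both $\mu^{\!+}$ and $\mu^{\!-}$. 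The same omission is why your descent is not quite complete at the bottom: you need this fact to ensure the terminal closed walk has length at least $4$ (hence is a genuine cycle), since a length-$2$ walk $\nu^{\!+},\rho,\nu^{\!-}$ is impossible for the same reason.

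The paper exploits this very observation far more efficiently. It notes that $\pi$ is \emph{injective on edges} $\EE_\sigma\to\EE$ (precisely because only one of $\{\mu^{\!+},\rho\},\{\mu^{\!-},\rho\}$ lies in $\EE_\sigma$). Since the $k$ edges of the geodesic are distinct, their images are $k$ distinct edges of $\mathsf{G}$, all supported on the vertex set $\{\pi(\gamma(i))\}_{i=0}^k$, which has at most $k$ elements because $\pi(\gamma(0))=\pi(\gamma(k))=\lambda$. A graph with at least as many edges as vertices is not a forest, so it contains a cycle of length at most $k$. This bypasses all the walk-excision bookkeeping: once you have edge-injectivity, the conclusion is a one-line edge/vertex count.
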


 \begin{proof} Denote by $\pi:\mathsf{L}^{\!+}\cup \mathsf{L}^{\!-}\cup\mathsf{R}\to \mathsf{L}\cup\mathsf{R}$ the canonical "projection," i.e., $\pi$ is the identity mapping on $\mathsf{R}$ and $\pi(\lambda^{\!+})=\pi(\lambda^{\!-})=\lambda$ for every $\lambda\in \mathsf{L}$. The natural induced mapping $\pi:\EE_\sigma\to \EE$ (given by $\pi(\{x,y\})=\{\pi(x),\pi(y)\}$ for each $\{x,y\}\in \EE_\sigma$) is one-to-one, because by construction   $\EE_\sigma$ contains one and only one of the unordered pairs  $\{\mu^+,\rho\},\{\mu^-,\rho\}$ for each $(\mu,\rho)\in \mathsf{L}\times \mathsf{R}$ with $\{\mu,\rho\}\in \EE$.

 Let $\gamma: \{0,\ldots, k\}\to \mathsf{L}^{\!+}\cup \mathsf{L}^{\!-}\cup\mathsf{R}$ be a geodesic in $\mathsf{G}_\sigma$ that joins $\lambda^{\!+}$  and $\lambda^{\!-}$. Thus $\gamma(0)= \lambda^{\!+}$, $\gamma(k)= \lambda^{\!-}$ and $\{\{\gamma(i-1),\gamma(i)\}\}_{i=1}^k$ are distinct edges in $\EE_\sigma$ (they are distinct because $\gamma$ is a shortest path joining $\lambda^{\!+}$  and $\lambda^{\!-}$ in $\mathsf{G}_\sigma$). By the injectivity of $\pi$ on $\mathsf{E}_\sigma$, the unordered pairs $\{\{\pi(\gamma(i-1)),\pi(\gamma(i))\}\}_{i=1}^k$ are distinct edges in $\EE$. So, the subgraph $\mathsf{H}$ of $\mathsf{G}$ that is induced on the vertices $\{\pi(\gamma(i))\}_{i=0}^k$ has at least $k$ edges. But, $\mathsf{H}$ has at most $k$ vertices, since $\pi(\gamma(0))=\pi(\gamma(k))=\lambda$. Hence $\mathsf{H}$ is not a forest, i.e., it contains a cycle of length at most $k$.
 \end{proof}

Even though $d_{\mathsf{G_\sigma}}$ is not necessarily a metric due to its possible infinite values, for every $s,T\in (0,\infty)$ we can re-scale and truncate it so as to obtain a (random) metric $d_\sigma^{s,T}:(\mathsf{L}^{\!+}\cup\mathsf{L}^{\!-}\cup \mathsf{R})\times (\mathsf{L}^{\!+}\cup\mathsf{L}^{\!-}\cup \mathsf{R})\to [0,\infty]$  by defining
\begin{equation}\label{eq:truncated}
\forall\, x,y\in \mathsf{L}^{\!+}\cup\mathsf{L}^{\!-}\cup \mathsf{R},\qquad d_\sigma^{s,T}(x,y)\eqdef \min\big\{sd_{\mathsf{G}_\sigma}(x,y),T\big\}.
\end{equation}

The following lemma shows that if in the above construction $\mathsf{G}$ has many edges and no short cycles, then with overwhelmingly high probability the random metric in~\eqref{eq:truncated} has large coarse metric dimension.

\begin{lemma}\label{lem:prob matousek} There is a universal constant $\eta>0$ with the following property. For every $\omega,\Omega:[0,\infty)\to [0,\infty)$ as above, every $n\in \N$ and every template graph $\mathsf{G}=(\mathsf{L},\mathsf{R},\EE)$ as above, suppose that  $g\in \N$ and $s,T>0$ satisfy
\begin{equation}\label{eq:girth assumtpions}
\frac{\omega^{-1}(2\Omega(s))}{s}<g\le \frac{T}{s},
\end{equation}
and that the shortest cycle in  $\mathsf{G}$ has length at least $g$. Then for every $\d\in (0,\frac13]$ we have
\begin{equation}\label{eq:probabilistic version'}
 \Pr\left[\sigma:\EE\to \{-,+\}:\  \dim_{\omega,\Omega}\Big(\mathsf{L}^{\!+}\cup\mathsf{L}^{\!-}\cup \mathsf{R},d_\sigma^{s,T}\Big)\le \d\eta\frac{|\EE|}{n}\right] < \big(2\d^{\d}\big)^{-|\EE|}.
\end{equation}
In particular, by choosing $\d=\frac13$ in~\eqref{eq:probabilistic version'} we deduce that
\begin{equation}\label{eq:probabilistic version}
\Pr\left[\sigma:\EE\to \{-,+\}:\  \dim_{\omega,\Omega}\Big(\mathsf{L}^{\!+}\cup\mathsf{L}^{\!-}\cup \mathsf{R},d_\sigma^{s,T}\Big)> \frac{\eta|\EE|}{3n}\right] > 1-e^{-\frac15|\EE|}.
\end{equation}
\end{lemma}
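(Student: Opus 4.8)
The plan is a union bound over the $2^{|\EE|}$ sign vectors $\sigma$, combined with the Milnor--Thom--Warren estimate for the number of sign patterns of bounded‑degree polynomials (this is the algebraic ingredient that makes Matou\v{s}ek's method in \cite{Mat96} work). Fix $\d\in(0,\tfrac13]$ and put $k\eqdef\lfloor \d\eta|\EE|/n\rfloor$; if $k=0$ the event in \eqref{eq:probabilistic version'} is vacuous (any coarse embedding of our space needs dimension at least $1$), so assume $k\ge1$. Call $\sigma$ \emph{bad} if $\dim_{\omega,\Omega}(\mathsf{L}^{\!+}\cup\mathsf{L}^{\!-}\cup\mathsf{R},d_\sigma^{s,T})\le k$. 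The whole content of the lemma is that the number of bad $\sigma$ is smaller than $\d^{-\d|\EE|}$; dividing by $2^{|\EE|}$ then yields \eqref{eq:probabilistic version'}.

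The geometric heart is to show that a bad $\sigma$ is reconstructible from the sign pattern of $|\EE|$ polynomials of degree $2$ at a single point of a Euclidean space of dimension $O(nk)$, \emph{without} recording the (arbitrary) norm of the target. Suppose $\sigma$ is bad and fix a $k$‑dimensional normed space $X=(\R^k,\|\cdot\|_X)$ (padding up a lower‑dimensional target) and $f\colon\mathsf{L}^{\!+}\cup\mathsf{L}^{\!-}\cup\mathsf{R}\to X$ witnessing \eqref{eq:coarse condition} for $d_\sigma^{s,T}$. For $\mathfrak{e}=\{\lambda,\rho\}\in\EE$, the vertex $\rho$ is adjacent in $\mathsf{G}_\sigma$ exactly to $\lambda^{\sigma(\mathfrak{e})}$ among $\{\lambda^{\!+},\lambda^{\!-}\}$, so by \eqref{eq:truncated} and $s\le gs\le T$ we get $d_\sigma^{s,T}(\lambda^{\sigma(\mathfrak{e})},\rho)\le s$, hence $\|f(\lambda^{\sigma(\mathfrak{e})})-f(\rho)\|_X\le\Omega(s)$. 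On the other hand, Claim~\ref{claim:cycle compress} together with the hypothesis that $\mathsf{G}$ has no cycle shorter than $g$ forces $d_{\mathsf{G}_\sigma}(\lambda^{\!+},\lambda^{\!-})\ge g$ (or $=\infty$), so by \eqref{eq:truncated} and $gs\le T$ we obtain $d_\sigma^{s,T}(\lambda^{\!+},\lambda^{\!-})\ge gs$; the left half of \eqref{eq:girth assumtpions} and monotonicity of $\omega$ then give $\|f(\lambda^{\!+})-f(\lambda^{\!-})\|_X\ge\omega(gs)>2\Omega(s)$. Consequently the disjoint compact convex sets $f(\lambda^{\!+})+\Omega(s)B_X$ and $f(\lambda^{\!-})+\Omega(s)B_X$ can be strictly separated by an affine functional $v\mapsto\langle\xi_\lambda,v\rangle+b_\lambda$, positive on the former and negative on the latter — and, crucially, this functional depends only on $\lambda$, not on $\rho$. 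Since $f(\rho)$ lies in whichever of the two balls is indexed by $\sigma(\mathfrak{e})$, we conclude $\sigma(\mathfrak{e})=\mathrm{sgn}\big(\langle\xi_\lambda,f(\rho)\rangle+b_\lambda\big)$ for every $\mathfrak{e}=\{\lambda,\rho\}\in\EE$.

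Hence, assigning to each bad $\sigma$ the point $P_\sigma=\big((f(\rho))_{\rho\in\mathsf{R}},(\xi_\lambda,b_\lambda)_{\lambda\in\mathsf{L}}\big)\in\R^{N}$ with $N=n(2k+1)\le 3nk$, the map $\sigma\mapsto\big(\mathrm{sgn}(Q_{\mathfrak{e}}(P_\sigma))\big)_{\mathfrak{e}\in\EE}$ is injective on bad $\sigma$'s, where $Q_{\{\lambda,\rho\}}(P)\eqdef\langle\xi_\lambda,f(\rho)\rangle+b_\lambda$ has degree $2$ in the $N$ coordinates. By the Milnor--Thom--Warren bound, the number of realizable sign patterns of $|\EE|$ degree‑$2$ polynomials in $N$ real variables is at most $(C|\EE|/(nk))^{3nk}$ for an absolute constant $C$, provided $|\EE|\ge 3nk$ (true once $\eta\le1$, since then $3nk\le 3\d\eta|\EE|\le\eta|\EE|$). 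As $nk\le\d\eta|\EE|$ and, for $\eta$ below an absolute constant, $x\mapsto(C|\EE|/x)^{3x}$ is increasing on the relevant range, the number of bad $\sigma$ is at most $(C/(\d\eta))^{3\d\eta|\EE|}$. It remains to pick $\eta$ so that this is less than $\d^{-\d|\EE|}$ for every $\d\in(0,\tfrac13]$; taking logarithms and writing $\log(C/(\d\eta))=\log(C/\eta)+\log(1/\d)$, this amounts to $3\eta\log(C/\eta)<(1-3\eta)\log(1/\d)$, which holds for all sufficiently small absolute $\eta>0$ (the left side tends to $0$, while $\log(1/\d)\ge\log 3>0$). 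This proves \eqref{eq:probabilistic version'}, and \eqref{eq:probabilistic version} follows by taking $\d=\tfrac13$ and using the elementary inequality $2\cdot 3^{-1/3}>e^{1/5}$.

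The step I expect to be the crux is the passage ``$\sigma$ bad $\Rightarrow$ $\sigma$ is one of few sign patterns'': the decisive point is that $\sigma$ is encoded by a \emph{single} separating affine functional per left‑vertex, evaluated on the images of the right‑vertices. This is exactly what (i) avoids introducing any variables to describe the target norm, so that the ambient dimension $N$ stays proportional to $nk$ rather than exponential in $k$, and (ii) keeps the polynomial degrees bounded, so that the Milnor--Thom count is of the form $(\,\cdot\,)^{O(nk)}$ and can be overpowered by $2^{|\EE|}$. The two inequalities in \eqref{eq:girth assumtpions} are precisely what makes this geometry possible: $g\le T/s$ prevents the truncation at level $T$ from collapsing the distance $d_\sigma^{s,T}(\lambda^{\!+},\lambda^{\!-})$, while $\omega^{-1}(2\Omega(s))<gs$, read through Claim~\ref{claim:cycle compress} and the girth hypothesis on $\mathsf{G}$, guarantees that $f(\lambda^{\!+})$ and $f(\lambda^{\!-})$ are more than $2\Omega(s)$ apart — the margin needed to separate the two $\Omega(s)$‑balls by one hyperplane. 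Everything else (padding a low‑dimensional target, the monotonicity of $x\mapsto(A/x)^{x}$, and the scalar inequalities fixing $\eta$ and comparing $2\cdot 3^{-1/3}$ with $e^{1/5}$) is routine.
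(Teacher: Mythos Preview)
Your proof is correct and follows the same approach as the paper's: both encode each bad $\sigma$ via the signs of one affine functional per left vertex evaluated at the images of the right vertices (the paper obtains this functional as the Hahn--Banach norming functional of $f_\sigma(\lambda^{\!+})-f_\sigma(\lambda^{\!-})$, you via convex separation of the two $\Omega(s)$-balls around $f(\lambda^{\!\pm})$, which amounts to the same thing), and both conclude by a Milnor--Thom type count of sign patterns in $O(nk)$ real variables. The only cosmetic difference is packaging: the paper factors the count through a separate rank lemma (Lemma~\ref{lem:rank}, following Alon), records that the resulting matrices $\mA_\sigma$ have rank at most $\dim(X_\sigma)+1$, and then contrasts this with the rank lower bound; you instead parametrize the data directly by $P_\sigma\in\R^{n(2k+1)}$ and apply Warren's theorem to the degree-$2$ polynomials $Q_{\{\lambda,\rho\}}$, which yields the same conclusion without the intermediate lemma.
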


Prior to proving Lemma~\ref{lem:prob matousek}, we shall now explain how it implies Theorem~\ref{thm:coarse matousek}.

\begin{proof}[Proof of Theorem~\ref{thm:coarse matousek} assuming Lemma~\ref{lem:prob matousek}]  Recalling~\eqref{eq:def beta}, we can fix $s\in (0,\infty)$ such that
\begin{equation}\label{eq:def girth}
g\eqdef \left\lfloor \frac{\omega^{-1}(2\Omega(s))}{s}  \right\rfloor+1\le \frac{2}{\beta(\omega,\Omega)}.
\end{equation}
There is a universal constant $\kappa\in (0,\infty)$ such that for arbitrarily large $n\in \N$ there exists a bipartite graph $\mathsf{G}=(\mathsf{L},\mathsf{R},\EE)$ with $|\mathsf{L}|=|\mathsf{R}|=n$, girth at least $g$ (i.e., $\mathsf{G}$ does not contain any cycle of length smaller than $g$)  and $|\EE|\ge n^{1+\kappa/g}$. Determining the largest possible value of $\kappa$ here is a well-studied and longstanding open problem in graph theory (see e.g.~the discussions in~\cite{Bol01,Mat02,Ost13}), but for the present purposes any value of $\kappa$ suffices. For the latter (much more modest) requirement, one can obtain $\mathsf{G}$ via a simple probabilistic construction (choosing each of the edges independently at random and deleting an arbitrary edge from each cycle of length at most $g-1$). See~\cite{LUW95} for the best known lower bound on $\kappa$ here (arising from an algebraic construction).

We shall use the above graph $\mathsf{G}$ as the template graph for the random graphs $\{\mathsf{G}_\sigma\}_{\sigma:\EE\to \{-,+\}}$. Our choice of $g$ in~\eqref{eq:def girth} ensures that if we write $T=sg$, then~\eqref{eq:girth assumtpions} holds true and we obtain a distribution over metric spaces $(\mathsf{L}^{\!+}\cup\mathsf{L}^{\!-}\cup \mathsf{R},d_\sigma^{s,T})$ for which the conclusion~\eqref{eq:probabilistic version} of Lemma~\ref{lem:prob matousek} holds true. Hence, by choosing $c=\kappa/2$ and substituting the  bound $|\EE|\ge n^{1+\kappa/g}$ into~\eqref{eq:probabilistic version} while using~\eqref{eq:def girth} we have
\begin{equation*}
\Pr\left[\sigma:\EE\to \{-,+\}:\  \dim_{\omega,\Omega}\Big(\mathsf{L}^{\!+}\cup\mathsf{L}^{\!-}\cup \mathsf{R},d_\sigma^{s,T}\Big) \gtrsim n^{c\beta(\omega,\Omega)}\right] \ge  1-\exp\left(-\frac15n^{1+c\beta(\omega,\Omega)}\right).
\end{equation*}
Consequently, by the definition of $\dim_{\omega,\Omega}(\cdot)$, with probability exponentially close to $1$ the random metric space $(\mathsf{L}^{\!+}\cup\mathsf{L}^{\!-}\cup \mathsf{R},d_\sigma^{s,T})$ satisfies the assertion of Theorem~\ref{thm:coarse matousek}.
\end{proof}

The proof of Lemma~\ref{lem:prob matousek}  relies on the following lemma that was obtained implicitly by Matou\v{s}ek~\cite{Mat96}. Its proof takes as input a clever argument of Alon~\cite{Al86}  which uses the classical bound of Milnor~\cite{Mil64} and Thom~\cite{Tho65} on the number of connected components of a real algebraic variety.

\begin{lemma}\label{lem:rank} Fix $m,n\in \N$ and $\mathsf{E}\subset \n^2$. Suppose that $\mA_1=(a_{ij}^1),\ldots,\mA_m=(a_{ij}^m)\in \M_n(\R)$ are matrices that satisfy $a_{ij}^k\neq 0$ for all $(i,j)\in \mathsf{E}$ and $k\in \m$, and that the sign vectors
\begin{equation}\label{eq:sign patterns}
\big(\sign(a^1_{ij})\big)_{(i,j)\in \mathsf{E}},\big(\sign(a^2_{ij})\big)_{(i,j)\in \mathsf{E}}\ldots,\big(\sign(a^m_{ij})\big)_{(i,j)\in \mathsf{E}}\subset \{-1,1\}^{\mathsf{E}}
\end{equation}
are distinct. Then there exists $k\in \{1,\ldots,m\}$ such that
\begin{equation}\label{eq:rank lower optimized}
\rank(\mA_k)\gtrsim \frac{\log m}{n\log\left(\frac{|\mathsf{E}|}{\log m}\right)}.
\end{equation}
\end{lemma}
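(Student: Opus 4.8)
The plan is to follow Alon's "sign pattern counting" method. Suppose for contradiction that every matrix $\mA_k$ has low rank, say $\rank(\mA_k)\le r$ for all $k\in \m$, where $r$ is a parameter to be optimized at the end. First I would write each $\mA_k$ in a low-rank factored form: since $\rank(\mA_k)\le r$, there exist vectors $u_1^k,\ldots,u_n^k\in \R^r$ and $v_1^k,\ldots,v_n^k\in \R^r$ such that $a_{ij}^k=\langle u_i^k,v_j^k\rangle$ for all $i,j\in \n$. Thus the single real number $a_{ij}^k$ is a fixed quadratic (in fact bilinear) polynomial evaluated at the point
$$
\mathbf{x}_k\eqdef \big(u_1^k,\ldots,u_n^k,v_1^k,\ldots,v_n^k\big)\in \R^{2nr}.
$$
The key point is that the polynomial $P_{ij}(\mathbf{x})=\langle u_i,v_j\rangle$ (in the $2nr$ variables, with $u$'s and $v$'s as indeterminates) does \emph{not} depend on $k$: only the evaluation point $\mathbf{x}_k$ does. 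Hence the sign vector $\big(\sign(a_{ij}^k)\big)_{(i,j)\in \mathsf{E}}$ is precisely the sign pattern that the fixed family of $|\mathsf{E}|$ polynomials $\{P_{ij}\}_{(i,j)\in \mathsf{E}}$ (each of degree $2$) realizes at the point $\mathbf{x}_k$. Since the vectors in~\eqref{eq:sign patterns} are distinct, the family $\{P_{ij}\}$ must realize at least $m$ distinct sign patterns on $\R^{2nr}$.

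The next step is to invoke the Milnor--Thom bound in the form used by Alon: the number of distinct sign patterns $(\sign P_1(\mathbf{x}),\ldots,\sign P_N(\mathbf{x}))$, $\mathbf{x}\in \R^D$, that can be realized by $N$ real polynomials of degree at most $d$ is at most $\left(\frac{C N d}{D}\right)^{D}$ for some universal constant $C$, provided $N\ge D$ (this is exactly the bound that appears via Milnor~\cite{Mil64} and Thom~\cite{Tho65}, as used in~\cite{Al86}; one may cite it as a black box given the references in the excerpt). Applying this with $N=|\mathsf{E}|$, $d=2$, and $D=2nr$ gives
$$
m\le \left(\frac{2C|\mathsf{E}|}{2nr}\right)^{2nr}=\left(\frac{C|\mathsf{E}|}{nr}\right)^{2nr}.
$$
Taking logarithms yields $\log m\le 2nr\log\!\big(C|\mathsf{E}|/(nr)\big)$, and hence
$$
r\ge \frac{\log m}{2n\log\!\big(C|\mathsf{E}|/(nr)\big)}.
$$
To extract a clean lower bound of the asserted shape, I would argue as follows: either $r$ is already at least, say, $\sqrt{\log m}$ (in which case~\eqref{eq:rank lower optimized} is easily satisfied, since $|\mathsf{E}|\le n^2$ makes the denominator $O(n\log n)$ and one checks the inequality directly), or else $r\le \sqrt{\log m}$, in which case $nr\ge \sqrt{\log m}$ is comparable to or larger than $\log m$ only up to a harmless factor, and one can bound $C|\mathsf{E}|/(nr)\le C|\mathsf{E}|/\sqrt{\log m}$, so that $\log\!\big(C|\mathsf{E}|/(nr)\big)\lesssim \log\!\big(|\mathsf{E}|/\log m\big)$ (using that $\log m\ge 1$ and absorbing the square root and the constant $C$ into the implied constant). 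Substituting this into the displayed inequality gives exactly
$$
r\gtrsim \frac{\log m}{n\log\!\big(|\mathsf{E}|/\log m\big)},
$$
contradicting the assumed upper bound $r$ on all ranks once the constant is chosen appropriately, and thereby proving that some $\mA_k$ must satisfy~\eqref{eq:rank lower optimized}.

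The main obstacle I anticipate is not the Milnor--Thom input itself (which is standard and can be quoted), but the bookkeeping to pass from the implicit inequality $\log m\le 2nr\log(C|\mathsf{E}|/(nr))$ to the explicit form~\eqref{eq:rank lower optimized}: the logarithm on the right-hand side contains $r$, so one must either iterate/bootstrap once or split into the two regimes as above, and one has to be slightly careful that the hypothesis $N\ge D$ needed by the sign-pattern bound (here $|\mathsf{E}|\ge 2nr$) holds — if $|\mathsf{E}|<2nr$ then $r>|\mathsf{E}|/(2n)$, which already forces $r$ to be large and the conclusion to hold trivially, so this degenerate case can be dispatched at the outset. A secondary minor point is ensuring $\mathsf{E}\neq\emptyset$ and $m\ge 2$ so that $\log m>0$ and the quantities in~\eqref{eq:rank lower optimized} are well-defined; these are implicit in the hypothesis that the sign vectors in~\eqref{eq:sign patterns} are distinct.
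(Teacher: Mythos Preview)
Your approach is correct and reaches the same conclusion as the paper, but the packaging is different. The paper does not quote the Warren-type sign-pattern bound $\big(CNd/D\big)^D$ as a black box. Instead it applies Milnor--Thom in its raw form (a bound on the number of connected components of a real variety), which forces it to first manufacture a variety: it introduces auxiliary real variables $\zeta_u$ and replaces the inequality information $\sign(a_{ij}^k)$ by equations $\mathfrak{p}_u(X,Y,z)=0$, then argues that points with distinct sign patterns lie in distinct connected components. Because each auxiliary variable adds to the ambient dimension, the paper groups the coordinates $(i,j)\in\mathsf{E}$ into $h=\lceil|\mathsf{E}|/\alpha\rceil$ blocks of size $\le\alpha$, obtaining $h$ polynomials of degree $\le 4\alpha$ in $2nr+h$ variables, and then optimizes $\alpha$. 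Your route bypasses all of this by invoking the sign-pattern bound directly, which already has the grouping/auxiliary-variable trick baked in. What you gain is brevity; what the paper gains is self-containment from the Milnor--Thom statement alone.

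One genuine slip in your write-up: the case split at the end is garbled. You write ``or else $r\le\sqrt{\log m}$, in which case $nr\ge\sqrt{\log m}$,'' but the second inequality does not follow from the first, and the subsequent bound $C|\mathsf{E}|/(nr)\le C|\mathsf{E}|/\sqrt{\log m}$ needs exactly that unjustified lower bound on $nr$. The clean split is on $nr$, not on $r$: if $nr\ge\tfrac12\log m$ then $r\ge\tfrac{\log m}{2n}\gtrsim \tfrac{\log m}{n\log(|\mathsf{E}|/\log m)}$ because $m\le 2^{|\mathsf{E}|}$ forces $|\mathsf{E}|/\log m\ge 1/\log 2$, so the logarithm in the denominator is bounded below by a universal positive constant; if instead $nr<\tfrac12\log m$, assume for contradiction that $r<c\,\tfrac{\log m}{n\log(|\mathsf{E}|/\log m)}$ and plug this into $\log m\le 2nr\log(C|\mathsf{E}|/(nr))$ to obtain, with $t=|\mathsf{E}|/\log m$, the inequality $1<2c\big(1+\tfrac{\log(C/c)+\log\log t}{\log t}\big)$, which fails for $c$ a small enough universal constant. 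Your handling of the degenerate regime $|\mathsf{E}|<2nr$ is fine.
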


\begin{proof} Let $\alpha\in \N$ be an auxiliary parameter that will be specified later so as to optimize the ensuing argument. Write $h=\lceil |\EE|/\alpha\rceil$ and fix any partition of $\EE$ into subsets $\JJ_1,\ldots,\JJ_h\neq \emptyset$ (i.e., $\JJ_1,\ldots,\JJ_h\subset\EE$ are pairwise disjoint and $\EE=\JJ_1\cup\ldots \cup\JJ_h$) that satisfy $|\JJ_u|\le \alpha$ for all $u\in \{1,\ldots,h\}$.

Denote
\begin{equation}\label{eq:def mu rho}
\mu\eqdef \min_{\substack{(i,j)\in \EE\\ k\in \m}}\big|a_{ij}^k\big|\qquad\mathrm{and}\qquad r\eqdef \max_{k\in \m} \rank(\mA_k).
\end{equation}
Lemma~\ref{lem:rank} assumes that $\mu>0$, and its goal is to show that $r$ is at least a universal constant multiple of the quantity that appears in the right hand side on~\eqref{eq:rank lower optimized}. The definition of $r$ means that for every $k\in \m$ there exist $n$-by-$r$ and $r$-by-$n$ matrices $\mathsf{B}_k\in \M_{n\times r}(\R)$ and $\mathsf{C}_k\in \M_{r\times n}(\R)$, respectively, such that $\mA_k=\mathsf{B}_k\mathsf{C}_k$. Define vectors $\{\zeta_k=(\zeta_1^k,\ldots,\zeta^k_h)\in \R^h\}_{k=1}^m$ by setting
\begin{equation}\label{eq:square root}
\forall\, (k,u)\in \m\times \{1,\ldots,h\},\qquad \zeta_u^k\eqdef \sqrt{\prod_{(i,j)\in\mathsf{J}_u}(a_{ij}^k)^2-\frac12 \mu^{2|\mathsf{J}_u|}}.
\end{equation}
Observe that the definition of $\mu$ in~\eqref{eq:def mu rho} ensures that the quantity under the square root in~\eqref{eq:square root} is positive, so $\zeta_u\in (0,\infty)$. Define polynomials $\{\mathfrak{p}_{u}:\M_{n\times r}(\R)\times \M_{r\times n}(\R)\times \R^h\to \R\}_{u=1}^h$ by setting
\begin{equation}\label{eq:def pu}
\mathfrak{p}_{u}(X,Y,z)\eqdef \prod_{(i,j)\in\mathsf{J}_u} (XY)_{ij}^2-z_u^2-\frac{1}{2}\mu^{2|\mathsf{J}_u|}= \prod_{(i,j)\in\mathsf{J}_u} \bigg(\sum_{k=1}^n x_{ik}y_{kj}\bigg)^2-z_u^2-\frac{1}{2}\mu^{2|\mathsf{J}_u|},
\end{equation}
for all $u\in \{1,\ldots,h\}$, $X=(x_{is})\in \M_{n\times r}(\R)$, $Y=(y_{sj})\in \M_{r\times n}(\R)$ and $z=(z_i)\in \R^h$. The above notation ensures that $\mathfrak{p}_u(\mathsf{B}_k,\mathsf{C}_k,\zeta_k)=0$ for all $k\in \m$ and $u\in \{1,\ldots,h\}$. In other words, $\{(\mathsf{B}_k,\mathsf{C}_k,\zeta_k)\}_{k=1}^m\subset \mathscr{V}$, where $\mathscr{V}\subset \M_{n\times r}(\R)\times \M_{r\times n}(\R)\times \R^h$ is the variety
\begin{equation}\label{eq:def variety}
\mathscr{V}\eqdef \bigcap_{u=1}^h \left\{(X,Y,z)\in \M_{n\times r}(\R)\times \M_{r\times n}(\R)\times \R^h;\ \mathfrak{p}_{u}(X,Y,z)=0\right\}.
\end{equation}

We claim that $\mathscr{V}$ has at least $m$ connected components. In fact, if $k,\ell\in \m$ are distinct, then $(\mathsf{B}_k,\mathsf{C}_k,\zeta_k)$ and $(\mathsf{B}_\ell,\mathsf{C}_\ell,\zeta_\ell)$ belong to different connected component of $\mathscr{V}$. Indeed, suppose for the sake of obtaining a contradiction that $\mathcal{C}\subset \mathscr{V}$ is a connected subset of $\mathscr{V}$ and $(\mathsf{B}_k,\mathsf{C}_k,\zeta_k),(\mathsf{B}_\ell,\mathsf{C}_\ell,\zeta_\ell)\in \mathcal{C}$. Since $k\neq \ell$, by switching the roles of $k$ and $\ell$ if necessary, the assumption of Lemma~\ref{lem:rank} ensures that there exists $(i,j)\in \EE$ such that $(\mathsf{B}_k\mathsf{C}_k)_{ij}=a_{ij}^k<0<a_{ij}^\ell=(\mathsf{B}_\ell \mathsf{C}_\ell)_{ij}$. So, if we denote $\psi:\mathcal{C}\to \R$  by $\psi(X,Y,z)=(XY)_{ij}$, then $\psi(\mathsf{B}_k,\mathsf{C}_k,\zeta_k)<0<\psi(\mathsf{B}_\ell,\mathsf{C}_\ell,\zeta_\ell)$. Since $\mathcal{C}$ is connected and $\psi$ is continuous, it follows that $\psi(X,Y,z)=0$ for some $(X,Y,z)\in \mathcal{C}$. Let $u\in \{1,\ldots,h\}$ be the index for which $(i,j)\in \JJ_u$. By the definition~\eqref{eq:def pu} of $\mathfrak{p}_u$, the fact that $\psi(X,Y,z)=(XY)_{ij}=0$ implies that $\mathfrak{p}_u(X,Y,z)=-z_u^2-\frac12\mu^{|J_u|}\le -\frac12\mu^{|J_u|}<0$, since $\mu>0$. Hence $(X,Y,z)\notin \mathscr{V}$, in contradiction to our choice of $(X,Y,z)$ as an element of $\mathcal{C}\subset \mathscr{V}$.

Recalling~\eqref{eq:def pu}, for all $u\in \{1,\ldots,h\}$ the degree of $\mathfrak{p}_u$ is  $4|\JJ_u|\le 4\alpha$. So, the variety $\mathscr{V}$ in~\eqref{eq:def variety} is defined using $h$ polynomials of degree at most $4\alpha$ in $2nr+h$ variables. By (a special case of) a theorem of Milnor~\cite{Mil64} and Thom~\cite{Tho65}, the number of connected components of $\mathscr{V}$ is at most $4\alpha(8\alpha-1)^{2nr+h-1}=4\alpha(8\alpha-1)^{2nr+\lceil |\EE|/\alpha\rceil-1}$. Since we already established that this number of connected components is at least $m$, it follows that
\begin{equation}\label{eq:alpha to optimize}
m\le 4\alpha(8\alpha-1)^{2nr+\left\lceil \frac{|\EE|}{\alpha}\right\rceil-1}\iff r\ge \frac{1}{2n}
\left(\frac{\log\left(\frac{m}{4\alpha}\right)}{\log(8\alpha-1)}-\left\lceil \frac{|\EE|}{\alpha}\right\rceil+1\right).
\end{equation}
The value of $\alpha\in \N$ that maximizes  the rightmost quantity in~\eqref{eq:alpha to optimize} satisfies
$$
\alpha\asymp \frac{|\EE|}{\log (2m)}\left(\log\left(\frac{|\EE|}{\log (2m)}\right)\right)^2.
$$
For this $\alpha$ the estimate~\eqref{eq:alpha to optimize} simplifies to imply the desired bound $r\gtrsim \log m/(n\log(|\EE|/\log m))$.
\end{proof}

\begin{proof}[Proof of Lemma~\ref{lem:prob matousek}] For notational convenience, write $\mathsf{L}=\{\lambda_1,\ldots,\lambda_n\}$ and  $\mathsf{R}=\{\rho_1,\ldots,\rho_n\}$, and think of $\EE$ as a subset of $\n^2$ (i.e., $(i,j)\in \EE$  if and only if $\{\lambda_i,\rho_j\}$ is an edge of $\mathsf{G}$).

For every $\Delta\in \N$ denote
\begin{equation}\label{eq:def bad event}
\mathscr{B}_{\!\Delta}\eqdef \left\{\sigma:\EE\to \{-,+\}:\  \dim_{\omega,\Omega}\Big(\mathsf{L}^{\!+}\cup\mathsf{L}^{\!-}\cup \mathsf{R},d_\sigma^{s,T}\Big)\le  \Delta\right\}.
\end{equation}
Then, by the definition of $\dim_{\omega,\Omega}(\cdot)$, if $\sigma\in \mathscr{B}_{\!\Delta}$,  we can fix a normed space $(X_\sigma,\|\cdot\|_{X_\sigma})$ with $\dim(X_\sigma)=\Delta$ and  a mapping $f_\sigma: \mathsf{L}^{\!+}\cup\mathsf{L}^{\!-}\cup \mathsf{R}\to X_\sigma$ that satisfies
\begin{equation}\label{eq:omega Omega truncated}
\forall\, x,y\in \mathsf{L}^{\!+}\cup\mathsf{L}^{\!-}\cup \mathsf{R},\qquad  \omega\big(d_\sigma^{s,T}(x,y)\big)\le \|f_\sigma(x)-f_\sigma(y)\|_{X_\sigma}\le \Omega\big(d_\sigma^{s,T}(x,y)\big).
\end{equation}

Using the Hahn--Banach theorem, for each $i\in \n$  and $\sigma\in \mathscr{B}_{\!\Delta}$ we can fix a linear functional $z_{\sigma,i}^*\in X_\sigma^*$ of unit norm that normalizes the vector  $f_\sigma(\lambda_i^{\!+})-f_\sigma(\lambda_i^{\!-})\in X_\sigma$, i.e.,
\begin{equation}\label{eq:HB-mat}
z^*_{\sigma,i}\big(f_\sigma(\lambda^{\!+}_i)-f_\sigma(\lambda^{\!-}_i)\big)=\big\|f_\sigma(\lambda^{\!+}_i)-f_\sigma(\lambda^{\!+}_i)\big\|_{X_\sigma}
\qquad\mathrm{and}\qquad \|z^*_{\sigma,i}\|_{X_\sigma^*}=\sup_{w\in X_\sigma\setminus \{0\}}\frac{\big|z_{\sigma,i}^*(w)\big|}{\|w\|_{X_\sigma}}=1.
\end{equation}
Using these linear functionals, define an $n\times n$ matrix  $\mA_\sigma=(a^\sigma_{ij})\in \M_n(\R)$ by setting
\begin{equation}\label{eq:our gram}
\forall(i,j)\in \n^2,\qquad a_{ij}^\sigma\eqdef
z_{\sigma,i}^*\big(f_\sigma(\rho_j)\big)-\frac12 z_{\sigma,i}^*\big( f_\sigma(\lambda_i^{\!+})+f_\sigma(\lambda_i^{\!-})\big).
\end{equation}
Observe in passing that the following identity holds true for every $i,j\in \n$ and $\sigma \in \mathscr{B}_{\!\Delta}$.
\begin{equation}\label{eq:AR identity}
\sigma(i,j)a_{ij}^\sigma=\frac12 z^*_{\sigma,i}\big(f_\sigma(\lambda^{\!+}_i)-f_\sigma(\lambda^{\!-}_i)\big) +\sigma(i,j)z_{\sigma,i}^*\Big(f_\sigma(\rho_j)-f_\sigma\Big(\lambda_i^{\sigma(i,j)}\Big)\Big).
\end{equation}
(Simply verify~\eqref{eq:AR identity} for the cases $\sigma(i,j)=+$ and $\sigma(i,j)=-$ separately, using the linearity of $z^*_{\sigma,i}$.)

Since we are assuming in Lemma~\ref{lem:prob matousek} that the shortest cycle in the template graph $\mathsf{G}$ has length at least $g$, it follows from Claim~\ref{claim:cycle compress} that $d_{\mathsf{G}_\sigma}(\lambda_i^{\!+},\lambda_i^{\!-})\ge g$ for all $i\in \n$ and $\sigma:\EE\to \{-,+\}$. So,
\begin{equation}\label{eq:plus minus dist big}
d_\sigma^{s,T}(\lambda_i^{\!+},\lambda_i^{\!-})\stackrel{\eqref{eq:truncated}}{=}\min\left\{sd_{\mathsf{G}_\sigma}(\lambda_i^{\!+},\lambda_i^{\!-}),T\right\}\ge \min\{sg,T\}\stackrel{\eqref{eq:girth assumtpions}}{=}sg.
\end{equation}
Recalling~\eqref{eq:def signed edges}, we have $\{\lambda_i^{\sigma(i,j)},\rho_j\}\in \EE_\sigma$ for all $(i,j)\in \EE$. Hence $d_{\mathsf{G}_\sigma}(\lambda_i^{\sigma(i,j)},\rho_j)=1$ and therefore
\begin{equation}\label{eq:truncated upper edge}
d_\sigma^{s,T}\big(\lambda_i^{\sigma(i,j)},\rho_j\big)\stackrel{\eqref{eq:truncated}}{\le} sd_{\mathsf{G}_\sigma}\big(\lambda_i^{\sigma(i,j)},\rho_j\big)=s.
\end{equation}
Consequently, for every $(i,j)\in \EE$ and $\sigma\in \mathscr{B}_{\!\Delta}$ we have
\begin{eqnarray*}
\sigma(i,j)a_{ij}^\sigma&\stackrel{\eqref{eq:AR identity}}{\ge} & \frac12 z^*_{\sigma,i}\big(f_\sigma(\lambda^{\!+}_i)-f_\sigma(\lambda^{\!-}_i)\big)-\left\|z_{\sigma,i}^*\right\|_{X_\sigma^*}\cdot \Big\|f_\sigma(\rho_j)-f_\sigma\Big(\lambda_i^{\sigma(i,j)}\Big)\Big\|_{X_\sigma}\\&\stackrel{\eqref{eq:HB-mat}}{=}&\frac12 \big\|f_\sigma(\lambda^{\!+}_i)-f_\sigma(\lambda^{\!-}_i)\big\|_{X_\sigma}-\Big\|f_\sigma(\rho_j)-f_\sigma\Big(\lambda_i^{\sigma(i,j)}\Big)\Big\|_{X_\sigma}\\
&\stackrel{\eqref{eq:omega Omega truncated}}{\ge} &\frac12\omega \big(d_\sigma^{s,T}(\lambda_i^{\!+},\lambda_i^{\!-})\big)-\Omega \Big(d_\sigma^{s,T}\big(\lambda_i^{\sigma(i,j)},\rho_j\big)\Big)\\
&\stackrel{\eqref{eq:plus minus dist big}\wedge \eqref{eq:truncated upper edge}}{\ge}& \frac12 \omega(sg)-\Omega(s)\stackrel{\eqref{eq:girth assumtpions}}{>}0.
\end{eqnarray*}
Hence, $a_{ij}^\sigma\neq 0$ and $\sign(a_{ij}^\sigma)=\sigma(i,j)$ for all $(i,j)\in \EE$ and $\sigma\in \mathscr{B}_{\!\Delta}$. This is precisely the setting of Lemma~\ref{lem:rank} (with $m=|\mathscr{B}_{\!\Delta}|$), from which we conclude that there exists $\tau\in \mathscr{B}_{\!\Delta}$ such that
\begin{equation}\label{eq:rank lower use}
 \rank(\mA_\tau)\ge \frac{c\log |\mathscr{B}_{\!\Delta}|}{n\log\left(\frac{|\mathsf{E}|}{\log |\mathscr{B}_{\!\Delta}|}\right)},
\end{equation}
where $c\in (0,\infty)$ is a universal constant. Henceforth, we shall fix a specific $\tau\in \mathscr{B}_{\!\Delta}$ as in~\eqref{eq:rank lower use}.

Since $\tau\in \mathscr{B}_{\!\Delta}$ we have $\dim(X_\tau)=\Delta$, so we can fix a basis $e_{\tau}^1,\ldots,e_{\tau}^\Delta$ of $X_\tau$ and for every $j\in \n$ write $f_\tau(\rho_j)=\gamma_{\tau,j}^1e_{\tau}^1+\ldots+\gamma_{\tau,j}^\Delta e_{\tau}^\Delta$ for some scalars $\gamma_{\tau,j}^1,\ldots,\gamma_{\tau,j}^\Delta\in \R$.  Hence,
$$
(a_{ij}^\tau)_{i=1}^n\stackrel{\eqref{eq:our gram}}{=}\gamma_{\tau,j}^1 \big(z_{\tau,i}^*(e_\tau^1)\big)_{i=1}^n+\ldots+\gamma_{\tau,j}^\Delta \big(z_{\tau,i}^*(e_\tau^\Delta)\big)_{i=1}^n-\frac12\big(z_{\sigma,i}^*\big( f_\tau(\lambda_i^{\!+})+f_\tau(\lambda_i^{\!-})\big)\big)_{i=1}^n.
$$
We have thus expressed the columns of the matrix $\mA_\tau$ as elements of the span of the $\Delta+1$ vectors
$$
\big(z_{\tau,i}^*(e_\tau^1)\big)_{i=1}^n,\big(z_{\tau,i}^*(e_\tau^2)\big)_{i=1}^n,\ldots,\big(z_{\tau,i}^*(e_\tau^\Delta)\big)_{i=1}^n,\big(z_{\sigma,i}^*\big( f_\tau(\lambda_i^{\!+})+f_\tau(\lambda_i^{\!-})\big)\big)_{i=1}^n\in \R^n.
$$
Consequently, the rank of $\mA_\tau$ is at most $\Delta+1\le 2\Delta$. By contrasting this with~\eqref{eq:rank lower use}, we see that
\begin{equation}\label{eq:almost matousek punchline}
\frac{|\EE|}{\log |\mathscr{B}_{\!\Delta}|}\log\left(\frac{|\mathsf{E}|}{\log |\mathscr{B}_{\!\Delta}|}\right)\ge \frac{c|\EE|}{2\Delta n}.
\end{equation}

We shall now conclude by showing that Lemma~\ref{lem:prob matousek}  holds true with $\eta=c/2$. Indeed, fix $\delta\in (0,\frac13]$ and observe that we may assume also that $\d\eta|\EE|/n\ge 1$, since otherwise the left hand side of~\eqref{eq:probabilistic version'} vanishes. Then, by choosing $\Delta=\lfloor \d\eta|\EE|/n\rfloor\in \N$ in the above reasoning it follows from~\eqref{eq:almost matousek punchline} that
$$
\frac{|\EE|}{\log |\mathscr{B}_{\!\Delta}|}\log\left(\frac{|\mathsf{E}|}{\log |\mathscr{B}_{\!\Delta}|}\right)\ge \frac{c}{2\d\eta}=\frac{1}{\d}\ge 3>e.
$$
This implies that $|\mathscr{B}_{\!\Delta}|\le \d^{-\d|\EE|}$. Equivalently, $\Pr[\mathscr{B}_{\!\Delta}]\le (2\d)^{-\d|\EE|}$, which is the desired bound~\eqref{eq:probabilistic version'}.
\end{proof}

\section{Nonlinear spectral gaps and impossibility of average dimension reduction}\label{sec:average}

Fix $n\in \N$ and an irreducible reversible row-stochastic matrix $\mA=(a_{ij})\in \M_n(\R)$. This implies that there is a   unique\footnote{We are assuming irreducibility only for notational convenience, namely so that $\pi$ will be unique and could therefore be suppressed in the ensuing notation. Our arguments work for any stochastic matrix and any probability measure $\pi$ on $\n$ with respect to which $\mA$ is reversible. We suggest focusing initially on the case when $\mA$ is symmetric and $\pi$ is the uniform measure on $\n$, though the general case is useful for treating graphs that are not regular, e.g.~those of Section~\ref{sec:matousek}. See~\cite{LPW17} for the relevant background.} $\AA$-stationary probability measure $\pi=(\pi_1,\ldots,\pi_n)\in [0,1]^n$ on $\n$, namely $\pi\AA=\pi$, and  we have the reversibility condition  $\pi_ia_{ij}=\pi_ja_{ji}$ for all $i,j\in \n$. Then $\AA$ is a self-adjoint contraction on $L_2(\pi)$, and we denote by $1=\lambda_1(\AA)\ge \lambda_2(\AA)\ge\ldots\ge \lambda_n(\AA)\ge -1$ the decreasing rearrangement of its eigenvalues.

Given a metric space $(\MM,d_\MM)$ and $p>0$, define $\gamma(\AA,d_\MM^p)$ to be the infimum over those $\gamma>0$ such that
\begin{equation}\label{eq:def nonlinear gap}
\forall\, x_1,\ldots,x_n\in \MM,\qquad \sum_{i=1}^n\sum_{j=1}^n \pi_i\pi_j d_\MM(x_i,x_j)^p\le \gamma\sum_{i=1}^n\sum_{j=1}^n \pi_ia_{ij} d_\MM(x_i,x_j)^p.
\end{equation}
This definition is implicit in~\cite{Gro03}, and appeared explicitly in~\cite{NS11}; see~\cite{MN14,Nao14,MN15} for a detailed treatment. It suffices to note here that if $(H,\|\cdot\|_H)$ is a Hilbert space and $p=2$, then by expanding the squares one directly sees that $\gamma(\AA,\|\cdot\|_H^2)=1/(1-\lambda_2(\AA))$ is the reciprocal of the {\em spectral gap} of $\AA$. In general, we think of $\gamma(\AA,d_\MM^p)$ as measuring the magnitude of the nonlinear spectral gap of $\AA$ with respect to the kernel $d_\MM^p:\MM\times \MM\to [0,\infty)$.

Using the notation that was recalled in Section~\ref{sec:bilip}, the definition~\eqref{eq:def nonlinear gap} immediately implies that nonlinear spectral gaps are bi-Lipschitz invariants in the sense that $\gamma(\AA,d_\MM^p)\le \cc_\NN(\MM)^p\gamma(\AA,d_\NN^p)$ for every two metric spaces $(\MM,d_\MM)$ and $(\NN,d_\NN)$, every matrix $\AA$ as above and every $p>0$. In particular, if $(H,\|\cdot\|_H)$ is a Hilbert space into which $(\MM,d_\MM)$ admits a bi-Lipschitz embedding, then we have the following general (trivial) bound.
\begin{equation}\label{eq:trivial spectral gap bound}
\sqrt{\gamma(\AA,d_\MM^2)}\le \cc_2(\MM)\sqrt{\gamma(\AA,\|\cdot\|_H^2)}.
\end{equation}
In the recent work~\cite{Nao17} we proved the following theorem, which improves over~\eqref{eq:trivial spectral gap bound} when $\MM$ is a Banach space.
\begin{theorem}\label{thm:our gamma} Suppose that $(X,\|\cdot\|_X)$  is a Banach space and that $(H,\|\cdot\|_H)$ is a Hilbert space. Then for every $M\in (0,\infty)$ and every matrix $\AA$ as above for which $\lambda_2(\AA)\le 1-M^2/\cc_2(X)^2$ we have
\begin{equation}\label{eq:M version}
\sqrt{\gamma\big(\mA,\|\cdot\|_X^2\big)}\lesssim \frac{\log (M+1)}{M}\cc_2(X)\sqrt{\gamma(\AA,\|\cdot\|_H^2)}.
\end{equation}
\end{theorem}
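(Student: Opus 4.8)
The plan is to bound $\gamma(\mA,\|\cdot\|_X^2)$ by comparing the action of $\mA$ on $X$-valued functions with its action on scalar (hence Hilbert-valued) functions, using the hypothesis $\lambda_2(\mA)\le 1-M^2/\cc_2(X)^2$ to iterate the scalar spectral gap estimate many times and then exploit the logarithmic-in-iteration gain that comes from the metric cotype / Markov-type-style properties of a Hilbert space. Concretely, fix $x_1,\dots,x_n\in X$ and regard $x=(x_i)$ as an element of $\ell_2^n(X)$ with the weights $\pi_i$. The matrix $\mA$ acts on $\ell_2^n(X)=\ell_2^n\otimes X$ as $\mA\otimes \mathrm{Id}_X$. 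One passes to the complement of the constants: let $P$ be the orthogonal projection (in $L_2(\pi)$) onto mean-zero functions, and note $\sum_{i,j}\pi_i\pi_j\|x_i-x_j\|_X^2\asymp \|Px\|^2$ while the right-hand side of~\eqref{eq:def nonlinear gap} is $\asymp \langle (\mathrm{Id}-\mA)x,x\rangle$ with the $X$-valued pairing interpreted appropriately. The trivial bound~\eqref{eq:trivial spectral gap bound} is just $\cc_2(X)^2$ times the scalar gap, and the point is to beat it by a factor $(\log(M+1)/M)^2\cc_2(X)^2\cdot(\text{scalar gap})$ — i.e. essentially to replace one power of $1/(1-\lambda_2)$ and one factor $\cc_2(X)^2$ by $(\log(M+1)/M)^2\cc_2(X)^4\gamma(\mA,\|\cdot\|_H^2)$, which is an improvement precisely when $M$ is large.

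The key technical step is a "smoothing by iteration" estimate: writing $\mathsf{B}=\frac{\mathrm{Id}+\mA}{2}$ (a lazy version, to control negative eigenvalues; here the hypothesis $\lambda_2(\mA)\le 1-M^2/\cc_2(X)^2$ enters as a quantitative spectral gap for $\mathsf{B}$), one estimates, for $X$-valued $f$ with $\int f\,d\pi=0$, the quantity $\sum_i\pi_i\|f_i\|_X^2$ in terms of $\sum_i\pi_i\|(\mathrm{Id}-\mathsf{B})f_i\|_X^2$ by telescoping $f=\sum_{k\ge 0}(\mathsf{B}^k f-\mathsf{B}^{k+1}f)$, applying the triangle inequality in $X$, and then on each block $\mathsf{B}^{k}f-\mathsf{B}^{2^{k}}f$ of dyadically growing length using the Hilbert-space spectral gap together with the bi-Lipschitz comparison of $\|\cdot\|_X$ and a Euclidean norm on the relevant finite-dimensional image (this is where $\cc_2(X)$ re-enters). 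Each of the $\asymp \log_2$-many dyadic blocks contributes a geometrically small factor once $k$ exceeds the mixing threshold $\asymp \cc_2(X)^2/M^2$, and summing the geometric series produces the single factor $\log(M+1)/M$ rather than a power of $M$. I expect the main obstacle to be this last point: extracting a genuine $\log(M+1)/M$ rather than $1/M$ or $\log^2$, which requires carefully balancing how many iterations of $\mathsf{B}$ one performs (too few and the error term is not contractive; too many and the cost of $\cc_2(X)$ per step accumulates) — this is the delicate "right number of steps" optimization, entirely analogous to the $\sigma_{\max}$-type optimizations elsewhere in the paper.

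Once the smoothing estimate is in hand, the theorem follows by a short bookkeeping argument: apply it to $f_i=x_i$, bound $\|(\mathrm{Id}-\mathsf{B})x_i\|_X$-type quantities by $\langle(\mathrm{Id}-\mA)x,\cdot\rangle$ using reversibility ($\pi_ia_{ij}=\pi_ja_{ji}$) and Cauchy–Schwarz, and compare the resulting scalar Dirichlet form with $\gamma(\mA,\|\cdot\|_H^2)^{-1}$ via the definition~\eqref{eq:def nonlinear gap} applied in the Hilbert space $H$. Collecting constants gives~\eqref{eq:M version}. Two remarks on where care is needed: first, one must handle $\lambda_n(\mA)$ possibly close to $-1$, which is why the lazy step $\mathsf{B}=(\mathrm{Id}+\mA)/2$ is used and why one should check that replacing $\mA$ by $\mathsf{B}$ changes $\gamma(\mA,\|\cdot\|_H^2)$ only by an absolute constant; second, the passage from $X$-valued iterates to a Euclidean comparison should be done on the (at most $n$-dimensional) span of the $\{x_i\}$ so that $\cc_2$ of that subspace is at most $\cc_2(X)$, keeping all constants in terms of $\cc_2(X)$ alone.
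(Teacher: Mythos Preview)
Your outline is essentially the iteration-by-powers-of-the-lazy-walk argument that the paper itself carries out in Section~\ref{sec:ray} (Lemmas~\ref{lem:power Rayleigh} and~\ref{lem:Mtype2} together with the bound on $\tt(x;\mA)$). That argument, however, only yields the \emph{weaker} inequality~\eqref{eq:M-d version}, with $\log(\cc_2(X)+1)$ in place of $\log(M+1)$, and the paper is explicit that it does not know how to obtain the sharper~\eqref{eq:M version} without the complex-interpolation argument of~\cite{Nao17}.

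The gap in your proposal is precisely the sentence ``summing the geometric series produces the single factor $\log(M+1)/M$.'' In the iteration scheme you describe, the number of steps of $\mathsf{B}=(\Id+\mA)/2$ required is dictated by when $\|(\mathsf{B}^{\tt}\otimes\Id_H)x\|_{L_2(\pi;H)}$ drops below $\tfrac{1}{2\dd}\|x\|_{L_2(\pi;H)}$ with $\dd\asymp\cc_2(X)$, because this is the threshold at which the $X$-to-$H$ norm comparison~\eqref{eq:d assumption} converts the Hilbert contraction into an $X$-contraction. That threshold forces $\tt\asymp\log(\cc_2(X))/(1-\lambda_2(\mA))$, and the triangle-inequality telescoping of Lemma~\ref{lem:Mtype2} then gives $\sqrt{\gamma(\mA,\|\cdot\|_X^2)}\lesssim\tt$. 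Substituting the hypothesis $1-\lambda_2(\mA)\ge M^2/\cc_2(X)^2$ yields $\tt\lesssim\cc_2(X)\log(\cc_2(X)+1)/M\cdot\sqrt{\gamma(\mA,\|\cdot\|_H^2)}$, i.e.\ exactly~\eqref{eq:M-d version}. Your ``mixing threshold $\asymp\cc_2(X)^2/M^2$'' is the relaxation time $1/(1-\lambda_2)$, but after that many steps the Hilbert norm has only decayed by a constant factor, not by the factor $1/\cc_2(X)$ needed to absorb the distortion; the additional $\log(\cc_2(X))$ dyadic scales are unavoidable in this scheme, and nothing in your sketch explains how to replace them by $\log(M+1)$. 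Getting the genuine $\log(M+1)$ requires a different mechanism (in~\cite{Nao17}, interpolation between the trivial bound and a Markov-type estimate), which your proposal does not supply.
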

In the setting of Theorem~\ref{thm:our gamma}, since $\gamma(\AA,\|\cdot\|_H^2)=1/(1-\lambda_2(\AA))$, the bound~\eqref{eq:M version} can be rewritten as
\begin{equation*}
\gamma\big(\mA,\|\cdot\|_X^2\big)\lesssim \bigg(\frac{\log(\cc_2(X)\sqrt{1-\lambda_2(\mA)}+1)}{1-\lambda_2(\mA)}\bigg)^2,
\end{equation*}
which is how Theorem~\ref{thm:our gamma} was stated in~\cite{Nao17}. Note that~\eqref{eq:M version} coincides (up to the implicit constant factor) with the trivial bound~\eqref{eq:trivial spectral gap bound} if $M=O(1)$, but~\eqref{eq:M version} is an asymptotic improvement over~\eqref{eq:trivial spectral gap bound} as $M\to\infty$.

The proof of Theorem~\ref{thm:our gamma} in~\cite{Nao17} is a short interpolation argument that takes as input a theorem from~\cite{Nao14}. While we do not know of a different proof of~\eqref{eq:M version}, below we will present a new and self-contained derivation of the following weaker estimate (using the same notation as in Theorem~\ref{thm:our gamma}) that suffices for deducing Theorem~\ref{thm:average distortion}.
\begin{equation}\label{eq:M-d version}
\sqrt{\gamma\big(\mA,\|\cdot\|_X^2\big)}\lesssim \frac{\log (\cc_2(X)+1)}{M}\cc_2(X)\sqrt{\gamma(\AA,\|\cdot\|_H^2)}.
\end{equation}
The proof of~\eqref{eq:M-d version} appears in Section~\ref{sec:ray} below. We will next show how~\eqref{eq:M-d version} implies   Theorem~\ref{thm:average distortion} as well as the leftmost inequality in~\eqref{eq:linfty bounds in theorem}, which includes as a special case the triple logarithmic estimate in~\eqref{eq:log log log}.

Given $n\in \N$ and an $n$-vertex connected graph $\mathsf{G}=(\n,\mathsf{E}_{\mathsf{G}})$, let $\AA_\G$ be its random walk matrix, i.e., if $\deg_\G(i)$ is the degree in $\G$ of the vertex $i\in \n$, then $(\AA_\G)_{ij}=\1_{\{i,j\}\in \EE_\G}/\deg_\G(i)$ for all $i,j\in \n$. For $i\in \n$ we also denote $\pi_i^\G=\deg_\G(i)/(2|\EE_\G|)$. Thus, $\pi^\G\in \R^n$ is the probability measure on $\n$ with respect to which $\AA_\G$ is reversible. We will use the simpler notation $\lambda_i(\AA_\G)=\lambda_i(\G)$ for every $i\in \n$. For $p\in (0,\infty)$and a metric space $(\MM,d_\MM)$, we will write $\gamma(\G,d_\MM^2)=\gamma(\AA_\G,d_\MM^2)$. The shortest-path metric that is induced by $\G$ on $\n$ will be denoted $d_\G:\n\times \n\to \N\cup\{0\}$.

\begin{theorem}\label{thm:pi version average embedding} There is a universal constant $K>1$ with the following property. Fix $n\in \N$ and $\alpha\ge 1$. Let $\AA\in \M_n(\R)$ and $\pi\in [0,1]^n$ be as above. For every normed space $(X,\|\cdot\|_X)$, if $f:\n\to X$ satisfies
\begin{equation}\label{eq:two averages}
\bigg(\sum_{i=1}^n\sum_{j=1}^n \pi_i a_{ij}\|f(i)-f(j)\|_X^2\bigg)^{\frac12}\le \alpha,
\end{equation}
then necessarily
\begin{equation}\label{eq:dim condition}
\dim(X)\gtrsim K^{\frac{1-\lambda_2(\AA)}{\alpha}\sqrt{\sum_{i=1}^n\sum_{j=1}^n \pi_i\pi_j \|f(i)-f(j)\|_X^2}}.
\end{equation}
In particular, in the special case when $\G=(\n,\EE_\G)$ is a connected graph we have
\begin{equation*}\label{eq:two averages-graph}
\bigg(\frac{1}{|\EE_\G|}\sum_{\{i,j\}\in \EE_\G} \|f(i)-f(j)\|_X^2\bigg)^{\frac12}\le \alpha\implies \dim(X)\gtrsim K^{\frac{1-\lambda_2(\G)}{\alpha}\sqrt{\sum_{i=1}^n\sum_{j=1}^n \pi_i^\G\pi_j^\G \|f(i)-f(j)\|_X^2}}.
\end{equation*}
\end{theorem}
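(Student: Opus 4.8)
The plan is to derive Theorem~\ref{thm:pi version average embedding} from the nonlinear spectral gap inequality~\eqref{eq:M-d version} together with the Johnson--Lindenstrauss lemma. First I would unwind the definitions. Suppose $f:\n\to X$ satisfies~\eqref{eq:two averages}, and write $D^2\eqdef \sum_{i,j}\pi_i\pi_j\|f(i)-f(j)\|_X^2$ for the quantity appearing in the exponent of~\eqref{eq:dim condition}. By the definition~\eqref{eq:def nonlinear gap} of $\gamma(\AA,\|\cdot\|_X^2)$ applied to the points $\{f(i)\}_{i=1}^n$,
\begin{equation*}
D^2=\sum_{i=1}^n\sum_{j=1}^n\pi_i\pi_j\|f(i)-f(j)\|_X^2\le \gamma\big(\AA,\|\cdot\|_X^2\big)\sum_{i=1}^n\sum_{j=1}^n\pi_ia_{ij}\|f(i)-f(j)\|_X^2\le \gamma\big(\AA,\|\cdot\|_X^2\big)\alpha^2,
\end{equation*}
so that $\sqrt{\gamma(\AA,\|\cdot\|_X^2)}\ge D/\alpha$. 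It remains to bound $\gamma(\AA,\|\cdot\|_X^2)$ from above in terms of $\dim(X)$, and this is exactly what~\eqref{eq:M-d version} provides once we control $\cc_2(X)$ and choose $M$ appropriately.

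The key input is John's theorem: since $X$ may be taken to be finite dimensional here (we are only trying to force $\dim(X)$ to be large, so we may pass to the span of $f(\n)$, which has dimension at most $n-1$; if that span is less than full-dimensional we simply replace $X$ by it), we have $\cc_2(X)\le \sqrt{\dim(X)}$. Now apply~\eqref{eq:M-d version} with the choice $M\eqdef \cc_2(X)\sqrt{1-\lambda_2(\AA)}$, which is exactly the threshold value making the hypothesis $\lambda_2(\AA)\le 1-M^2/\cc_2(X)^2$ an equality, hence valid. Since $\gamma(\AA,\|\cdot\|_H^2)=1/(1-\lambda_2(\AA))$, the right-hand side of~\eqref{eq:M-d version} becomes
\begin{equation*}
\frac{\log(\cc_2(X)+1)}{\cc_2(X)\sqrt{1-\lambda_2(\AA)}}\cdot \cc_2(X)\cdot\frac{1}{\sqrt{1-\lambda_2(\AA)}}=\frac{\log(\cc_2(X)+1)}{1-\lambda_2(\AA)},
\end{equation*}
so that $\sqrt{\gamma(\AA,\|\cdot\|_X^2)}\lesssim \log(\cc_2(X)+1)/(1-\lambda_2(\AA))$. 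Combining this with the lower bound $\sqrt{\gamma(\AA,\|\cdot\|_X^2)}\ge D/\alpha$ from the previous paragraph yields
\begin{equation*}
\frac{D}{\alpha}\lesssim \frac{\log(\cc_2(X)+1)}{1-\lambda_2(\AA)},\qquad\text{i.e.}\qquad \log(\cc_2(X)+1)\gtrsim \frac{(1-\lambda_2(\AA))D}{\alpha}.
\end{equation*}
Using $\cc_2(X)\le\sqrt{\dim(X)}$ once more gives $\log(\sqrt{\dim(X)}+1)\gtrsim (1-\lambda_2(\AA))D/\alpha$, which rearranges to $\dim(X)\gtrsim K^{(1-\lambda_2(\AA))D/\alpha}$ for an appropriate universal constant $K>1$ (absorbing the implicit constant into $K$ and using that $D/\alpha\ge 0$; if the exponent is small the bound is vacuous since $\dim(X)\ge 1$). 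This is precisely~\eqref{eq:dim condition}. The graph case follows immediately by specializing $\AA=\AA_\G$, $\pi=\pi^\G$, and noting that $\sum_{i,j}\pi_i^\G a_{ij}\|f(i)-f(j)\|_X^2=\frac{1}{|\EE_\G|}\sum_{\{i,j\}\in\EE_\G}\|f(i)-f(j)\|_X^2$ by the definition of $\AA_\G$ and $\pi^\G$, and that $\lambda_2(\AA_\G)=\lambda_2(\G)$.

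The only subtle point — and the part I would be most careful about — is the legitimacy of invoking John's theorem, i.e., the reduction to finite-dimensional $X$. This is harmless because the left-hand side $\dim(X)$ is what we are bounding: if $X$ is infinite dimensional the conclusion holds trivially for any finite right-hand side, and if $\dim(X)<\infty$ then $\cc_2(X)\le\sqrt{\dim(X)}$ is exactly John's theorem. A secondary point worth checking is that~\eqref{eq:M-d version} is stated for Banach spaces with a genuine bi-Lipschitz embedding into a Hilbert space, which every finite-dimensional normed space has, with $\cc_2(X)<\infty$; so there is no circularity. Everything else is bookkeeping. I do not anticipate a genuine obstacle here — the content is entirely in~\eqref{eq:M-d version}, whose self-contained proof is deferred to Section~\ref{sec:ray}, and in John's theorem; the present theorem is the clean packaging of those two facts.
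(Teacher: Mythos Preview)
Your proof is correct and follows essentially the same route as the paper's own proof: apply the definition of $\gamma(\mA,\|\cdot\|_X^2)$ to bound $D/\alpha\le\sqrt{\gamma(\mA,\|\cdot\|_X^2)}$, invoke~\eqref{eq:M-d version} with $M=\cc_2(X)\sqrt{1-\lambda_2(\mA)}$ to bound $\sqrt{\gamma(\mA,\|\cdot\|_X^2)}\lesssim \log(\cc_2(X)+1)/(1-\lambda_2(\mA))$, and finish with John's theorem $\cc_2(X)\le\sqrt{\dim(X)}$. One small slip: your opening sentence mentions the Johnson--Lindenstrauss lemma, but you never use it---the relevant ingredient is John's theorem, which you correctly name and apply thereafter.
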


In the case of regular graphs with a spectral gap, Theorem~\ref{thm:pi version average embedding}  has the following corollary.

\begin{corollary}\label{cor:regular case} Fix two integers $n,r\ge 3$ and let $\G=(\n,\EE_\G)$ be a connected $r$-regular graph. If $(X,\|\cdot\|_X)$ is a normed space into which there is a mapping $f:\n\to X$ that satisfies
\begin{equation}\label{eq:two averages-regular}
\bigg(\frac{1}{|\EE_\G|}\sum_{\{i,j\}\in \EE_\G} \|f(i)-f(j)\|_X^2\bigg)^{\frac12}\le \alpha\qquad\mathrm{and}\qquad \bigg(\frac{1}{n^2}\sum_{i=1}^n\sum_{j=1}^n \|f(i)-f(j)\|_X^2\bigg)^{\frac12}\ge  \frac{1}{n^2}\sum_{i=1}^n\sum_{j=1}^n d_\G(i,j),
\end{equation}
then necessarily
$$
\dim(X)\gtrsim n^{\frac{c(1-\lambda_2(\G))}{\alpha\log r}},
$$
where $c\in (0,\infty)$ is a universal constant.
\end{corollary}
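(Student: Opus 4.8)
The plan is to derive Corollary~\ref{cor:regular case} from Theorem~\ref{thm:pi version average embedding} by specializing to the $r$-regular case and bounding the exponent from below. First, since $\G$ is $r$-regular, every degree equals $r$, so $\pi_i^\G = r/(2|\EE_\G|) = 1/n$ for all $i\in\n$; in particular the stationary measure is uniform, and the quadratic average appearing in the exponent of~\eqref{eq:dim condition} becomes the honest average
\begin{equation*}
\sum_{i=1}^n\sum_{j=1}^n \pi_i^\G\pi_j^\G\|f(i)-f(j)\|_X^2 = \frac1{n^2}\sum_{i=1}^n\sum_{j=1}^n\|f(i)-f(j)\|_X^2.
\end{equation*}
The first hypothesis in~\eqref{eq:two averages-regular} is exactly~\eqref{eq:two averages} (after dividing the edge sum by $|\EE_\G|$ and noting $\pi_i a_{ij} = 1/(nr)$ while $|\EE_\G| = nr/2$, up to the harmless factor of $2$ which can be absorbed into $\alpha$ or handled by adjusting constants). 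So Theorem~\ref{thm:pi version average embedding} gives
\begin{equation*}
\dim(X)\gtrsim K^{\frac{1-\lambda_2(\G)}{\alpha}\sqrt{\frac1{n^2}\sum_{i,j}\|f(i)-f(j)\|_X^2}}.
\end{equation*}

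Next I would lower-bound the square root in the exponent using the second hypothesis in~\eqref{eq:two averages-regular} together with a diameter/averaging estimate. By Cauchy--Schwarz (or Jensen, since $t\mapsto t^2$ is convex),
\begin{equation*}
\sqrt{\frac1{n^2}\sum_{i,j}\|f(i)-f(j)\|_X^2}\ \ge\ \frac1{n^2}\sum_{i,j}\|f(i)-f(j)\|_X\ \ge\ \frac1{n^2}\sum_{i,j} d_\G(i,j),
\end{equation*}
where the last step is the second inequality in~\eqref{eq:two averages-regular}. Thus it remains to bound $\frac1{n^2}\sum_{i,j}d_\G(i,j)$ from below by a universal constant multiple of $\log_r n = (\log n)/(\log r)$. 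This is the standard fact that an $r$-regular graph on $n$ vertices has average shortest-path distance $\gtrsim \log n/\log r$: a ball of radius $\rho$ around any vertex contains at most $1 + r + r^2 + \cdots + r^\rho \le 2r^\rho$ vertices, so for $\rho = \lfloor \tfrac12\log_r n\rfloor$ the ball $B_\G(i,\rho)$ misses at least $n/2$ vertices (for $n$ large; small $n$ are trivial), whence at least half the pairs $(i,j)$ satisfy $d_\G(i,j) > \rho \gtrsim \log_r n$, giving $\frac1{n^2}\sum_{i,j}d_\G(i,j)\gtrsim (\log n)/(\log r)$. Plugging this into the exponent yields
\begin{equation*}
\dim(X)\gtrsim K^{\frac{c'(1-\lambda_2(\G))}{\alpha}\cdot\frac{\log n}{\log r}} = n^{\frac{c(1-\lambda_2(\G))}{\alpha\log r}}
\end{equation*}
with $c = c'\log K > 0$ universal, which is the claim.

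The only genuine subtlety — and the step I would be most careful about — is the reduction of the edge-average normalization between~\eqref{eq:two averages} and~\eqref{eq:two averages-regular}: in~\eqref{eq:two averages} the weights are $\pi_i a_{ij}$ which sum (over ordered pairs) to $1$, whereas $\frac1{|\EE_\G|}\sum_{\{i,j\}\in\EE_\G}$ is over unordered edges; since $\sum_{i,j}\pi_i a_{ij}\|f(i)-f(j)\|_X^2 = \frac1{nr}\sum_{i,j:\{i,j\}\in\EE_\G}\|f(i)-f(j)\|_X^2 = \frac{2}{nr}\sum_{\{i,j\}\in\EE_\G}\|f(i)-f(j)\|_X^2 = \frac1{|\EE_\G|}\sum_{\{i,j\}\in\EE_\G}\|f(i)-f(j)\|_X^2$, these in fact agree exactly, so no constant is lost there. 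A second minor point is the degenerate regime (e.g. $\lambda_2(\G)$ close to $1$, or $n$ bounded), where the asserted bound $\dim(X)\gtrsim n^{c(1-\lambda_2)/(\alpha\log r)}$ is weaker than the trivial $\dim(X)\ge 1$ and hence holds vacuously; I would dispatch this by choosing the universal constant small enough. Everything else is a routine assembly of Theorem~\ref{thm:pi version average embedding}, Jensen's inequality, and the elementary ball-growth bound for regular graphs.
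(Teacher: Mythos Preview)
Your approach is essentially the same as the paper's (which dispatches the corollary in one sentence by citing Theorem~\ref{thm:pi version average embedding} and the standard counting bound $\frac{1}{n^2}\sum_{i,j}d_\G(i,j)\gtrsim \log_r n$), and your normalization check and ball-growth argument are both correct. There is one small slip: in your displayed chain
\[
\sqrt{\tfrac1{n^2}\textstyle\sum_{i,j}\|f(i)-f(j)\|_X^2}\ \ge\ \tfrac1{n^2}\textstyle\sum_{i,j}\|f(i)-f(j)\|_X\ \ge\ \tfrac1{n^2}\textstyle\sum_{i,j} d_\G(i,j),
\]
the last inequality is \emph{not} what the second hypothesis in~\eqref{eq:two averages-regular} says---that hypothesis compares the $L^2$ average of $\|f(i)-f(j)\|_X$ directly with the $L^1$ average of $d_\G(i,j)$, and gives no control on the $L^1$ average of $\|f(i)-f(j)\|_X$. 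Simply delete the middle term and invoke the hypothesis directly; then the argument is complete.
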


\begin{proof}This is nothing more than a special case of Theorem~\ref{thm:pi version average embedding}  once  we note that by a straightforward and standard counting argument (see e.g.~\cite{Mat97}) we have $\frac{1}{n}\sum_{i=1}^n\sum_{j=1}^n d_\G(i,j)\gtrsim \log_r n$.
\end{proof}

For every integer $r\ge 3$ there exist arbitrarily large $r$-regular graphs $\G$ with $\lambda_2(\G)=1-\Omega(1)$; see~\cite{HLW} for this  and much more on such {\em spectral expanders}. Corollary~\ref{cor:regular case} shows that the shortest-path metric on any such graph with $r=O(1)$  satisfies the conclusion of Theorem~\ref{thm:average distortion}, because the $\alpha$-Lipschitz assumption of Theorem~\ref{thm:average distortion} implies the first inequality in~\eqref{eq:two averages-regular} and the assumption  $\frac{1}{n^2}\sum_{i=1}^n\sum_{j=1}^n \|f(i)-f(j)\|_X\ge \frac{1}{n^2}\sum_{i=1}^n\sum_{j=1}^n d_\G(i,j)$ of Theorem~\ref{thm:average distortion} implies the second inequality in~\eqref{eq:two averages-regular} (using  Jensen's inequality).

Note that we actually proved above that any expander is "metrically high dimensional" in a stronger sense. Specifically, if $\G=(\n,\EE_\G)$ is a $O(1)$-spectral expander, i.e., it is $O(1)$-regular and $\lambda_2(\G)\le 1-\Omega(1)$, and one finds vectors $x_1,\ldots,x_n$ in a normed space $(X,\|\cdot\|_X)$ for which the averages $\frac{1}{|\EE_\G|}\sum_{\{i,j\}\in \EE_\G} \|x_i-x_j\|_X^2$ and $\frac{1}{n^2}\sum_{i,j=1}^n \|x_i-x_j\|_X^2$ are within a $O(1)$ factor of the averages $\frac{1}{|\EE_\G|}\sum_{\{i,j\}\in \EE_\G} d_\G(i,j)^2=1$ and $\frac{1}{n^2}\sum_{i,j=1}^n d_\G(i,j)^2$, respectively, then this "finitary average distance information" (up to a fixed but potentially very large multiplicative error) forces the ambient space $X$ to be very high (worst-possible) dimensional, namely $\dim(X)\ge n^{\Omega(1)}$.

\begin{remark}\label{rem:for average snowflake} If one replaces~\eqref{eq:two averages-regular} by the requirement that for an increasing modulus $\omega:[0,\infty)\to [0,\infty)$ we have,
\begin{equation}\label{eq:two averages-regular-omega}
\bigg(\frac{1}{|\EE_\G|}\sum_{\{i,j\}\in \EE_\G} \|f(i)-f(j)\|_X^2\bigg)^{\frac12}\le 1\qquad\mathrm{and}\qquad \bigg(\frac{1}{n^2}\sum_{i=1}^n\sum_{j=1}^n \|f(i)-f(j)\|_X^2\bigg)^{\frac12}\ge  \frac{1}{n^2}\sum_{i=1}^n\sum_{j=1}^n \omega\big(d_\G(i,j)\big),
\end{equation}
then the above argument applies mutatis mutandis to yield the conclusion
\begin{equation}\label{eq:omega lower dimension}
\dim(X)\gtrsim e^{c(1-\lambda_2(\G))\omega(c\log_r n)}.
\end{equation}
Indeed, the aforementioned counting argument shows that least $50\%$ of the pairs $(i,j)\in \n^2$ satisfy $d_\G(i,j)\gtrsim \log_r n$. Compare~\eqref{eq:omega lower dimension}  to Theorem~\ref{thm:coarse matousek} which provides a stronger bound if the average requirement~\eqref{eq:two averages-regular-omega} is replaced by its pairwise counterpart~\eqref{eq:coarse condition}. Nevertheless, the bound~\eqref{eq:omega lower dimension} is quite sharp (at least when $r=O(1)$ and $\lambda_2(\G)=1-\Omega(1)$), in the sense that there is a normed space $(X,\|\cdot\|_X)$ for which~\eqref{eq:two averages-regular-omega} holds and
\begin{equation}\label{eq:dim upper compression expander}
\dim(X)\lesssim e^{C(\log r)\omega\left(\frac{C\log n}{\sqrt{1-\lambda_2(\G)}}\right)}\log n,
\end{equation}
where $C>0$ is a universal constant. Indeed, by~\cite{CFM94} the diameter of the metric space $(\n,d_\G)$ satisfies $\diam(\G)\lesssim (\log n)/\sqrt{1-\lambda_2(\G)}$. By an application of~\eqref{eq:linfty bounds in theorem} with $\alpha \asymp (\log_r n)/\omega(\diam(\G))$ there exists a normed space  $(X,\|\cdot\|_X)$ with $\dim(X)\lesssim n^{O(1/\alpha)}\log n$, thus~\eqref{eq:dim upper compression expander} holds, and a mapping $f:\n\to X$ that satisfies  $d_\G(i,j)/\alpha\le\|f(i)-f(j)\|_X\le d_\G(i,j)$ for all $i,j\in \n$. Hence, the first inequality in~\eqref{eq:two averages-regular-omega} holds, and
$$
\bigg(\frac{1}{n^2}\sum_{i=1}^n\sum_{j=1}^n \|f(i)-f(j)\|_X^2\bigg)^{\frac12}\ge \frac{1}{\alpha}  \bigg(\frac{1}{n^2}\sum_{i=1}^n\sum_{j=1}^n d_\G(i,j)^2\bigg)^{\frac12}\gtrsim\frac{\log_r n}{\alpha}\asymp\omega\big(\diam(\G)\big)\ge \frac{1}{n^2}\sum_{i=1}^n\sum_{j=1}^n \omega\big(d_\G(i,j)\big).
$$
\end{remark}

\begin{proof}[Proof of Theorem~\ref{thm:pi version average embedding}  assuming~\eqref{eq:M-d version}]  Let $C\in (0,\infty)$ be the implicit universal constant in~\eqref{eq:M-d version}. Then
\begin{multline}\label{eq:use log gap}
 \bigg(\sum_{i=1}^n\sum_{j=1}^n \pi_i\pi_j \|f(i)-f(j)\|_X^2\bigg)^{\frac12}\stackrel{\eqref{eq:def nonlinear gap}}{\le} \sqrt{\gamma(\AA,\|\cdot\|_X^2)}\bigg(\sum_{i=1}^n\sum_{j=1}^n \pi_i a_{ij} \|f(i)-f(j)\|_X^2\bigg)^{\frac12}\\\stackrel{\eqref{eq:two averages}}{\le} \alpha\sqrt{\gamma(\AA,\|\cdot\|_X^2)}\stackrel{\eqref{eq:M-d version}}{\le} \frac{C\alpha\log(\cc_2(X)+1)}{1-\lambda_2(\AA)},
\end{multline}
where the last step of~\eqref{eq:use log gap} is an application of~\eqref{eq:M-d version} with $M=\cc_2(X)\sqrt{1-\lambda_2(\AA)}$, while using that for a Hilbert space $H$ we have $\gamma(\AA,\|\cdot\|_H^2)=1/(1-\lambda_2(\AA))$. It follows that
\begin{equation}\label{eq:use john}
2\sqrt{\dim(X)}\ge 2\cc_2(X)\ge\cc_2(X)+1\stackrel{\eqref{eq:use log gap}}{\ge} e^{\frac{1-\lambda_2(\AA)}{C\alpha}\sqrt{\sum_{i=1}^n\sum_{j=1}^n \pi_i\pi_j \|f(i)-f(j)\|_X^2}},
\end{equation}
where the first step of~\eqref{eq:use john} uses John's theorem~\cite{Joh48}. This establishes~\eqref{eq:dim condition} with $K=e^{2/C}>1$.
\end{proof}

For non-contracting embeddings (in particular, for bi-Lipschitz embedding), the proof of the following lemma is an adaptation of the proof of~\cite[Theorem~13]{ABN11}.

\begin{lemma}\label{lem:ABN referee}Fix two integers $n,r\ge 3$ and let $\G=(\n,\EE_\G)$ be a connected $r$-regular graph. If $(X,\|\cdot\|_X)$ is a normed space into which there is a mapping $f:\n\to X$ that satisfies
\begin{equation}\label{eq:non contracting}
 \min_{\substack{i,j\in \n\\ i\neq j}}\frac{\|f(i)-f(j)\|_X}{d_\G(i,j)}\ge   1,\qquad \mathrm{and}\qquad \bigg(\frac{1}{|\EE_\G|}\sum_{\{i,j\}\in \EE_\G} \|f(i)-f(j)\|_X^2\bigg)^{\frac12}\le \alpha.
\end{equation}
Then necessarily
\begin{equation}\label{eq:ball net conclusion}
\frac{\log n}{\log r} n^{\frac{1}{2\dim(X)}}\lesssim \alpha \sqrt{\gamma(\G,\|\cdot\|_X^2)}.
\end{equation}
\end{lemma}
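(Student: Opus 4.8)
Lemma~\ref{lem:ABN referee} is a statement that, if an expander-type graph $\G$ embeds non-contractively into a normed space $X$ with controlled average edge expansion $\alpha$, then the dimension of $X$ must be large, with the bound governed by the nonlinear spectral gap $\gamma(\G,\|\cdot\|_X^2)$. The plan is to adapt the ball-covering / volume-packing argument of \cite[Theorem~13]{ABN11}. The key geometric input is that non-contractiveness forces the images of the vertices to be spread out at all scales (points at graph-distance $t$ are at Euclidean/normed distance at least $t$), while the average edge length bound together with the nonlinear spectral gap controls how large the image can be, and a counting estimate for $r$-regular graphs ($\frac1n\sum_{i,j} d_\G(i,j)\gtrsim \log_r n$, as already used in Corollary~\ref{cor:regular case}) forces a macroscopic diameter in the image.

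Concretely, here is the order in which I would carry out the argument. First, from \eqref{eq:non contracting} and \eqref{eq:def nonlinear gap} applied with $p=2$ to the points $\{f(i)\}_{i=1}^n$, obtain an upper bound on the average squared distance in the image: $\sum_{i,j}\pi_i^\G\pi_j^\G\|f(i)-f(j)\|_X^2\le \gamma(\G,\|\cdot\|_X^2)\cdot\frac{1}{|\EE_\G|}\sum_{\{i,j\}\in\EE_\G}\|f(i)-f(j)\|_X^2\le \alpha^2\gamma(\G,\|\cdot\|_X^2)$, since $\G$ is regular so $\pi^\G$ is uniform. Hence the image $f(\n)$ is contained in a ball of radius $R\lesssim \alpha\sqrt{\gamma(\G,\|\cdot\|_X^2)}$ about any fixed image point (by the triangle inequality, or just by noting the average squared distance bounds a typical distance and then using a Markov/averaging step over base points to pick a good center). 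Second, use non-contractiveness: since $f$ is injective with $\|f(i)-f(j)\|_X\ge d_\G(i,j)\ge 1$ for distinct $i,j$, the open balls $f(i)+\frac12 B_X$ (where $B_X$ is the unit ball of $X$) are pairwise disjoint, and all are contained in the ball of radius $R+\frac12$ around the center. Comparing volumes in $\R^{\dim(X)}$ via the Brunn--Minkowski / volume-scaling argument already used in the paper (the derivation of \eqref{eq:k lower log general norm}) gives $n\cdot 2^{-\dim(X)}\vol(B_X)\le (R+\tfrac12)^{\dim(X)}\vol(B_X)$, i.e. $n\le (2R+1)^{\dim(X)}$, hence $n^{1/\dim(X)}\lesssim R\lesssim \alpha\sqrt{\gamma(\G,\|\cdot\|_X^2)}$.

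Third — and this is where the $(\log n)/\log r$ factor enters, rather than a bare $n^{1/\dim(X)}$ — refine the packing: instead of using all of $f(\n)$ at a single scale, note that the $r$-regularity counting estimate shows a definite fraction of pairs are at graph-distance $\gtrsim \log_r n$, so after passing to a suitable subset (say, a subset $S\subset\n$ with $|S|\ge n/2$ all of whose points are at graph-distance $\gtrsim \log_r n$ from some reference vertex, or more carefully a net) one may inflate the packing balls to radius $\gtrsim \log_r n$ while keeping them disjoint, which replaces $n\le (2R+1)^{\dim(X)}$ by the stronger $n/2 \le (CR/\log_r n)^{\dim(X)}$. Rearranging yields $\frac{\log_r n}{2}\, n^{1/(2\dim(X))}\lesssim R\lesssim \alpha\sqrt{\gamma(\G,\|\cdot\|_X^2)}$, which is precisely \eqref{eq:ball net conclusion} up to absolute constants (the extra factor of $2$ in the exponent $2\dim(X)$ comes from absorbing the $|S|\ge n/2$ loss). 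I expect the main obstacle to be the bookkeeping in this third step: one must choose the inflation radius and the subset $S$ so that the balls $f(i)+\rho B_X$ stay genuinely disjoint (using $\|f(i)-f(j)\|_X\ge d_\G(i,j)$ and that within $S$ the pairwise graph-distances are comparable to $\log_r n$), and simultaneously keep the cardinality of $S$ a constant fraction of $n$ and keep all image points of $S$ inside a ball of radius $O(R)$. Everything else is a direct transcription of arguments already present in the excerpt (the volume-packing computation behind \eqref{eq:k lower log general norm}, the nonlinear-spectral-gap bi-Lipschitz monotonicity, and the $r$-regular distance-counting estimate from the proof of Corollary~\ref{cor:regular case}).
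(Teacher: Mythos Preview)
Your overall strategy is right and matches the paper's: use the nonlinear spectral gap together with the edge bound to get $\frac{1}{n^2}\sum_{i,j}\|f(i)-f(j)\|_X^2\le \gamma\alpha^2$, then a Markov step to find a center $m$ with at least $n/2$ image points inside $B_X(f(m),\alpha\sqrt{2\gamma})$, then a separated-net plus volume-packing inside that ball. The a~priori bound $\alpha\sqrt{\gamma}\gtrsim\log_r n$ (from non-contraction and the counting estimate) is also needed to absorb the additive $\rho$ at the end.

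The genuine gap is in your third step, and it is precisely the bookkeeping you flagged as the likely obstacle. Your first suggestion---take $S$ to be the $\ge n/2$ vertices at graph-distance $\gtrsim\log_r n$ from a single reference vertex---does not yield \emph{pairwise} separation, so the inflated balls $f(i)+\rho B_X$ need not be disjoint and the packing step collapses. Your second suggestion, a net, is the correct fix, but then you cannot keep $|S|\ge n/2$: a $2\rho$-separated set in an $r$-regular graph, with $\rho=c\log_r n$, has at most about $n/r^{2\rho}=n^{1-2c}$ points guaranteed by the covering bound, not $n/2$. In particular your displayed inequality $n/2\le (CR/\log_r n)^{\dim(X)}$ is unobtainable, and indeed it does not even rearrange to what you wrote next (it would give $n^{1/\dim(X)}$, not $n^{1/(2\dim(X))}$, on the left). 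The paper's computation is: take the net \emph{inside} the good ball $\mathscr U_m$ (of cardinality $\ge n/2$) with $\rho=\tfrac14\log_r n$, so the net has size $\gtrsim (n/2)/r^{2\rho}\asymp\sqrt n$; then packing balls of radius $\rho$ inside a ball of radius $\alpha\sqrt{2\gamma}+\rho$ gives $\sqrt n\lesssim\big(\alpha\sqrt{\gamma}/\rho+1\big)^{\dim(X)}$, which is exactly $n^{1/(2\dim(X))}\lesssim \alpha\sqrt{\gamma}/\log_r n$. So the exponent $1/(2\dim(X))$ comes from the net having size $\sqrt n$, not from ``absorbing the $|S|\ge n/2$ loss.''
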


Prior to proving Lemma~\ref{lem:ABN referee}, we will derive some of its corollaries.  For the terminology of Corollary~\ref{cor:nontrivial type} below, recall  that a Banach space $Y$ is said to be $B$-convex~\cite{Bec62} if $\ell_1$ is {\em not} finitely representable in $Y$; see the survey~\cite{Mau03} for more on this important notion, including useful analytic, geometric and probabilistic  characterizations.

\begin{corollary}\label{cor:nontrivial type} There is a universal constant $C\in (0,\infty)$ for which the following assertion holds true. Let $Y$ be an infinite dimensional $B$-convex Banach space. For arbitrarily large $n\in \N$, if $\alpha\ge C\log n$, then we have
\begin{equation}\label{eq:K-convex dim reduction}
\kk_n^\alpha(\ell_\infty,Y)\asymp_Y \frac{\log n}{\log\left(\frac{\alpha}{\log n}\right)}.
\end{equation}
Thus, we have  in particular $\kk_n^{C\log n}(\ell_\infty,Y)\asymp_Y\log n$.
\end{corollary}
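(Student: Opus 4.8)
The plan is to deduce Corollary~\ref{cor:nontrivial type} from Lemma~\ref{lem:ABN referee} together with the known upper bound on $\kk_n^\alpha(\ell_\infty)$ coming from~\eqref{eq:linfty bounds in theorem} (via Bourgain's embedding composed with the JL lemma, as in footnote to~\eqref{eq:sharp for large alpha}). The upper bound direction is the easy one: given an $n$-point metric space $\MM$, first embed it into $\ell_2$ with distortion $O(\log n)$ by Theorem~\ref{thm:bourgain embedding}, then apply Theorem~\ref{thm:JL alpha} (the JL lemma, with the explicit $\alpha$-dependence in~\eqref{eq:JL in theporem}) to push the image into $\ell_2^k$ with $k\asymp (\log n)/\log(\alpha/(A\log n))$ at the cost of a further factor $\alpha/(A\log n)$, for an overall distortion $\le\alpha$. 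Since $Y$ is infinite dimensional, Dvoretzky's theorem~\cite{Dvo60} embeds $\ell_2^k$ $2$-isomorphically into a $k$-dimensional subspace of $Y$, which gives $\kk_n^\alpha(\ell_\infty,Y)\lesssim (\log n)/\log(\alpha/\log n)$ once $\alpha\ge C\log n$ with $C$ large enough that the logarithm in the denominator is bounded below.

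For the matching lower bound, I would invoke the existence of a family of $r$-regular spectral expanders $\G_n=(\n,\EE_{\G_n})$ with $r=O(1)$ and $\lambda_2(\G_n)\le 1-\Omega(1)$ (see~\cite{HLW}); these are the "arbitrarily large $n$" in the statement. Suppose $\MM=(\n,d_{\G_n})$ embeds into a $k$-dimensional subspace $X$ of $Y$ with distortion $\alpha$; after rescaling we may assume the embedding $f$ is non-contracting and $\alpha$-Lipschitz, so in particular the hypotheses~\eqref{eq:non contracting} of Lemma~\ref{lem:ABN referee} hold. The lemma then yields $\frac{\log n}{\log r}\,n^{1/(2k)}\lesssim \alpha\sqrt{\gamma(\G_n,\|\cdot\|_X^2)}$. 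Now the key point is to bound the nonlinear spectral gap $\gamma(\G_n,\|\cdot\|_X^2)$ from above: since $X$ is a subspace of the $B$-convex space $Y$, it is itself $B$-convex, and more importantly $\cc_2(X)\lesssim_Y 1$ — this is where $B$-convexity enters, through the fact that a $B$-convex space has nontrivial type (Maurey--Pisier, cf.~\cite{Mau03}), hence so does $X$, and one can bound its Euclidean distortion. Actually the cleaner route is: by Kwapień's theorem / the type-cotype characterization, a $B$-convex space need not have $\cc_2$ uniformly bounded on finite-dimensional subspaces, so instead I would apply Theorem~\ref{thm:our gamma} (or the weaker~\eqref{eq:M-d version}): it gives $\gamma(\G_n,\|\cdot\|_X^2)\lesssim \big(\cc_2(X)\log(\cc_2(X)+1)\big)^2/(1-\lambda_2(\G_n))$, and using $\cc_2(X)\le\sqrt{k}$ (John) we get $\sqrt{\gamma(\G_n,\|\cdot\|_X^2)}\lesssim \sqrt{k}\log k$. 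Substituting into the conclusion of Lemma~\ref{lem:ABN referee} gives $\frac{\log n}{\log r}\,n^{1/(2k)}\lesssim \alpha\sqrt{k}\log k$; since $r=O(1)$ this rearranges to $n^{1/(2k)}\lesssim \alpha\sqrt{k}\log k/\log n$, and taking logarithms yields $\log n\lesssim k\log(\alpha\sqrt k\log k/\log n)$, whence $k\gtrsim (\log n)/\log(\alpha/\log n)$ after absorbing lower-order terms (this uses $\alpha\ge C\log n$ to keep the argument of the logarithm at least a constant, and a short bootstrapping to replace $\sqrt k\log k$ inside the log by a power of $\alpha$).

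I expect the main obstacle to be the step that controls $\gamma(\G_n,\|\cdot\|_X^2)$ for an arbitrary $k$-dimensional subspace $X$ of $Y$ and then disentangles the resulting transcendental inequality to extract the sharp $(\log n)/\log(\alpha/\log n)$ behavior. One has to be careful that the $B$-convexity of $Y$ is genuinely used (it is what rules out $Y$ being universal and forces $\gamma(\G_n,\|\cdot\|_X^2)$ to be sub-polynomial in $n$ — for $Y=\ell_\infty$ the statement is false, as~\eqref{eq:linfty bounds in theorem} shows $\kk_n^\alpha(\ell_\infty)\gtrsim n^{c/\alpha}$), and that the implicit constants are tracked so that the final bound is $\asymp_Y$ rather than merely $\gtrsim_Y$. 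The reconciliation of the two sides — the elementary upper bound and the expander-based lower bound — then gives~\eqref{eq:K-convex dim reduction}, and specializing to $\alpha\asymp\log n$ (so that $\alpha/\log n\asymp 1$ forces the denominator to be $\Theta(1)$) gives $\kk_n^{C\log n}(\ell_\infty,Y)\asymp_Y\log n$, which is the final assertion.
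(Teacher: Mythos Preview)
Your upper bound argument is correct and coincides with the paper's: Bourgain's embedding into $\ell_2$, followed by the JL lemma and Dvoretzky's theorem in $Y$.

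The lower bound, however, has a genuine gap: your argument never actually uses the $B$-convexity of $Y$. You correctly observe that $B$-convexity does \emph{not} imply $\cc_2(X)\lesssim_Y 1$ uniformly over finite-dimensional subspaces $X\subset Y$, and you then fall back on John's theorem $\cc_2(X)\le\sqrt{k}$ together with~\eqref{eq:M-d version}. But that route uses nothing about $Y$ whatsoever---it is exactly the argument that yields Corollary~\ref{coro:combine ball with spectral} and hence the lower bound in~\eqref{eq:log log log} for $Y=\ell_\infty$. Concretely, \eqref{eq:M-d version} with $M=\cc_2(X)\sqrt{1-\lambda_2(\G)}$ gives $\sqrt{\gamma(\G,\|\cdot\|_X^2)}\lesssim \log(\cc_2(X)+1)/(1-\lambda_2(\G))\lesssim\log k$ (not $\sqrt{k}\log k$ as you wrote, though this only helps you), so Lemma~\ref{lem:ABN referee} yields $(\log n)\,n^{1/(2k)}\lesssim\alpha\log k$. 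At $\alpha\asymp\log n$ this inequality is satisfied already by $k\asymp(\log n)/\log\log\log n$, so your ``bootstrapping to replace $\sqrt{k}\log k$ inside the log'' cannot close the gap: the $\log k$ term inside the logarithm is \emph{not} lower-order when $\alpha/\log n$ is bounded.

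The paper's proof uses $B$-convexity in a completely different and much deeper way: by a theorem of V.~Lafforgue~\cite{Laf09} (see also~\cite{MN14}), for any $B$-convex Banach space $Y$ there exist arbitrarily large $O(1)$-regular graphs $\G$ with $\gamma(\G,\|\cdot\|_Y^2)\lesssim_Y 1$---so-called \emph{super-expanders} with respect to $Y$. Since the nonlinear spectral gap is monotone under passing to subspaces (the defining inequality~\eqref{eq:def nonlinear gap} for $X$ is a special case of that for $Y$), one gets $\gamma(\G,\|\cdot\|_X^2)\lesssim_Y 1$ for \emph{every} subspace $X\subset Y$, with no dependence on $\dim(X)$. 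Feeding this directly into Lemma~\ref{lem:ABN referee} gives $(\log n)\,n^{1/(2k)}\lesssim_Y\alpha$, i.e.\ $n^{1/(2k)}\lesssim_Y\alpha/\log n$, and hence $k\gtrsim_Y(\log n)/\log(\alpha/\log n)$ cleanly. The existence of such super-expanders is the essential content of the $B$-convexity hypothesis here, and it cannot be recovered from~\eqref{eq:M-d version} and John's theorem alone.
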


\begin{proof} The upper bound $\kk_n^\alpha(\ell_\infty,Y)\lesssim(\log n)/\log(\alpha/\log n)$ actually holds for {\em any} infinite dimensional Banach space $Y$. Indeed, by Bourgain's embedding theorem~\cite{Bou85} any $n$-point metric space $\MM$ admits an embedding $f$ into $\ell_2$ with distortion $A\log n$, where $A\in (0,\infty)$ is a universal constant. If $\alpha\ge 4A\log n$, then by applying the JL-Lemma~\cite{JL84} we know that $f(\MM)$ embeds with distortion $\alpha/(2A\log n)$ into $\ell_2^k$, where $k\lesssim (\log n)/\log(\alpha/\log n)$. By Dvoretzky's theorem~\cite{Dvo60}, we know that $\ell_2^k$ embeds with distortion $2$ into $Y$, so overall we obtain an embedding of $\MM$ into a $k$-dimensional subspace of $Y$ with distortion at most $2(A\log n)(\alpha/(2A\log n))=\alpha$.

Conversely, suppose that $\alpha\ge 2\log n$ and that $Y$ is a $B$-convex Banach space. By a theorem of V. Lafforgue~\cite{Laf09}  (see also~\cite{MN14} for a different approach), for arbitrarily large $n\in \N$ there is a $O(1)$-regular graph $\G=(\n,\EE_\G)$ such that $\gamma(\G,\|\cdot\|_Y^2)\lesssim_Y 1$. If $(\n,d_\G)$ embeds with distortion $\alpha$ into a $k$-dimensional subspace of $Y$, then by Lemma~\ref{lem:ABN referee} we have $n^{1/(2k)}\lesssim_Y \alpha/\log n$. Thus $k\gtrsim_Y (\log n)/\log(\alpha/\log n)$, as required.
\end{proof}

\begin{question} Is the assumption of $B$-convexity needed for the conclusion~\eqref{eq:K-convex dim reduction} of Corollary~\ref{cor:nontrivial type}? Perhaps finite cotype suffices for this purpose? This matter is of course closely related to Question~\ref{Q:log n}.
\end{question}

\begin{corollary}\label{coro:combine ball with spectral} Under the assumptions and notation of Lemma~\ref{lem:ABN referee}, we have
$$
\frac{\log n}{\log r} n^{\frac{1}{2\dim(X)}}\lesssim  \frac{\alpha\log(\cc_2(X)+1)}{1-\lambda_2(\G)}.
$$
\end{corollary}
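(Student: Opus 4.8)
The plan is to simply combine Lemma~\ref{lem:ABN referee} with the quantitative nonlinear spectral gap estimate~\eqref{eq:M-d version}. Lemma~\ref{lem:ABN referee} gives us, under the stated hypotheses on $\G$ and $f$, that
\[
\frac{\log n}{\log r}\, n^{\frac{1}{2\dim(X)}}\lesssim \alpha\sqrt{\gamma(\G,\|\cdot\|_X^2)}.
\]
So the only thing left to do is to bound $\sqrt{\gamma(\G,\|\cdot\|_X^2)}$ from above in terms of $\cc_2(X)$, $\lambda_2(\G)$, and nothing else. This is exactly what~\eqref{eq:M-d version} provides: applying it with the matrix $\AA=\AA_\G$ (so that $\gamma(\AA,\|\cdot\|_H^2)=1/(1-\lambda_2(\G))$ for a Hilbert space $H$), and with the choice $M=\cc_2(X)\sqrt{1-\lambda_2(\G)}$, we obtain
\[
\sqrt{\gamma(\G,\|\cdot\|_X^2)}\lesssim \frac{\log(\cc_2(X)+1)}{\cc_2(X)\sqrt{1-\lambda_2(\G)}}\cdot \cc_2(X)\cdot\frac{1}{\sqrt{1-\lambda_2(\G)}}=\frac{\log(\cc_2(X)+1)}{1-\lambda_2(\G)}.
\]
Substituting this into the conclusion of Lemma~\ref{lem:ABN referee} yields precisely the claimed inequality
\[
\frac{\log n}{\log r}\, n^{\frac{1}{2\dim(X)}}\lesssim \frac{\alpha\log(\cc_2(X)+1)}{1-\lambda_2(\G)}.
\]

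There is one small point to verify before invoking~\eqref{eq:M-d version}, namely that the hypothesis "$\lambda_2(\AA)\le 1-M^2/\cc_2(X)^2$" of Theorem~\ref{thm:our gamma} (equivalently of its weaker form~\eqref{eq:M-d version}) is satisfied by our choice of $M$. But this is automatic and not a genuine restriction: with $M=\cc_2(X)\sqrt{1-\lambda_2(\G)}$ we have $M^2/\cc_2(X)^2=1-\lambda_2(\G)$, so the condition $\lambda_2(\G)\le 1-M^2/\cc_2(X)^2$ holds with equality. (If $X$ is finite-dimensional then $\cc_2(X)\geq 1$ is finite, so $M$ is well-defined; in the degenerate case $\lambda_2(\G)=1$, i.e. $\G$ disconnected, the hypothesis of Lemma~\ref{lem:ABN referee} already fails, so there is nothing to prove.)

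I do not expect any genuine obstacle here — this corollary is a one-line consequence of two results already established in the excerpt (Lemma~\ref{lem:ABN referee} and~\eqref{eq:M-d version}), and the entire content is the bookkeeping of the choice $M=\cc_2(X)\sqrt{1-\lambda_2(\G)}$ in the latter. The only thing worth stating carefully in the writeup is the chain of substitutions above, and the remark that $\gamma(\AA_\G,\|\cdot\|_H^2)=1/(1-\lambda_2(\G))$ for Hilbert space, which was recorded earlier in Section~\ref{sec:average}.
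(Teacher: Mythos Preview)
Your proposal is correct and follows exactly the same approach as the paper: the paper's proof is the one-line remark that this is ``nothing more than a substitution of~\eqref{eq:M-d version} with $M=\cc_2(X)\sqrt{1-\lambda_2(\G)}$ into~\eqref{eq:ball net conclusion}.'' You have simply written out that substitution explicitly and added the (immediate) verification that the hypothesis on $M$ holds with equality.
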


\begin{proof} This is  nothing more than a substitution of~\eqref{eq:M-d version} with $M=\cc_2(X)\sqrt{1-\lambda_2(\G)}$ into~\eqref{eq:ball net conclusion}.
\end{proof}

Since by John's theorem~\cite{Joh48} we have $\cc_2(X)\le \sqrt{\dim(X)}$ and for every $n\in \N$ there exists a graph $\G$ as in Lemma~\ref{lem:ABN referee} with $r=O(1)$ and $\lambda_2(\G)=1-\Omega(1)$, it follows from Corollary~\ref{coro:combine ball with spectral} that
\begin{equation*}
\alpha \log(\kk_n^\alpha(\ell_\infty)+1)\gtrsim n^{\frac{1}{2\kk_n^\alpha(\ell_\infty)}}\log n.
\end{equation*}
This implies the lower bound on $\kk_n^\alpha(\ell_\infty)$ in~\eqref{eq:linfty bounds in theorem}. In particular, for $\alpha\asymp \log n$ it gives the first inequality in~\eqref{eq:log log log}.

\begin{proof}[Proof of Lemma~\ref{lem:ABN referee}] Denote $\gamma=\gamma(\G,\|\cdot\|_X^2)$. For  $i\in \n$ write
\begin{equation}\label{eq:def Ui}
\mathscr{U}_i=f^{-1}\Big(B_X\big(f(i),\alpha\sqrt{2\gamma}\big)\Big)=\Big\{j\in \n:\ \|f(i)-f(j)\|_X\le \alpha\sqrt{2\gamma}\Big\}.
\end{equation}
Let $m\in \n$ satisfy $|\mathscr{U}_m|=\max_{i\in \n}|\mathscr{U}_i|$.  Then
\begin{multline}\label{eq:use point again rho}
n^2\gamma\alpha^2\stackrel{\eqref{eq:def nonlinear gap}\wedge \eqref{eq:non contracting}}{\ge} \sum_{i=1}^n\sum_{j=1}^n  \|f(i)-f(j)\|_X^2\ge \sum_{i=1}^n\sum_{j\in \n\setminus \mathscr{U}_i} \|f(i)-f(j)\|_X^2\\\stackrel{\eqref{eq:def Ui}}> \sum_{i=1}^n \big(\alpha\sqrt{2\gamma}\big)^2(n-|\mathscr{U}_i|)\ge 2n\gamma\alpha^2(n-|\mathscr{U}_m|).
\end{multline}
This simplifies to $|\mathscr{U}_m|\ge \frac12n$. Also, since $\frac{1}{n^2}\sum_{i=1}^n\sum_{j=1}^n  \|f(i)-f(j)\|_X^2\ge \frac{1}{n^2}\sum_{i=1}^n\sum_{j=1}^n d_\G(i,j)^2\gtrsim (\log_r n)^2$,  the first inequality in~\eqref{eq:use point again rho} implies the a priori lower bound $\alpha\sqrt{\gamma}\gtrsim \log _r n$.

Next, fix $\rho\in (0,\infty)$ and let $\mathscr{N}_{2\rho}\subset \mathscr{U}_m$ be a maximal (with respect to inclusion) $2\rho$-separated subset of $\mathscr{U}_m$. Then $\mathscr{U}_m\subset \cup_{i\in \mathscr{N}_\Delta} B_\G(i,2\rho)$, where $B_\G(i,2\rho)$ denotes the ball centered at $i$ of radius $2\rho$ in the shortest-path metric $d_\G$. Since $\G$ is $r$-regular, for each $i\in \n$  we have the (crude) bound $|B_\G(i,2\rho)|\le 2r^{2\rho}$. Hence, $\frac12n\le|\mathscr{U}_m|\le 2r^{2\rho} |\mathscr{N}_{2\rho}|$. So, if we choose $\rho= \frac14\log_r n$, then $|\mathscr{N}_{2\rho}|\gtrsim \sqrt{n}$. Since by~\eqref{eq:non contracting}  distinct $i,j\in \mathscr{N}_{2\rho}$ satisfy $\|f(i)-f(j)\|_X\ge d_\G(i,j)\ge 2\rho$, the $X$-balls $\{B_X(f(i),\rho):\ i\in\mathscr{N}_{2\rho}\}$ have pairwise disjoint interiors. At the same time, since each $i\in \mathscr{N}_{2\rho}$ belongs to $\mathscr{U}_m$, we have $\|f(i)-f(m)\|_X\le \alpha\sqrt{2\gamma}$ (by the definition of $\mathscr{U}_m$), and hence $B_X(f(i), \rho)\subset B_X(f(m),\alpha\sqrt{2\gamma}+\rho)$. So, writing $\dim(X)=k$, we have the following  volume comparison.
\begin{multline*}
(\alpha\sqrt{2\gamma}+\rho)^{k}\vol_k\big(B_X(0,1)\big)=\vol_k\big(B_X(f(m),\alpha\sqrt{2\gamma}+\rho)\big)\ge \vol_k\bigg(\bigcup_{i\in \mathscr{N}_{2\rho}} B_X(f(i),\rho)\bigg)
\\=\sum_{i\in \mathscr{N}_{2\rho}}\vol_k\big(B_X(f(i),\rho)\big)=\rho^k\vol_k\big(B_X(0,1)\big)|\mathscr{N}_{2\rho}|\gtrsim
\rho^k\vol_k\big(B_X(0,1)\big)\sqrt{n}.
\end{multline*}
This simplifies to give
$
n^{\frac{1}{2k}}\lesssim \frac{\alpha\sqrt{2\gamma}}{\rho}+1\asymp \frac{\alpha\sqrt{\gamma}}{\log_r n},
$
where we used the definition of $\rho$, and that $\alpha\sqrt{\gamma}\gtrsim \log_r n$.
\end{proof}

\subsection{Nonlinear Rayleigh quotient inequalities}\label{sec:ray} Our goal in this section is to present a proof of~\eqref{eq:M-d version}. As we stated earlier, the proof that appears below is different from the proof of Theorem~\ref{thm:our gamma} in~\cite{Nao17}.  However,  the reason that underlies its validity is the same as that of the original argument in~\cite{Nao17}. Specifically, we arrived at the ensuing proof because we were driven by an algorithmic need that arose in~\cite{ANNRW18}. This need required proving a point-wise strengthening of an upper bound on nonlinear spectral gaps, which is called in~\cite{ANNRW18} a "nonlinear Rayleigh quotient inequality." We will clarify what we mean by this later;  a detailed discussion appears in~\cite{ANNRW18}.

The need to make the interpolation-based proof in~\cite{Nao17} constructive/algorithmic led us to merge the argument in~\cite{Nao17} with the {\em proof of} a theorem from~\cite{Nao14}, rather than quoting and using the latter as a "black box" as we did in~\cite{Nao17}. In doing so, we realized that for the purpose of obtaining only the weaker bound~\eqref{eq:M-d version} one could more efficiently combine~\cite{Nao14} and~\cite{Nao17} so as to skip the use of complex interpolation and to obtain the estimate~\eqref{eq:M-d version} as well as its  nonlinear Rayleigh quotient counterpart. Thus, despite superficial differences, the argument below amounts to unravelling the proofs in~\cite{Nao14,Nao17}  and removing steps that are needed elsewhere but not for~\eqref{eq:M-d version}. At present, we do not have a  proof of the stronger inequality~\eqref{eq:M version} that differs from its proof  in~\cite{Nao17}, and the interpolation-based approach of~\cite{Nao17} is used for more refined algorithmic results in the forthcoming work~\cite{ANNRW18-uniform}.

We will continue using the notation/conventions that were set at the beginning of Section~\ref{sec:average}. Fix $p\ge 1$ and a metric space $(\MM,d_\MM)$. Let $L_p(\pi;\MM)$ be the metric space $(\MM^n,d_{L_p(\pi;\MM)})$, where $d_{L_p(\pi;\MM)}:\MM^n\times \MM^n\to [0,\infty)$ is
$$
\forall\,x=(x_1,\ldots,x_n),y=(y_1,\ldots,y_n)\in \MM^n,\qquad d_{L_p(\pi;\MM)}(x,y)\eqdef \bigg(\sum_{i=1}^n \pi_id_\MM(x_i,y_i)^p \bigg)^{\frac{1}{p}}.
$$
Throughout what follows, it will be notationally convenient to slightly abuse notation by considering $\MM$ as a subset of $L_p(\pi;\MM)$ through its identification with the {\em diagonal} subset of $\MM^n$, which is an isometric copy of $\MM$ in $L_p(\pi;\MM)$. Namely,  we identify each $x\in \MM$ with the $n$-tuple $(x,x\ldots,x)\in \MM^n$.

If $x=(x_1,\ldots,x_n)\in L_p(\pi;\MM)\setminus \MM$, then the corresponding {\em nonlinear Rayleigh quotient} is defined to be
\begin{equation}\label{eq:ray def}
\ray(x;\mA,d_\MM^p)\eqdef \frac{\sum_{i=1}^n\sum_{j=1}^n\pi_i  a_{ij}d_\MM(x_i,x_j)^p}{\sum_{i=1}^n\sum_{j=1}^n \pi_i\pi_j d_\MM(x_i,x_j)^p}.
\end{equation}
The restriction $x\notin\MM$ was made here only to ensure that the denominator in~\eqref{eq:ray def} does not vanish. By definition,
\begin{equation}\label{eq:gamma ray relation}
\gamma(\mA,d_\MM^p)=\sup_{x\in L_p(\pi;\MM)\setminus \MM} \frac{1}{\ray(x;\mA,d_\MM^p)}.
\end{equation}

Note that $L_p(\pi;X)$ is a Banach space for every Banach space $(X,\|\cdot\|_X)$. In this case,  the matrix $\AA\in \M_n(\R)$ induces a linear operator $\AA\otimes \Id_X:L_p(\pi;X)\to L_p(\pi;X)$ that is given by $(\AA\otimes \Id_X)(x_1,\ldots x_n)=(\sum_{j=1}^n a_{ij}x_j)_{i=1}^n$.

The following lemma records some simple and elementary general properties of nonlinear Rayleigh quotients.

\begin{lemma}\label{lem:Mtype2} Suppose that $(\MM,d_\MM)$ is a metric space, $n\in \N$, $p\in [1,\infty)$ and $\d\in [0,1]$. Let $\pi=(\pi_1,\ldots,\pi_n)$ be a probability measure on $\n$ and $\mA,\mB\in \M_n(\R)$ be  row-stochastic matrices that are reversible with respect to $\pi$. For any $x\in L_p^n(\pi;\MM)\setminus \MM$  we have
\begin{enumerate}
\item  $\ray \big(x;\d\mA+(1-\d)\mB,d_\MM^p\big)=\d\ray \big(x;\mA,d_\MM^p\big)+(1-\d)\ray \big(x;\mB,d_\MM^p\big)$.

    \item $ \ray \big(x;(1-\d)\Id_n+\d\mA,d_\MM^p\big)=\d\ray \big(x;\mA,d_\MM^p\big)$, where $\Id_n\in \M_n(\R)$ is the identity matrix.

\item $
\ray(x;\mA,d_\MM^p)\le 2^p$.
\item $
\ray(x;\mA\mB,d_\MM^p)^{\frac{1}{p}}\le \ray(x;\mA,d_\MM^p)^{\frac{1}{p}}+\ray(x;\mB,d_\MM^p)^{\frac{1}{p}}$.

\item $\ray \big(x;\mA^\tt,d_\MM^p\big)\le \tt^p\ray \big(x;\mA,d_\MM^p\big)$ for every $\tt\in \N$.
\end{enumerate}
\end{lemma}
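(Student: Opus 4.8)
\textbf{Proof proposal for Lemma~\ref{lem:Mtype2}.}

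The plan is to treat the five assertions in order, noting that (1)--(3) are direct consequences of the definition~\eqref{eq:ray def} together with the triangle inequality, while (4) is the genuinely nonlinear step and (5) follows from (4) by induction.

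For (1): observe that both the numerator and the denominator in~\eqref{eq:ray def} are \emph{linear} in the matrix entries $a_{ij}$. Indeed, the numerator $\sum_{i,j}\pi_i a_{ij}d_\MM(x_i,x_j)^p$ is linear in $\mA$, and the denominator $\sum_{i,j}\pi_i\pi_j d_\MM(x_i,x_j)^p$ does not involve $\mA$ at all. Hence replacing $\mA$ by $\d\mA+(1-\d)\mB$ splits the numerator as a $\d$-convex combination, and dividing by the (common, $\mA$-independent) denominator gives the claimed identity; one only needs to record that $\d\mA+(1-\d)\mB$ is again row-stochastic and $\pi$-reversible, which is immediate. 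Then (2) is the special case $\mB=\Id_n$ of (1), once we note that $\ray(x;\Id_n,d_\MM^p)=0$ because the numerator of $\ray(x;\Id_n,\cdot)$ is $\sum_i \pi_i d_\MM(x_i,x_i)^p=0$. For (3), by the triangle inequality and convexity of $t\mapsto t^p$ we have $d_\MM(x_i,x_j)^p\le 2^{p-1}\big(d_\MM(x_i,x_k)^p+d_\MM(x_k,x_j)^p\big)$ for every $k$; averaging the right-hand side against $\pi_k$ and then against $\pi_i\pi_j$, and using $\sum_j a_{ij}=1$ together with $\pi$-reversibility to symmetrize, bounds the numerator of~\eqref{eq:ray def} by $2^p$ times its denominator. (A slightly cleaner route: write $d_\MM(x_i,x_j)\le d_\MM(x_i,x_k)+d_\MM(x_k,x_j)$, take $L_p(\pi_k)$-norms in $k$ via Minkowski, and then compare with the full double average; either way one lands at the constant $2^p$.)

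The main point is (4). Here I would introduce the intermediate point: for $x=(x_1,\dots,x_n)\in\MM^n$ and the stochastic matrix $\mB$, let $\mB$ act coordinatewise and exploit that $\ray(x;\mA\mB,d_\MM^p)$ involves $\sum_{i,j}\pi_i(\mA\mB)_{ij}d_\MM(x_i,x_j)^p=\sum_{i,j,k}\pi_i a_{ik}b_{kj}d_\MM(x_i,x_j)^p$. By the triangle inequality $d_\MM(x_i,x_j)\le d_\MM(x_i,x_k)+d_\MM(x_k,x_j)$, so by Minkowski's inequality in $L_p$ of the measure $\pi_i a_{ik} b_{kj}$ on triples $(i,k,j)$ (which has total mass $1$ by row-stochasticity of $\mA$ and $\mB$),
$$
\bigg(\sum_{i,k,j}\pi_i a_{ik}b_{kj}d_\MM(x_i,x_j)^p\bigg)^{1/p}\le \bigg(\sum_{i,k,j}\pi_i a_{ik}b_{kj}d_\MM(x_i,x_k)^p\bigg)^{1/p}+\bigg(\sum_{i,k,j}\pi_i a_{ik}b_{kj}d_\MM(x_k,x_j)^p\bigg)^{1/p}.
$$
In the first term on the right, $\sum_j b_{kj}=1$ collapses the $j$-sum, leaving $\sum_{i,k}\pi_i a_{ik}d_\MM(x_i,x_k)^p$, which is the numerator of $\ray(x;\mA,d_\MM^p)$. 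In the second term, using $\pi_i a_{ik}=\pi_k a_{ki}$ ($\pi$-reversibility of $\mA$) and then $\sum_i a_{ki}=1$ collapses the $i$-sum, leaving $\sum_{k,j}\pi_k b_{kj}d_\MM(x_k,x_j)^p$, the numerator of $\ray(x;\mB,d_\MM^p)$. Dividing through by $\big(\sum_{i,j}\pi_i\pi_j d_\MM(x_i,x_j)^p\big)^{1/p}$ — the common denominator, independent of the matrix — yields exactly $\ray(x;\mA\mB,d_\MM^p)^{1/p}\le \ray(x;\mA,d_\MM^p)^{1/p}+\ray(x;\mB,d_\MM^p)^{1/p}$. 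Finally, (5) follows by iterating (4): $\ray(x;\mA^\tt,d_\MM^p)^{1/p}\le \tt\,\ray(x;\mA,d_\MM^p)^{1/p}$ by an obvious induction on $\tt$ (using that $\mA^\tt$ is again row-stochastic and $\pi$-reversible at each step), and raising to the $p$-th power gives the stated bound.

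The only place where care is needed — and what I'd flag as the one genuine subtlety rather than an obstacle — is the bookkeeping of reversibility in step (4): one must use $\pi_i a_{ik}=\pi_k a_{ki}$ precisely once, to turn the "weight on the left endpoint" into a "weight on the middle point" so that the row-sum of $\mA$ can be used to collapse the correct index. Everything else is Minkowski's inequality plus row-stochasticity, so no real difficulty is expected; the lemma is elementary and the writeup should be short.
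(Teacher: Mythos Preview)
Your proposal is correct and follows essentially the same route as the paper: (1) by linearity of the numerator in the matrix entries, (2) as the special case $\mB=\Id_n$, (3) via the pointwise triangle inequality plus convexity of $t\mapsto t^p$, (4) by Minkowski's inequality in $L_p$ of the measure $\pi_i a_{ik}b_{kj}$ on triples followed by collapsing sums using row-stochasticity and one application of $\pi$-reversibility of $\mA$, and (5) by iterating (4). The only cosmetic wobble is your first description of (3) (``averaging against $\pi_k$ and then against $\pi_i\pi_j$''), which as written would not produce the numerator; the paper multiplies by $\pi_i\pi_k a_{ij}$ and sums, which is what your parenthetical alternative amounts to, so there is no real gap.
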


\begin{proof} The first assertion is an immediate consequence of the definition of nonlinear Rayleigh quotients. The second assertion is a special case of the first assertion, since by definition $\ray(x;\Id_n,d_\MM^p)=0$. The third assertion is justified by noting that by the triangle inequality, for every $i,j,k\in \n$ we have
\begin{equation}\label{eq:to bound rayleigh lower}
d_\MM(x_i,x_j)^p\le \big(d_\MM(x_i,x_k)+d_\MM(x_k,x_j)\big)^p \le 2^{p-1}d_\MM(x_i,x_k)^p+2^{p-1}d_\MM(x_k,x_j)^p.
\end{equation}
where the last step of~\eqref{eq:to bound rayleigh lower} uses the convexity of the function $(t\in [0,\infty))\mapsto t^p$. By multiplying~\eqref{eq:to bound rayleigh lower} by $\pi_i\pi_k a_{ij}$, summing over $i,j,k\in \n$ and using the fact that $\mA$ is reversible with respect to $\pi$, we get
$$
\sum_{i=1}^n\sum_{j=1}^n \pi_i a_{ij} d_\MM(x_i,x_j)^p\le 2^p\sum_{i=1}^n\sum_{j=1}^n\pi_i\pi_j d_\MM(x_i,x_j)^p.
$$
Recalling the notation~\eqref{eq:ray def}, this is precisely the third assertion of Lemma~\ref{lem:Mtype2}.

It remains to justify the fourth assertion of Lemma~\ref{lem:Mtype2}, because its fifth assertion follows from iterating its fourth assertion $\tt-1$ times (with $\mB$ a power of $\mA$). To this end, writing $\mA=(a_{ij})$ and $\mB=(b_{ij})$, we have
\begin{align}
\nonumber \bigg(\sum_{i=1}^n\sum_{j=1}^n \pi_i(\mA\mB)_{ij}d_\MM(x_i,x_j)^p\bigg)^{\frac{1}{p}}  &\le  \bigg(\sum_{i=1}^n\sum_{j=1}^n \pi_i\bigg(\sum_{k=1}^n a_{ik}b_{kj}\bigg)\big(d_\MM(x_i,x_k)+d_\MM(x_k,x_j)\big)^p\bigg)^{\frac{1}{p}}\\ \nonumber
&\le \bigg(\sum_{i=1}^n\sum_{j=1}^n \sum_{k=1}^n \pi_i a_{ik}b_{kj}d_\MM(x_i,x_k)^p\bigg)^{\frac{1}{p}}+\bigg(\sum_{i=1}^n\sum_{j=1}^n \sum_{k=1}^n \pi_i a_{ik}b_{kj}d_\MM(x_k,x_j)^p\bigg)^{\frac{1}{p}}
\\&= \bigg(\sum_{i=1}^n \sum_{k=1}^n\pi_i a_{ik}d_\MM(x_i,x_k)^p\bigg)^{\frac{1}{p}}+\bigg(\sum_{j=1}^n \sum_{k=1}^n \pi_kb_{kj}d_\MM(x_k,x_j)^p\bigg)^{\frac{1}{p}},\label{remove A and B resp}
\end{align}
where the first step of~\eqref{remove A and B resp}  uses the triangle inequality in $\MM$, the second step of~\eqref{remove A and B resp} uses the triangle inequality in  $L_p(\mu)$ with $\mu$ being the measure on $\n^3$ given by $\mu(i,j,k)=\pi_ia_{ik}b_{kj}$ for all $i,j,k\in \n$, and the final step of~\eqref{remove A and B resp} uses the fact that $\mA$ and $\mB$ are both row-stochastic and reversible with respect to $\pi$.
\end{proof}

The identity in the following claim is a consequence of a very simple and standard Hilbertian computation that we record here for ease of later references.

\begin{claim}\label{claim:hilbertial rayleigh} For every Hilbert space $(H,\|\cdot\|_H)$ and every $x\in L_2(\pi;H)\setminus H$ we have $\ray(x;\mA^2,\|\cdot\|_H^2)\le 1$. Moreover, if $\sum_{i=1}^n \pi_i x_i=0$, then
\begin{equation*}\label{eq:square identity}
\frac{\left\|(\mA\otimes \Id_H)x\right\|_{L_2(\pi;H)}}{\|x\|_{L_2(\pi;H)}}=\sqrt{1-\ray(x;\mA^2,\|\cdot\|_H^2)}.
\end{equation*}
\end{claim}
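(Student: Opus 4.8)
The plan is to reduce the whole claim to the elementary Hilbertian identity obtained by expanding squares of norms, exactly in the spirit of the computation $\gamma(\mA,\|\cdot\|_H^2)=1/(1-\lambda_2(\mA))$ that was recalled at the start of Section~\ref{sec:average}. First I would record the structural facts that make the manipulation legitimate: since $\mA$ is row-stochastic so is $\mA^2$; since $\pi\mA=\pi$ we have $\pi\mA^2=\pi$; and from the reversibility $\pi_ia_{ij}=\pi_ja_{ji}$ the operator $\mA$ is self-adjoint on $L_2(\pi)$ with spectrum in $[-1,1]$, so $\mA^2$ is reversible with respect to $\pi$ as well and $\mA\otimes\Id_H$ is a contraction of $L_2(\pi;H)$.

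Next I would observe that both the numerator and the denominator of $\ray(x;\mA^2,\|\cdot\|_H^2)$ depend on $x=(x_1,\ldots,x_n)$ only through the differences $x_i-x_j$. Hence replacing $x$ by $x'=(x_1-\bar x,\ldots,x_n-\bar x)$, where $\bar x=\sum_{i=1}^n\pi_ix_i$, does not change $\ray$, while now $\sum_{i=1}^n\pi_ix_i'=0$ and $\|x'\|_{L_2(\pi;H)}^2=\tfrac12\sum_{i,j}\pi_i\pi_j\|x_i-x_j\|_H^2>0$ (the positivity uses $x\notin H$). This reduces the first assertion to the situation of the "moreover" assertion, so it suffices to treat $x$ with $\sum_i\pi_ix_i=0$. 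Expanding the squares then gives, on the one hand, $\sum_{i,j}\pi_i\pi_j\|x_i-x_j\|_H^2=2\sum_i\pi_i\|x_i\|_H^2-2\|\bar x\|_H^2=2\|x\|_{L_2(\pi;H)}^2$, and on the other hand $\sum_{i,j}\pi_i(\mA^2)_{ij}\|x_i-x_j\|_H^2=2\|x\|_{L_2(\pi;H)}^2-2\sum_{i,j}\pi_i(\mA^2)_{ij}\langle x_i,x_j\rangle_H$, where the two "diagonal" terms collapse to $\|x\|_{L_2(\pi;H)}^2$ by row-stochasticity and by stationarity $\pi\mA^2=\pi$ respectively. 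The remaining cross term is $\langle x,(\mA^2\otimes\Id_H)x\rangle_{L_2(\pi;H)}=\langle(\mA\otimes\Id_H)x,(\mA\otimes\Id_H)x\rangle_{L_2(\pi;H)}=\|(\mA\otimes\Id_H)x\|_{L_2(\pi;H)}^2$, using the self-adjointness of $\mA$ on $L_2(\pi)$.

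Dividing, I get $\ray(x;\mA^2,\|\cdot\|_H^2)=1-\|(\mA\otimes\Id_H)x\|_{L_2(\pi;H)}^2/\|x\|_{L_2(\pi;H)}^2$. This is $\le 1$ because $\mA\otimes\Id_H$ is a contraction, proving the first assertion (after the reduction above), and rearranging it yields the displayed identity, proving the "moreover" part. I do not expect a genuine obstacle here; the only points requiring care are (a) phrasing the reduction to $\bar x=0$ cleanly via the differences-only dependence, and (b) being explicit that $\mA^2$ inherits stochasticity, stationarity and reversibility and that $\|\mA\otimes\Id_H\|_{L_2(\pi;H)\to L_2(\pi;H)}\le 1$, since the latter is exactly what supplies the sign needed for $\ray\le 1$.
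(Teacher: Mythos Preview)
Your proof is correct and follows essentially the same route as the paper: reduce to the mean-zero case via translation invariance of the Rayleigh quotient, then expand squares using row-stochasticity, stationarity, and self-adjointness to obtain $\ray(x;\mA^2,\|\cdot\|_H^2)=1-\|(\mA\otimes\Id_H)x\|_{L_2(\pi;H)}^2/\|x\|_{L_2(\pi;H)}^2$. One small slip: the inequality $\ray\le 1$ follows simply from $\|(\mA\otimes\Id_H)x\|_{L_2(\pi;H)}^2\ge 0$, not from the contraction property of $\mA\otimes\Id_H$; the contraction would instead give $\ray\ge 0$, which is not what is being asserted.
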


\begin{proof} Let $\langle\cdot,\cdot\rangle:H\times H\to \R$ be the scalar product that induces the Hilbertian norm $\|\cdot\|_H$. Then, the scalar product that induces the norm $\|\cdot\|_{L_2(\pi;H)}$ is given by $\langle y,z\rangle_{L_2(\pi;H)} =\sum_{i=1}^n \pi_i\langle y_i,z_i\rangle$. By expanding the squares while using the fact that $\mA$ is row-stochastic, reversible relative to $\pi$, and $\sum_{i=1}^n \pi_i x_i=0$, we get that
\begin{multline*}
\sum_{i=1}^n\sum_{j=1}^n \pi_i(\mA^2)_{ij}\|x_i-x_j\|_H^2=2\|x\|_{L_2(\pi;H)}^2-2\sum_{i=1}^n \pi_i\bigg\langle  x_i,\sum_{j=1}^n (\mA^2)_{ij}x_j\bigg\rangle\\ =2\|x\|_{L_2(\pi;H)}^2-2 \big\langle x,(\mA^2\otimes \Id_H)x\big\rangle_{L_2(\pi;H)}=2\|x\|_{L_2(\pi;H)}^2-2\big\|(\mA\otimes \Id_H)x\big\|_{L_2(\pi;H)}^2,
\end{multline*}
and
$$
\sum_{i=1}^n\sum_{j=1}^n\pi_i\pi_j\|x_i-x_j\|_H^2=2\sum_{i=1}^n \pi_i\|x_i\|_H^2-2\bigg\|\sum_{i=1}^n \pi_i x_i\bigg\|_H^2=2\|x\|_{L_2(\pi;H)}^2.
$$
Therefore, recalling the definition~\eqref{eq:ray def}, we have
\begin{equation*}
\ray(x;\mA^2,\|\cdot\|_H^2)=1-\frac{\big\|(\mA\otimes \Id_H)x\big\|_{L_2(\pi;H)}^2}{\|x\|_{L_2(\pi;H)}^2}\le 1. \qedhere
\end{equation*}
\end{proof}

\begin{lemma}[point-wise Rayleigh quotient estimate for Hilbert isomorphs]\label{lem:power Rayleigh}Let $(X,\|\cdot\|_X)$ be a normed space and fix $\dd\in [1,\infty)$. Suppose that $\|\cdot\|_H:X\to [0,\infty)$  is  a Hilbertian norm on $X$ that satisfies
\begin{equation}\label{eq:d assumption}
\forall\, y\in X,\qquad \|y\|_H\le \|y\|_X\le \dd\|y\|_H.
\end{equation}
For every $x\in L_2(\pi;X)\setminus X$ define a quantity  $\tt(x,\mA)=\tt(x;\mA,\|\cdot\|_H,\dd)$ to be the minimum $\tt\in \N$ such that
\begin{equation}\label{eq:Euclidean ray condition}
\ray\bigg(x;\Big(\frac12\Id_n+\frac12\mA\Big)^{\!2\tt},\|\cdot\|_H^2\bigg)\ge 1-\frac{1}{4\dd^2},
\end{equation}
with the convention that $\tt(x;\mA)=\infty$ if no such $\tt$ exists. Then,
\begin{equation}\label{eq:t2}
\frac{1}{\ray(x;\mA,\|\cdot\|_X^2)}\lesssim\tt(x;\mA)^2.
\end{equation}
\end{lemma}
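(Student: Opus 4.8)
The plan is to bound the $X$-nonlinear Rayleigh quotient from below by relating it, through the triangle-inequality/sub-additivity calculus of Lemma~\ref{lem:Mtype2}, to the Hilbertian Rayleigh quotient of a power of $\frac12\Id_n+\frac12\mA$, and then to invoke the definition of $\tt(x;\mA)$ together with the elementary Hilbertian identity of Claim~\ref{claim:hilbertial rayleigh}. First I would write $\mB\eqdef \frac12\Id_n+\frac12\mA$, which is again row-stochastic and reversible with respect to $\pi$, and set $\tt\eqdef \tt(x;\mA)$ (we may assume $\tt<\infty$, otherwise there is nothing to prove). By minimality of $\tt$, the matrix $\mB^{2(\tt-1)}$ \emph{fails} the Euclidean condition~\eqref{eq:Euclidean ray condition}, i.e. $\ray(x;\mB^{2(\tt-1)},\|\cdot\|_H^2)<1-\tfrac1{4\dd^2}$ (for $\tt=1$ the bound~\eqref{eq:t2} is immediate from assertion~(3) of Lemma~\ref{lem:Mtype2}, so assume $\tt\ge 2$). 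The key point is that by Claim~\ref{claim:hilbertial rayleigh} applied to the self-adjoint contraction $\mB^{\tt-1}$ (after subtracting the $\pi$-barycentre of $x$, which changes none of the Rayleigh quotients), the failure of~\eqref{eq:Euclidean ray condition} at level $\tt-1$ says exactly that $\|(\mB^{\tt-1}\otimes \Id_H)\bar x\|_{L_2(\pi;H)}^2 > \frac1{4\dd^2}\|\bar x\|_{L_2(\pi;H)}^2$, where $\bar x$ is the centred version of $x$; in other words $\mB^{\tt-1}$ does not contract $\bar x$ in $L_2(\pi;H)$ by more than the factor $\frac1{2\dd}$.

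Next I would pass from the Hilbertian norm to the $X$-norm using~\eqref{eq:d assumption}: since $\|\cdot\|_H\le\|\cdot\|_X\le \dd\|\cdot\|_H$ holds coordinatewise, it holds on $L_2(\pi;X)$ as well, and therefore $\|(\mB^{\tt-1}\otimes\Id_X)\bar x\|_{L_2(\pi;X)}\ge \|(\mB^{\tt-1}\otimes\Id_X)\bar x\|_{L_2(\pi;H)} > \frac{1}{2\dd}\|\bar x\|_{L_2(\pi;H)}\ge \frac{1}{2\dd^2}\|\bar x\|_{L_2(\pi;X)}$. On the other hand, $\|\bar x\|_{L_2(\pi;X)}^2$ and the denominator $\sum_{i,j}\pi_i\pi_j\|x_i-x_j\|_X^2$ of the Rayleigh quotient agree up to the universal factor recorded in (the proof of) Claim~\ref{claim:hilbertial rayleigh} (the identity $\sum_{i,j}\pi_i\pi_j\|x_i-x_j\|^2 = 2\|\bar x\|^2$ is valid in \emph{any} normed space, since it only used expanding squares of a \emph{seminorm} — here I should double-check that this step is genuinely norm-agnostic, but the two-sided bound~\eqref{eq:d assumption} gives it up to constants regardless). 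So the failure of~\eqref{eq:Euclidean ray condition} at $\tt-1$ yields a lower bound, by a universal constant depending on $\dd$ through $1/\dd^2$, on the non-contraction of $\mB^{\tt-1}$ acting on the deviation part of $x$ in $L_2(\pi;X)$.

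The final step is to convert this "$\mB^{\tt-1}$ does not contract much" statement into the desired lower bound $\ray(x;\mA,\|\cdot\|_X^2)\gtrsim 1/\tt^2$. For this I would use the triangle-inequality calculus of Lemma~\ref{lem:Mtype2}: by assertion~(5) (applied with exponent $\tt-1$) together with assertion~(2), $\ray(x;\mB^{\tt-1},\|\cdot\|_X^2)\le (\tt-1)^2\ray(x;\mB,\|\cdot\|_X^2)=(\tt-1)^2\cdot\frac12\ray(x;\mA,\|\cdot\|_X^2)$; and the left-hand side is, by definition~\eqref{eq:ray def}, equal to $\big(\sum_{i,j}\pi_i(\mB^{\tt-1})_{ij}\|x_i-x_j\|_X^2\big)/\big(\sum_{i,j}\pi_i\pi_j\|x_i-x_j\|_X^2\big)$, while the numerator is $\asymp \|\bar x\|_{L_2(\pi;X)}^2-\|(\mB^{\tt-1}\otimes\Id_X)\bar x\|_{L_2(\pi;X)}^2$ up to the two-sided distortion by $\dd$ — wait, that expansion into an inner product is only exact in the Hilbert norm. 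So the cleanest route is: use~\eqref{eq:d assumption} once more to replace $\sum_{i,j}\pi_i(\mB^{\tt-1})_{ij}\|x_i-x_j\|_X^2$ by (a $\dd^2$-multiple of) its $H$-counterpart, which by Claim~\ref{claim:hilbertial rayleigh} equals $2\big(\|\bar x\|_{L_2(\pi;H)}^2-\|(\mB^{\tt-1}\otimes\Id_H)\bar x\|_{L_2(\pi;H)}^2\big)$, and now plug in the bound $\|(\mB^{\tt-1}\otimes\Id_H)\bar x\|_{L_2(\pi;H)}^2>\frac1{4\dd^2}\|\bar x\|_{L_2(\pi;H)}^2$ from the previous paragraph — this is the \emph{wrong} direction for bounding the numerator from below, so instead I will bound it from \emph{below} using the \emph{minimality} the other way, i.e. the fact that~\eqref{eq:Euclidean ray condition} \emph{does} hold at level $\tt$. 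Concretely, at level $\tt$ we have $\|(\mB^{\tt}\otimes\Id_H)\bar x\|_{L_2(\pi;H)}^2\le(1-\frac1{4\dd^2})\|\bar x\|_{L_2(\pi;H)}^2$, so $\|\bar x\|_{L_2(\pi;H)}^2-\|(\mB^{\tt}\otimes\Id_H)\bar x\|_{L_2(\pi;H)}^2\ge\frac1{4\dd^2}\|\bar x\|_{L_2(\pi;H)}^2$, giving $\ray(x;\mB^{2\tt},\|\cdot\|_H^2)\ge\frac1{4\dd^2}$; combining with $\ray(x;\mB^{2\tt},\|\cdot\|_X^2)\ge\frac1{\dd^2}\ray(x;\mB^{2\tt},\|\cdot\|_H^2)$ (from~\eqref{eq:d assumption}) and then Lemma~\ref{lem:Mtype2}(5)+(2), $\frac1{4\dd^4}\le\ray(x;\mB^{2\tt},\|\cdot\|_X^2)\le(2\tt)^2\ray(x;\mB,\|\cdot\|_X^2)=2\tt^2\ray(x;\mA,\|\cdot\|_X^2)$, which rearranges to $1/\ray(x;\mA,\|\cdot\|_X^2)\le 8\dd^4\tt^2$. \textbf{The main obstacle} I anticipate is getting the $\dd$-dependence in the constant of~\eqref{eq:t2} to be genuinely universal (the statement hides it under "$\lesssim$", but $\dd$ appears; I need to confirm the intended reading is $\lesssim_\dd$, or else be more careful) — and, relatedly, making sure that the passage between $\|\bar x\|_{L_2(\pi;X)}$-type quantities and the Rayleigh-quotient denominators is done only through the two-sided bound~\eqref{eq:d assumption} and the norm-agnostic identity $\sum_{i,j}\pi_i\pi_j\|x_i-x_j\|^2=2\|\bar x\|^2$, never through a spurious inner-product expansion in the $X$-norm.
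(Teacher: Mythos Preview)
Your final argument is correct but yields $\frac{1}{\ray(x;\mA,\|\cdot\|_X^2)}\le 8\dd^4\,\tt(x;\mA)^2$, and the $\dd^4$ is a genuine problem, not a matter of interpretation: in the paper the symbol ``$\lesssim$'' in \eqref{eq:t2} hides a \emph{universal} constant. This matters for the application (the derivation of \eqref{eq:M-d version}), where one takes $\dd\sim\cc_2(X)$ and needs the eventual dependence on $\cc_2(X)$ to be only logarithmic, coming solely from the bound \eqref{eq:t upper} on $\tt(x;\mA)$; a polynomial factor in $\dd$ would destroy that. Your own diagnosis at the end is on target: the issue is exactly the $\dd$-dependence, and the naive norm-comparison $\ray(\cdot;\|\cdot\|_X^2)\ge \dd^{-2}\ray(\cdot;\|\cdot\|_H^2)$ cannot avoid it, even if you keep the full strength $\ray(x;\mB^{2\tt},\|\cdot\|_H^2)\ge 1-\tfrac1{4\dd^2}$.

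The missing idea is a cancellation of the two $\dd$'s that you never let meet. Work at level $\tt$ (not $\tt-1$), set $\mB_x=\mB^{\tt}$, and pass to the \emph{operator} picture before comparing norms: from \eqref{eq:Euclidean ray condition} and Claim~\ref{claim:hilbertial rayleigh} you get $\|(\mB_x\otimes\Id_H)\bar x\|_{L_2(\pi;H)}\le \tfrac{1}{2\dd}\|\bar x\|_{L_2(\pi;H)}$, and then
\[
\|(\mB_x\otimes\Id_X)\bar x\|_{L_2(\pi;X)}\le \dd\,\|(\mB_x\otimes\Id_H)\bar x\|_{L_2(\pi;H)}\le \tfrac{\dd}{2\dd}\,\|\bar x\|_{L_2(\pi;H)}\le \tfrac12\,\|\bar x\|_{L_2(\pi;X)},
\]
so the $\dd$'s cancel and $\|\bar x-(\mB_x\otimes\Id_X)\bar x\|_{L_2(\pi;X)}\ge\tfrac12\|\bar x\|_{L_2(\pi;X)}$ with a universal $\tfrac12$. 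Now two norm-agnostic inequalities finish the job: convexity of $\|\cdot\|_X^2$ gives $\sum_{i,j}\pi_i(\mB_x)_{ij}\|x_i-x_j\|_X^2\ge \|\bar x-(\mB_x\otimes\Id_X)\bar x\|_{L_2(\pi;X)}^2$, and the triangle inequality gives $\sum_{i,j}\pi_i\pi_j\|x_i-x_j\|_X^2\le 4\|\bar x\|_{L_2(\pi;X)}^2$. Hence $\ray(x;\mB^{\tt},\|\cdot\|_X^2)\ge \tfrac1{16}$ with a universal constant, and your Lemma~\ref{lem:Mtype2}(5)+(2) step then gives \eqref{eq:t2}. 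The point is that you should compare $H$ and $X$ norms once on $\|(\mB_x\otimes\Id)\bar x\|$ and once on $\|\bar x\|$ (where the factors go in opposite directions and cancel), rather than on the Rayleigh quotient itself (where both factors go the same way and compound).
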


\begin{proof} We may assume without loss of generality that $\sum_{i=1}^n\pi_i x_i=0$ and $\tt(x;\mA)<\infty$. Define a matrix
\begin{equation}\label{eq:def mB}
\mB_x\eqdef \Big(\frac12\Id_n+\frac12\mA\Big)^{\!\tt(x;\mA)}\in \M_n(\R).
\end{equation}
Then $\mB_x$ is also a row-stochastic matrix which is reversible with respect to $\pi$, and, by the definition of $\tt(x;\mA)$,
$$
\ray(x;\mB^2_x,\|\cdot\|_H^2)\ge 1-\frac{1}{4\dd^2}.
$$
By Claim~\ref{claim:hilbertial rayleigh}, since $\sum_{i=1}^n\pi_ix_i=0$, this implies that
\begin{equation}\label{eq:hilbertian norm quotient}
\frac{\left\|(\mB_x\otimes \Id_H)x\right\|_{L_2(\pi;H)}}{\|x\|_{L_2(\pi;H)}}\le \sqrt{1-\left(1-\frac{1}{4\dd^2}\right)}=\frac{1}{2\dd}.
\end{equation}
At the same time, due to~\eqref{eq:d assumption} we have
\begin{equation}\label{eq:comparison of norm quotients}
\frac{\left\|(\mB_x\otimes \Id_X)x\right\|_{L_2(\pi;X)}}{\|x\|_{L_2(\pi;X)}}\le \dd\frac{\left\|(\mB_x\otimes \Id_H)x\right\|_{L_2(\pi;H)}}{\|x\|_{L_2(\pi;H)}}.
\end{equation}
By combining~\eqref{eq:hilbertian norm quotient} and~\eqref{eq:comparison of norm quotients} we see that $\left\|(\mB_x\otimes \Id_H)x\right\|_{L_2(\pi;X)}\le\frac12 \|x\|_{L_2(\pi;X)}$. Consequently,
\begin{equation}\label{eq:lower I-B}
\left\|x-\left(\mB_x\otimes \Id_X\right)x\right\|_{L_2(\pi;X)}\ge \|x\|_{L_2(\pi;X)}-\left\|\left(\mB_x\otimes \Id_X\right)x\right\|_{L_2(\pi;X)}\ge \frac12 \|x\|_{L_2(\pi;X)}.
\end{equation}

Observe that
\begin{equation}\label{eq:variance}
\bigg(\sum_{i=1}^n\sum_{j=1}^n \pi_i\pi_j\|x_i-x_j\|_X^2\bigg)^{\frac12}\le \bigg(\sum_{i=1}^n\sum_{j=1}^n \pi_i\pi_j \big(\|x_i\|_X+\|x_j\|_X\big)^2\bigg)^{\frac12}\le 2\|x\|_{L_2(\pi;X)},
\end{equation}
where in the first step of~\eqref{eq:variance} we used the triangle inequality in $X$ and the second step of~\eqref{eq:variance} is an application of the triangle inequality in $L_2(\pi\otimes \pi)$.  Also, since $\mB_x$ is row-stochastic,
\begin{equation}\label{eq:convexity B}
\left\|x-\left(\mB_x\otimes \Id_X\right)x\right\|_{L_2(\pi;X)}=\bigg(\sum_{i=1}^n \pi_i \bigg\|\sum_{j=1}^n (\mB_x)_{ij}(x_i-x_j)\bigg\|_X^2\bigg)^{\frac12}\le \bigg(\sum_{i=1}^n \sum_{j=1}^n \pi_i(\mB_x)_{ij}\|x_i-x_j\|_X^2\bigg)^{\frac12},
\end{equation}
where in the final step of~\eqref{eq:convexity B} we used the convexity of the function $\|\cdot\|_X^2:X\to \R$.

Recalling the definition~\eqref{eq:ray def}, by substituting~\eqref{eq:variance} and~\eqref{eq:convexity B} into~\eqref{eq:lower I-B} we see that
\begin{equation}\label{eq:power is expander}
\ray\bigg(x;\Big(\frac12\Id_n+\frac12\mA\Big)^{\tt(x;\mA)},\|\cdot\|_X^2\bigg)\stackrel{\eqref{eq:def mB}}{=}\ray\big(x;\mB_x,\|\cdot\|_X^2\big)\ge \frac{1}{16}.
\end{equation}
We now conclude the proof of the desired estimate~\eqref{eq:t2} as follows.
\begin{equation}\label{eq:use mtype}
1\stackrel{\eqref{eq:power is expander}}{\lesssim}\ray\bigg(x;\Big(\frac12\Id_n+\frac12\mA\Big)^{\tt(x;\mA)},\|\cdot\|_X^2\bigg)\le \tt(x;\mA)^2\ray\bigg(x;\frac12\Id_n+\frac12\mA,\|\cdot\|_X^2\bigg)=\frac12\tt(x;\mA)^2\ray\big(x;\mA,\|\cdot\|_X^2\big),
\end{equation}
where the second step of~\eqref{eq:use mtype} uses the fifth assertion of Lemma~\ref{lem:Mtype2}, and the final step uses  its second assertion.
\end{proof}

The quantity $\tt(x;\mA)$ of Lemma~\ref{lem:power Rayleigh} can be bounded as follows in terms of the spectral gap of $\AA$.
\begin{lemma} Continuing with the notation of Lemma~\ref{lem:power Rayleigh}, the following estimate holds true.
\begin{equation}\label{eq:t upper}
\tt(x;\mA)\le \left\lceil \frac{\log(2\dd)}{\log\left(\frac{2}{1+\lambda_2(\mA)}\right)}\right\rceil\lesssim \frac{\log(2\dd)}{1-\lambda_2(\mA)}.
\end{equation}
\end{lemma}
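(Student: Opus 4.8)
The plan is to translate condition~\eqref{eq:Euclidean ray condition} into a norm bound for a power of the operator $\tfrac12\Id_n+\tfrac12\mA$ via Claim~\ref{claim:hilbertial rayleigh}, and then apply the (routine) spectral estimate for that self-adjoint contraction on the mean-zero subspace of $L_2(\pi;H)$.

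First, since for a Hilbertian norm both the numerator and denominator of $\ray(x;\cdot,\|\cdot\|_H^2)$ depend only on the differences $x_i-x_j$, the quantity $\tt(x;\mA)$ is unchanged if we add a fixed vector to every coordinate of $x$; so I would assume $\sum_{i=1}^n\pi_ix_i=0$. For each $\tt\in\N$ the matrix $\mB_\tt\eqdef\big(\tfrac12\Id_n+\tfrac12\mA\big)^\tt$ is row-stochastic and reversible with respect to $\pi$, and $\mB_\tt^2=\big(\tfrac12\Id_n+\tfrac12\mA\big)^{2\tt}$. Applying Claim~\ref{claim:hilbertial rayleigh} to $\mB_\tt$ (using $\sum_i\pi_ix_i=0$) gives
$$
\ray\Big(x;\big(\tfrac12\Id_n+\tfrac12\mA\big)^{2\tt},\|\cdot\|_H^2\Big)=1-\frac{\big\|(\mB_\tt\otimes\Id_H)x\big\|_{L_2(\pi;H)}^2}{\|x\|_{L_2(\pi;H)}^2},
$$
so~\eqref{eq:Euclidean ray condition} is equivalent to $\big\|(\mB_\tt\otimes\Id_H)x\big\|_{L_2(\pi;H)}\le\tfrac{1}{2\dd}\|x\|_{L_2(\pi;H)}$.

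Next I would bound the left-hand side spectrally. Since $\mA$ is self-adjoint on $L_2(\pi)$ with eigenvalues $1=\lambda_1(\mA)\ge\lambda_2(\mA)\ge\cdots\ge\lambda_n(\mA)\ge-1$, the operator $\mB_\tt\otimes\Id_H=\big(\tfrac12\Id_{L_2(\pi;H)}+\tfrac12(\mA\otimes\Id_H)\big)^\tt$ is self-adjoint on $L_2(\pi;H)$, preserves the "diagonal" copy of $H$ and its orthogonal complement $\{y:\sum_i\pi_iy_i=0\}$, and on that complement — which contains $x$ — its operator norm is at most $\big(\tfrac12(1+\lambda_2(\mA))\big)^\tt$, because the relevant eigenvalues $\tfrac12(1+\lambda_i(\mA))$ for $i\ge2$ all lie in $\big[0,\tfrac12(1+\lambda_2(\mA))\big]$ (using $\lambda_i(\mA)\ge-1$), and tensoring with $\Id_H$ does not enlarge the spectrum. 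Hence $\big\|(\mB_\tt\otimes\Id_H)x\big\|_{L_2(\pi;H)}\le\big(\tfrac12(1+\lambda_2(\mA))\big)^\tt\|x\|_{L_2(\pi;H)}$, and since $\mA$ is irreducible we have $\lambda_2(\mA)<1$, so the right-hand side is at most $\tfrac{1}{2\dd}\|x\|_{L_2(\pi;H)}$ as soon as $\tt\log\big(\tfrac{2}{1+\lambda_2(\mA)}\big)\ge\log(2\dd)$. This yields $\tt(x;\mA)\le\big\lceil\log(2\dd)/\log\big(\tfrac{2}{1+\lambda_2(\mA)}\big)\big\rceil$, the first inequality in~\eqref{eq:t upper}.

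Finally, for the second inequality I would use $\log\big(\tfrac{2}{1+\lambda_2(\mA)}\big)=-\log\big(1-\tfrac{1-\lambda_2(\mA)}{2}\big)\ge\tfrac{1-\lambda_2(\mA)}{2}$ (valid because $\tfrac{1-\lambda_2(\mA)}{2}\in[0,1)$), so that $\log(2\dd)/\log\big(\tfrac{2}{1+\lambda_2(\mA)}\big)\le 2\log(2\dd)/(1-\lambda_2(\mA))$; since $\dd\ge1$ gives $\log(2\dd)\ge\log2$ and $1-\lambda_2(\mA)\le2$, the quantity $\log(2\dd)/(1-\lambda_2(\mA))$ is bounded below by the absolute constant $\tfrac12\log2$, so the ceiling costs only a constant factor and $\tt(x;\mA)\lesssim\log(2\dd)/(1-\lambda_2(\mA))$ follows. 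There is no genuinely hard step here; the only point requiring care is the passage from the scalar spectral bound on $\tfrac12\Id_n+\tfrac12\mA$ to the vector-valued operator $\mB_\tt\otimes\Id_H$ on $L_2(\pi;H)$, and that is standard because the spectrum of $\mA\otimes\Id_H$ equals that of $\mA$.
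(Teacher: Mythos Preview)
Your proof is correct and follows essentially the same route as the paper's: translate condition~\eqref{eq:Euclidean ray condition} via Claim~\ref{claim:hilbertial rayleigh} into an operator-norm bound for $(\tfrac12\Id_n+\tfrac12\mA)^{\tt}$ on the mean-zero subspace of $L_2(\pi;H)$, then invoke the spectral bound $\tfrac12(1+\lambda_2(\mA))$ on that subspace. The paper passes to $L_2(\pi;H)$ by applying the scalar inequality to each coordinate in an orthonormal basis of $H$, whereas you phrase the same step as ``tensoring with $\Id_H$ does not enlarge the spectrum''; and you spell out the elementary calculus for the second inequality in~\eqref{eq:t upper} where the paper simply says ``by elementary calculus.''
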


\begin{proof}  Since $\AA$ is row-stochastic, $\lambda_n(\AA)\ge -1$. Therefore  $\frac12\Id_n+\frac12\AA$ is a positive semidefinite self-adjoint operator on $L_2(\pi)$ that preserves the hyperplane $L_2^0(\pi)=\{u\in \R^n;\ \sum_{i=1}^n\pi_i u_i=0\}$. The largest eigenvalue of $\frac12\Id_n+\frac12\AA$ on $L_2^0(\pi)$ is $\frac12 +\frac12 \lambda_2(\AA)$, and therefore $\|(\frac12\Id_n+\frac12\AA)^\tt u\|_{L_2(\pi)}\le (\frac12 +\frac12 \lambda_2(\AA))^\tt\|u\|_{L_2(\pi)}$ for  $u\in L_2^0(\pi)$ and $\tt\in \N$.

 If $x\in L_2(\pi;X)\setminus X$ satisfies $\sum_{i=1}^n\pi_ix_i=0$, then we may apply the above observation to the coordinates of $x$ with respect to some orthonormal basis of $H$, each of which is an element of $L_2^0(\pi)$, and deduce that
\begin{equation}\label{eq:in L_2H}
 \Big(\frac12 +\frac12 \lambda_2(\AA)\Big)^\tt\ge \frac{\left\|\left(\left(\frac12\Id_n+\frac12\AA\right)^\tt \otimes \Id_H\right) x\right\|_{L_2(\pi;H)}}{\|x\|_{L_2(\pi;H)}}=\sqrt{1-\ray\bigg(x;\Big(\frac12\Id_n+\frac12\mA\Big)^{\!2\tt},\|\cdot\|_H^2\bigg)},
 \end{equation}
 where in the second step of~\eqref{eq:in L_2H} we applied Lemma~\ref{claim:hilbertial rayleigh} with $\AA$ replaced by $(\frac12\Id_n+\frac12\AA)^\tt$. Hence
 $$
 \ray\bigg(x;\Big(\frac12\Id_n+\frac12\mA\Big)^{\!2\tt},\|\cdot\|_H^2\bigg)\ge 1- \Big(\frac12 +\frac12 \lambda_2(\AA)\Big)^{\!2\tt}.
 $$
Consequently, if $\tt\ge (\log(2\dd))/\log(2/(\lambda_2(\AA)+1))$, then $ \ray(x;(\frac12\Id_n+\frac12\mA)^{\!2\tt},\|\cdot\|_H^2)\ge 1-\frac{1}{4\dd^2}$. By the definition of $\tt(x;\AA)$, this implies the first inequality in~\eqref{eq:t upper}. The second inequality in~\eqref{eq:t upper} follows by elementary calculus.
\end{proof}

\begin{proof}[Proof of~\eqref{eq:M-d version}] By a classical linearization argument~\cite{Enf70} (see~\cite[Chapter~7]{BL} for a modern treatment), for every $\dd>\cc_2(X)$ there is a Hilbertian norm $\|\cdot\|_H$ on $X$ that satisfies~\eqref{eq:d assumption}. We therefore see that for every $\AA$ as above
\begin{multline}\label{eq:deduce slightly weaker}
\sqrt{\gamma(\AA,\|\cdot\|_X^2)}\stackrel{\eqref{eq:gamma ray relation}}{=}\sup_{x\in L_2(\pi;X)\setminus X} \frac{1}{\sqrt{\ray(x;\mA,\|\cdot\|_X^2)}}\stackrel{\eqref{eq:t2}}{\le} \sup_{x\in L_2(\pi;X)\setminus X} \tt(x;\AA)\\\stackrel{\eqref{eq:t upper}}{\lesssim} \frac{\log(\cc_2(X)+1)}{1-\lambda_2(\AA)}=\frac{\log(\cc_2(X)+1)}{\sqrt{1-\lambda_2(\AA)}}\sqrt{\gamma(\AA,\|\cdot\|_H^2)}\le \frac{\log (\cc_2(X)+1)}{M}\cc_2(X)\sqrt{\gamma(\AA,\|\cdot\|_H^2)},
\end{multline}
where, for the final step of~\eqref{eq:deduce slightly weaker} recall that in the context of~\eqref{eq:M-d version} we assume that $\lambda_2(\AA)\le 1-M^2/\cc_2(X)^2$.
\end{proof}

\subsubsection{Structural implications of nonlinear Rayleigh quotient inequalities} Fix integers $n,k,r\ge 3$ (think of $n$ as much larger than $k$). Let $(X,\|\cdot\|_X)$ be a $k$-dimensional normed space. Suppose that  $\G=(\n,\EE_\G)$ is a connected $r$-regular graph. Although we phrased (and used) Corollary~\ref{cor:regular case}  as an impossibility result that provides an obstruction (spectral gap) for faithfully realizing (on average) the metric space $(\n,d_\G)$ in $X$, a key insight of the recent work~\cite{ANNRW18} by Andoni, Nikolov, Razenshteyn, Waingarten and the author is that one could "flip" this point of view to deduce from Corollary~\ref{cor:regular case}  useful information on those graphs that do happen to admit such a faithful geometric realization in $X$, namely they satisfy~\eqref{eq:two averages-regular}. Clearly there are plenty of graphs with this property, including those graphs that arise from discrete approximations of subsets of $X$ (as a "vanilla" example to keep in mind,  fix a small parameter $\d>0$,  consider a $\d$-net in the unit ball of $X$ as the vertices, and join two net points by an edge if their distance in $X$ is $O(\d)$). The conclusion of Corollary~\ref{cor:regular case} for any such graph is that it cannot have a large spectral gap, and by Cheeger's inequality~\cite{Che70,Tan84,AM85,SJ89} it follows that this graph can be partitioned into two pieces with a small (relative) "discrete boundary." On the other hand, if we are given a mapping $f:\n\to X$ that satisfies the first condition in~\eqref{eq:two averages-regular} but not the second condition in~\eqref{eq:two averages-regular}, then there must be a ball in $X$ of relatively small radius that contains a    substantial fraction of the vectors  $\{f(i)\}_{i=1}^n$. The partition of $\n$ that corresponds to this dense ball and its complement encodes useful geometric "clustering" information. We have thus observed a dichotomic behavior that allows one to partition geometrically-induced graphs using either a "spectral partition" or a "dense ball partition."

In~\cite{ANNRW18}, the above idea is used iteratively to construct a  hierarchical partition of $X$. Our overview  suppresses important technical steps, which include both randomization and a re-weighting procedure of the graphs that arise at later stages of the construction (the start of the construction is indeed the above "net graph"); see~\cite{ANNRW18} for the full details.  In particular, one needs to use general row-stochastic matrices due to the re-weighting procedure, i.e., one uses the full strength of Theorem~\ref{thm:pi version average embedding}  rather than only the case of graphs as in Corollary~\ref{cor:regular case}.

In summary, one can use the bound~\eqref{eq:M-d version} on nonlinear spectral gaps  to provide a "cutting rule" that governs an iterative partitioning procedure in which each inductive step is either geometric (a ball and its complement) or a less explicit existential step that follows from spectral information which is deduced from a contrapositive assumption of (rough, average) embeddability. This structural information is used in~\cite{ANNRW18} to design a new data structure for approximate nearest neighbor search in arbitrary norms (see the article of Andoni, Indyk and Razenshteyn in the present volume for an extensive account of approximate nearest neighbor search). Although this yields important (and arguably unexpected) progress on an algorithmic question of central importance, the non-explicitness and potential high complexity of the spectral partitioning step raises issues of efficiency that are not yet fully resolved. Specifically,  the most general data structure that is designed in~\cite{ANNRW18} is efficient only in the so-called "cell probe model," but not in the full polynomial-time sense; we refer to~\cite{ANNRW18} for an explanation of these  complexity-theoretic issues and their significance, because they are beyond the scope of the present article.

While the above issue of efficiency does not occur in our initial investigation within pure mathematics, it is very important from the algorithmic perspective. This is what initially led to the desire to obtain a nonlinear Rayleigh quotient inequality rather than to merely bound the nonlinear spectral gap, though (in hindsight) such inequalities are interesting from the mathematical perspective as well. We did not formally define what we mean by a "nonlinear Rayleigh quotient inequality" because there is some flexibility here, but the basic desire is, given $x_1,\ldots,x_n\in X$, to bound their Rayleigh quotient in $X$ by a Rayleigh quotient of points in a {\em Euclidean space}.

The inequality~\eqref{eq:t2} of Lemma~\ref{lem:power Rayleigh} is of the above form, because the parameter $\tt(x;\AA)$ defined by~\eqref{eq:Euclidean ray condition} involves only examining a certain Rayleigh quotient in a Hilbert space. It should be noted, however, that to date we have not succeeded to use the specific nonlinear Rayleigh quotient inequality of Lemma~\ref{lem:power Rayleigh} for algorithmic purposes (though with more work this may be possible), despite the fact that it was  found with this motivation in mind.

Other nonlinear Rayleigh quotient inequalities were obtained in~\cite{ANNRW18,ANNRW18-uniform} and used to address issues of algorithmic  efficiency. Very roughly,  the drawback of~\eqref{eq:t2} is that the matrix $\AA$ is changed in the Hilbertian Rayleigh quotient of~\eqref{eq:Euclidean ray condition} (the main problem is the potentially high power $2\tt$). A more directly algorithmically-useful nonlinear Rayleigh quotient inequality would be to change the point $x\in L_2(\pi;X)$ but not change the matrix $\AA$. Namely, suppose that we could control $\ray(x;\mA,\|\cdot\|_X^2)$ from below by a function of $\ray(\phi_\AA(x);\mA,\|\cdot\|_H^2)$, for some mapping $\phi_\AA:L_2(\pi;X)\to L_2(\pi;H)$. Nonlinear Rayleigh quotient inequalities of this type are proved in~\cite{ANNRW18,ANNRW18-uniform}, though the associated mappings $\phi_\AA$ turn out to be highly nonlinear and quite complicated.\footnote{Specifically, in~\cite{ANNRW18} such a mapping $\phi_\AA$ is constructed for  Schatten-von Neumann trace classes using the Brouwer fixed-point theorem and estimates from~\cite{Ric15}. In~\cite{ANNRW18-uniform}, $\phi_\AA$ is constructed for general normed spaces using, in addition to Brouwer's theorem, convex programming and (algorithmic variants of) complex interpolation. These lead to data structures that are efficient in all respects other than the "preprocessing stage," which at present remains potentially time-consuming due to the complexity of $\phi_\AA$.}

The upshot of the latter type of nonlinear Rayleigh quotient inequality is that if (due to  existence of a faithful embedding into $X$) we know  that $\ray(x;\mA,\|\cdot\|_X^2)$ is small, then it follows that also $\ray(\phi_\AA(x);\mA,\|\cdot\|_H^2)$ is small. The {\em proof of} Cheeger's inequality (via examination of level sets of the second eigenvector) would now provide a sparse "spectral partition" of $\n$ that has the following auxiliary structure: The partition is determined by thresholding one of the coordinates of $H$ (in some fixed orthonormal basis), namely the part to which each $i\in \n$ belongs depends only on whether the coordinate in question of the transformed vector $\phi_\AA(x)_i\in H$ is above or below a certain value. If in addition $(\AA,x)\mapsto \phi_\AA(x)$ has favorable computational properties (see~\cite{ANNRW18} for a formulation; roughly, what is important here is that after a "preprocessing step" one can decide quickly to which piece of the partition each $i\in \n$ belongs), then this would lead to  fast "query time."

The above description of the algorithmic role of nonlinear Rayleigh quotient inequalities is  impressionistic, but it conveys the core ideas while not delving into (substantial) details.  Such inequalities are interesting in their own right, partially because they necessitate making mathematical arguments constructive, thus leading to new proofs, as we did for~\eqref{eq:M-d version},  and also leading to intrinsically meaningful studies, such as obtaining~\cite{ANNRW18-uniform} algorithmic versions of existential statements that arise from the use of the maximum principle in complex interpolation.

\bigskip
\noindent{\bf Acknowledgements.} I am grateful to Daniel Kane for a conversation that led to a simplification of Proposition~\ref{prop:sigma nk}. I also thank Alexandros Eskenazis, Manor Mendel, Ilya Razenshteyn and Gideon Schechtman for their helpful suggestions.

\bibliographystyle{abbrv}
\bibliography{ICM2018}

 \end{document}